\newcommand{\Fre}{{Fr\'{e}chet \,}} 
\newcommand{\BC}{{\mathbb {C}}}
\newcommand{\BN}{{\mathbb {N}}}
\newcommand{\BR}{{\mathbb {R}}}
\newcommand{\BZ}{{\mathbb {Z}}}
\newcommand{\CC}{{\mathcal {C}}}
\newcommand{\CS}{{\mathcal {S}}}
\newcommand{\CT}{{\mathcal {T}}}
\newcommand{\RM}{{\mathrm {M}}}
\newcommand{\Ad}{{\mathrm{Ad}}}
\newcommand{\GL}{{\mathrm{GL}}}
\newcommand{\Hom}{{\mathrm{Hom}}}
\newcommand{\Ind}{{\mathrm{Ind}}}
\newcommand{\ind}{{\mathrm{ind}}}
\newcommand{\Ker}{{\mathrm{Ker}}}
\newcommand{\Lie}{{\mathrm{Lie}}}
\newcommand{\rd}{{\mathrm{d}}}
\newcommand{\Sym}{{\mathrm{Sym}}}
\newcommand{\DR}{{\mathrm{DR}}}
\newcommand{\con}{\textit{C}}
\newcommand{\od}{\operatorname{d}}
\newcommand{\oH}{\operatorname{H}}
\newcommand{\oU}{\operatorname{U}}
\newcommand{\oD}{\operatorname{D}}
\newcommand{\g}{\mathfrak g}
\renewcommand{\k}{\mathfrak k}
\newcommand{\abs}[1]{\lvert#1\rvert}
\newcommand{\la}{\langle}
\newcommand{\ra}{\rangle}
\newcommand{\be}{\begin {equation}}
\newcommand{\ee}{\end {equation}}
\newcommand{\bee}{\begin {equation*}}
\newcommand{\eee}{\end {equation*}}
\theoremstyle{Theorem}
\theoremstyle{Theorem}
\newtheorem{lem}{Lemma}[section]
\newtheorem{corl}[lem]{Corollary}
\newtheorem{thml}[lem]{Theorem}
\newtheorem{prpl}[lem]{Proposition}
\theoremstyle{Theorem}
\newtheorem{prp}{Proposition}[section]
\newtheorem{corp}[prp]{Corollary}
\newtheorem{lemp}[prp]{Lemma}
\newtheorem{thmp}[prp]{Theorem}
\theoremstyle{Definition}
\newtheorem{dfn}{Definition}[section]
\newtheorem{prpd}[dfn]{Proposition}
\newtheorem{thmd}[dfn]{Theorem}
\newtheorem{dfnl}[lem]{Definition}
\theoremstyle{remark}
\newtheorem*{remark}{Remark}
\newtheorem*{remarks}{Remarks}
\theoremstyle{remark}
\newtheorem{examplel}[lem]{Example}
\newtheorem*{acknowledgements}{Acknowledgement}
\begin{document}

\title[Schwartz homologies]{Schwartz homologies of representations of almost linear Nash groups }

\author[Y. Chen]{Yangyang Chen}
\address{School of Sciences\\
Harbin Institute of Technology\\
Shenzhen, 518055, China}
\email{chenyangyang@hit.edu.cn}

\author [B. Sun] {Binyong Sun}

\address{Academy of Mathematics and Systems Science, Chinese Academy of Sciences, and School of Mathematical Sciences, University of Chinese Academy of Sciences, Beijing, 100190, China} \email{sun@math.ac.cn}

\subjclass[2010]{22E20, 46T30}
\keywords{Schwartz homologies, Hausdorffness, tempered vector bundle, Schwartz induction, Shapiro lemma,  Lie algebra homology, automatic extensions}

\maketitle

\begin{abstract}
Let $G$ be an almost linear Nash group, namely, a Nash group which admits a Nash homomorphism with finite kernel to some 
$\GL_k(\mathbb R)$.  A homology theory (the Schwartz homology) is established for the category of smooth  \Fre representations of $G$ of moderate growth.  Frobenius reciprocity and Shapiro's lemma are proved  in this category.  As an application, we give a criterion for automatic extensions of Schwartz homologies of Schwartz sections of a tempered $G$-vector bundle. 
\end{abstract}

\setcounter{tocdepth}{2}
 \tableofcontents

\section{Introduction}
\subsection{Smooth representations}
Let us first recall the usual notions of representations and smooth representations of Lie groups.
Let $G$ be a Lie group. 
By a representation of $G$, we mean a quasi-complete Hausdorff locally convex topological vector space $V$ over $\mathbb C$, together with a  continuous linear action
\be\label{actsm}
  G\times V\rightarrow V, \quad (g, v)\mapsto g.v.
\ee

The representation $V$ is said to be 
 smooth if the map \eqref{actsm}  is smooth as a map of infinite dimensional manifolds.  The notion of smooth maps in infinite dimensional setting may be found in \cite{GN}, for example.
Note that the continuous linear action \eqref{actsm} is smooth if and only if the map
\be\label{actg}
 V\rightarrow V, \quad v\mapsto Y. v:=\lim_{t\rightarrow 0} \frac{ \exp(t Y).v-v}{t}
\ee
is defined and continuous for every $Y\in \Lie (G)$. When this is the case,  it is routine to check that \eqref{actg} defines a $\g$-module structure on $V$, which is called the differential of the representation. Here $\g:=\Lie (G)\otimes_\mathbb R \mathbb C$ denotes the complexified Lie algebra of $G$. Hence every smooth representation of $G$ is naturally a $\oU(\g)$-module. Here and as usual, $\oU$ indicates the universal enveloping algebra.

\begin{examplel}\label{exm12}

Let $M$ be a (finite dimensional, paracompact, Hausdorff) smooth manifold and let $E$ be a  quasi-complete Hausdorff locally convex topological vector space over $\mathbb C$. Then the space $C^{\infty}(M, E)$  of $E$-valued smooth functions on $M$ is a quasi-complete Hausdorff locally convex topological vector space over $\mathbb C$, under the usual smooth topology. The space 
\[
 C^{\infty}_{c}(M, E)=\varinjlim_{\Omega\textrm{ is a compact subset of $M$}} C^{\infty}_\Omega(M, E)
\]
of the compactly supported smooth functions is also a quasi-complete Hausdorff locally convex topological vector space over $\mathbb C$, under the usual inductive topology. Here 
\[
  C^{\infty}_\Omega(M, E):=\{f\in C^{\infty}(M, E)\,:\, \textrm{the support of $f$ is contained in $\Omega$}\}.
\]

Suppose that $M$ carries a smooth action of $G$ from left. Then for every smooth representation $E$ of $G$,  both $C^{\infty}(M, E)$ and $C^{\infty}_{c}(M, E)$ are smooth representations of $G$ under the action
\be\label{actgf}
  (g. f)(x):=g.(f(g^{-1}.x)), \quad g\in G, \, f\in C^{\infty}(M, E) \textrm{ or } C^{\infty}_{c}(M, E),\,x\in M.
\ee
Specifically, if $E$ is merely a quasi-complete Hausdorff locally convex topological vector space over $\mathbb C$, by viewing it as a smooth representation of $G$ with the trivial action, both 
 $C^{\infty}(M, E)$ and $C^{\infty}_{c}(M, E)$ are smooth representations of $G$. 

Likewise, if $M$ carries a smooth action of $G$ from right, then for every smooth representation $E$ of $G$,  both $C^{\infty}(M, E)$ and $C^{\infty}_{c}(M, E)$ are smooth representations of $G$ under the action
\be\label{actgf02}
  (g. f)(x):=g.(f(x.g)), \quad g\in G, \, f\in C^{\infty}(M, E) \textrm{ or } C^{\infty}_{c}(M, E),\,x\in M.
\ee

\end{examplel}

\subsection{Smooth cohomologies and smooth homologies}
We now review the basic theory of smooth cohomologies and smooth homologies, as respectively introduced in  \cite{HM} and \cite{BW}. 

Denote by $\mathcal D\mathrm{mod}_G$ the category of smooth representations of $G$. The morphisms in this category are the $G$-intertwining  continuous linear maps. By using relatively injective resolutions in the category $\mathcal D\mathrm{mod}_G$, 
 Hochschild and Mostow defined in  \cite{HM} a topological vector space $\oH^i(G; V)$ ($i\in \mathbb Z$) for every smooth representation $V$ of $G$, which was  called the smooth cohomology of $V$. 
They  showed that the smooth cohomology agrees  with the usual continuous group cohomology \cite[Theorem 5.1]{HM}. If $G$ has only finitely many connected components, they also showed that the smooth cohomology agrees with the relative Lie algebra cohomology, namely, there is a topological identification (see \cite[Theorem 6.1]{HM})
\be\label{gkh0}
\oH^i(G; V)= \oH^i(\g, K; V),
\ee
where $K$ denotes a maximal compact subgroup of $G$. The reader is referred to \cite[(2.126)]{KV} and \cite[(2.127)]{KV} for the explicit complexes which  respectively compute the relative Lie algebra homology spaces and  the relative Lie algebra cohomology spaces.

By using strong projective resolutions in the category $\mathcal D\mathrm{mod}_G$, 
Blanc and Wigner also defined in \cite{BW} a topological vector space $\oH_i(G; V)$, which was called the smooth homology of $V$.  
The following Theorem plays a key role in the study of smooth homologies.

\begin{thml}
\label{thmA0} 
Let $G$ be a Lie group. Let $E$ be a quasi-complete Hausdorff locally convex topological vector space over $\mathbb C$. Then
\[
\left\{f\in C^{\infty}_{c}(G, E):\int_G f(g)\rd_r g= 0\right\} = \sum_{g\in G}(g-1).C^{\infty}_{c}(G, E),
\]
where $G$ acts on $C^{\infty}_{c}(G, E)$ by the right translations. 
If $G$ is connected, then 
\[
\left\{f\in C^{\infty}_{c}(G, E):\int_G f(g)\rd_r g= 0\right\} = \g.C^{\infty}_{c}(G, E).
\]
\end{thml}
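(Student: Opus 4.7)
Proof plan:

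The inclusion $\supseteq$ in both identities follows from right-invariance of $\rd_r g$: for $g \in G$ and $h \in C^\infty_c(G, E)$, the substitution $g' \mapsto g'g$ yields $\int_G ((g-1).h)(g')\,\rd_r g' = 0$, and differentiating this at $t = 0$ along $g = \exp(tY)$ (permissible by continuity of $\int$ together with the defining limit $Y.h = \lim_{t\to 0}t^{-1}(\exp(tY).h - h)$) gives $\int_G (Y.h)\,\rd_r g' = 0$ for $Y \in \g$.

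For the reverse inclusion, the first step is to reduce to $G$ connected. Since $\mathrm{supp}(f)$ meets only finitely many components $g_1G^0,\ldots,g_NG^0$ (take $g_1 = e$), write $f = \sum_i f_i$ with $\mathrm{supp}(f_i) \subseteq g_iG^0$ and $e_i := \int f_i\,\rd_r g$, so $\sum_i e_i = 0$. Fix $\phi \in C^\infty_c(G^0)$ with $\int \phi = 1$ and put $\phi_i := g_i^{-1}.\phi$ (so $\phi_1 = \phi$); by normality of $G^0$, $\phi_i$ is supported in $g_iG^0$ with $\int \phi_i = 1$. The identity
\[
\sum_{i=1}^N \phi_i \otimes e_i \;=\; \sum_{i=2}^N (g_i^{-1} - 1).(\phi \otimes e_i) \;\in\; \sum_{g \in G}(g-1).C^\infty_c(G, E),
\]
following from $\sum e_i = 0$, together with the fact that each $f_i - \phi_i \otimes e_i$ right-translates by $g_i$ to a compactly supported $E$-valued smooth function on $G^0$ with zero integral (and that $\sum(g-1).C^\infty_c$ is stable under right translation via $g'.((g-1).h) = (\Ad(g')(g) - 1).(g'.h)$), reduces the general statement to the connected case.

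For $G$ connected, the identity
\[
(\exp(Y) - 1).h \;=\; Y.\!\int_0^1 \exp(tY).h\,dt, \qquad Y \in \g_\mathbb R,\ h \in C^\infty_c(G, E)
\]
(in which the integrand is a smooth $C^\infty_c(G, E)$-valued function of $t \in [0,1]$), combined with the product formula $(ab - 1).h = a.((b-1).h) + (a-1).h$, the $\Ad$-equivariance $g.(Y.h) = (\Ad(g)Y).(g.h)$, and the fact that $G^0$ is generated by $\exp(\g_\mathbb R)$, shows $\sum_{g\in G}(g-1).C^\infty_c(G, E) \subseteq \g.C^\infty_c(G, E)$. It therefore suffices to prove $\ker\int \subseteq \sum_{g\in G}(g-1).C^\infty_c(G, E)$ for connected $G$, after which all three spaces will coincide. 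My plan: via a partition of unity subordinate to a finite cover of $\mathrm{supp}(f)$ by exponential charts $\Phi_\alpha(t) = g_\alpha \exp(t_1X_1)\cdots\exp(t_nX_n)$ (for a fixed basis $X_1,\ldots,X_n$ of $\g_\mathbb R$), one subtracts bump-times-scalar corrections so that each local piece has zero integral and support in a single chart (the corrections collapsing into $(g-1)$-terms as in the first reduction, via $\sum$-of-scalars $= \int f = 0$); then the problem reduces to showing that for $\tilde f \in C^\infty_c(\mathbb R^n, E)$ with small support and $\int \tilde f = 0$, $\tilde f$ is a finite sum of translation-differences $\tau_v h - h$ with $h \in C^\infty_c(\mathbb R^n, E)$.

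The main obstacle is this Euclidean statement. By induction on $n$, one slices along $x_n$: write $\tilde f(x', x_n) = [\tilde f(x',x_n) - \psi(x_n) g(x')] + \psi(x_n) g(x')$, where $\psi \in C^\infty_c(\mathbb R)$ has $\int \psi = 1$ and $g(x') := \int \tilde f(x', x_n)\,dx_n$ satisfies $\int g = 0$ in $\mathbb R^{n-1}$ (handled by the inductive hypothesis), while $\tilde f - \psi \otimes g$ has zero $x_n$-integral for each $x'$, reducing the remainder to the one-variable case fiberwise. The one-variable case is delicate: while the naive telescope $h(x) := -\sum_{j \geq 0} f(x + ja)$ satisfies $h(x+a) - h(x) = f(x)$ pointwise, it is compactly supported only under the strictly stronger condition $\sum_{j \in \mathbb Z} f(x + ja) \equiv 0$ (not merely $\int f = 0$). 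To overcome this, I would employ two incommensurate shifts $a_1, a_2$ and construct a decomposition $f = f_1 + f_2$ with each $f_i \in C^\infty_c(\mathbb R, E)$ satisfying the stronger shift-$a_i$ periodization condition; the non-commensurability of the reciprocal lattices $(2\pi/a_i)\mathbb Z$, combined with $\hat f(0) = \int f = 0$, ensures the existence of such a decomposition via a Paley-Wiener-compatible partition-of-unity argument, after which a single telescope handles each $f_i$.
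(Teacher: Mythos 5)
The paper does not actually prove this theorem: it quotes \cite[Theorem~1]{BW} for the first identity and \cite[pp.~264--266]{BW} for the second, and its own treatment of the Schwartz analogue (Theorem~1.4) is cohomological --- Poincar\'e duality for de~Rham and Lie-algebra cohomology plus Dixmier--Malliavin --- not an explicit telescoping. Your easy inclusions, the reduction to $G$ connected, and the inclusion $\sum_{g\in G}(g-1).C^\infty_c\subseteq\g.C^\infty_c$ are all fine. The hard inclusion $\ker\int\subseteq\sum_{g\in G}(g-1).C^\infty_c(G,E)$ for connected $G$, however, has two real gaps. The first is the chart reduction. In $\Phi_\alpha(t)=g_\alpha\exp(t_1X_1)\cdots\exp(t_nX_n)$ only the last-coordinate translation $t_n\mapsto t_n+s$ is a right translation (by $\exp(sX_n)$); for $j<n$, right translation by $\exp(sX_j)$ is a \emph{nonlinear} flow in these coordinates, since $\exp(t_nX_n)\exp(sX_j)\ne\exp(sX_j)\exp(t_nX_n)$ in general. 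Hence writing a local piece as a finite sum of Euclidean translation-differences $\tau_vh-h$ does not exhibit it as an element of $\sum_{g\in G}(g-1).C^\infty_c(G,E)$; the reduction to $\BR^n$ is valid only for abelian $G$. (A chart argument does give the \emph{second} assertion $\ker\int=\g.C^\infty_c$: the pulled-back left-invariant frame is divergence-free for the pulled-back right Haar density $\rho\,dt$, and the compactly supported Poincar\'e lemma writes any $u$ with $\int u\rho\,dt=0$ as $\mathrm{div}_\rho(\sum_j h_j\tilde X_j)=\sum_j\tilde X_j.h_j$. But passing from $\g.C^\infty_c$ to $\sum_{g}(g-1).C^\infty_c$ is then exactly the one-dimensional difficulty again.)

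The second gap is the one-dimensional step itself, which is the crux and is not proved. You correctly note that a single telescope has compact support only under the strong periodization condition $\sum_{j\in\BZ}f(x+ja)\equiv0$, and propose $f=f_1+f_2$ with two incommensurate shifts $a_1,a_2$, each $f_i$ satisfying the strong condition for its own $a_i$. But the ``Paley--Wiener-compatible partition-of-unity argument'' you invoke does not name an available tool. A smooth Fourier-side partition of unity $\chi_1+\chi_2=1$ with $\chi_i$ vanishing on $(2\pi/a_i)\BZ\setminus\{0\}$ produces $\hat f_i:=\chi_i\hat f$ with the right zeros, but then $\hat f_i$ is merely $C^\infty$, not entire of exponential type, so $f_i$ is not compactly supported; and there is no entire-of-exponential-type partition of unity with the needed vanishing. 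Constructing the $f_i$ inside the Paley--Wiener--Schwartz class --- and doing so uniformly in the $\BR^{n-1}$ parameters introduced by your slicing, so that the telescopes remain smooth and compactly supported --- is a genuine division/interpolation problem that must be solved, not asserted. This is precisely the analytic content of the theorem, and it is what the cohomological route of Blanc--Wigner and of the paper (for the Schwartz version) is designed to bypass.
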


Here and henceforth, $\rd_r g$ denotes a right invariant Haar measure on $G$. The Lie algebra $\g$ is identified with the space of left invariant complex vector fields on $G$ so that the action of $\g$ on $C^{\infty}_{c}(G, E)$  agrees with the differential of the $G$-action. 
The first assertion of Theorem \ref{thmA0} is \cite[Theorem 1]{BW}, and the second assertion follows from the argument of \cite[Pages 264--266]{BW}. 

\begin{remark}\label{remark1}
Many results in this article depend on the existence of integrals of vector-valued functions. More precisely, we will use  freely the following result in  \cite[Section 1, No.2, Corollary  of Proposition  5]{Bo}: 
 Let $X$ be a locally compact Hausdorff topological space with a Borel measure $\mu$ on it. Let $E$ be a quasi-complete Hausdorff locally convex topological vector space over 
$\mathbb C$. Then for every compactly supported continuous function $f: X\rightarrow E$, there is a unique  element 
$\int_X f(x)\od\! \mu(x)\in E$ such that 
\[
   \lambda\left(\int_X f(x)\od\! \mu(x)\right)=\int_X (\lambda\circ f)(x)\od\! \mu(x)
\]
for every continuous linear functional $\lambda: E\rightarrow \mathbb C$. 

For example, in \cite{BW}, Blanc and Wigner proved Theorem \ref{thmA0}  with the assumption that $E$ is complete. Using the aforementioned result of the existence of the integrals of vector-valued functions,  their proof obviously extends to the case of quasi-complete spaces. 
\end{remark}

Write 
\be\label{delta}
\delta_G : G\rightarrow  \mathbb C^\times, \quad g\mapsto (\textrm{the absolute value of the determinant of $\Ad_g : \g\rightarrow \g$})
\ee
for the modular character of $G$. Here $\Ad$ indicates the adjoint representation of $G$ on $\g$.

If $G$ has only finitely many connected components, with a maximal compact subgroup $K$, Blanc and Wigner  showed the following  Poincar\'e duality theorem for smooth homologies and smooth cohomologies (see \cite[Theorem 3]{BW}):
\be\label{ohgv}
  \oH_i(G; V)=\oH^{n-i}(G; V\otimes \wedge^n (\g/\k)),
\ee
where $n:=\dim G/K$ and $\k$ is the complexified Lie algebra of $K$. Here the 1-dimensional space $\wedge^n (\g/\k)$ carries a representation of $G$ such that its restriction to $K$ is the adjoint representation, and its restriction to the identity connected component $G^\circ$ of $G$ corresponds to the modular character $\delta_{G^\circ}$. By \eqref{ohgv}, the study of smooth cohomologies is equivalent to the study of smooth homologies. 
Recall that the relative Lie algebra homology and relative Lie algebra cohomology are related by the following 
Poincar\'e duality (see \cite[Corollary 3.6]{KV}):
\be\label{pd0}
\oH_i(\g, K; V)=\oH^{n-i}(\g, K; V\otimes \wedge^n (\g/\k)). 
\ee
Thus, by \eqref{gkh0}, the Poincar\'e duality \eqref{ohgv} is equivalent to 
\be\label{gkh1}
\oH_i(G; V)= \oH_i(\g, K; V).
\ee


\subsection{Smooth \Fre representations of moderate growth}

For applications to the theory of automorphic forms, we are mostly interested in 
smooth \Fre representations with certain growth conditions. In order to formulate the growth conditions precisely, it is  convenient to work in the setting of Nash manifolds and Nash groups.  
The reader is referred  to  \cite{Sh} for the notions of Nash manifolds, Nash maps, Nash submanifolds, affine Nash manifolds, and the related notion of semialgebraic sets. Recall that a Nash group is a Nash manifold which is simultaneously a group such that the group multiplication map and the inversion map are both Nash maps. Nash groups are discussed in \cite{Sh2, Su, FS, Ca}, for examples.  A  group homomorphism between two Nash groups is called a Nash  homomorphism if it is also a Nash map. 

Now suppose that $G$ is an almost linear Nash group, namely, a Nash group which admits a Nash homomorphism $G\rightarrow \GL_k(\mathbb R)$ with finite kernel, for some $k\geq 0$. 
Structures of almost linear Nash groups were studied in detail in \cite{Su}.  A  representation  $V$ of $G$ is said to be of moderate growth, if for every continuous seminorm $|\cdot|_{\mu}$ on $V$, there is a positive Nash function $f$ on $G$ and a continuous seminorm $|\cdot|_{\nu}$ on $V$ such that
\[
|g.v|_{\mu}\leq f(g)|v|_{\nu}, \quad\text{for all}\quad g\in G, v\in V.
\]
It is said to be Fr\'{e}chet, if $V$ is \Fre as a topological vector space. 
Denote by $\mathcal S\mathrm{mod}_G$ the category of smooth \Fre representations of $G$ of moderate growth. This is a full subcategory of 
 $\mathcal D\mathrm{mod}_G$, and is the category of representations which we are mostly concerned with in this article. 
 This category of representations was introduced and  studied in \cite{Fd} by F. du Cloux.

\begin{examplel}\label{exm123}

Let $M$ be a Nash manifold and let $E$ be a complex \Fre space. Then the  space of $E$-valued Schwartz functions on $M$, which is denoted by 
$\CS(M, E)$, is naturally a  complex \Fre space. Moreover,
\[
   \CS(M, E)=\CS(M)\widehat \otimes E \qquad (\textrm{the completed projective tensor product}), 
\]
where $\CS(M):=\CS(M, \mathbb C)$. 
See Section \ref{secsf1} for details. If $M$ carries a left Nash action of $G$ and $E$ is a representation in $\CS\mathrm{mod}_G$, then $\CS(M, E)$ is  a representation in $\CS\mathrm{mod}_G$, under the action given as in \eqref{actgf}. Likewise, if  $M$ carries a right Nash action of $G$ and $E$ is a representation in $\CS\mathrm{mod}_G$, then $\CS(M, E)$ is also a representation in $\CS\mathrm{mod}_G$, under the action given as in \eqref{actgf02}.

\end{examplel}

Similar to Theorem \ref{thmA0}, the following theorem plays a key role in this article.  
\begin{thml}
\label{thmA11} 
Suppose that $G$ is an  almost linear Nash group and  $E$ is a \Fre space. Then 
\[
\left\{f\in \CS(G, E):\int_G f(g)\rd_r g= 0\right\} = \sum_{g\in G}(g-1).\CS (G, E),
\]
where $G$ acts on $\CS(G, E)$ by the right translations. 
If $G$ is connected, then
\[
\left\{f\in \CS(G, E):\int_G f(g)\rd_r g= 0\right\} =\g.\CS (G, E).
\]
\end{thml}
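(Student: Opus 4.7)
The inclusion ``$\supseteq$'' in each assertion is immediate: the substitution $g\mapsto gg_0$ combined with the right-invariance of $\rd_r g$ gives $\int_G((g_0-1).f)(g)\,\rd_r g=0$ for every $g_0\in G$, and differentiating this identity along the one-parameter subgroup $t\mapsto\exp(tY)$ yields $\int_G(Y.f)\,\rd_r g=0$ for every $Y\in\g$, since $\g$ acts via the left-invariant vector fields generating right translations.

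For ``$\subseteq$'' in the non-connected case, I reduce to the connected case. The identity component $G^\circ$ is a normal open subgroup of finite index in $G$. Choose representatives $e=g_0,g_1,\ldots,g_{N-1}$ for $G/G^\circ$ and decompose any $f\in\CS(G,E)$ as $f=\sum_{i}g_i^{-1}.f_i'$ with $f_i'\in\CS(G^\circ,E)$ (extended by zero). Setting $c_i:=\int_{G^\circ}f_i'\,\rd_r g\in E$, the hypothesis forces $\sum_i c_i=0$. Fixing $\phi_0\in\CS(G^\circ)$ with $\int\phi_0=1$, one obtains
\[
f=\sum_i g_i^{-1}.(f_i'-c_i\phi_0)+\sum_{i\ge 1}(g_i^{-1}-1).(c_i\phi_0),
\]
with the second sum already in $\sum_{g\in G}(g-1).\CS(G,E)$ and each $f_i'-c_i\phi_0\in\CS(G^\circ,E)$ having zero integral on $G^\circ$. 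Applying the connected case of the first assertion to write $f_i'-c_i\phi_0=\sum_j(h_j-1).\psi_j$ with $h_j\in G^\circ$, $\psi_j\in\CS(G^\circ,E)$, and using the identity $g_i^{-1}(h_j-1)=(g_i^{-1}h_j-1)-(g_i^{-1}-1)$, finishes the reduction.

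For connected $G$, I focus on the first assertion; the second will then follow automatically, since the inclusion $\sum_{g\in G}(g-1).\CS(G,E)\subseteq \g.\CS(G,E)$ always holds on a connected group (write $g=\exp(Y_1)\cdots\exp(Y_m)$, telescope, and apply $(\exp(Y_i)-1).\psi=Y_i.\int_0^1\exp(tY_i).\psi\,\rd t$), while both spaces are contained in the kernel of integration, forcing all three to coincide. I induct on $\dim G$. The base case $\dim G=1$ treats $G\cong\mathbb R$ and $G\cong S^1$: for $G=\mathbb R$, pick two incommensurable translations $t_1=1$ and $t_2=\sqrt 2$; since the common zero set of $e^{2\pi i t_j\xi}-1$ ($j=1,2$) is $\{\xi=0\}$ and $\hat f(0)=0$, a smooth Fourier-side partition of unity $\alpha_1+\alpha_2=1$ (with $\alpha_j$ vanishing on a neighbourhood of the zero set of the $j$-th factor) produces Schwartz $\hat h_j:=\alpha_j\hat f/(e^{2\pi i t_j\xi}-1)$ and hence $f=(T_{t_1}-1).h_1+(T_{t_2}-1).h_2$; the $S^1$ case reduces to a one-term Fourier computation with a Diophantine angle. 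For the inductive step, the structure theory of connected almost linear Nash groups provides a closed positive-dimensional normal Nash subgroup $H\subsetneq G$ (from the solvable radical, a central one-parameter subgroup of a reductive Levi factor, or a one-parameter subgroup of the unipotent radical), yielding a Nash principal bundle $G\to G/H$ and a topological isomorphism $\CS(G,E)\cong\CS(G/H,\CS(H,E))$ up to a modular twist; apply the inductive hypothesis first to the fibre $H$ (to kill the part with zero fibrewise integrals) and then to the pushforward $\bar f\in\CS(G/H,E)$ (which also has zero total integral).

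The main technical obstacle is preserving Schwartz regularity throughout. A partition-of-unity argument on $G$, which would trivialize the corresponding compactly-supported statement (Theorem~\ref{thmA0}), destroys the rapid decay at infinity. The Nash/tempered structure is therefore essential: Nash principal bundles admit Nash local sections with polynomial-growth bounds, the Fourier-side partition of unity must be chosen with polynomially controlled derivatives so that division by $e^{2\pi i t_j\xi}-1$ (which can be as small as the distance to the zero set) preserves Schwartz seminorms, and the fibrewise decompositions must be made parameter-uniform in the base variable. A secondary subtlety is bookkeeping the modular character $\delta_G$ when identifying $\CS(G,E)$ with vector-valued Schwartz functions on the Nash quotient $G/H$.
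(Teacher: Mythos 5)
Your overall outline of the non-connected reduction and the $\supseteq$ inclusion is fine, and the observation that $(\exp Y-1).\psi = Y.\int_0^1\exp(tY).\psi\,\rd t$ recovers the Lie-algebra version from the group version is sound. However, your treatment of the connected case is not the paper's argument and, more importantly, it has a fatal gap.

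The paper does not induct on dimension at all. It reduces the connected case to a one-line statement, $\dim\oH_0(\g;\CS(G))=1$, which it proves by Poincar\'e duality for Lie algebra (co)homology together with Prelli's theorem identifying the de~Rham cohomology of $G$ with Schwartz coefficients with the de~Rham cohomology with compactly supported coefficients (where Poincar\'e duality is classical). The $E$-valued case is then obtained by the Grothendieck isomorphism $\oH_0(\g;\CS(G))\widehat\otimes E\cong\oH_0(\g;\CS(G,E))$, and the passage from $\g.\CS(G,E)$ to $\sum_g(g-1).\CS(G,E)$ is handled by Dixmier--Malliavin combined with Blanc--Wigner's Theorem~\ref{thmA0}. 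This avoids any structure-theoretic induction.

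The gap in your induction is the choice of $H$. You require a \emph{closed positive-dimensional normal Nash subgroup $H\subsetneq G$}, and the candidates you list (solvable radical, a central one-parameter subgroup of a Levi factor, a one-parameter subgroup of the unipotent radical) do not cover the semisimple case and are not all normal. For $G$ connected and simple, e.g.\ $\SL_2(\BR)$, the only proper normal subgroups are central and finite, so there is no such $H$ and the reduction to lower dimension cannot begin. Even when the unipotent radical is nontrivial, a one-parameter subgroup of it is normal only in very special situations, and $\oZ(L)$ for a Levi factor $L$ is typically not normal in $G$ (think of the $ax+b$ group). Since your later step --- ``apply the inductive hypothesis \ldots to the pushforward $\bar f\in\CS(G/H,E)$'' --- needs $G/H$ to be a group in $\CS\mathrm{mod}$, normality is essential and cannot be dropped. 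As written, the induction never terminates for semisimple $G$.

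There are also smaller problems. In the base case, a partition of unity with each $\alpha_j$ ``vanishing on a neighbourhood of the zero set of the $j$-th factor'' is impossible, because both zero sets contain $\xi=0$ while $\alpha_1+\alpha_2=1$ must hold there; what you actually want is for $\alpha_j$ to avoid the \emph{nonzero} zeros of the $j$-th factor and to cancel the zero at $\xi=0$ against $\hat f(0)=0$. The zero sets $\BZ$ and $\BZ/\sqrt2$ approach each other (with gaps of size $\sim1/|\xi|$), so the needed polynomial control on the derivatives of the $\alpha_j$ and on the lower bound of the denominators is a genuine, unproven estimate relying on the bad approximability of $\sqrt2$ --- and this Diophantine input is mentioned only for the circle. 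Finally, the $1$-dimensional connected case is not exhausted by $\BR$ and $S^1$: the Nash group $\BR^\times_{>0}$ is $1$-dimensional and not Nash isomorphic to $\BR$, and its Schwartz space differs from $\CS(\BR)$ after the non-Nash identification $\log$, so it must be handled separately. All of these could conceivably be repaired, but the failure of the inductive step for simple groups cannot, so this proposal does not establish the theorem as it stands.
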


Here and as in Theorem \ref{thmA0}, $\g$ is identified with the space of left invariant complex vector fields on $G$.

\subsection{Schwartz homologies}

Recall that a homomorphism  $\alpha: V_1\rightarrow V_2$ of representations of $G$ is said to be 
strong if there is a continuous linear map $\beta: V_2\rightarrow V_1$ such that $\alpha\circ \beta\circ \alpha=\alpha$  (see \cite{Ho} and \cite[Section 2]{HM}).

\begin{dfnl}\label{rp}
A representation $P$ in $\CS\mathrm{mod}_G$ is said to be relatively projective if for every surjective strong homomorphism $\alpha: V_1\rightarrow V_2$ and every homomorphism 
$\beta: P\rightarrow V_2$ in $\CS\mathrm{mod}_G$, there exists a homomorphism $\tilde{\beta}:P\rightarrow V_1$ in $\CS\mathrm{mod}_G$ which lifts $\beta$, namely, $\alpha\circ\tilde{\beta}=\beta$.
\end{dfnl}

\begin{examplel}\label{exm1234}
Let the notations and assumptions  be as in Example \ref{exm123}. Suppose that $M$ is a principal left  $G$-Nash bundle, namely, $M$ carries a free  Nash action of $G$ from left with the following property: there is a Nash manifold $G\backslash M$ and a submersive Nash map $M\rightarrow G\backslash M$ whose fibers are the $G$-orbits in $M$. Then $\CS(M, E)$ is a relatively projective representation in $\CS\mathrm{mod}_G$. Likewise, if $M$ is a principal right  $G$-Nash bundle, then $\CS(M, E)$ is also a relatively projective representation in $\CS\mathrm{mod}_G$.
See Proposition \ref{repro}. 
\end{examplel}

Write 
\be\label{defodg}
\oD^\varsigma(G):=\CS(G) \od_r\! g
\ee
for the space of Schwartz densities on $G$. It is an associative algebra under convolutions. Put
\[
  \mathrm I^\varsigma(G):=\left\{ \mu \in \oD^\varsigma(G)\, :\, \int_G 1\od\! \mu(g)=0\right\}.
\]
This is a closed ideal of $\oD^\varsigma(G)$ of codimension 1. Every representation $V$ in $\CS\mathrm{mod}_G$ is a 
$\oD^\varsigma(G)$-module under the action 

\be\label{indact}
  \mu. v:= \int_G g. v\od\! \mu(g), \quad \mu\in \oD^\varsigma(G), \, v\in V. 
\ee

Recall that $K$ is a maximal compact subgroup of $G$. Theorem \ref{thmA11} has the following consequence. 

\begin{thml}
\label{thmA2} 
Suppose that $G$ is an  almost linear Nash group and  let $V$ be a representation in $\CS\mathrm{mod}_G$. Then 
\[
  \mathrm I^\varsigma(G). V=\sum_{g\in G}(g-1).V=\g. V+\sum_{g\in K}(g-1). V.
\]
If $V$ is relatively projective in $\CS\mathrm{mod}_G$, then the above space is closed in $V$. 
\end{thml}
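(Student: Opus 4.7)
The plan is to reduce the three-way equality to Theorem~\ref{thmA11} and its natural variants, combined with the Dixmier--Malliavin factorization $V=\oD^\varsigma(G).V$ for representations in $\CS\mathrm{mod}_G$, which is a result of du Cloux. A preliminary observation is a left-action reformulation of Theorem~\ref{thmA11}: the inversion anti-automorphism $\iota\colon \oD^\varsigma(G)\to\oD^\varsigma(G)$ given by pushforward along $g\mapsto g^{-1}$ is a continuous linear bijection that preserves the augmentation $\mu\mapsto\int_G 1\,\rd\mu$ and satisfies $\iota(\alpha\ast\beta)=\iota(\beta)\ast\iota(\alpha)$ and $\iota(\delta_g)=\delta_{g^{-1}}$. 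Applying $\iota$ to the identity $\mathrm I^\varsigma(G)=\mathrm{span}\{\mu\ast\delta_{g}-\mu\}$ (equivalent to Theorem~\ref{thmA11} with $E=\mathbb{C}$ under $\mu\leftrightarrow\phi\cdot\rd_r g$) yields
\[
\mathrm I^\varsigma(G)=\mathrm{span}\{\delta_g\ast\mu-\mu : g\in G,\ \mu\in\oD^\varsigma(G)\},
\]
and analogously the second (connected) assertion of Theorem~\ref{thmA11} yields
\[
\mathrm I^\varsigma(G^\circ)=\mathrm{span}\{Y\cdot\mu : Y\in\g,\ \mu\in\oD^\varsigma(G^\circ)\},
\]
where $Y\cdot\mu:=\frac{\rd}{\rd t}\big|_{t=0}(\exp(tY)\cdot\mu)$ denotes the infinitesimal left convolution.

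Granted these, the first equality $\mathrm I^\varsigma(G).V=\sum_{g\in G}(g-1).V$ becomes essentially formal. For $\subseteq$, expand $\nu\in\mathrm I^\varsigma(G)$ as $\sum_j(\delta_{g_j}\ast\mu_j-\mu_j)$; since $(\delta_g\ast\mu).v=g.(\mu.v)$, one obtains $\nu.v=\sum_j(g_j-1).(\mu_j.v)\in\sum_g(g-1).V$. For $\supseteq$, Dixmier--Malliavin yields $v=\sum_j\mu_j.w_j$ and hence $(g_0-1).v=\sum_j(\delta_{g_0}\ast\mu_j-\mu_j).w_j\in\mathrm I^\varsigma(G).V$, because each $\delta_{g_0}\ast\mu_j-\mu_j$ has augmentation $\int\mu_j-\int\mu_j=0$.

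The equality with $\g.V+\sum_{k\in K}(k-1).V$ follows by the same scheme, using the structural fact $G=G^\circ K$ (valid for almost linear Nash groups, with $K$ meeting every connected component). The inclusion $\g.V\subseteq\sum_g(g-1).V$ uses Dixmier--Malliavin on $G^\circ$ together with $Y.(\mu.w)=(Y\cdot\mu).w\in\mathrm I^\varsigma(G^\circ).V\subseteq\mathrm I^\varsigma(G).V$, which by the first equality lies in $\sum_g(g-1).V$. Conversely, writing $g_0=g_1 k$ yields $(g_0-1).v=(g_1-1).(k.v)+(k-1).v$, reducing to showing $(g_1-1).w\in\g.V$ for $g_1\in G^\circ$ and $w\in V$; D-M on $G^\circ$ gives $w=\sum\mu_j.w_j$, so $(g_1-1).w=\sum(\delta_{g_1}\ast\mu_j-\mu_j).w_j\in\mathrm I^\varsigma(G^\circ).V$, and the Lie-algebra span description of $\mathrm I^\varsigma(G^\circ)$ gives $\mathrm I^\varsigma(G^\circ).V=\mathrm{span}\{Y.(\mu.v)\}\subseteq\g.V$.

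For the closedness assertion, my plan is to compare $V$ to a concrete projective model. Relative projectivity provides, for any strong surjection from some $P$ of the form $\CS(M,E)$ with $M$ a principal $G$-Nash bundle (as in Example~\ref{exm1234}) onto $V$, a continuous $G$-equivariant section $s\colon V\to P$. In the model $P=\CS(M,E)$, the subspace $\mathrm I^\varsigma(G).P=\sum_g(g-1).P$ is realized as the kernel of the continuous integration-along-fibres map $P\to\CS(G\backslash M,E)$, and is therefore closed in $P$; the section $s$ then transfers this closedness to $V$ by a standard retract argument. I expect this closedness step to be the main obstacle, as it requires exhibiting enough projectives of the principal-bundle type to which $V$ is a retract, and verifying that the integration-over-fibres map cleanly identifies $\mathrm I^\varsigma(G).P$ in a topologically controlled way. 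By contrast, the two equalities are essentially formal consequences of Theorem~\ref{thmA11} and Dixmier--Malliavin once the left-action reformulation has been established.
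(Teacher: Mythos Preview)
Your proposal is correct and follows essentially the same route as the paper: both derive the equalities from Theorem~\ref{thmA11} via the inversion anti-automorphism on $\oD^\varsigma(G)$ combined with Dixmier--Malliavin factorization (the paper first establishes Proposition~\ref{coinva} for $\CS(G,E)_r$ and then descends to general $V$ as a quotient, whereas you apply Dixmier--Malliavin directly to $V$---a harmless reorganization), and both obtain closedness by realizing a relatively projective $V$ as a $G$-equivariant direct summand of some $\CS(G,E)_r$ (Proposition~\ref{rpr}, Theorem~\ref{hausrelpro}). Your worry about the closedness step is unfounded: the principal bundle $M=G$ already suffices, and the integration map $\CS(G,E)_r\to E$ of Theorem~\ref{thmA11} directly exhibits the relevant kernel as closed.
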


In the notation of Theorem \ref{thmA2}, we write 
\[
V_G:=V/ (\mathrm I^\varsigma(G). V)=V/\left(\sum_{g\in G}(g-1).V\right)=V/\left(\g. V+\sum_{g\in K}(g-1). V\right).
\]
By Theorem \ref{thmA2}, this is a \Fre space when $V$ is relatively projective in $\CS\mathrm{mod}_G$. 
For a general representation $V$  in $\CS\mathrm{mod}_G$, we take a strong projective resolution 
\be\label{prors}
\cdots \rightarrow P_2\rightarrow P_1\rightarrow P_0\rightarrow V\rightarrow 0
\ee
of $V$, namely, all $P_i$'s are relatively projective in $\CS\mathrm{mod}_G$,  all the arrows are strong homomorphisms, and  the above sequence is exact.  
Define the $i$th ($i\in \mathbb Z$) Schwartz homology $\oH^\CS_i(G; V)$ of $V$ to be the $i$th homology of the complex 
 \[
   \cdots \rightarrow (P_2)_G\rightarrow (P_1)_G\rightarrow (P_0)_G\rightarrow 0\rightarrow 0\rightarrow \cdots.
 \]
 Then $\oH^\CS_i(G; V)$ is a locally convex topological vector space which may or may not be Hausdorff. It is independent of the choice of the resolution \eqref{prors}. See Section \ref{sh} for details. 
 For $i=0$,  there is a topological linear identification (see Proposition \ref{0homo})
 \[
  \oH^\CS_0(G; V)=V_G. 
 \]
 
 In fact, Schwartz homologies agree with smooth homologies, as in the following theorem. 
 
 \begin{thml}\label{hosho}
Let $G$ be an almost linear Nash group, and let $V$ be a representation in $\CS\mathrm{mod}_G$.  Then there is an identification 
\[
\oH_{i}^\CS(G; V)=\oH_{i}(G; V)
\]
of topological vector spaces, for all $i\in \mathbb Z$.
\end{thml}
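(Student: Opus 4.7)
The plan is to exhibit a single complex that simultaneously computes both $\oH^\CS_i(G;V)$ and $\oH_i(G;V)$. Let $P_\bullet \to V$ be the Schwartz bar resolution, with $P_i := \CS(G^{i+1})\widehat\otimes V \cong \CS(G, \CS(G^i, V))$, where $G$ acts by left translation in the leftmost factor. By Example \ref{exm1234} each $P_i$ is relatively projective in $\CS\mathrm{mod}_G$, and translation-averaging against a Schwartz density of total mass one furnishes a strong (non-equivariant) contracting homotopy, so $P_\bullet \to V$ is a strong projective resolution in $\CS\mathrm{mod}_G$. By construction $\oH^\CS_i(G;V)$ is the homology of $(P_\bullet)_G$, and since in degree zero $V_G=\oH_0(G;V)=\oH^\CS_0(G;V)$ agree tautologically, it suffices to show that each $P_i$ is acyclic for the smooth homology functor in positive degrees.

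The central step is therefore to prove $\oH_j(G;\CS(G,E))=0$ for every $j \geq 1$ and every Fr\'echet space $E$. Since almost linear Nash groups have only finitely many connected components (being finite extensions of subgroups of some $\GL_k(\mathbb R)$), the Blanc--Wigner identification \eqref{gkh1} reduces this to the vanishing of the relative Lie algebra homology $\oH_j(\g,K;\CS(G,E))$, computed by the Koszul complex $(\wedge^\bullet(\g/\k)\otimes \CS(G,E))^K$. Theorem \ref{thmA11} supplies the zeroth-degree statement $\g\cdot\CS(G,E)+\sum_{g\in K}(g-1)\cdot\CS(G,E)=\ker\int_G$, identifying the coinvariants with $E$. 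One then builds, degree by degree, a continuous $K$-equivariant contracting homotopy on the Koszul complex using the translation action of $\g$ on $\CS(G,E)$ together with averaging over the compact group $K$; the required exactness at each degree is obtained from an integration-by-parts argument in the spirit of Theorem \ref{thmA11}, relying on the vector-valued integrals of Remark \ref{remark1}.

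With this acyclicity in hand, the standard comparison theorem of relative homological algebra yields the topological identification: $P_\bullet$ is now also a resolution by $\oH_\bullet(G;-)$-acyclic objects, so its coinvariants complex computes both derived functors and the canonical chain maps induce the isomorphisms $\oH^\CS_i(G;V)\cong\oH_i(G;V)$ for every $i\in\mathbb Z$. The main obstacle is the iterated Koszul contracting homotopy: although Theorem \ref{thmA11} cleanly supplies the base case, upgrading it to a $K$-equivariant, Fr\'echet-continuous chain contraction in higher degrees demands a careful interplay between the $\g$-action by left-invariant vector fields, the $K$-averaging, and the Schwartz growth conditions of the vector-valued function space. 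An alternative is to dualize via the Poincar\'e duality \eqref{ohgv} and argue cohomologically through the Hochschild--Mostow machinery \eqref{gkh0}, recasting the vanishing as $\oH^{n-j}(G;\CS(G,E)\otimes\wedge^n(\g/\k))=0$ and exploiting injective resolutions in \cite{HM} in place of the Koszul construction.
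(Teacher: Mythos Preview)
Your overall strategy---find a single resolution that computes both homologies---is sound, but the decisive step is left unjustified. You claim that $\oH_j(\g,K;\CS(G,E))=0$ for $j\geq 1$ and propose to prove it by upgrading Theorem~\ref{thmA11} to a continuous $K$-equivariant contracting homotopy of the relative Koszul complex via an ``integration-by-parts argument.'' But Theorem~\ref{thmA11} is purely a degree-zero statement (it identifies the coinvariants), and no mechanism is given for producing the higher-degree homotopy operators. Indeed, the absolute Lie algebra homology $\oH_j(\g;\CS(G))$ is typically nonzero (it computes the singular homology of $G$ via the Schwartz de~Rham theorem and Poincar\'e duality), so any contracting homotopy must genuinely exploit the passage to $\g/\k$ and the $K$-coinvariants; a naive lift of the degree-zero argument will not suffice. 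You acknowledge this as the ``main obstacle,'' but as written the proposal does not overcome it, and the alternative route via \eqref{ohgv} and \cite{HM} that you mention at the end is only a reformulation, not a proof.

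The paper bypasses this obstacle entirely by choosing a better resolution. Rather than the bar complex, it uses the \emph{finite-length} Schwartz de~Rham complex on $X=G/K$ (Proposition~\ref{stracy}), whose terms are Schwartz inductions $\ind_K^G(\wedge^k(\g/\k)^*\otimes\text{twist})$. Tensoring with $V$ gives a strong projective resolution of $V$ in $\CS\mathrm{mod}_G$ (Propositions~\ref{invrel} and~\ref{rproj}), and after taking $G$-coinvariants, Frobenius reciprocity (Theorem~\ref{fro222}) identifies the resulting complex \emph{term by term} with the standard relative Lie algebra complex $(\wedge^\bullet(\g/\k)\otimes V)_K$. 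This yields $\oH^\CS_i(G;V)=\oH_i(\g,K;V)$ directly, without ever needing to establish acyclicity of $\CS(G,E)$; the identification with $\oH_i(G;V)$ then follows from the Blanc--Wigner result~\eqref{gkh1}. The key insight is that the de~Rham resolution, being induced from $K$, has coinvariants that are computable by Frobenius reciprocity---a structural feature the bar resolution lacks.
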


\begin{remark}
In view of the identification \eqref{gkh1}, Theorem \ref{hosho} is equivalent to say that 
\[
\oH_{i}^\CS(G; V)=\oH_{i}(\g,K; V).
\]
In particular, if $G$ is exponential in the sense that $G$ has no nontrivial compact subgroup, then $\oH_{i}^\CS(G; V)=\oH_{i}(\g; V)$  (the Lie algebra homology).
\end{remark}

For applications to representation theory, it is important to show that  $\oH_{i}^\CS(G; V)$ is Hausdorff, at least  in some cases we are interested in. This is true when the homology space is finite dimensional, as claimed in the following proposition (see \cite[Proposition 6]{CW} and \cite[Lemma 3.4]{BoW}). 

\begin{prpl}\label{intrhaus}
Let $G$ be an almost linear Nash group, and let $V$ be a representation in $\CS\mathrm{mod}_G$.  If $\oH_{i}^\CS(G; V)$ is finite dimensional ($i\in \BZ$), then it is Hausdorff.  
\end{prpl}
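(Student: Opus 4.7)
My plan is to reduce Hausdorffness of $\oH_i^{\CS}(G;V)$ to closedness of a single subspace in a Fr\'echet space and then extract that closedness from finite-dimensionality via the open mapping theorem. I would fix a strong projective resolution $\cdots\to P_2\to P_1\to P_0\to V\to 0$, write $\partial_j:(P_j)_G\to(P_{j-1})_G$ for the induced boundary maps, and invoke Theorem \ref{thmA2} to conclude that each $(P_j)_G$ is Fr\'echet (since each $P_j$ is relatively projective). Consequently $A:=\ker(\partial_i)$ is closed in $(P_i)_G$ and hence itself Fr\'echet, while $B:=\partial_{i+1}((P_{i+1})_G)$ is a (possibly non-closed) linear subspace of $A$; since $\oH_i^{\CS}(G;V)=A/B$ carries the quotient topology, the whole problem reduces to showing that $B$ is closed in $A$. (For $i<0$ the complex vanishes and the claim is trivial.)

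For the main step I would choose any vector-space complement $F$ of $B$ in $A$; by hypothesis on $A/B$, it is finite-dimensional. The key device is then the continuous linear surjection
\[
\phi:(P_{i+1})_G\oplus F\longrightarrow A,\qquad (p,f)\mapsto \partial_{i+1}(p)+f,
\]
whose source is Fr\'echet (as $F$ is finite-dimensional) and whose target is Fr\'echet. The open mapping theorem therefore forces $\phi$ to be a topological quotient map. Its kernel is exactly $\ker(\partial_{i+1})\oplus\{0\}$, since the relation $\partial_{i+1}(p)+f=0$ compels $f\in F\cap B=\{0\}$ and then $p\in\ker(\partial_{i+1})$. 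Passing to quotients yields a topological isomorphism
\[
A\;\cong\;\bigl((P_{i+1})_G/\ker(\partial_{i+1})\bigr)\oplus F,
\]
under which $B$ corresponds to the first (closed) summand. Hence $B$ is closed in $A$, and $\oH_i^{\CS}(G;V)=A/B$ is Hausdorff (indeed topologically isomorphic to $F$).

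The main obstacle is really the recognition step: the qualitative hypothesis ``$\oH_i^{\CS}(G;V)$ is finite-dimensional'' becomes the concrete conclusion ``$B$ is closed in $A$'' only once $(P_{i+1})_G$ is augmented by a finite-dimensional complement of $B$ and the open mapping theorem is applied to the enlarged domain. The residual verification—that the quotient of $(P_{i+1})_G\oplus F$ by $\ker(\partial_{i+1})\oplus\{0\}$ canonically splits off the $F$-factor topologically—is routine, as is the Fr\'echetness of $A$ and of the source of $\phi$.
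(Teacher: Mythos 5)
Your argument is correct. The paper gives no proof of its own for this proposition---it simply points to \cite[Proposition 6]{CW} and \cite[Lemma 3.4]{BoW}, which record the general functional-analytic fact that if a complex of Fr\'echet spaces has finite-dimensional homology at some degree, then that homology is Hausdorff. Your proposal supplies a self-contained proof of precisely that fact via the standard ``adjoin a finite-dimensional complement and apply the open mapping theorem'' device: since each $P_j$ is relatively projective, Theorem~\ref{thmA2} (or equivalently Theorem~\ref{hausrelpro}) makes each $(P_j)_G$ Fr\'echet, hence $A=\ker\partial_i$ is Fr\'echet; choosing a finite-dimensional algebraic complement $F$ of $B=\partial_{i+1}((P_{i+1})_G)$ in $A$, the surjection $\phi:(P_{i+1})_G\oplus F\to A$ is a continuous linear surjection between Fr\'echet spaces, so it is open, and its kernel $\ker\partial_{i+1}\oplus\{0\}$ splits off, giving $A\cong\bigl((P_{i+1})_G/\ker\partial_{i+1}\bigr)\oplus F$ topologically with $B$ equal to the first (closed) summand. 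The only point worth making explicit, though it is routine, is that $F$ carries the unique Hausdorff finite-dimensional vector topology inherited from $A$, so that $(P_{i+1})_G\oplus F$ is indeed Fr\'echet and the open mapping theorem applies. In short, your route is not a different one from the paper's: it unpacks the references the paper delegates to, and the resulting proof is the expected one.
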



Let $H$ be a Nash subgroup of $G$, and let $V_0$ be a representation in $\CS\mathrm{mod}_H$. Let $H$ act  on  
$\oD^\varsigma(G)$ by right translations. Then $ \oD^\varsigma(G)\widehat \otimes V_0$ with the diagonal $H$-action is a relatively projective representation in $\CS\mathrm{mod}_H$ (see Proposition \ref{repro}).  
Define the Schwartz produced representation
\be\label{produced}
   \mathrm{pro}_H^G V_0:= ( \oD^\varsigma(G)\widehat \otimes V_0)_H,
\ee
which is a representation in $\CS\mathrm{mod}_G$. Here $G$ acts on  $\mathrm{pro}_H^G V_0$ through the left translations on $\oD^\varsigma(G)$. The Schwartz  produced representation \eqref{produced} is isomorphic to a certain Schwartz indued representation as defined by du Cloux in \cite{Fd}.  See Proposition \ref{pro=ind}.

In many situations in representation theory of Lie groups, one is interested in Schwartz functions and Schwartz inductions instead of compactly supported smooth functions and compactly supported smooth inductions. For smooth homologies of compactly supported smooth inductions, Frobenius reciprocity and Shapiro's lemma were established in \cite[Theorem 11]{Bl}. However, in order to prove  Frobenius reciprocity and Shapiro's lemma for Schwartz produced representations, it is more natural to work in the setting of Schwartz homologies. This is the reason why we  introduce Schwartz homologies, although they agree with  smooth homologies by Theorem \ref{hosho}.

Precisely, for Schwartz  produced representations, we have the following version of Frobenius reciprocity. 

\begin{thml}\label{fro}
Let $H$ be a Nash subgroup of an almost linear Nash group  $G$, and let $V_0$ be a representation in $\CS\mathrm{mod}_H$. Then the continuous linear map
\be\label{110}
  \oD^\varsigma(G)\widehat \otimes V_0\rightarrow V_0, \quad \mu\otimes v\mapsto \int_G 1 \od\! \mu(g) \cdot v
\ee
induces an identification 
\[
  (\mathrm{pro}_H^G V_0)_G=(V_0)_H
\]
of topological vector spaces. 
\end{thml}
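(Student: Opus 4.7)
The plan is to construct mutually inverse continuous linear maps between $(\mathrm{pro}_H^G V_0)_G$ and $(V_0)_H$. Write $W:=\oD^\varsigma(G)\widehat\otimes V_0$, equipped with its commuting left $G$-action (left translation on the first factor) and diagonal $H$-action (right translation on $\oD^\varsigma(G)$ combined with the given action on $V_0$), so $(\mathrm{pro}_H^G V_0)_G = (W_H)_G$. Denoting by $\phi_0\colon W\to V_0$ the map in \eqref{110}, I observe that both left and right translations on $\oD^\varsigma(G)$ preserve the total integral of a Schwartz density, so $\phi_0$ is $G$-invariant and $H$-equivariant; composing with $V_0\twoheadrightarrow(V_0)_H$ therefore yields, by the universal property of coinvariants, a continuous linear map
\[
\bar\phi\colon(\mathrm{pro}_H^G V_0)_G\longrightarrow(V_0)_H.
\]

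For the reverse, fix any $\mu_0\in\oD^\varsigma(G)$ with $\int_G 1\,\od\!\mu_0(g)=1$ and set $\psi_0(v):=\mu_0\otimes v$. To see that $V_0\xrightarrow{\psi_0}W\twoheadrightarrow(\mathrm{pro}_H^G V_0)_G$ factors through $(V_0)_H$, I use, for $h\in H$ and $v\in V_0$, the decomposition
\[
\mu_0\otimes(hv)-\mu_0\otimes v = \bigl(h\cdot(\mu_0\otimes v)-\mu_0\otimes v\bigr) + (\mu_0-h\mu_0)\otimes(hv).
\]
The first summand lies in $\mathrm I^\varsigma(H)\cdot W$ and dies in $W_H$. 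For the second, $\mu_0-h\mu_0\in\mathrm I^\varsigma(G)$ since right translation preserves total integrals, and the correction vanishes in $(W_H)_G$ by the following \emph{transfer claim}: for every $\nu\in\mathrm I^\varsigma(G)$ and $v\in V_0$, $\nu\otimes v\in\mathrm I^\varsigma(G)\cdot W$. Granting the claim, $\psi_0$ descends to a continuous $\bar\psi\colon(V_0)_H\to(\mathrm{pro}_H^G V_0)_G$.

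I would prove the transfer claim in two steps. First, apply Theorem \ref{thmA2} to the smooth \Fre representation $\oD^\varsigma(G)$ under left translation (relatively projective by Example \ref{exm1234}, since $G$ with its left-multiplication action is a principal left $G$-Nash bundle over a point): this gives $\mathrm I^\varsigma(G)\cdot\oD^\varsigma(G)=\sum_{g\in G}(g-1)\oD^\varsigma(G)$ as a closed subspace of $\oD^\varsigma(G)$. Using an approximate identity $\{e_n\}$ for the Fréchet convolution algebra $\oD^\varsigma(G)$, every $\nu\in\mathrm I^\varsigma(G)$ is the limit of $\nu*e_n\in\mathrm I^\varsigma(G)*\oD^\varsigma(G)=\mathrm I^\varsigma(G)\cdot\oD^\varsigma(G)$; by closedness, $\mathrm I^\varsigma(G)\cdot\oD^\varsigma(G)=\mathrm I^\varsigma(G)$. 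Second, the continuous $G$-equivariant map $\oD^\varsigma(G)\to W$, $\nu\mapsto\nu\otimes v$, sends $\mathrm I^\varsigma(G)\cdot\oD^\varsigma(G)$ into $\mathrm I^\varsigma(G)\cdot W$, which is itself closed in $W$ (by Theorem \ref{thmA2} and the relative projectivity of $W$, again via Example \ref{exm1234}). This yields the transfer claim.

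Finally, $\bar\phi\circ\bar\psi=\mathrm{id}_{(V_0)_H}$ follows from $\phi_0(\mu_0\otimes v)=v$, and $\bar\psi\circ\bar\phi=\mathrm{id}$ follows from applying the transfer claim to $\bigl(\int_G 1\,\od\!\mu(g)\bigr)\mu_0-\mu\in\mathrm I^\varsigma(G)$, which represents the difference between $[\mu\otimes v]$ and $\bigl(\int_G 1\,\od\!\mu(g)\bigr)\cdot[\mu_0\otimes v]$ in $(\mathrm{pro}_H^G V_0)_G$. The main obstacle is the transfer claim, and especially the identification $\mathrm I^\varsigma(G)\cdot\oD^\varsigma(G)=\mathrm I^\varsigma(G)$: the inclusion $\mathrm I^\varsigma(G)\cdot\oD^\varsigma(G)\subseteq\mathrm I^\varsigma(G)$ is immediate from the codim-one ideal structure, but the reverse requires the approximate-identity trick together with the closedness afforded by Theorem \ref{thmA2} and the relative projectivity of $\oD^\varsigma(G)$ and $W$ supplied by Example \ref{exm1234}.
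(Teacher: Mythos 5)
Your proof is correct, but it takes a considerably longer route than the paper's two-line argument, and the extra machinery you build (the approximate-identity argument, the identity $\mathrm I^\varsigma(G)\cdot\oD^\varsigma(G)=\mathrm I^\varsigma(G)$) duplicates work that is already done elsewhere. The paper's proof rests on two observations you do not exploit. First, the left $G$-action and diagonal right $H$-action on $W:=\oD^\varsigma(G)\widehat\otimes V_0$ commute, so taking coinvariants commutes: $(W_H)_G$ and $(W_G)_H$ are both canonically the quotient of $W$ by $\mathrm I^\varsigma(G).W+\mathrm I^\varsigma(H).W$, hence $(\mathrm{pro}_H^GV_0)_G=(W_H)_G=(W_G)_H$. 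Second, Theorem \ref{thmA11} applied to $W\cong\CS(G,V_0)$ (with the $G$-action by left translation, which is legitimate after $g\mapsto g^{-1}$) says exactly that the kernel of $\phi_0=I\otimes\mathrm{id}_{V_0}$, namely $\mathrm I^\varsigma(G)\widehat\otimes V_0$, equals $\sum_{g\in G}(g-1).W$; this gives the topological identification $W_G=V_0$ directly, and then $(W_G)_H=(V_0)_H$ is immediate. In particular, your ``transfer claim'' $\nu\otimes v\in\mathrm I^\varsigma(G).W$ for $\nu\in\mathrm I^\varsigma(G)$ is nothing but the inclusion $\mathrm I^\varsigma(G)\widehat\otimes V_0\subset\sum_{g\in G}(g-1).W$, i.e.\ one half of Theorem \ref{thmA11} for $W$; invoking it there renders the approximate-identity step and the reduction to $\oD^\varsigma(G)$ unnecessary. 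What your explicit construction of $\bar\phi$ and $\bar\psi$ buys is an elementary and very concrete description of the isomorphism, which can be useful when one wants to track explicit classes; what it costs is that the topological comparison (that $\bar\phi$, $\bar\psi$ are open, or equivalently that the subspaces quotiented out coincide) is hidden inside the closedness and density arguments, whereas the commuting-coinvariants identity makes this immediate.
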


As a corollary of Theorem \ref{fro}, we get the following version of Shapiro's lemma. 

\begin{thml}\label{fro2}
Let $H$ be a Nash subgroup of an almost linear Nash group  $G$, and let $V_0$ be a representation in $\CS\mathrm{mod}_H$.  Then there is an identification 
\[
\oH_{i}^\CS(G;  \mathrm{pro}_H^G V_0)=\oH_{i}^\CS(H; V_0)
\]
of topological vector spaces, for all $i\in \mathbb Z$.
\end{thml}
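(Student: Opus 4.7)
The plan is to deduce Shapiro's lemma from Theorem~\ref{fro} by the classical homological-algebra device of applying $\mathrm{pro}_H^G$ termwise to a strong projective resolution of $V_0$ in $\CS\mathrm{mod}_H$, and then invoking Frobenius reciprocity to identify the resulting chain complexes of coinvariants.

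Concretely, I would fix a strong projective resolution $P_\bullet\to V_0\to 0$ in $\CS\mathrm{mod}_H$ of standard bar-type form with $P_n=\oD^\varsigma(H^{n+1})\widehat\otimes V_0$, each $P_n$ being relatively projective by Example~\ref{exm1234}; by definition $\oH_i^\CS(H;V_0)$ is the $i$th homology of $(P_\bullet)_H$. Applying $\mathrm{pro}_H^G$ termwise produces a complex $\mathrm{pro}_H^G P_\bullet\to\mathrm{pro}_H^G V_0\to 0$ in $\CS\mathrm{mod}_G$. The key claim is that this is again a strong projective resolution. Granting this, $\oH_i^\CS(G;\mathrm{pro}_H^G V_0)$ is the $i$th homology of $(\mathrm{pro}_H^G P_\bullet)_G$, and Theorem~\ref{fro} applied termwise yields topological isomorphisms $(\mathrm{pro}_H^G P_n)_G\cong (P_n)_H$. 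Naturality of the map~\eqref{110} in the $H$-representation assembles these into an isomorphism of chain complexes of topological vector spaces, whose $i$th homology gives the claimed topological identification.

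The heart of the proof is therefore the key claim, which splits into two subpoints: (i) each $\mathrm{pro}_H^G P_n$ is relatively projective in $\CS\mathrm{mod}_G$, and (ii) the augmented complex $\mathrm{pro}_H^G P_\bullet\to\mathrm{pro}_H^G V_0\to 0$ is strongly exact. For (i), a direct computation from~\eqref{produced} identifies $\mathrm{pro}_H^G P_n$ with a space of the form $\oD^\varsigma(G)\widehat\otimes F_n$ for a Fr\'echet space $F_n$ (an ``induction in stages'' type computation), and such spaces are relatively projective by Example~\ref{exm1234} applied with $M=G$. For (ii), continuous linear sections of the differentials in $P_\bullet$, which exist by strong exactness, must propagate through the tensor product with $\oD^\varsigma(G)$ and the subsequent passage to $H$-coinvariants: the former is compatible because $\widehat\otimes$ preserves continuous linear retracts of Fr\'echet spaces, while the latter requires knowing that the relevant coinvariant subspaces are closed, a fact supplied by Theorem~\ref{thmA2}. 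I expect (ii) --- the preservation of strong exactness after Schwartz induction --- to be the main obstacle, since constructing continuous linear sections compatible with the $H$-coinvariant quotient is precisely where the Hausdorffness from Theorem~\ref{thmA2} and the Fr\'echet-category properties of $\widehat\otimes$ must be carefully combined.
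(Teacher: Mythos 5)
Your overall strategy---apply $\mathrm{pro}_H^G$ termwise to a strong projective resolution of $V_0$ and then invoke Frobenius reciprocity to identify the resulting complexes of coinvariants---is exactly the paper's. Where you diverge is in trying to re-derive the ``key claim'' from scratch rather than invoking the paper's machinery: the paper settles both of your subpoints (i) and (ii) at once by citing Proposition \ref{pro=ind} (that $\mathrm{pro}_H^G$ equals $\ind_H^G$ up to character twists), Proposition \ref{exact} (du Cloux's theorem that $\ind_H^G$ is exact and maps strong homomorphisms to strong homomorphisms), and Proposition \ref{rproj} (that $\ind_H^G$ preserves relative projectivity, itself a consequence of induction in stages, Proposition \ref{exact22}, and the characterization of relatively projective modules in Proposition \ref{rpr}). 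Your sketch of (i) is essentially the induction-in-stages computation underlying Proposition \ref{rproj}, so that part is sound.

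Your sketch of (ii), however, has a genuine gap. You propose that the continuous linear sections of the differentials in $P_\bullet$ ``propagate'' through $\oD^\varsigma(G)\widehat\otimes(-)$ and then to the $H$-coinvariants, citing the closedness from Theorem~\ref{thmA2}. Closedness gives Hausdorffness of the quotients, so they are Fr\'echet and the open mapping theorem applies---but that does \emph{not} furnish a continuous linear section of the induced map between coinvariant quotients. The section you start with is a continuous linear map that need not be $H$-equivariant (strong homomorphisms only demand a \emph{linear} retraction), so it does not descend to the coinvariants; and a surjective continuous linear map of Fr\'echet spaces need not split (already for Banach spaces: $\ell^\infty\to\ell^\infty/c_0$ has no continuous linear section). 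So ``Hausdorffness $+$ Fr\'echet $\widehat\otimes$ properties'' is not enough to manufacture the required sections. The statement you identify as the main obstacle---that $\ind_H^G$ preserves strong homomorphisms---is precisely Proposition \ref{exact}, which the paper quotes from du Cloux and whose proof runs through the local triviality of $G\to G/H$ and explicit cutoff functions as in Lemma \ref{chi}, not through descent of sections to coinvariants. You should cite it rather than attempt to re-prove it by the route you sketched.
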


\subsection{Automatic extensions of Schwartz homologies}

Our original motivation to introduce Schwartz homologies was the applications to the calculations of invariant distributions. Precisely, let $M$ be a Nash manifold, and let $\mathsf E$ be  a  tempered vector bundle over $M$, as defined in  Section \ref{sectvb}. For example, all Nash vector bundles, as studied in \cite[Section 3.4]{AG1},
 are tempered vector bundles.  The \Fre space  $\Gamma^{\varsigma}(M,\mathsf E)$ of the Schwartz sections is defined in Section \ref{secsch}. 

Now suppose that $M$ is a left $G$-Nash manifold, namely, it carries a  left Nash action $G\times M\rightarrow M$. Also suppose that $\mathsf E$ is a tempered left $G$-vector bundle, namely, it carries a  tempered bundle action $G\times \mathsf E\rightarrow \mathsf E$. Then 
$\Gamma^{\varsigma}(M,\mathsf E)$ is naturally a representation of $G$ in $\CS\mathrm{mod}_G$. See Section \ref{sectvb00} for details.

For every $z\in M$, let $G_z$ denote its stabilizer in $G$, and let $\mathsf E_z$ denote the fibre of $\mathsf E$ at $z$, which is a representation in $\CS\mathrm{mod}_{G_z}$. 
Write 
\[
  \mathrm{N}_{z}:=\frac{\mathrm T_z (M)}{\mathrm T_z(G.z)}\otimes_{\mathbb R} \mathbb C\qquad (\mathrm T_z\textrm{ stands for the tangent space})
\]
for the complexified  normal space, and write 
\[
\mathrm{N}_{z}^*:=\textrm{the dual space of $\mathrm{N}_{z}$},
\]
which is the complexified  conormal space. They are both representation in $\CS\mathrm{mod}_{G_z}$.  Write
\[
\delta_{G/G_{z}}:=(\delta_G)|_{G_z} \cdot \delta_{G_z}^{-1} : G_z\rightarrow \mathbb C^\times.
\]
It is a positive Nash homomorphism. 

Let $\chi: G\rightarrow \mathbb C^\times$ be a character which has moderate growth in the sense that $\abs{\chi}$ is bounded above by a positive Nash function on $G$. When no confusion is possible, we do not distinguish a 1-dimensional representation of a Lie group  with its corresponding character. In particular, $\chi$ is also viewed as a 1-dimensional representation in $\CS\mathrm{mod}_G$.

\begin{thml}\label{fro3}
Let the notation be as above.  
Let $U$ be a $G$-stable open Nash  submanifold of $M$ such that  $M\setminus U$ has only finitely many $G$-orbits. Assume that  
\[
\oH_i^\CS(G_{z}; \mathsf E_{z}\otimes\Sym^{k}( \mathrm{N}^*_{z})\otimes\delta_{G/G_{z}}\otimes\chi)=0
\]
for all $z\in M\setminus U$, $i\in \mathbb Z$ and  $k\geq 0$, where $\Sym^{k}$ indicates the $k$th symmetric power.  Then 
the  extension by zero  homomorphism 
\[
\Gamma^{\varsigma}(U,\mathsf E|_U)\hookrightarrow\Gamma^{\varsigma}(M,\mathsf E)
\]
induces a  topological linear isomorphism  
\be\label{auto}
 \oH^\CS_{i}(G; \Gamma^{\varsigma}(U,\mathsf E|_U)\otimes\chi)\rightarrow\oH^\CS_{i}(G; \Gamma^{\varsigma}(M,\mathsf E)\otimes\chi), \quad (i\in \mathbb Z). 
\ee
\end{thml}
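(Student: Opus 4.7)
The plan is to reduce the statement, via devissage over the orbits in $M\setminus U$, to the case where this complement is a single closed $G$-orbit $Z$, and then to attack the quotient $Q:=\Gamma^{\varsigma}(M,\mathsf E)/\Gamma^{\varsigma}(U,\mathsf E|_U)$ using a Taylor-type filtration along $Z$ together with Shapiro's lemma (Theorem~\ref{fro2}). For the devissage I would induct on the number of orbits in $M\setminus U$: since $M\setminus U$ is closed in $M$ with only finitely many orbits, a minimal such orbit $Z$ is closed in $M$; setting $U':=M\setminus Z$, the complement $U'\setminus U\subset M\setminus U$ has strictly fewer orbits and inherits the vanishing hypothesis, so the inductive hypothesis gives the isomorphism \eqref{auto} for $U\subset U'$, and composition reduces matters to $M\setminus U=Z$.

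In this base case the extension-by-zero map sits in a short exact sequence $0\to \Gamma^{\varsigma}(U,\mathsf E|_U)\to\Gamma^{\varsigma}(M,\mathsf E)\to Q\to 0$ in $\CS\mathrm{mod}_G$. Tensoring with $\chi$ preserves exactness and applying a strong projective resolution termwise yields the usual long exact sequence in Schwartz homology, so it suffices to prove $\oH_i^\CS(G;Q\otimes\chi)=0$ for every $i$. Using a $G$-equivariant Nash tubular neighbourhood of $Z\cong G/G_z$ in $M$, one realizes $Q$ as the space of Schwartz jets along $Z$, equipped with the decreasing $G$-stable Taylor filtration $Q=Q^{(0)}\supset Q^{(1)}\supset \cdots$ satisfying $\bigcap_{k}Q^{(k)}=\{0\}$ and
\[
 Q^{(k)}/Q^{(k+1)}\;\cong\;\Gamma^{\varsigma}\bigl(Z,\;\Sym^{k}(\mathrm N^{*}_Z)\otimes\mathsf E|_Z\bigr),
\]
where $\mathrm N^{*}_Z$ is the tempered conormal $G$-bundle of $Z$ in $M$. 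Since $Z=G/G_z$ is homogeneous, the standard comparison of Schwartz sections with Schwartz densities produces an identification $\Gamma^{\varsigma}(Z,\mathsf F)\cong\mathrm{pro}_{G_z}^{G}(\mathsf F_z\otimes\delta_{G/G_z})$ for any tempered $G$-bundle $\mathsf F$ on $Z$; combining this with Shapiro's lemma gives
\[
 \oH_i^\CS\bigl(G;(Q^{(k)}/Q^{(k+1)})\otimes\chi\bigr)=\oH_i^\CS\bigl(G_z;\mathsf E_z\otimes\Sym^{k}(\mathrm N^{*}_z)\otimes\delta_{G/G_z}\otimes\chi\bigr),
\]
which vanishes for all $i,k$ by hypothesis. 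A finite induction on $k$ via the short exact sequences $0\to Q^{(k+1)}/Q^{(N)}\to Q^{(k)}/Q^{(N)}\to Q^{(k)}/Q^{(k+1)}\to 0$ and their associated long exact sequences then yields $\oH_i^\CS(G;(Q/Q^{(N)})\otimes\chi)=0$ for every finite $N$.

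The main obstacle will be the passage from the finite quotients $Q/Q^{(N)}$ back to $Q$ itself. A Borel-type surjection theorem in the Nash/Schwartz setting identifies $Q$ with the countable inverse limit $\varprojlim_{N} Q/Q^{(N)}$, whose transition maps are surjective but for which Schwartz homology is not a priori continuous. I plan to address this by constructing, compatibly for all $N$, a surjective inverse system of strong projective resolutions of the $Q/Q^{(N)}$ (for example via a bar-type construction using Schwartz densities on $G$) so that the chain complexes computing $\oH_\bullet^\CS(G;(Q/Q^{(N)})\otimes\chi)$ form a surjective inverse system of Fr\'echet complexes with vanishing cohomology; vanishing of $\varprojlim{}^{1}$ for surjective inverse systems of Fr\'echet spaces, together with the fact that $\varprojlim$ commutes with the relevant tensor products and coinvariant functors in our setting, then transfers vanishing to $Q$. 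Verifying the equivariance of the tubular neighbourhood (possibly only after shrinking) and the temperedness of the conormal bundle $\mathrm N^{*}_Z$ are further technical points, but this limit argument is where the principal difficulty lies.
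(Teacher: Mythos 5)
Your outline reproduces the paper's proof (Section 7) in all its essentials: reduce to the case that $M\setminus U$ is a single closed $G$-orbit $Z$, replace the extension-by-zero map by the vanishing of $\oH^\CS_i(G;\Gamma^\varsigma_Z(M,\mathsf E)\otimes\chi)$ via the long exact sequence, filter $\Gamma^\varsigma_Z(M,\mathsf E)$ by order of vanishing along $Z$, compute the graded pieces through Proposition \ref{geometry}, Proposition \ref{pro=ind} and Shapiro's lemma, and finally pass through the countable inverse limit by a Mittag--Leffler argument (Lemma \ref{commu}). One small simplification over your plan: at the limit step the paper does not construct a compatible family of projective resolutions of the $Q/Q^{(N)}$; it invokes Theorem \ref{BBB} to replace Schwartz homology by the explicit $(\g,K)$-chain complex $(\wedge^\bullet(\g/\k)\otimes -)_K$, which is functorial and automatically yields a surjective inverse system of \Fre complexes, and the finite-dimensionality input of Lemma \ref{commu} is trivially met since the quotient homologies already vanish.

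Two places where your write-up needs to be fixed. First, the $G$-equivariant Nash tubular neighbourhood you invoke is neither obviously available for non-compact $G$ nor needed: the paper defines the filtration $\Gamma^\varsigma_Z(M,\mathsf E)_k$ intrinsically via $k$-vanishing at points of $Z$ (manifestly $G$-stable since the $G$-action on $\mathsf E$ is by tempered bundle maps), and Propositions \ref{cha} and \ref{cha1} give both $\Gamma^\varsigma_Z(M,\mathsf E)\cong\varprojlim_k\Gamma^\varsigma_Z(M,\mathsf E)/\Gamma^\varsigma_Z(M,\mathsf E)_k$ and the identification of the graded pieces, with no tubular neighbourhood in sight. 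Second, vanishing of $\oH^\CS_i(G;Q\otimes\chi)$ and the long exact sequence only give an abstract linear bijection in \eqref{auto}; since Schwartz homology spaces carry topologies that need not be Hausdorff, an additional open-mapping argument at the chain-complex level (Lemma \ref{open} in the paper) is required to upgrade it to the topological linear isomorphism the theorem asserts. Your proposal does not address this step.
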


\begin{remark}
Applying the isomorphism \eqref{auto} for $i=0$, we get an automatic extension result of invariant distributions, namely, a linear isomorphism
\[
   \Hom_G(\Gamma^{\varsigma}(M,\mathsf E), \chi^{-1})\cong   \Hom_G(\Gamma^{\varsigma}(U,\mathsf E|_U), \chi^{-1}).
\]
 \end{remark}
 
 Theorem \ref{fro3} has the following consequence for finite rank vector bundles. 

\begin{thml}\label{fro4}
Let the notation be as above. Let $U$ be a $G$-stable open Nash  submanifold of $M$ such that  $M\setminus U$ has only finitely many $G$-orbits.  Assume that  
all the fibres of $\mathsf E$ are finite dimensional, and for all $z\in  M\setminus U$ and $k\geq 0$, the trivial representation of $G_z$ does not occur as a subquotient of 
\[
\mathsf E_{z}\otimes\Sym^{k}( \mathrm{N}^*_{z})\otimes\delta_{G/G_{z}}\otimes\chi. 
\]
Then the  extension by zero  homomorphism 
\[
\Gamma^{\varsigma}(U,\mathsf E|_U)\hookrightarrow\Gamma^{\varsigma}(M,\mathsf E)
\]
induces a topological linear isomorphism  
\be
 \oH^\CS_{i}(G; \Gamma^{\varsigma}(U,\mathsf E|_U)\otimes\chi)\rightarrow\oH^\CS_{i}(G; \Gamma^{\varsigma}(M,\mathsf E)\otimes\chi), \quad (i\in \mathbb Z). 
\ee
\end{thml}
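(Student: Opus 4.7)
The strategy is to deduce Theorem \ref{fro4} from Theorem \ref{fro3} by checking the vanishing hypothesis of the latter. Fix $z \in M \setminus U$, $i \in \BZ$, and $k \ge 0$, and set
\[
W := \mathsf E_z \otimes \Sym^k(\mathrm N^*_z) \otimes \delta_{G/G_z} \otimes \chi.
\]
Under the hypotheses of Theorem \ref{fro4}, $\mathsf E_z$ is finite-dimensional, $\mathrm N_z$ is finite-dimensional (being a quotient of $\mathrm T_z M$), and $\delta_{G/G_z}$ and $\chi$ are one-dimensional, so $W$ is a finite-dimensional object of $\CS\mathrm{mod}_{G_z}$. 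The standing hypothesis is that the trivial representation of $G_z$ does not occur as a subquotient of $W$.

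The task then reduces to the following auxiliary statement: \emph{for every almost linear Nash group $H$ and every finite-dimensional $V \in \CS\mathrm{mod}_H$ whose composition series contains no trivial factor, $\oH_i^\CS(H; V) = 0$ for every $i$}. To establish this, I would fix a Jordan--Hölder filtration $0 = V_0 \subset V_1 \subset \cdots \subset V_n = V$; every successive quotient $\sigma_j := V_j / V_{j-1}$ is finite-dimensional, irreducible, and, by hypothesis, non-trivial. Short exact sequences of finite-dimensional Fréchet $H$-representations are automatically strict (topologically split), so each inclusion $V_{j-1} \hookrightarrow V_j \twoheadrightarrow \sigma_j$ is a strong short exact sequence in $\CS\mathrm{mod}_H$ and hence yields a long exact sequence in Schwartz homology. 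Iterating, the vanishing of $\oH^\CS_*(H; V)$ reduces to that of $\oH_*^\CS(H; \sigma)$ for each irreducible, finite-dimensional, non-trivial representation $\sigma$ of $H$.

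For that last step, I would use Theorem \ref{hosho} to identify $\oH^\CS_*(H; \sigma)$ with the relative Lie algebra homology $\oH_*(\mathfrak{h}, K_H; \sigma)$, and then invoke Wigner's lemma: by Schur, the center $Z(\oU(\mathfrak h))$ acts on the finite-dimensional irreducible $\sigma$ by a character $\chi_\sigma$ which, because $\sigma$ is not the trivial representation, differs from the infinitesimal character of the trivial module; the standard scalar-action argument on the relative Lie algebra complex then forces $\oH_*(\mathfrak h, K_H; \sigma) = 0$. The main expected obstacle is making this invocation work uniformly in the full generality of almost linear Nash groups. When $H$ is disconnected or its identity component is not reductive, the Wigner argument must be combined with the Hochschild--Serre spectral sequence relative to the unipotent radical of $H^\circ$ and with averaging over the finite group $H/H^\circ$ of connected components; arranging this reduction cleanly so that the ``no trivial subquotient'' hypothesis on the original $W$ passes through the successive subquotients produced by those tools is the technical heart of the argument.
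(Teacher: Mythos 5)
Your argument is correct and is essentially the paper's: the paper derives Theorem~\ref{fro4} in one line from Theorem~\ref{fro3} together with Proposition~\ref{vani} (vanishing of $\oH^\CS_*$ for nontrivial irreducible finite-dimensional moderate-growth representations), and your proposal simply makes explicit the Jordan--H\"older filtration and long exact sequence step implicit in that derivation and then reconstructs the sketched proof of Proposition~\ref{vani} via Theorem~\ref{hosho}/\ref{BBB}, Wigner's lemma, and a spectral-sequence reduction for the non-reductive and disconnected cases. The paper likewise leaves that last reduction as an exercise (``We leave details to the interested reader''), so you have correctly identified where the remaining technical work lies.
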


In the $p$-adic case, a result similar to Theorem \ref{fro4} was established in \cite[Theorem 1.4]{HS}. We give two  examples to illustrate the usefulness of Theorem \ref{fro4}. 
The first one comes from Tate's thesis. 

\begin{examplel}\label{tate}
The \Fre space $\CS(\BR)$ is a representation in $\CS\mathrm{mod}_{\BR^\times}$ with the action 
\[
  (t.f)(x)=f(xt), \quad t\in \BR^\times, \, f\in \CS(\BR), \, x\in \BR. 
\]
Suppose $\chi$ is a character of $\BR^\times$ which does not have the form
\be\label{chialg}
  t\mapsto t^{-k}, \quad k\geq 0.
\ee
Then by applying  Theorem \ref{fro4} to the trivial bundle $\BR\times \BC$ over $\BR$, we know that the obvious embedding
\[
  \CS(\BR^\times)\hookrightarrow \CS(\BR)
\]
induces a topological linear isomorphism  
\[
 \oH^\CS_{i}(\BR^\times; \CS(\BR^\times)\otimes\chi)\rightarrow\oH^\CS_{i}(\BR^\times; \CS(\BR)\otimes\chi), \quad (i\in \mathbb Z). 
\]
Using Theorem \ref{fro2}, we get a topological linear isomorphism
\be\label{htate}
  \oH^\CS_{i}(\BR^\times; \CS(\BR)\otimes\chi)\cong\left\{ 
                     \begin{array}{ll}
                        \BC, & \textrm{if $i=0$};\\
                        \{0\}, &  \textrm{if $i\neq 0$}.
                        \end{array}
                        \right.
\ee

\end{examplel}

\begin{remark}
If fact, \eqref{htate} holds even when $\chi$ has the form \eqref{chialg}. We leave the proof to the interested reader. 
\end{remark}

The second example is about Whittaker models.  

\begin{examplel}\label{whittaker}
Suppose that $G$ is the real points of a quasi-split connected reductive linear algebraic group defined over $\BR$. Let $B$ be a Borel subgroup of $G$ whose unipotent radical is denoted by $N$. 
Let $\psi: N\rightarrow \BC^\times$ be a unitary character which is non-degenerate in the sense that
\[
  \psi|_{N\cap B'}\neq 1\quad \textrm{for every Borel subgroup $B'$ of $G$ which is not opposite to $B$}.
\]
Let $J$ be a smooth principal series representation of $G$. We claim that there is  a topological linear isomorphism
\be\label{htate2}
  \oH^\CS_{i}(N; J\otimes\psi)\cong\left\{ 
                     \begin{array}{ll}
                        \BC, & \textrm{if $i=0$};\\
                        \{0\}, &  \textrm{if $i\neq 0$}.
                        \end{array}
                        \right.
\ee
This particularly implies the uniqueness of the Whittaker models. See also  \cite[Theorems 6.2 and 9.1]{CHM}. 

In fact, suppose that $M$ is the set of Borel subgroups of $G$, which is naturally a left $G$-Nash manifold. Then $J=\Gamma^{\varsigma}(M,\mathsf E)$, for a certain  tempered left $G$-vector bundle $\mathsf E$ of rank one over $M$. Suppose that $U$ is the open $N$-orbit in $M$. Fix a base point of $U$ and an $N$-equivariant  trivialization of $\mathsf E|_U$. Then
\[
    \Gamma^{\varsigma}(U,\mathsf E)=\CS(N).
\]
Now \eqref{htate2} follows from Theorems \ref{fro4} and \ref{fro2}. This example shows that, at least for the study of Whittaker models, it is more natural to use Schwartz inductions and Schwartz homologies instead of compactly supported smooth inductions and smooth homologies. 
\end{examplel}

\subsection{Structure of this article}
We will introduce some preliminaries on several function spaces in Section \ref{pfunction}. These include Schwartz functions on Nash manifolds with values in \Fre spaces, linear families of moderate growth and tempered linear families. In Section \ref{prep}, we show that the action map of every representation in the category $\CS\mathrm{mod}_G$ gives a tempered linear family. Then we will prove the first main result of this article in Section \ref{firs}. In Section \ref{sh}, we introduce the notions of relatively projective representations, strong projective resolutions and Schwartz homologies of representations of an almost linear Nash group. Along the way, we will give a proof of Theorem \ref{thmA2} in Section \ref{co}. 

In Section \ref{sectvb00}, we define tempered vector bundles over Nash manifolds and Schwartz sections of tempered vector bundles. We will also recall the Schwartz induced representations introduced by du Cloux and show that it is isomorphic to Schwartz produced representations as defined in \eqref{produced}. In Section \ref{frobe}, we prove the Frobenius reciprocity law, namely, Theorem \ref{fro}.
In Section \ref{secsha}, we establish properties of the Schwartz induction functor and then prove Shapiro's lemma, namely, Theorem \ref{fro2}. Moreover, we show that the Schwartz homologies coincide with the relative Lie algebra homologies (Theorem \ref{BBB}). As an application of these results, we prove the automatic extensions of Schwartz homologies, namely, Theorems \ref{fro3} and \ref{fro4}, in the last section.

\begin{acknowledgements}
B. Sun was supported in part by National Natural Science Foundation of China grants 11525105, 11688101, 11621061, and 11531008.
\end{acknowledgements}

\section{Preliminaries on some function spaces}\label{pfunction}

\subsection{Schwartz functions} \label{secsf1}

Let $M$ be a Nash manifold. Recall that a differential operator $D$ on $M$  is said to be Nash if $D(f)$ is a Nash function on $U$, for every (complex valued) Nash function $f$ on every  open Nash submanifold $U$ of $M$. See \cite[Section 3.5]{AG1} for more details. 

Recall that a Nash manifold is said to be  affine if  it is Nash isomorphic to some closed Nash submanifolds of $\mathbb R^k$ for some $k\geq 0$.  It is known that every open Nash submanifold of every affine Nash manifold  is also affine. See \cite[Proposition III.1.7]{Sh}
and \cite[Section 2.22]{Sh2} for details.

Let $E$ be a complex \Fre space. If $M$ is affine, set 
\begin{eqnarray*}
  \CS(M, E)&:=& \{ f\in C^{\infty}(M, E)\, : \, \sup_{x\in M}\abs{(Df)(x)}_\nu <\infty \textrm{ for all Nash differential  }  \\
  & &\left. \  \textrm{ operators $D$ on $M$ and all continuous seminorms $\abs{\,\cdot\,}_\nu$ on $E$ }\right \}.
\end{eqnarray*}
Then  $\CS(M, E)$ is a complex \Fre space with the obvious topology. In general, take a finite covering $\{M_i\}_{i=1}^k$ ($k\geq 0$) of $M$ by affine open Nash submanifolds. Then by extension by zero, we get a continuous linear map
\[
  \bigoplus_{i=1}^k \CS(M_i, E)\rightarrow C^{\infty}(M, E). 
\]
We define $\CS(M, E)$ to be the image of this map, equipped with the quotient topology of the domain. Then $\CS(M, E)$ is a \Fre space which is independent of the covering $\{M_i\}_{i=1}^k$ (see \cite[Proposition 5.1.2]{AG1}). This is called the space of $E$-valued Schwartz functions on $M$. 

For simplicity, we write $\CS(M):=\CS(M, \BC)$. This is a nuclear \Fre space. An easy argument of functional analysis shows that (see  \cite[Proposition 1.2.6]{Fd})
\[
  \CS(M, E)=\CS(M)\widehat \otimes E\qquad(\textrm{the completed projective tensor product}).
\]
We refer the reader to \cite[Section 3]{Ta} for more details about topological tensor products.

\subsection{Linear families of moderate growth}

When $M$ is affine, a  function $f: M\rightarrow \BC$ is said to be of  moderate growth if $ \abs{f}$ is bounded above by a positive Nash function on $M$. Here $f$ may or may not be continuous. 

\begin{lem}\label{mgr}
Suppose that the Nash manifold $M$ is affine. Let $\{M_i\}_{i=1}^k $ ($k\geq 0$) be a finite covering of $M$ by open Nash submanifolds. 
Then a function $f: M\rightarrow \BC$ is of moderate growth if and only if $f|_{M_i}$ is so for every $1\leq i\leq k$. 
\end{lem}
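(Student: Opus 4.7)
The ``only if'' direction is immediate: if $|f|\leq g$ for some positive Nash function $g$ on $M$, then $g|_{M_i}$ is a positive Nash function on $M_i$ (open Nash submanifolds of affine Nash manifolds are affine) and bounds $|f|_{M_i}|$. So the plan is to focus on the converse.

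For the ``if'' direction, I would assume positive Nash functions $g_i$ on $M_i$ with $|f|_{M_i}|\leq g_i$, and build a pointwise candidate bound
\[
G(x) := \min\bigl\{g_i(x) : 1\leq i\leq k,\ x\in M_i\bigr\},\qquad x\in M.
\]
Since every $x\in M$ lies in at least one $M_i$ and each $g_i$ is finite, $G$ is everywhere finite, and $|f|\leq G$ by construction. The graph of $G$ is a finite Boolean combination of graphs of Nash functions on semi-algebraic sets, hence semi-algebraic.

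The key local property of $G$ I would verify is local boundedness: given $x_0\in M$, pick any index $i$ with $x_0\in M_i$; since $M_i$ is open and $g_i$ is continuous, there is a neighborhood $U$ of $x_0$ in $M$ with $U\subset M_i$ on which $g_i$ is bounded, and on $U$ we have $G\leq g_i$. (Note that $G$ itself need not be continuous, since the index set $\{i:x\in M_i\}$ may jump, so it is really local boundedness, not continuity, that one uses.)

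It then remains to dominate the everywhere-finite, locally bounded semi-algebraic function $G$ by a positive Nash function on $M$. Since $M$ is affine, I embed $M$ as a closed Nash submanifold of some $\BR^N$, so that $\{x\in M:\|x\|\leq R\}$ is compact for every $R\geq 0$; by local boundedness, $\phi(R):=\sup\{G(x):x\in M,\ \|x\|\leq R\}$ is finite. By Tarski--Seidenberg, $\phi:\BR_{\geq 0}\to\BR$ is semi-algebraic, and any everywhere-finite semi-algebraic function on $\BR_{\geq 0}$ is bounded above by $c(1+R^2)^d$ for suitable $c,d>0$. Hence $|f(x)|\leq G(x)\leq c(1+\|x\|^2)^d$ on $M$, and the right-hand side is the restriction to $M$ of a positive polynomial on $\BR^N$, which is a positive Nash function on $M$. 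The main obstacle is the last step, the passage from locally bounded semi-algebraic to Nash-bounded, which relies on the embedding of $M$ as a \emph{closed} Nash submanifold of $\BR^N$ together with the classical polynomial growth of semi-algebraic functions; everything else is formal.
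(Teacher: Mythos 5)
Your proof is correct, and it differs from the paper in an interesting way: the paper's ``proof'' is just a citation to Theorems 4.5.1 and 4.5.2 of Aizenbud--Gourevitch \cite{AG1} (which are actually statements about \emph{tempered} functions in the sense of smooth functions all of whose Nash derivatives have moderate growth, so the citation is somewhat indirect for the present statement about an arbitrary, possibly discontinuous, function of moderate growth). Your gluing step --- take $G(x) = \min\{g_i(x) : x\in M_i\}$ --- is in fact exactly the construction the paper itself uses one lemma later (in the proof of Lemma 2.3), so the key idea is the same; what you add is a self-contained argument for the final domination step that the paper delegates to the reference: that a semi-algebraic, locally bounded function on an affine Nash manifold is dominated by a positive Nash function, via a closed embedding $M\hookrightarrow\BR^N$, the radial supremum $\phi(R)=\sup\{G(x): x\in M,\ \|x\|\le R\}$, and the classical polynomial growth bound for semi-algebraic functions of one variable. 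This makes your proof more elementary and self-contained.

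Two small points worth tightening: (i) the assertion ``any everywhere-finite semi-algebraic function on $\BR_{\ge0}$ is bounded by $c(1+R^2)^d$'' is false without some form of local boundedness (e.g.\ a semi-algebraic function blowing up at an interior point), but your $\phi$ is automatically monotone non-decreasing, hence locally bounded since it is finite, so the conclusion holds; it would be cleaner to invoke monotonicity explicitly rather than local boundedness of $G$. (ii) Your use of the min over the varying index set $\{i: x\in M_i\}$, rather than a max, is essential and deserves the emphasis you give it: it is precisely what guarantees the local boundedness of $G$ near boundary points of the $M_i$ that lie in $M$.
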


\begin{proof}
See \cite[Theorems 4.5.1 and 4.5.2]{AG1}. 
\end{proof}

Let $E_1$ and $E_2$ be two \Fre spaces. A map $\phi: M\times E_1\rightarrow E_2$ is called a liner family if the map $\phi(x, \,\cdot\,): E_1\rightarrow E_2$ is linear for all $x\in M$. 
Generalizing the previous notion of moderate growth, we introduce the following definition.

\begin{dfnl}\label{defmgr}
Suppose that the Nash manifold $M$ is affine. A linear family $\phi: M\times E_1\rightarrow E_2$ is said to be of  moderate growth if for 
every continuous seminorm $\abs{\,\cdot\,}_2$ on $E_2$, there is a positive Nash function $f$ on $M$ and  a continuous seminorm $\abs{\,\cdot\,}_1$ on $E_1$ such that
\[
  \abs{\phi(x,u)}_2\leq f(x) \abs{u}_1\quad \textrm{for all } x\in M, u\in E_1. 
\]

\end{dfnl}

\begin{lem}\label{mgr2}
Suppose that the Nash manifold $M$ is affine. Let $\{M_i\}_{i=1}^k $ ($k\geq 0$) be a finite covering of $M$ by open Nash submanifolds. 
Then a linear family  $\phi: M\times E_1\rightarrow E_2$ is of moderate growth if and only if $\phi|_{M_i\times E_1}$ is so for every $1\leq i\leq k$. 
\end{lem}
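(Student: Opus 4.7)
The plan is to reduce the vector-valued statement to the scalar case of Lemma~\ref{mgr} by passing to a pointwise ``operator norm''.

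The forward direction is immediate: if $\phi$ is controlled on $M$ by a positive Nash function $f$ and a continuous seminorm $\abs{\,\cdot\,}_1$ on $E_1$, then $f|_{M_i}$ (still Nash on $M_i$) and the same $\abs{\,\cdot\,}_1$ witness moderate growth of $\phi|_{M_i\times E_1}$.

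For the converse, fix a continuous seminorm $\abs{\,\cdot\,}_2$ on $E_2$. By hypothesis, for each $1\leq i\leq k$ there exist a positive Nash function $f_i$ on $M_i$ and a continuous seminorm $\abs{\,\cdot\,}_{1,i}$ on $E_1$ with
\[
\abs{\phi(x,u)}_2\leq f_i(x)\,\abs{u}_{1,i}, \qquad x\in M_i,\ u\in E_1.
\]
Since the family $\{\abs{\,\cdot\,}_{1,i}\}_{i=1}^k$ is finite, the pointwise maximum $\abs{\,\cdot\,}_1:=\max_{1\leq i\leq k}\abs{\,\cdot\,}_{1,i}$ is again a continuous seminorm on $E_1$, and the inequalities above still hold with $\abs{\,\cdot\,}_1$ in place of each $\abs{\,\cdot\,}_{1,i}$.

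Now define
\[
F(x):=\sup_{u\in E_1,\,\abs{u}_1\leq 1}\abs{\phi(x,u)}_2, \qquad x\in M.
\]
For each $x\in M$, picking any $i$ with $x\in M_i$ gives $F(x)\leq f_i(x)<\infty$, so $F$ is a well-defined $[0,\infty)$-valued function on $M$ whose restriction to each $M_i$ is pointwise bounded by the positive Nash function $f_i$. Lemma~\ref{mgr} then produces a positive Nash function $f$ on $M$ with $F\leq f$ everywhere. Unwinding the definition of $F$ yields $\abs{\phi(x,u)}_2\leq f(x)\,\abs{u}_1$ for all $x\in M$ and $u\in E_1$, which is precisely the moderate growth condition for $\phi$.

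There is no real obstacle here: the argument is a routine promotion of the scalar lemma. The one point requiring a moment of care is that the auxiliary function $F$ is legitimate input to Lemma~\ref{mgr}: Definition~\ref{defmgr} imposes no regularity on the scalar function, so the pointwise bound by a positive Nash function on each $M_i$ is exactly what is needed. The combination of seminorms uses only finiteness of the cover, not any particular feature of the Fr\'echet structure on $E_1$.
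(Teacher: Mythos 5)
Your proof is correct and follows essentially the same strategy as the paper's: combine the finitely many seminorms into a single one, produce a scalar auxiliary function on $M$ controlling $\phi$ whose restriction to each $M_i$ is dominated by the Nash function $f_i$, and invoke Lemma~\ref{mgr} to upgrade to a global positive Nash bound. The only cosmetic differences are that you take the pointwise maximum of the seminorms and the operator seminorm $F(x)=\sup_{\abs{u}_1\le 1}\abs{\phi(x,u)}_2$, whereas the paper takes the sum of the seminorms and the pointwise minimum $f(x)=\min_{j\,:\,x\in M_j}f_j(x)$; both choices yield the same reduction to the scalar lemma.
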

\begin{proof}
The only if part of the lemma is obvious. To prove the if part, assume that  $\phi|_{M_i\times E_1}$ is  of moderate growth for every $1\leq i\leq k$.
Let $\abs{\,\cdot\,}_2$ be a continuous seminorm on $E_2$.
Then there is a positive Nash function $f_i$ on $M_i$ and  a continuous seminorm $\abs{\,\cdot\,}_{1,i}$ on $E_1$ such that
\[
  \abs{\phi(x,u)}_2\leq f_i(x) \abs{u}_{1,i}\quad \textrm{for all } x\in M_i, u\in E_1. 
\]
For each $x\in M$, define 
\[
  f(x):=\min_{1\leq j\leq k, x\in M_j} f_j(x). 
\]
Then Lemma \ref{mgr} implies that the function $f$ on $M$ is of moderate growth. Define a continuous seminorm $\abs{\, \cdot\,}_1$ on $E_1$ by
\[
\abs{u}_{1}:=\sum_{i=1}^k \abs{u}_{1,i}, \quad u\in E_1. 
\]
Then 
\[
  \abs{\phi(x,u)}_2\leq f(x) \abs{u}_{1}\quad \textrm{for all } x\in M, u\in E_1. 
\]
This proves the lemma. 
\end{proof}
In general when $M$ may or may not be affine, we make the following definition.

\begin{dfnl}\label{defmgr2}
A linear family $\phi: M\times E_1\rightarrow E_2$ is said to be of  moderate growth if there is a finite covering  $\{M_i\}_{i=1}^k $ ($k\geq 0$) of $M$ by affine open Nash submanifolds such that 
 $\phi|_{M_i\times E_1}$ is of moderate growth for all $1\leq i\leq k$. 
\end{dfnl}

By Lemma \ref{mgr2}, Definition \ref{defmgr2} agrees with Definition \ref{defmgr} when $M$ is affine. Moreover, Lemma \ref{mgr2} remains true if we allow $M$ to be affine or not. 

Let $E_3$ be another \Fre space. The following lemma is easy to check. 

\begin{lem}\label{mgrcom}
Let $\phi_1: M\times E_1\rightarrow E_2$  and $\phi_2: M\times E_2\rightarrow E_3$  be linear families of moderate growth. Then the linear family
\[
   \phi_3 : M\times E_1\rightarrow E_3, \quad (x,u)\mapsto \phi_2(x, \phi_1(x, u))
\]
is of moderate growth. 
\end{lem}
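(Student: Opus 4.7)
The plan is to reduce to the affine case, then chain the two moderate-growth bounds in the natural order.

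First, I would apply Definition \ref{defmgr2} to pick finite coverings $\{U_i\}_{i=1}^p$ and $\{V_j\}_{j=1}^q$ of $M$ by affine open Nash submanifolds such that $\phi_1|_{U_i \times E_1}$ and $\phi_2|_{V_j \times E_2}$ are of moderate growth in the sense of Definition \ref{defmgr}. Their common refinement $\{U_i \cap V_j\}_{i,j}$ is again a finite covering of $M$ by affine open Nash submanifolds, since every open Nash submanifold of an affine Nash manifold is affine, as recalled in Section \ref{secsf1}. Combined with Lemma \ref{mgr2} applied to $\phi_1$ on each $U_i$ and to $\phi_2$ on each $V_j$, this reduces the problem to the case where $M$ itself is affine and both $\phi_1$ and $\phi_2$ are of moderate growth on all of $M$.

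Next, assume $M$ is affine. Given a continuous seminorm $\abs{\,\cdot\,}_3$ on $E_3$, I would first invoke moderate growth of $\phi_2$ to produce a positive Nash function $g$ on $M$ and a continuous seminorm $\abs{\,\cdot\,}_2$ on $E_2$ such that $\abs{\phi_2(x,w)}_3 \leq g(x)\abs{w}_2$ for all $x \in M$ and $w \in E_2$. Feeding this very seminorm $\abs{\,\cdot\,}_2$ into moderate growth of $\phi_1$ produces a positive Nash function $f$ on $M$ and a continuous seminorm $\abs{\,\cdot\,}_1$ on $E_1$ with $\abs{\phi_1(x,u)}_2 \leq f(x)\abs{u}_1$. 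Composing the two estimates gives $\abs{\phi_3(x,u)}_3 \leq g(x) f(x)\abs{u}_1$, and since $gf$ is again a positive Nash function on $M$, this exhibits $\phi_3$ as being of moderate growth. Applying Lemma \ref{mgr2} once more transports the conclusion back to the general $M$.

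The only real subtlety lies in the reduction step: one must know that the intersection of two affine open Nash submanifolds of $M$ is again an affine open Nash submanifold, which is precisely the fact cited in Section \ref{secsf1}. Beyond that, the argument is a chaining of pointwise inequalities, with no analytic difficulty — no completion, limit, or density argument is needed — so I do not expect any genuine obstacle, consistent with the paper's remark that the lemma is easy to check.
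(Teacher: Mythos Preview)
Your proposal is correct and is exactly the natural argument the paper has in mind; the paper offers no proof beyond declaring the lemma ``easy to check,'' and your reduction to the affine case via a common refinement followed by chaining the two seminorm estimates (with $gf$ again a positive Nash function) is precisely how one checks it.
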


\subsection{Tempered linear families}

We introduce the following definition.

\begin{dfnl}\label{deftem}
Suppose that $M$ is affine. A linear family $\phi: M\times E_1\rightarrow E_2$ is said to be tempered if 
\begin{itemize}
\item 
it is smooth as a map of infinite dimensional manifolds; and
\item
for every Nash differential operator $D$ on $M$, the linear family 
\[
 D\phi: M\times E_1\rightarrow E_2
\]
 is of moderate growth. 
 \end{itemize}
 
\end{dfnl}

The following lemma is an analogue of Lemma \ref{mgr2}. 

\begin{lem}\label{tem2}
Suppose the Nash manifold $M$ is affine. Let $\{M_i\}_{i=1}^k $ ($k\geq 0$) be a finite covering of $M$ by open Nash submanifolds. 
Then a linear family  $\phi: M\times E_1\rightarrow E_2$ is tempered if and only if $\phi|_{M_i\times E_1}$ is so for every $1\leq i\leq k$. 
\end{lem}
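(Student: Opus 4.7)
The plan is to reduce both directions of the equivalence to the already-established Lemma \ref{mgr2} for moderate-growth linear families. First note that, since $M$ is affine and each $M_i$ is open in $M$, each $M_i$ is itself an affine Nash manifold, as recalled in Section \ref{secsf1}. Hence Definition \ref{deftem} applies equally to $\phi$ on $M \times E_1$ and to each restriction $\phi|_{M_i \times E_1}$.

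For the \emph{if} direction, smoothness of $\phi$ on $M \times E_1$ is a local property and follows at once from the smoothness of each restriction. Given a Nash differential operator $D$ on $M$, its coefficients restrict to Nash functions on each $M_i$, so $D|_{M_i}$ is a Nash differential operator on $M_i$ and
\[
  (D\phi)|_{M_i \times E_1} = (D|_{M_i})(\phi|_{M_i \times E_1})
\]
is of moderate growth on $M_i$ by the assumed temperedness of the restriction. Applying Lemma \ref{mgr2} to the linear family $D\phi$ on $M$ then yields that $D\phi$ is of moderate growth on $M$, which completes this direction.

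For the \emph{only-if} direction, smoothness restricts trivially. The main task is to show that for any Nash differential operator $D_i$ on $M_i$, the family $D_i(\phi|_{M_i \times E_1})$ is of moderate growth on $M_i$. Since $M_i$ is affine, I would cover it by finitely many affine open Nash submanifolds $V_1, \ldots, V_m$ on each of which $D_i|_{V_j}$ coincides with the restriction of some Nash differential operator $D_j$ on $M$. On each $V_j$, the family $D_i(\phi|_{M_i \times E_1})|_{V_j \times E_1}$ equals $(D_j\phi)|_{V_j \times E_1}$, which is of moderate growth because $D_j \phi$ is of moderate growth on $M$ by the temperedness of $\phi$, and hence on $V_j$ by Lemma \ref{mgr2}. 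A final application of Lemma \ref{mgr2} on $M_i$ with the cover $\{V_j\}$ gives the desired moderate growth on $M_i$.

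The main obstacle is the structural fact underpinning the only-if direction: every Nash differential operator on an affine open Nash submanifold $V \subset M$ is locally (on a sufficiently small affine open subset) the restriction of a Nash differential operator on $M$. This is seen by shrinking $V$ so that $TM|_V$ is Nash-trivial, writing $D_i|_V$ as a finite sum of products of coordinate vector fields with Nash-function coefficients, and expressing each coordinate vector field as the restriction of a globally defined Nash vector field. Modulo this local-generation fact for Nash differential operators, the entire argument is a routine reduction to Lemma \ref{mgr2}.
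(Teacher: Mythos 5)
The ``if'' direction is fine and matches the paper: the restriction of a Nash differential operator on $M$ to an open Nash submanifold $M_i$ is again Nash, so $(D\phi)|_{M_i\times E_1}=(D|_{M_i})(\phi|_{M_i\times E_1})$ has moderate growth on each piece, and Lemma~\ref{mgr2} glues.

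The ``only-if'' direction, however, rests on a false structural claim. You assert that, after shrinking, a Nash differential operator $D_i$ on $M_i$ is \emph{the restriction of a Nash differential operator on $M$}. This does not follow from the decomposition you describe: you correctly note that $D_i$ can locally be written as $\sum_\alpha a_\alpha\, Y^\alpha$ with the $Y^\alpha$ monomials in globally defined Nash vector fields, but the coefficient functions $a_\alpha$ are Nash functions on the open piece only, and Nash functions on an open Nash submanifold do \emph{not} in general extend to Nash functions on $M$, even locally. Concretely, take $M=\BR$, $M_i=(0,1)$, and $D_i=\tfrac{1}{x}\,\tfrac{d}{dx}$. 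This is a Nash differential operator on $M_i$, yet no restriction of it to a smaller open interval $V\subset(0,1)$ equals the restriction of a Nash differential operator on $\BR$ (any Nash function on $\BR$ agreeing with $1/x$ on an open interval would have to agree on all of $(0,\infty)$ by analyticity, hence cannot be defined at $0$). So the identity $D_i(\phi|_{M_i\times E_1})|_{V_j\times E_1}=(D_j\phi)|_{V_j\times E_1}$, which your argument invokes, has no $D_j$ to back it up.

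The fix is to use your decomposition directly rather than trying to globalize $D_i$. Write $D_i=\sum_\alpha a_\alpha\,(Y^\alpha|_{M_i})$ with $a_\alpha$ Nash on $M_i$ and $Y^\alpha$ monomials in Nash vector fields on $M$ (for $M$ affine in $\BR^n$, the orthogonal projections of $\partial_1,\dots,\partial_n$ to $TM$ generate over the Nash functions, and a Nash vector field on $M_i$ lifts via the orthogonal complement of the kernel). Then
\[
  D_i(\phi|_{M_i\times E_1}) \;=\; \sum_\alpha a_\alpha\cdot (Y^\alpha\phi)|_{M_i\times E_1}.
\]
Each $Y^\alpha\phi$ has moderate growth on all of $M$ by temperedness of $\phi$, hence on $M_i$; and each $a_\alpha$, being a Nash function on $M_i$, is bounded in absolute value by a positive Nash function on $M_i$. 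Combining these estimates gives moderate growth of $D_i(\phi|_{M_i\times E_1})$ on $M_i$. This is the argument the paper is implicitly invoking by its reference to the proof of \cite[Theorem 4.5.1]{AG1}; the crucial point there, as here, is that one never needs the coefficients to extend --- only the generating vector fields.
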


\begin{proof}
The if part of the lemma follows directly from Lemma \ref{mgr2}. The only if part is proved as in the proof of 
\cite[Theorem 4.5.1]{AG1}. 
\end{proof}

Similar to Definition \ref{defmgr2}, we make the following definition when $M$ may or may not be affine.

\begin{dfnl}\label{deftem2}
A linear family $\phi: M\times E_1\rightarrow E_2$ is said to be tempered if there is a finite covering  $\{M_i\}_{i=1}^k $ ($k\geq 0$) of $M$ by affine open Nash submanifolds such that 
 $\phi|_{M_i\times E_1}$ is tempered for all $1\leq i\leq k$. 
\end{dfnl}

By Lemma \ref{tem2}, Definition \ref{deftem2} agrees with Definition \ref{deftem} when $M$ is affine. Moreover, Lemma \ref{tem2} remains true if we allow $M$ to be affine or not. 

Similar to Lemma \ref{mgrcom}, we have the following lemma. 

\begin{lem}\label{temcom}
Let $\phi_1: M\times E_1\rightarrow E_2$  and $\phi_2: M\times E_2\rightarrow E_3$ be tempered linear families. Then the linear family
\[
 \phi_3 :   M\times E_1\rightarrow E_3, \quad (x,u)\mapsto \phi_2(x, \phi_1(x, u))
\]
is also tempered. 
\end{lem}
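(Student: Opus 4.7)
The plan is to prove Lemma \ref{temcom} by first localizing on $M$ and then decomposing a Nash differential operator applied to $\phi_3$ into a finite sum of compositions, at which point Lemma \ref{mgrcom} finishes the job.

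First I would reduce to the case where $M$ is affine. Choose a finite covering $\{M_i\}_{i=1}^k$ of $M$ by affine open Nash submanifolds such that both $\phi_1|_{M_i\times E_1}$ and $\phi_2|_{M_i\times E_2}$ are tempered (such a common covering exists by refining the coverings given by Definition \ref{deftem2}). By the (extended form of) Lemma \ref{tem2}, it suffices to show $\phi_3|_{M_i\times E_1}$ is tempered for each $i$. So from now on assume $M$ is affine.

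Next I would check smoothness. Since $\phi_1 : M\times E_1\to E_2$ and $\phi_2 : M\times E_2\to E_3$ are smooth as maps of infinite dimensional manifolds, the composite
\[
  M\times E_1 \xrightarrow{(\mathrm{id},\phi_1)} M\times E_2 \xrightarrow{\phi_2} E_3
\]
is smooth. So $\phi_3$ is smooth.

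It remains to verify that $D\phi_3$ is of moderate growth for every Nash differential operator $D$ on $M$. The main observation is that, because $\phi_2(x,v)$ is linear in $v$, the partial derivative in the $E_2$-direction is trivially computable: for any $w\in E_2$, $\left.\tfrac{d}{dt}\right|_{t=0}\phi_2(x,v+tw)=\phi_2(x,w)$. Applying a vector field $X$ on $M$ to $\phi_3$ (with $u$ fixed) and invoking the chain rule therefore gives
\[
  (X\phi_3)(x,u) \;=\; (X\phi_2)(x,\phi_1(x,u)) \;+\; \phi_2(x,(X\phi_1)(x,u)),
\]
where $(X\phi_2)(x,v)$ means differentiating $\phi_2$ in the $M$-variable with $v$ held fixed. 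Both summands on the right are compositions of linear families of the form $(D_2\phi_2)(x,(D_1\phi_1)(x,u))$, for Nash differential operators $D_1,D_2$ on $M$. Iterating this identity and using that Nash differential operators are generated (over Nash functions) by Nash vector fields, I get by induction on the order of $D$ a finite expansion
\[
  (D\phi_3)(x,u) \;=\; \sum_{i} (D_{2,i}\phi_2)\bigl(x,\,(D_{1,i}\phi_1)(x,u)\bigr),
\]
with $D_{1,i},D_{2,i}$ Nash differential operators on $M$. By the temperedness hypothesis each $D_{1,i}\phi_1$ and $D_{2,i}\phi_2$ is of moderate growth, so Lemma \ref{mgrcom} makes each summand of moderate growth; a finite sum of moderate-growth linear families is clearly again of moderate growth, which finishes the proof.

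The only real step to watch is the chain-rule/Leibniz expansion of $D\phi_3$: one must record the fact that, thanks to the linearity of $\phi_2$ in its second argument, no $v$-derivatives of $\phi_2$ beyond order one appear, so the resulting sum stays inside the class of compositions to which Lemma \ref{mgrcom} applies. Everything else is bookkeeping.
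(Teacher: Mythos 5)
Your proof is correct and follows the same route as the paper: reduce to affine $M$ via Lemma \ref{tem2}, expand $D\phi_3$ by the Leibniz/chain rule into a finite sum $\sum_i (D'_i\phi_2)(x,(D_i\phi_1)(x,u))$, and conclude with Lemma \ref{mgrcom}. You supply detail the paper leaves implicit --- the explicit check that $\phi_3$ is smooth, and the observation that linearity of $\phi_2$ in its second slot makes the $E_2$-derivative an algebraic substitution, which is precisely why the iterated expansion stays inside the class of single compositions to which Lemma \ref{mgrcom} applies.
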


\begin{proof}
In view of Lemma \ref{tem2}, we assume without loss of generality that $M$ is an open Nash submanifold of $\BR^n$ ($n\geq 0$). Let $D$ be a Nash differential operator on $M$. 
Then by Leibniz rule, there are Nash differential operators $D_1, D_2, \cdots, D_k$ and $D'_1, D'_2, \cdots, D'_k$ ($k\geq 0$) such that 
\[
  (D\phi_3)(x, u)=\sum_{i=1}^k (D'_i\phi_2)(x, (D_i\phi_1)(x,u)),\quad \textrm{for all } x\in M, \, u\in E_1.
\]
Hence the lemma follows from Lemma \ref{mgrcom}. 
\end{proof}

The following lemma generalizes the fact that the pullback of a tempered function through a Nash map is also tempered. 

\begin{lem}\label{temcompullb}
Let $\phi_1: M_1\times E_1\rightarrow E_2$ be a tempered linear family, where $M_1$ is a Nash manifold. Then for every Nash map $f: M\rightarrow M_1$,  the linear family
\[
 \phi:   M\times E_1\rightarrow E_2, \quad (x,u)\mapsto \phi_1(f(x), u)
\]
is also tempered. 
\end{lem}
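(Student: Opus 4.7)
The plan is to verify the two defining properties of a tempered linear family in Definition \ref{deftem2} for $\phi$. Smoothness of $\phi$ as a map of infinite-dimensional manifolds is immediate: every Nash map between Nash manifolds is smooth, so $\phi = \phi_1\circ(f\times \mathrm{id}_{E_1})$ is a composition of smooth maps. The substantive point is that $D\phi$ is of moderate growth for every Nash differential operator $D$ on $M$.

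First I would reduce to the case where both $M$ and $M_1$ are affine. By Definition \ref{deftem2} applied to $\phi_1$, choose a finite covering $\{M'_j\}_{j=1}^\ell$ of $M_1$ by affine open Nash submanifolds such that each $\phi_1|_{M'_j\times E_1}$ is tempered. The preimages $f^{-1}(M'_j)$ form an open Nash cover of $M$, and each of them may be further covered by finitely many affine open Nash submanifolds of $M$. This produces a finite affine open Nash cover $\{N_k\}_{k=1}^t$ of $M$ refining $\{f^{-1}(M'_j)\}$. By Lemma \ref{tem2}, it suffices to show that $\phi|_{N_k\times E_1}$ is tempered for every $k$, and each such restriction is the pullback of the tempered family $\phi_1|_{M'_{j(k)}\times E_1}$ along the Nash map $f|_{N_k}\colon N_k\to M'_{j(k)}$ between affine Nash manifolds. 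So we may assume that $M$ and $M_1$ are both affine and that $\phi_1$ is tempered on the whole of $M_1\times E_1$.

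In this affine situation, let $D$ be a Nash differential operator on $M$. Iterating the chain rule applied to $D\bigl(\phi_1(f(x),u)\bigr)$ yields a finite expansion
\[
(D\phi)(x,u)=\sum_{\alpha=1}^{s} g_\alpha(x)\,(D'_\alpha\phi_1)(f(x),u),
\]
where each $g_\alpha$ is a Nash function on $M$ (a polynomial in iterated partial derivatives of the components of $f$) and each $D'_\alpha$ is a Nash differential operator on $M_1$ acting in the $M_1$-variable. For a given continuous seminorm $|\cdot|_2$ on $E_2$, the temperedness of $\phi_1$ supplies, for each $\alpha$, a positive Nash function $h_\alpha$ on $M_1$ and a continuous seminorm $|\cdot|_\alpha$ on $E_1$ satisfying $|(D'_\alpha\phi_1)(y,u)|_2\le h_\alpha(y)|u|_\alpha$. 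Substituting $y=f(x)$, each $h_\alpha\circ f$ is a positive Nash function on $M$, and each $|g_\alpha|$ is dominated by a positive Nash function on $M$. Collecting the terms yields a positive Nash function $F$ on $M$ and the continuous seminorm $|\cdot|_1:=\sum_\alpha |\cdot|_\alpha$ on $E_1$ with $|(D\phi)(x,u)|_2\le F(x)|u|_1$ for all $x\in M$ and $u\in E_1$, which is the required moderate-growth bound.

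The main obstacle is the first sentence of the preceding paragraph: carefully writing out the iterated chain-rule expansion for the composition $\phi=\phi_1\circ(f\times\mathrm{id}_{E_1})$, which mixes a Nash map on $M$ with a merely smooth linear family in the $M_1$-variable, and confirming that the coefficient functions $g_\alpha$ on $M$ and the differential operators $D'_\alpha$ on $M_1$ that arise are truly Nash. Once this expansion is in hand, the moderate-growth estimate drops out term by term from the temperedness of $\phi_1$, together with the fact that the pullback of a positive Nash function on $M_1$ along a Nash map $M\to M_1$ is a positive Nash function on $M$.
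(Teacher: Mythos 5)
Your proof is correct and takes essentially the same approach as the paper's: reduce to the affine case via Lemma \ref{tem2}, expand $D\phi$ by iterating the chain rule and Leibniz rule, and bound each term using the temperedness of $\phi_1$ together with the fact that Nash functions (and their absolute values) are dominated by positive Nash functions. The step you flag as the main obstacle is exactly what the paper writes out: working in coordinates on open Nash subsets of $\BR^n$ and $\BR^m$, it derives $(\partial_i\phi)(x,u)=\sum_{k}(\partial_i f_k)(x)\cdot(\partial_k\phi_1)(f(x),u)$ and iterates to express $(\partial_I\phi)(x,u)$ as a finite sum of products $(\partial_{I_1}f_{k_1})(x)\cdots(\partial_{I_r}f_{k_r})(x)\cdot(\partial_J\phi_1)(f(x),u)$, where the coefficient functions and the operators $\partial_J$ are visibly Nash, so your expansion is justified.
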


\begin{proof}
In view of Lemma \ref{tem2}, we assume without loss of generality that $M$ is an open Nash submanifold of $\BR^n$ ($n\geq 0$), and $M_1$ is an open Nash submanifold of $\BR^m$ ($m\geq 0$). 

Write
\[
  f(x)=(f_1(x), f_2(x),\cdots, f_m(x)), \quad x\in M.
\]
For each  $I=(i_1, i_2,\cdots, i_n)\in \BN^n$ ($\BN$ denotes the set of nonnegative integers), write $\partial_I=\partial_1^{i_1} \partial_2^{i_2}\cdots \partial_n^{i_n}$, to be viewed as a differential operator on $M$. 
By the chain rule, for each $1\leq i\leq n$, 
\be\label{chainrule}
 ( \partial_i \phi)(x,u)=\sum_{k=1}^m  (\partial_i f_k)(x)\cdot  (\partial_k\phi_1)(f(x),u), \quad x\in M, \, u\in E_1.
\ee
By using the Leibniz rule and \eqref{chainrule} inductively, we know that the function $(\partial_I\phi)(x,u)$ is a finite sum of functions of the form
\[
  (\partial_{I_1} f_{k_1})(x)\cdot  (\partial_{I_2} f_{k_2})(x)\cdot \ldots \cdot  (\partial_{I_r} f_{k_r})(x)\cdot   (\partial_{J}\phi_1)(f(x),u),
\]
where $r\geq 0$, $I_1, I_2, \cdots, I_r, J\in \BN^n$, $1\leq k_1, k_2, \cdots, k_r\leq m$. Thus the linear family $\partial_I\phi: M\times E_1\rightarrow E_2$ is of moderate growth, and the Lemma follows. 
\end{proof}

\begin{dfnl}\label{tembdm}
Let $M_1$, $M_2$ be Nash manifolds. A map $M_1\times E_1\rightarrow M_2\times E_2$ is called a tempered bundle map if it has the form
\[
  (x, u)\mapsto (f(x), \phi(x, u)),
\]
where $f: M_1\rightarrow M_2$ is a Nash map, and $\phi: M_1\times E_1\rightarrow E_2$ is a tempered linear family.   
\end{dfnl}

\begin{lem}\label{temcombu}
Let $\psi_1: M_1\times E_1\rightarrow M_2\times E_2$  and  $\psi_2: M_2\times E_2\rightarrow M_3\times E_3$ be tempered bundle maps, where 
$M_1$, $M_2$, $M_3$ are Nash manifolds, and $E_1$, $E_2$, $E_3$ are \Fre spaces. Then
\[
\psi_2\circ \psi_1: M_1\times E_1\rightarrow M_3\times E_3
\]
is also a tempered bundle map. 
\end{lem}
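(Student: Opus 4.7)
The plan is to unpack the definitions and then chain together the two preparatory lemmas on tempered linear families. Write $\psi_1(x,u) = (f_1(x), \phi_1(x,u))$ and $\psi_2(y,v) = (f_2(y), \phi_2(y,v))$, where $f_1: M_1 \to M_2$ and $f_2: M_2 \to M_3$ are Nash maps and $\phi_1: M_1 \times E_1 \to E_2$ and $\phi_2: M_2 \times E_2 \to E_3$ are tempered linear families. A direct computation gives
\[
  (\psi_2 \circ \psi_1)(x,u) = \bigl( f_2(f_1(x)),\; \phi_2(f_1(x), \phi_1(x,u)) \bigr).
\]
Thus I need to verify that $f := f_2 \circ f_1$ is a Nash map and that the linear family $\phi: M_1 \times E_1 \to E_3$ defined by $\phi(x,u) := \phi_2(f_1(x), \phi_1(x,u))$ is tempered.

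The first point is immediate: the composition of Nash maps between Nash manifolds is Nash. For the second, I would first use Lemma \ref{temcompullb} applied to the Nash map $f_1: M_1 \to M_2$ and the tempered linear family $\phi_2: M_2 \times E_2 \to E_3$: this yields that the pulled-back family
\[
  \widetilde{\phi}_2: M_1 \times E_2 \to E_3, \quad (x,v) \mapsto \phi_2(f_1(x), v)
\]
is tempered. Now both $\phi_1: M_1 \times E_1 \to E_2$ and $\widetilde{\phi}_2: M_1 \times E_2 \to E_3$ are tempered linear families indexed by the same Nash manifold $M_1$, so Lemma \ref{temcom} applies and tells us that
\[
  (x,u) \mapsto \widetilde{\phi}_2(x, \phi_1(x,u)) = \phi_2(f_1(x), \phi_1(x,u)) = \phi(x,u)
\]
is again a tempered linear family. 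Combined with the Nash property of $f$, this shows $\psi_2 \circ \psi_1$ is a tempered bundle map in the sense of Definition \ref{tembdm}.

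There is no substantive obstacle here; the statement is essentially a bookkeeping consequence of the preparatory Lemmas \ref{temcompullb} and \ref{temcom}. The only subtlety worth flagging is that one must pull back $\phi_2$ along $f_1$ before invoking Lemma \ref{temcom}, since that composition lemma requires both linear families to be parametrized over the same base; this is exactly what Lemma \ref{temcompullb} is designed to supply.
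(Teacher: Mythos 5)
Your proof is correct and follows essentially the same route as the paper: pull $\phi_2$ back along $f_1$ via Lemma \ref{temcompullb} to get a tempered linear family over $M_1$, then apply Lemma \ref{temcom} to compose with $\phi_1$. The write-up is accurate and the subtlety you flag is exactly the point of introducing Lemma \ref{temcompullb}.
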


\begin{proof}
Write $\psi_1 = (f_1,\phi_1)$ and $\psi_2 = (f_2,\phi_2)$, with notations as in Definition \ref{tembdm}.
Define a linear family
\[
  \phi'_2: M_1\times E_2\rightarrow E_3, \quad (x,v)\mapsto \phi_2(f_1(x),v).
\]
By Lemma \ref{temcompullb}, this linear family is tempered. Note that 
\[
  (\psi_2\circ \psi_1)(x, u)=((f_2\circ f_1)(x), \phi_2'(x, \phi_1(x, u))).
\]
Hence the lemma follows by Lemma \ref{temcom}. 
\end{proof}

\section{Preliminaries on representations}\label{prep}

\subsection{The action of compactly supported distributions} \label{secsf2}

Let $G$ be a Lie group. Recall from the Introduction that $\g$ denotes the complexified Lie algebra of $G$. 
Let $\RM_c(G)$ denote the space of compactly supported Borel measures on $G$. It is an associative algebra under convolutions. Every representation $V$ of $G$ is naturally an $\RM_c(G)$-module:
 \be\label{intm}
  \mu.v:=\int_G g.v\, \rd \mu(g), \qquad \mu\in \RM_c(G), \, v\in V. 
\ee

 Let $\oD_c^{-\infty}(G)$ denote the space of compactly supported distributions on $G$. It is an associative algebra under convolutions, and contains both $\RM_c(G)$ and $\oU(\g)$ as subalgebras. Moreover, by the structure theory of compactly supported distributions, we have that
 \be\label{dgmg}
   \oD_c^{-\infty}(G)=\RM_c(G)\cdot \oU(\g)=\oU(\g)\cdot \RM_c(G).
 \ee
 Every smooth representation $V$ of $G$ is naturally a $\oD_c^{-\infty}(G)$-module by requiring that  
 \be\label{defetav}
  \lambda( \eta. v)= \la \eta, \phi_{\lambda, v}\ra, \qquad \eta\in \oD_c^{-\infty}(G), \, v\in V,
 \ee
 for all continuous linear functionals $\lambda$ on $V$. Here $\phi_{\lambda, v}\in \con^{\,\infty}(G)$ denotes the matrix coefficient $g\mapsto \lambda(g.v)$. 
 This action of $\oD_c^{-\infty}(G)$ extends the existing actions of $G$, $\RM_c(G)$ and $\oU(\g)$ on $V$.

\begin{remark}
The existence of $\eta.v\in V$ satisfying \eqref{defetav} follows from \eqref{dgmg}, and the previously defined  actions of $\RM_c(G)$ and $\oU(\g)$  on $V$. 
\end{remark}

\subsection{The category $\mathcal{S}\mathrm{mod}_{G}$}

In the rest of this article, suppose that $G$ is an almost linear Nash group as in the Introduction. 

\begin{lem}\label{smodg}
Let 
\be\label{actge}
  G\times E\rightarrow E, \quad (g,u)\mapsto g.u
\ee
be a  linear action of $G$ on a \Fre space $E$. If the  map \eqref{actge} is smooth, and has moderate growth as a linear family, then it is tempered as a linear family. 
\end{lem}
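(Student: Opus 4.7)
The plan is to verify the two requirements of Definition \ref{deftem2}. Smoothness is part of the hypothesis, so the real content is to show that $D\phi$ is of moderate growth for every Nash differential operator $D$, where $\phi(g,u):=g.u$. By Lemma \ref{tem2} it suffices to cover $G$ by affine open Nash submanifolds $\{G_i\}$ and to check moderate growth of $D\phi|_{G_i\times E}$ for every Nash differential operator $D$ on $G_i$.

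The central computational step is the identity
\[
  (D\phi)(g,u) \;=\; \sum_\alpha f_\alpha(g)\cdot g.(X^\alpha.u),\qquad g\in G_i,\ u\in E,
\]
where $D=\sum_\alpha f_\alpha\,\tilde X^\alpha$ is any decomposition of $D$ as a Nash-coefficient combination of products of left-invariant vector fields on $G$, with $X^\alpha\in\oU(\g)$ the corresponding elements. Such a decomposition exists because the multiplication on $G$ is Nash: hence the left-invariant vector fields attached to a basis of $\g$ are Nash vector fields forming a global Nash frame of $TG$, and any Nash differential operator on $G_i$ can be rewritten in this frame using the Leibniz rule. The identity itself follows by differentiating the equivariance $g\exp(tX).u=g.(\exp(tX).u)$, which gives $(\tilde X\phi)(g,u)=g.(X.u)$ for $X\in\g$, and iterating.

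Now fix a continuous seminorm $|\cdot|_\mu$ on $E$. The moderate growth hypothesis provides a positive Nash function $h$ on $G$ and a continuous seminorm $|\cdot|_\nu$ on $E$ such that $|g.v|_\mu\le h(g)\,|v|_\nu$ for all $g$ and $v$. Since the action is smooth, each $u\mapsto X.u$ is a continuous linear endomorphism of $E$ (cf.\ \eqref{actg} and the surrounding discussion), and consequently so is each $u\mapsto X^\alpha.u$; hence there exist constants $C_\alpha$ and a single continuous seminorm $|\cdot|_{\nu'}$ (obtained by summing finitely many) with $|X^\alpha.u|_\nu\le C_\alpha\,|u|_{\nu'}$. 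Combining these estimates yields
\[
  |(D\phi)(g,u)|_\mu \;\le\; \left(\sum_\alpha |f_\alpha(g)|\,h(g)\,C_\alpha\right)|u|_{\nu'},
\]
and the coefficient in parentheses is a finite sum of products of Nash functions on $G_i$ with a positive Nash function, hence bounded above by a positive Nash function on $G_i$. This is precisely the moderate growth of $D\phi|_{G_i\times E}$.

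The main obstacle is the Nash-geometric input in the second paragraph: writing an arbitrary Nash differential operator on $G_i$ as a Nash-coefficient combination of products of left-invariant vector fields. Once this is established, the remaining argument is a routine assembly of the smoothness (continuity of each $X^\alpha$ on $E$) and the moderate growth of the action.
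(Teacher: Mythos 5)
Your proof is correct and follows the same approach as the paper's (very terse) hint: the essential identity is $(\tilde X^\alpha \phi)(g,u)=\phi(g,X^\alpha.u)=g.(X^\alpha.u)$ for products of left-invariant vector fields, which reduces $D\phi$ to the moderate growth of the action and the continuity of the $\oU(\g)$-action on $E$. You make explicit the Nash-geometric step of expanding an arbitrary Nash differential operator in the global Nash frame of left-invariant vector fields with Nash coefficients, which the paper leaves implicit.
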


\begin{proof}
This is known to experts. The proof follows by using the identity
\[
  (X. \phi)(g, u)=\phi(g, X.u), \quad \textrm{for all } X\in \oU(\g), \, g\in G, \, u\in E,
\]
where $\phi$ denotes the map \eqref{actge}. 
\end{proof}

As in the Introduction, let $\mathcal{S}\mathrm{mod}_{G}$  denote the category of smooth \Fre representations of $G$ of moderate growth. By Lemma \ref{smodg}, the action map of every representation in $\mathcal{S}\mathrm{mod}_{G}$ is a tempered linear family.

The following lemma is easily checked. 

\begin{lem}\label{sq}
Let $V$ be a representation in $\mathcal{S}\mathrm{mod}_{G}$. Then all subrepresentations and  quotient representations of $V$ are representations in $\mathcal{S}\mathrm{mod}_{G}$.
\end{lem}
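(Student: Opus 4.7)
The plan is to verify the three defining properties of the category $\mathcal{S}\mathrm{mod}_{G}$ — the \Fre property, smoothness of the action, and moderate growth — separately for a (closed) subrepresentation $W\subseteq V$ and for the corresponding quotient $V/W$. The \Fre property is immediate from classical functional analysis: a closed subspace of a \Fre space is \Fre in the subspace topology, and the quotient by a closed subspace is again \Fre.

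For smoothness, I would use the criterion recalled in the Introduction that a continuous linear action is smooth if and only if the infinitesimal action $v\mapsto Y.v$ is defined and continuous for every $Y\in\Lie(G)$. Since $W$ is $G$-stable, the difference quotient $(\exp(tY).w-w)/t$ lies in $W$ for each $t$; as $W$ is closed, its limit $Y.w$ also lies in $W$, and the restriction of the continuous linear operator $Y:V\to V$ to $W$ is continuous. For the quotient, the relation $Y.(v+w)-Y.v=Y.w\in W$ (using $G$-invariance and hence $\g$-invariance of $W$) shows that $\ov v\mapsto \ov{Y.v}$ is a well-defined operator on $V/W$, and continuity follows from the universal property of the quotient topology.

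The moderate growth condition is the one requiring a small argument. For a subrepresentation, every continuous seminorm on $W$ is dominated by the restriction of some continuous seminorm on $V$, so the moderate growth bound for $V$ restricts to the same bound on $W$. For the quotient, I would start with an arbitrary continuous seminorm $|\cdot|_{\ov\mu}$ on $V/W$ and pull it back to a continuous seminorm $|v|_\mu:=|\ov v|_{\ov\mu}$ on $V$. This pullback vanishes on $W$, so by the triangle inequality $|g.v|_\mu=|g.(v+w)|_\mu$ for every $w\in W$. Applying the moderate growth hypothesis for $V$ yields a positive Nash function $f$ on $G$ and a continuous seminorm $|\cdot|_\nu$ on $V$ with $|g.(v+w)|_\mu\leq f(g)|v+w|_\nu$ for all $w\in W$. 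Taking the infimum over $w\in W$ and introducing the quotient seminorm $|\ov v|_{\ov\nu}:=\inf_{w\in W}|v+w|_\nu$ (which is continuous on $V/W$) gives
\[
|g.\ov v|_{\ov\mu}=|g.v|_\mu\leq f(g)\,|\ov v|_{\ov\nu},
\]
which is the moderate growth bound for $V/W$.

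The only step that is not entirely formal is the moderate growth statement for the quotient, where the key idea — replacing the pulled-back seminorm $|\cdot|_\nu$ by its associated quotient seminorm and exploiting that $|\cdot|_\mu$ vanishes on $W$ — is needed to express the right-hand side purely in terms of $\ov v$. No other step presents a genuine obstacle.
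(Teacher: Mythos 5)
The paper gives no proof of Lemma~\ref{sq} --- it simply says ``the following lemma is easily checked'' --- so there is no paper argument to compare against; what you have done is supply the routine verification that the paper elides. Your verification is correct on all three counts, and the one slightly nontrivial point (moderate growth of the quotient) is handled exactly right: the pulled-back seminorm $|\cdot|_\mu$ vanishes on $W$, and since $g.w\in W$ for $g\in G$, $w\in W$, you indeed get $|g.v|_\mu=|g.(v+w)|_\mu$ by the triangle inequality; taking the infimum over $w$ then produces the quotient seminorm on the right-hand side. The other ingredients (closed subspace and quotient of a \Fre space are \Fre, $Y.w\in W$ by closedness, the infinitesimal action descends to $V/W$ and is continuous in the quotient topology, and every continuous seminorm on $W$ is dominated by the restriction of one on $V$) are all standard and correctly invoked. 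No gaps.
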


\section{A proof of Theorem 1.4}\label{firs}

Recall that $G$ and $\g$ act on $\CS(G)$ by right translations. 

\begin{lem}\label{thmA1000}
Assume that $G$ is connected. Then
\[
\dim \oH_{0}(\g; \CS(G))=1. 
\]
\end{lem}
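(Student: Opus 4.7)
By definition $\oH_{0}(\g;\CS(G))=\CS(G)/\g\cdot\CS(G)$, so the task reduces to showing this algebraic quotient has dimension exactly one. The distinguished candidate for a nonzero functional is integration against a right Haar measure, $I\colon f\mapsto\int_{G}f(g)\rd_{r}g$. First I would check that $I$ vanishes on $\g\cdot\CS(G)$: each $X\in\g$ acts as the left-invariant vector field on $G$ whose flow is right translation by $\exp(tX)$, so right-invariance of $\rd_{r}g$ yields
\[
\int_{G}(Xf)(g)\rd_{r}g=\tfrac{d}{dt}\Big|_{t=0}\int_{G}f(g\exp(tX))\rd_{r}g=0.
\]
Since $I$ is evidently surjective, this already gives $\dim\oH_{0}(\g;\CS(G))\geq 1$.

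For the reverse inequality I would begin with a duality argument. A continuous linear functional on $\CS(G)/\g\cdot\CS(G)$ is a tempered distribution $T$ on $G$ satisfying $\langle T,Xf\rangle=0$ for every $X\in\g$ and $f\in\CS(G)$, i.e.\ $X\cdot T=0$ in the distributional sense. This means $T$ is invariant under the flow of every left-invariant vector field, hence under right translation by $\exp(tX)$ for all $X\in\g$ and $t\in\BR$. Since $G$ is connected and generated by such one-parameter subgroups, $T$ is right-$G$-invariant, and by the uniqueness of Haar measure it is a scalar multiple of $\rd_{r}g$. Thus the continuous dual of the quotient is one-dimensional, spanned by $I$, and Hahn-Banach gives $\overline{\g\cdot\CS(G)}=\ker I$.

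The main obstacle is now closing the gap between continuous dual and algebraic quotient: one must show that $\g\cdot\CS(G)$ is already closed in $\CS(G)$, so the quotient is Hausdorff and its continuous dual separates points. My approach would be induction on $\dim G$ using the structure theory of connected almost linear Nash groups. The base cases are $G$ compact (where $\CS(G)=C^{\infty}(G)$ and the Peter-Weyl decomposition reduces the statement to $\g$ acting surjectively on each nontrivial matrix-coefficient isotype) and $G\cong\BR$ (where the antiderivative $g(x)=\int_{-\infty}^{x}f(t)\dif t$ lies in $\CS(\BR)$ and satisfies $g'=f$ whenever $f\in\CS(\BR)$ has vanishing integral).

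For the inductive step I would fix a nontrivial connected normal Nash subgroup $N\triangleleft G$ of smaller dimension and exploit the fibration $G\to G/N$. Integration along the $N$-fibres gives a $G$-equivariant continuous surjection $\CS(G)\twoheadrightarrow\CS(G/N)$ whose kernel, by the induction hypothesis applied in vector-valued form to $N$, should coincide with $\mathfrak{n}\cdot\CS(G)$; combined with the induction hypothesis for $G/N$ this would identify $\g\cdot\CS(G)$ with $\ker I$. The delicate point, and the place where the main work lies, is that identifying the kernel of fibre integration as $\mathfrak{n}\cdot\CS(G)$ really requires the vector-valued form of Theorem \ref{thmA11} itself, so the induction has to be organised around the stronger vector-valued statement rather than only the scalar case appearing in the lemma.
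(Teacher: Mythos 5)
Your route is genuinely different from the paper's. The paper applies Poincar\'e duality for Lie algebra cohomology to reduce $\oH_0(\g;\CS(G))$ to $\oH^{\dim\g}(\g;\CS(G))$, identifies the Chevalley--Eilenberg complex with the de Rham complex of $G$ with Schwartz coefficients, and then invokes Prelli's theorem \cite[Theorem 4.3]{Pr} that Schwartz de Rham cohomology of a Nash manifold agrees with compactly supported de Rham cohomology, which in top degree is one-dimensional by classical Poincar\'e duality. Your duality argument is correct as far as it goes: the pairing with tempered distributions, combined with the connectedness of $G$ and uniqueness of the invariant distribution, does show that $\overline{\g\cdot\CS(G)}=\ker I$, so the reduced quotient is one-dimensional. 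You also correctly identify that this is not yet the lemma: the paper needs the algebraic quotient, since Proposition \ref{thmA100} immediately turns this lemma into the equality $\g\cdot\CS(G,E)=\{f:\int f=0\}$ on the nose, not up to closure.

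The proposed induction to close the gap does not go through as stated. A connected noncompact simple Lie group --- take $G=\SL_2(\BR)$ --- has no nontrivial connected normal Nash subgroup, so the inductive step via a fibration $G\to G/N$ is unavailable, and it is neither compact nor isomorphic to $\BR$, so neither of your stated base cases applies. You would need a separate argument for noncompact simple $G$, and the Peter--Weyl/Laplacian mechanism you sketch in the compact case relies essentially on compactness (discrete spectrum, summability of the Fourier expansion, invertibility of the Casimir on the orthogonal complement of the invariants). There is also the circularity you yourself flag: Theorem \ref{thmA11} is \emph{deduced} from the present lemma through Proposition \ref{thmA100} and Lemma \ref{thmA10000}, so ``organising the induction around the stronger vector-valued statement'' amounts to re-proving Theorem \ref{thmA11} from scratch by an entirely different method, which is a much larger project than the lemma and still leaves the base-case gap. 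The paper's appeal to Prelli's theorem sidesteps all of this, at the cost of importing a nontrivial external result about Schwartz de Rham cohomology on Nash manifolds.
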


\begin{proof}
By Poincar\'{e} duality for Lie algebra cohomologies, we have that
\be\label{pd01}
\oH_{0}(\g; \CS(G)) \cong \oH^{\dim \g}(\g; \CS(G)\otimes\wedge^{\dim \g}\g).
\ee
Note that 
\[
  \CS(G)\otimes\wedge^{\dim \g}\g\cong \CS(G)
\]
as representations of $G$, and hence they are isomorphic to each other as $\g$-modules. Thus 
\be\label{pd02}
  \oH^{\dim \g}(\g; \CS(G)\otimes\wedge^{\dim \g}\g)\cong  \oH^{\dim \g}(\g; \CS(G)).
\ee

By \cite[Theorem 4.3]{Pr}, one has that
\be\label{pd03}
\oH^{i}_{\DR,\CS}(G)\cong\oH^{i}_{\DR,c}(G), \qquad (i\in \BZ),
\ee
where 
$\oH^{i}_{\DR,\CS}(G)$ denotes the $i$th de Rham cohomology of the Nash manifold $G$ with Schwartz coefficients and 
$\oH^{i}_{\DR,c}(G)$ denotes the $i$th de Rham cohomology of the smooth manifold $G$ with  compactly supported smooth coefficients. Poincar\'{e} duality for de Rham cohomologies implies that
\be\label{pd04}
  \dim \oH^{\dim \g}_{\DR,c}(G)=1. 
\ee
By comparing the complexes computing the cohomologies, one has that
\be\label{pd05}
  \oH^{i}_{\DR,\CS}(G)\cong  \oH^{i}(\g; \CS(G)).
\ee
Combining \eqref{pd01}, \eqref{pd02}, \eqref{pd03}, \eqref{pd04} and \eqref{pd05}, the lemma follows. 
\end{proof}

Recall that $E$ is  a \Fre space, and $G$ acts on $\CS(G, E)$ by right translations. 

\begin{prpl}\label{thmA100}
Assume that $G$ is connected. Then
\[
\g.\CS(G, E)=\left\{f\in\CS(G, E) : \int_G f(g)\rd_r g = 0\right\}.
\]
\end{prpl}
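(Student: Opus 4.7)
The plan is to deduce this $E$-valued statement from the scalar case (Lemma~\ref{thmA1000}) through a completed projective tensor product argument, exploiting the identification $\CS(G, E) = \CS(G) \widehat{\otimes} E$. Write $I_E : \CS(G, E) \to E$ and $I : \CS(G) \to \BC$ for the integration maps $f \mapsto \int_G f(g)\rd_r g$. The inclusion $\g.\CS(G, E) \subseteq \ker I_E$ is immediate from right-invariance of $\rd_r g$ (differentiate $I_E((\exp tY).f) = I_E(f)$ at $t = 0$), so the task is the reverse inclusion.

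First I would upgrade Lemma~\ref{thmA1000} to the \emph{topological} equality $\g.\CS(G) = \ker I$ in $\CS(G)$. Since $\g.\CS(G) \subseteq \ker I$ and $I$ is nonzero, the one-dimensional algebraic quotient $\CS(G)/\g.\CS(G)$ supplied by Lemma~\ref{thmA1000} must surject isomorphically onto the one-dimensional space $\CS(G)/\ker I \cong \BC$. In particular, $\g.\CS(G)$ is closed in $\CS(G)$.

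Next I would fix a basis $Y_1, \ldots, Y_n$ of $\g$ and form the continuous linear map
\[
\Delta : \CS(G)^n \to \CS(G), \qquad (\phi_1, \ldots, \phi_n) \mapsto \sum_{i=1}^n Y_i.\phi_i,
\]
whose image equals $\g.\CS(G) = \ker I$. By the open mapping theorem for \Fre spaces, $\Delta$ is a topological quotient map onto $\ker I$. Applying $(\,\cdot\,) \widehat{\otimes} E$ and using right-exactness of the completed projective tensor product for \Fre spaces, I obtain a continuous surjection
\[
\Delta_E : \CS(G, E)^n \to \ker I \widehat{\otimes} E, \qquad (f_1, \ldots, f_n) \mapsto \sum_{i=1}^n Y_i.f_i,
\]
using the natural identification $\CS(G)^n \widehat{\otimes} E = \CS(G, E)^n$ and the fact that $\Delta \widehat{\otimes} \mathrm{id}_E$ agrees with $\Delta_E$ on the dense algebraic tensor subspace. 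A choice of $\phi_0 \in \CS(G)$ with $\int_G \phi_0 \rd_r g = 1$ provides a continuous splitting of the short exact sequence $0 \to \ker I \to \CS(G) \to \BC \to 0$; tensoring with $E$ then gives $\CS(G, E) = (\ker I \widehat{\otimes} E) \oplus E$ with $I_E$ as projection onto the $E$-factor, whence $\ker I_E = \ker I \widehat{\otimes} E$. Surjectivity of $\Delta_E$ onto $\ker I_E$ then shows every $f \in \ker I_E$ has the form $\sum_{i=1}^n Y_i.f_i$, so lies in $\g.\CS(G, E)$.

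The main technical point will be the tensor-product bookkeeping: the identification $\ker I_E = \ker I \widehat{\otimes} E$ via the splitting, together with the preservation of surjectivity under $(\,\cdot\,)\widehat{\otimes} E$ (right-exactness of the projective tensor product for \Fre spaces). Once those identifications are in place, the proposition reduces cleanly to the scalar statement of Lemma~\ref{thmA1000}.
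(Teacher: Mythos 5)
Your proof is correct and follows essentially the same route as the paper's: both reduce the $E$-valued statement to the scalar case of Lemma~\ref{thmA1000} by tensoring with $E$ and invoking a Grothendieck-type right-exactness property of $\widehat{\otimes}_\pi$ for \Fre spaces (the paper cites \cite[II, \S 2, $n^\circ$1]{Gr1} to identify $\oH_0(\g;\CS(G))\widehat\otimes E$ with $\oH_0(\g;\CS(G,E))$, which is the same fact your argument with $\Delta$, the open mapping theorem, and the splitting makes explicit). The only difference is presentational: you unwind the homological statement into a concrete surjectivity argument via a chosen basis of $\g$, whereas the paper works directly at the level of the zeroth homology functor and a commutative diagram.
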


\begin{proof}
Obviously, one has that
\[
\g.\CS(G, E)\subset\left\{f\in\CS(G, E) : \int_G f(g)\rd_r g = 0\right\}. 
\]
Thus the integration map
\[
  \CS(G, E)\rightarrow E, \quad f\mapsto \int_G f(g) \rd_r g,
\]
which is surjective, descends to a surjective linear map
\be\label{je}
  J_E: \oH_{0}(\g; \CS(G, E)) \rightarrow E.
\ee
It suffices to show that the above map is a linear isomorphism. 

When $E=\mathbb C$, the linear map \eqref{je} becomes a surjective linear map 
\be\label{je2}
  J_\mathbb C: \oH_{0}(\g; \CS(G)) \rightarrow \mathbb C.
\ee
Then Lemma \ref{thmA1000} implies that  \eqref{je2} is a linear isomorphism.

In general, we have an obvious commutative diagram
\[
 \begin{CD}
           \oH_{0}(\g; \CS(G))\widehat \otimes E@>J_\mathbb C \otimes 1_E  >> \mathbb C\otimes E \\
            @VVV           @VV =V\\
          \oH_{0}(\g; \CS(G, E)) @>  J_E >> E,\\
  \end{CD}
  \]
where $1_E$ denotes the identity map of $E$. It follows from \cite[II, \S 2, $n^\circ $1]{Gr1} that the left vertical arrow of the above diagram is a linear isomorphism.  Since the top horizontal arrow is also a linear isomorphism, the proposition follows. 

\end{proof}

Similar to \eqref{defodg}, let $\oD_c^\infty(G)$ denote the space of compactly supported smooth densities on $G$, equivalently,
\[
 \oD_c^\infty(G)=\con_c^\infty(G)\cdot \rd_r g. 
\]
It is a subalgebra of $\oD_c^{-\infty}(G)$.  Dixmier--Malliavin's Theorem \cite[Theorem 3.3]{DM} asserts that 
\[
V= \oD_c^\infty(G). V,
\]
for every smooth \Fre representation $V$ of $G$.

\begin{lem}\label{thmA10000}
Assume that $G$ is connected. Then
\[
\left\{f\in\CS(G, E) : \int_G f(g)\rd_r g = 0\right\}=\sum_{g\in G} (g-1). \CS(G, E).
\]
\end{lem}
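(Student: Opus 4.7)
The inclusion $\supseteq$ is immediate from the right-invariance of $\rd_r g$: for any $g\in G$ and $f\in\CS(G,E)$, the substitution $x\mapsto xg^{-1}$ in $\int_G f(xg)\,\rd_r x=\int_G f(x)\,\rd_r x$ gives $\int_G((g-1).f)(x)\,\rd_r x=0$. For the reverse inclusion, Proposition~\ref{thmA100} identifies the left-hand side with $\g.\CS(G,E)$, so it suffices to show that $X.k\in\sum_{g\in G}(g-1).\CS(G,E)$ for every $X\in\g$ and every $k\in\CS(G,E)$.

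The plan is to restrict the $G$-action to the one-parameter subgroup generated by $X$ and exploit its abelianity. Concretely, the restriction of the smooth \Fre representation $\CS(G,E)$ along $t\mapsto\exp(tX)$ is a smooth \Fre $\BR$-representation of moderate growth (the moderate growth condition is inherited because the Nash map $t\mapsto\exp(tX)$ pulls back positive Nash functions on $G$ to positive Nash functions on $\BR$). The Dixmier--Malliavin theorem, quoted immediately before the lemma, then yields a finite decomposition
\[
k=\sum_i\psi_i\cdot k_i,\qquad\psi\cdot k:=\int_\BR\psi(t)\,\exp(tX).k\,dt,
\]
with $\psi_i\in C_c^\infty(\BR)$ and $k_i\in\CS(G,E)$. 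Differentiating under the integral sign and integrating by parts gives the identity $X.(\psi\cdot k)=(-\psi')\cdot k$ for all $\psi\in C_c^\infty(\BR)$ and $k\in\CS(G,E)$.

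Each $-\psi_i'$ is a compactly supported smooth function on $\BR$ with vanishing total integral, so the first assertion of Theorem~\ref{thmA0}, applied to the group $\BR$, furnishes a finite decomposition
\[
-\psi_i'=\sum_j(R_{g_{ij}}\psi_{ij}-\psi_{ij})\qquad\text{in }C_c^\infty(\BR),
\]
where $R_g\psi(t):=\psi(t+g)$. A direct change of variables in the defining integral yields the abelian identity $(R_g\psi)\cdot k=\exp(-gX).(\psi\cdot k)$, and combining these gives
\[
X.k=\sum_i X.(\psi_i\cdot k_i)=\sum_{i,j}\bigl(\exp(-g_{ij}X)-1\bigr).(\psi_{ij}\cdot k_i),
\]
which manifestly lies in $\sum_{g\in G}(g-1).\CS(G,E)$.

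The key technical point is the identity $(R_g\psi)\cdot k=\exp(-gX).(\psi\cdot k)$, which expresses the fact that the $\BR$-action commutes with itself. It is precisely this abelian identity --- which has no analogue on a general non-abelian $G$ --- that makes restriction to the one-parameter subgroup generated by $X$ the central idea of the proof, allowing the Blanc--Wigner decomposition on $\BR$ to be pushed forward, via convolution, to the desired decomposition of $X.k$ in $\CS(G,E)$.
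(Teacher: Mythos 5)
Your argument is correct, but it takes a genuinely different route from the paper's. Both proofs begin by invoking Proposition~\ref{thmA100} to identify the left-hand side with $\g.\CS(G,E)$, but the paper then works directly on $G$: it expands $\CS(G,E)=\oD_c^\infty(G).\CS(G,E)$ by Dixmier--Malliavin, uses Theorem~\ref{thmA0} on $G$ to obtain the algebra identity $\g\cdot\oD_c^\infty(G)=\sum_{g\in G}(g-1)\cdot\oD_c^\infty(G)$ inside $\oD_c^{-\infty}(G)$, and reabsorbs the densities by Dixmier--Malliavin again. You instead restrict to the one-parameter subgroup generated by a fixed $X\in\g$ and apply Dixmier--Malliavin and Theorem~\ref{thmA0} only for the group $\BR$, using the commutativity of the flow to push the decomposition of $-\psi_i'$ forward to a decomposition of $X.k$. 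Both arguments are legitimate; yours only needs the abelian case of the Blanc--Wigner theorem, at the cost of an extra reduction. One caveat: your parenthetical claim that the restricted $\BR$-representation has moderate growth ``because the Nash map $t\mapsto\exp(tX)$ pulls back positive Nash functions on $G$ to positive Nash functions on $\BR$'' is false --- $\exp$ is not a Nash map, and already for $G=\BR^\times$ the pullback of $x\mapsto x+x^{-1}$ along $t\mapsto e^{t}$ is $e^{t}+e^{-t}$, which is not semialgebraic. This is harmless for the proof, though: the Dixmier--Malliavin theorem, as quoted in the paper, applies to every smooth \Fre representation with no moderate-growth hypothesis, and the restriction of a smooth representation to a one-parameter subgroup is certainly smooth; you should simply delete that parenthetical remark together with the moderate-growth claim.
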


\begin{proof}
One has that 
\begin{eqnarray*}
&& \left\{f\in\CS(G, E):\int_G f(g)\rd_r g = 0\right\}\\
  &=& \g.\CS(G, E)\qquad \quad \textrm{by Proposition \ref{thmA100}}\\
       &=& \g.(\oD_c^\infty(G).\CS(G, E))\qquad \quad \textrm{by Dixmier--Malliavin's Theorem}\\
        &=& (\g\cdot \oD_c^\infty(G)).\CS(G, E))\\
          &=& \sum_{g\in G}((g-1)\cdot \oD^{\infty}_{c}(G)).\CS(G, E)\qquad \quad \textrm{by Theorem \ref{thmA0}} \\
           &=& \sum_{g\in G}(g-1).(\oD^{\infty}_{c}(G).\CS(G, E))\\
            &=& \sum_{g\in G}(g-1).\CS(G, E) \qquad \quad\textrm{by Dixmier--Malliavin's Theorem.}\\
\end{eqnarray*}

\end{proof}

Lemma \ref{thmA10000} holds without the assumption that $G$ is connected, as in the following proposition. 

\begin{prpl}\label{thmA100000}
For every almost linear Nash group $G$ and every \Fre space $E$, 
\be\label{int0s}
\left\{f\in\CS(G, E) : \int_G f(g)\rd_r g = 0\right\}=\sum_{g\in G} (g-1). \CS(G, E).
\ee
\end{prpl}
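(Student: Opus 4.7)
The plan is to reduce the general statement to the connected case already established in Lemma~\ref{thmA10000}, by using the decomposition of $G$ into its finitely many connected components. The inclusion $\sum_{g\in G}(g-1).\CS(G,E)\subseteq\{f:\int_G f\,\rd_r g=0\}$ is immediate from right-invariance of $\rd_r g$.

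For the reverse inclusion, I use that every Nash manifold has only finitely many connected components, so I may fix representatives $e=g_0,g_1,\ldots,g_n$ for the cosets $G/G^\circ$. Because each $g_iG^\circ$ is open and closed, extension by zero identifies
\[
\CS(G,E)=\bigoplus_{i=0}^n\CS(g_iG^\circ,E)
\]
as a topological direct sum of closed subspaces. Since $G^\circ$ is normal we have $g_iG^\circ=G^\circ g_i$, so the right-translation map $h\mapsto g_i^{-1}.h$, where $(g_i^{-1}.h)(x)=h(xg_i^{-1})$, carries $\CS(G^\circ,E)$ bijectively onto $\CS(g_iG^\circ,E)$; its inverse sends $f_i\in\CS(g_iG^\circ,E)$ to $h_i(x):=f_i(xg_i)\in\CS(G^\circ,E)$.

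Given $f$ with $\int_G f\,\rd_r g=0$, decompose $f=\sum_{i=0}^n f_i$ with $f_i\in\CS(g_iG^\circ,E)$, set $h_i(x):=f_i(xg_i)$ for $i\geq 1$, so that $f_i=g_i^{-1}.h_i$. Then
\[
f=F+\sum_{i=1}^n (g_i^{-1}-1).h_i,\qquad F:=f_0+\sum_{i=1}^n h_i\in\CS(G^\circ,E).
\]
The second summand already lies in $\sum_{g\in G}(g-1).\CS(G,E)$. Right-invariance of $\rd_r g$ implies $\int_G (g.\phi)\,\rd_r g=\int_G\phi\,\rd_r g$ for every $\phi$, so $\int_{G^\circ}F\,\rd_r g=\int_G f\,\rd_r g=0$. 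Lemma~\ref{thmA10000} applied to the connected almost linear Nash group $G^\circ$ then writes $F$ as a finite sum $\sum(g-1).\phi$ with $g\in G^\circ$ and $\phi\in\CS(G^\circ,E)\hookrightarrow\CS(G,E)$, concluding the argument.

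There is no serious obstacle; the only point of care is to verify that the right-translation action of $G^\circ$ on $\CS(G^\circ,E)$ coincides, under extension by zero, with the restriction of the right-translation action of $G$ on $\CS(G,E)$ to this subspace. This holds because for $g\in G^\circ$ right-translation by $g$ preserves the open set $G^\circ$, so a sum $\sum_{g\in G^\circ}(g-1).\phi$ has the same meaning in the two pictures. All analytic content is therefore contained in the already-proved connected case.
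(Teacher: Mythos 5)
Your proof is correct and takes essentially the same route as the paper's: decompose $\CS(G,E)$ over the finitely many connected components, translate each piece back to $G^\circ$ (you use $g_i^{-1}-1$ where the paper writes $g_i-1$, which amounts to a different choice of coset representatives), and reduce to the connected-case Lemma~\ref{thmA10000}. The paper states the resulting direct-sum decomposition of the left-hand side and says the rest is easy; your argument supplies exactly the bookkeeping it omits.
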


\begin{proof}
Let $G^\circ$ denote the identity connected component of $G$. Write 
\[
  G=G^\circ\, \sqcup \, G^\circ g_1\, \sqcup\, \cdots \, \sqcup G^\circ g_k, \qquad (k\geq 0),
\]
where $g_1, \cdots, g_k\in G$. 
It is easy to see that the space of the left hand side of \eqref{int0s} equals 
\[
   \left\{f\in\CS(G^\circ, E)\, :\, \int_G f(g)\rd_r g = 0\right\} \oplus  \bigoplus_{i=1}^k(g_i-1) .\CS(G^\circ, E). 
     \]
Here $\CS(G^\circ, E) $ is viewed as a subspace of $\CS(G, E) $, by extension by zero. Thus the proposition follows from Lemma \ref{thmA10000}. 
\end{proof}

Theorem  \ref{thmA11} is now proved by combining Propositions  \ref{thmA100000} and \ref{thmA100}.
It has the following interesting  consequence.

\begin{corl}
Let $G$ be an almost linear Nash group. Then every right invariant linear functional on $\CS(G)$ is automatically continuous. 
\end{corl}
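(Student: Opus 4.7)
The plan is to reduce the problem directly to Theorem \ref{thmA11}, whose role is precisely to identify the algebraic span $\sum_{g\in G}(g-1).\CS(G)$ with the codimension-one closed subspace annihilated by Haar integration. Once this identification is in hand, any right invariant linear functional automatically factors through a one-dimensional quotient on which continuity is trivial.

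More concretely, let $\lambda : \CS(G)\to\BC$ be a linear functional that is invariant under right translations, meaning $\lambda(g.f)=\lambda(f)$ for every $g\in G$ and every $f\in\CS(G)$. Then $\lambda$ vanishes on $(g-1).f$ for every $g\in G$ and $f\in\CS(G)$, and by linearity $\lambda$ vanishes on the entire subspace
\[
\sum_{g\in G}(g-1).\CS(G).
\]
Invoking Theorem \ref{thmA11} with $E=\BC$, this subspace coincides with the kernel of the continuous linear functional
\[
I : \CS(G)\to\BC, \quad f\mapsto \int_G f(g)\,\rd_r g.
\]
Since $I$ is surjective onto $\BC$, its kernel has codimension one in $\CS(G)$, so any linear functional vanishing on $\ker I$ is a scalar multiple of $I$; in particular $\lambda=c\cdot I$ for some $c\in\BC$, and hence $\lambda$ is continuous.

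The only point worth checking carefully is that the integration functional $I$ is itself continuous on $\CS(G)$. This is standard: choose a finite covering of $G$ by affine open Nash submanifolds, use a corresponding partition of unity to reduce to the affine case, and note that Schwartz functions on an affine Nash manifold are integrable against any right Haar measure with the integral bounded by a Schwartz seminorm (rapid decay beats the polynomial growth of the density). So the only substantive ingredient is Theorem \ref{thmA11}, and there is no genuine obstacle beyond quoting it.
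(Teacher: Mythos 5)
Your proof is correct and takes essentially the same approach as the paper's: both reduce to Theorem \ref{thmA11}, which identifies $\sum_{g\in G}(g-1).\CS(G)$ with the kernel of the (continuous) Haar integration functional, so that any right invariant functional factors through a one-dimensional Hausdorff quotient. The paper phrases this via the coinvariant space $\CS(G)_G$ being one-dimensional Hausdorff, while you phrase it as $\lambda$ being a scalar multiple of the integration functional $I$; these are the same argument, and your explicit check that $I$ is continuous is the fact the paper implicitly uses to conclude Hausdorffness of the quotient.
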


\begin{proof}
Every right invariant linear functional $\phi$ on $\CS(G)$ factors through the coinvariant space  $\CS(G)_{G}$, which is a 1-dimensional Hausdorff topological vector space by Theorem \ref{thmA11}. Thus $\phi$ must be continuous.
\end{proof}

\section{Schwartz homologies}\label{sh}

\subsection{Relatively projective representations}

Recall from Definition \ref{rp} the notion of relatively projective representations in $\CS\mathrm{mod}_G$ and also
the notion of principal left (or right)  $G$-Nash bundles from Example \ref{exm1234}. Let $\mathrm{d}_lg$ denote a fixed left invariant Haar measure on $G$.

\begin{lem}\label{chi}
For every principal  right $G$-Nash bundle $M$ and every \Fre space  $E$, there exists a smooth function 
$\chi$ on $M$ with the following properties: 
\begin{itemize}
\item the linear map 
\[
   \mathcal{S}(M, E)\rightarrow \mathcal{S}(G\times M, E), \quad f\mapsto((g,x)\mapsto \chi(xg)f(x))
\]
is well-defined and continuous; \smallskip
\item $\int_{G}\chi(xg)\mathrm{d}_lg =1$ for all $x\in M$.
\end{itemize}
\end{lem}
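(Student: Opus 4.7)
The plan is to construct $\chi$ by lifting a local construction on the base $M/G$ via a Nash partition of unity. Since $\pi\colon M \to M/G$ is a Nash submersion with free $G$-action, it admits local Nash sections (by the Nash implicit function theorem). Choose a locally finite semialgebraic open cover $\{U_i\}_{i \in I}$ of $M/G$ together with Nash sections $s_i\colon U_i \to M$, yielding Nash trivializations $\phi_i\colon U_i \times G \to \pi^{-1}(U_i)$, $(u, h) \mapsto s_i(u)\cdot h$. Let $\{\rho_i\}_{i\in I}$ be a Nash partition of unity on $M/G$ subordinate to $\{U_i\}$ (such partitions exist on paracompact Nash manifolds).

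Fix any $\beta \in \CS(G)$ with $\int_G \beta(h)\rd_l h = 1$, define $\chi_i\colon M \to \BC$ to equal $\rho_i(u)\beta(h)$ on $\pi^{-1}(U_i)$ (via $\phi_i$) and zero elsewhere, and set $\chi := \sum_i \chi_i$, which is smooth by local finiteness. The normalization $\int_G \chi(xg)\rd_l g = 1$ then follows immediately: for $x = s_i(u) h_0 \in \pi^{-1}(U_i)$, left invariance of $\rd_l g$ gives $\int_G \chi_i(xg)\rd_l g = \rho_i(u)\int_G \beta(h_0 g)\rd_l g = \rho_i(u)$, and summing over $i$ yields $1$.

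For the continuity assertion, I would reduce, via local finiteness and extension by zero, to each trivialization $\phi_i$, where the map $f \mapsto ((g,x) \mapsto \chi_i(xg)f(x))$ becomes $f \mapsto ((g, u, h) \mapsto \rho_i(u)\beta(hg)\tilde f(u, h))$ with $\tilde f := f\circ \phi_i \in \CS(U_i \times G, E)$. The substitution $k := hg$ is a Nash diffeomorphism of $G \times U_i \times G$ whose inverse $(g,u,k)\mapsto(g,u,kg^{-1})$ is of moderate growth thanks to the almost-linearity of $G$, and converts the map into $f \mapsto ((g,u,k) \mapsto \rho_i(u)\beta(k)\tilde f(u, kg^{-1}))$. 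The factor $\rho_i(u)\beta(k)$ is Schwartz in $(u,k)$, and the pullback of the Schwartz function $\tilde f$ along the Nash map $(g,u,k)\mapsto(u,kg^{-1})$ is a tempered smooth function on $G \times U_i \times G$; multiplying these gives a Schwartz function on $G \times U_i \times G$ with continuous dependence on $f$, and summing over $i$ yields the required continuous linear map.

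The main obstacle is justifying that this product is genuinely Schwartz jointly in $(g, u, k)$, i.e.\ that Schwartz decay of $\tilde f$ in its second argument translates into joint decay after the change of variables. This requires controlling the Nash diffeomorphism $(g, k) \mapsto kg^{-1}$ on $G \times G$: because $G$ is closed-embedded as a Nash submanifold in some $\GL_n(\BR)$, the matrix entries of $kg^{-1}$ are rational functions of bounded degree in those of $k$ and $g^{-1}$, so polynomial decay of $\tilde f$ in every matrix entry of its second argument translates into polynomial decay in the matrix entries of the pair $(g, k)$. Making these estimates uniform in $f$ ranging over a bounded subset of $\CS(M, E)$ (and in $E$-seminorms) provides the required continuity of the linear map into $\CS(G \times M, E)$.
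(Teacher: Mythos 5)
Your overall strategy -- local trivializations, a globally defined $\chi$ built by patching, and a change of variables in the fibre direction -- is the same strategy the paper gestures at (it handles the case $M=Y\times G$ directly with $\chi(y,h)=\beta(h)$ and delegates the patching to \cite[Proposition 7]{Bl}). However, there is a genuine error and a needless obstacle in your write-up.

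The error is the invocation of a ``Nash partition of unity.'' Nash functions are real analytic, so a nonzero Nash function on a connected Nash manifold cannot vanish on a nonempty open set; in particular there are no Nash bump functions and no Nash partitions of unity subordinate to a proper open cover. What does exist, and is exactly what is needed here, is a \emph{tempered smooth} partition of unity subordinate to a finite open Nash cover (this is established in \cite{AG1}). Also note that in this setting one should take a \emph{finite} cover of the base by open Nash submanifolds with Nash sections of $\pi$ (this is available by \cite[Theorem 2.4.3]{AG3}, used elsewhere in the paper); there is no need for an infinite locally finite cover, and using one would force you to worry about whether the infinite sum of the corresponding continuous maps is still continuous. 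With a finite cover and a tempered smooth partition of unity $\{\rho_i\}$, the resulting $\chi$ is tempered smooth, and multiplication by a tempered smooth function preserves Schwartz spaces, so the rest of your construction is fine.

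The second point is about the continuity estimate you flag as ``the main obstacle.'' It disappears if you choose the change of variables more carefully. After reducing to a trivialization $U_i\times G$ the relevant expression is $(g,u,h)\mapsto \rho_i(u)\,\beta(hg)\,\tilde f(u,h)$. Rather than substituting $k:=hg$ (which leaves the residual $\tilde f(u,kg^{-1})$ and forces the Peetre-type matrix estimates you sketch), substitute $g':=hg$ and keep $h$. The map $(g,u,h)\mapsto(hg,u,h)$ is a Nash automorphism of $G\times U_i\times G$, and pullback along a Nash isomorphism is a topological isomorphism of Schwartz spaces. Under this substitution the expression becomes $(g',u,h)\mapsto \beta(g')\cdot\rho_i(u)\tilde f(u,h)$, i.e.\ a pure tensor in $\CS(G)\,\widehat\otimes\,\CS(U_i\times G, E)$. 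Continuity then follows from the continuity of $\CS(G)\times\CS(U_i\times G,E)\to\CS(G\times U_i\times G,E)$, $(\alpha,\psi)\mapsto\alpha\otimes\psi$, with no ad hoc decay estimates. This is presumably what the paper means by ``it is easy to check.'' So: replace the Nash partition of unity with a tempered one, use a finite cover, and change variables with $g'=hg$; with those fixes your argument becomes a complete version of what the paper sketches.
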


\begin{proof}
We prove the trivial bundle case by assuming that $M = Y \times G$, where $Y$ is a Nash manifold and $G$ acts on $M$ by right translations on the second factor.
Note that the general case follows from a local trivialization technique (see, for example the proof of \cite[Proposition 7]{Bl}) .  Take a function $f\in\mathcal{S}(G)$ with $\int_G f(g)\mathrm{d}_l g= 1$. Define $\chi(y,g) := f(g)$. It is easy to check that $\chi$  has the two properties of the lemma.
\end{proof}

\begin{prpl}\label{repro}
Let $M$ be a principal right $G$-Nash bundle and let $E$ be a representation in $\CS\mathrm{mod}_G$. Then $\CS(M, E)$ is a relatively projective representation in $\CS\mathrm{mod}_G$, with the action given  by
\be\label{prin}
(g.f)(x):=g.f(xg),\quad \textrm{ $g\in G$, $x\in M$ and $f\in\CS(M, E)$.}
\ee
\end{prpl}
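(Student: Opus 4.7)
The plan is to verify first that $\CS(M,E)$ with the action \eqref{prin} belongs to $\CS\mathrm{mod}_G$, and then to prove relative projectivity by averaging against the cutoff function $\chi$ provided by Lemma \ref{chi}.

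The Fr\'echet condition on $\CS(M,E)$ is immediate from its definition. For smoothness and moderate growth of the action, one observes that $(g,f) \mapsto g.f$ is built from two tempered operations on the underlying ``bundle'' $M \times E$: the pullback of $E$-valued functions along the Nash right translation $x \mapsto xg$ on $M$, followed by the pointwise application of the tempered linear family $G \times E \to E$ coming from $E \in \CS\mathrm{mod}_G$. A routine verification in the spirit of Lemmas \ref{temcom} and \ref{temcompullb} translates these features to the desired smoothness and moderate growth of the induced action on $\CS(M,E)$.

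For relative projectivity, let $\alpha : V_1 \to V_2$ be a surjective strong homomorphism in $\CS\mathrm{mod}_G$, with continuous linear $\sigma : V_2 \to V_1$ satisfying $\alpha\circ\sigma\circ\alpha = \alpha$, and let $\gamma : \CS(M,E) \to V_2$ be a morphism in $\CS\mathrm{mod}_G$; surjectivity of $\alpha$ combined with $\alpha\sigma\alpha = \alpha$ forces $\alpha\circ\sigma = \mathrm{id}_{V_2}$. Fix $\chi$ as in Lemma \ref{chi} and, for $f \in \CS(M,E)$, define
\[
F^f(g,y) \;:=\; \chi(y)\cdot g^{-1}.f(yg^{-1}), \qquad g \in G,\ y \in M.
\]
Under the Nash diffeomorphism $(g,x) \mapsto (g,xg)$ of $G \times M$, $F^f$ pulls back to $(g,x) \mapsto \chi(xg)\cdot g^{-1}.f(x)$; this is the tempered linear family $(g,v) \mapsto g^{-1}.v$ on $E$ applied to the Schwartz function $(g,x)\mapsto \chi(xg)f(x)$ supplied by Lemma \ref{chi}, so $F^f \in \CS(G\times M, E) = \CS(G,\CS(M,E))$. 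A short computation gives $(g.F^f(g,\cdot))(x) = \chi(xg)\,f(x)$, whence
\[
\int_G g.F^f(g,\cdot)\,\rd_l g \;=\; f
\]
by the second defining property of $\chi$. I then set
\[
\tilde\gamma(f) \;:=\; \int_G g.\sigma\bigl(\gamma(F^f(g,\cdot))\bigr)\,\rd_l g.
\]

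The remaining verifications are formal. The lifting identity $\alpha\tilde\gamma = \gamma$ follows by pulling the $G$-equivariant $\alpha$ inside the integral, using $\alpha\sigma = \mathrm{id}_{V_2}$ and the $G$-equivariance of $\gamma$ to reduce to $\int_G \gamma(g.F^f(g,\cdot))\,\rd_l g = \gamma(f)$. Equivariance of $\tilde\gamma$ follows from the direct identity $F^{h.f}(g,y) = F^f(h^{-1}g, y)$ (immediate from the definitions of $F^f$ and of the $G$-action on $\CS(M,E)$), combined with the substitution $g = hg'$ and the left invariance of $\rd_l g$. Continuity of $\tilde\gamma$ reduces to the continuity of $f \mapsto F^f \in \CS(G,\CS(M,E))$ and the continuity of the subsequent vector-valued integration. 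The only genuinely analytic issue, and the main obstacle, is to justify that the integrand $g \mapsto g.\sigma\gamma(F^f(g,\cdot))$ is integrable with continuous dependence on $f$: it is Schwartz in $g$ (by nuclearity) but not compactly supported, so one must extend the construction recalled in Remark \ref{remark1} to Schwartz integrands by dominating them, using the moderate growth of the $G$-action on $V_1$, by a scalar Schwartz function on $G$, and then invoking quasi-completeness of $V_1$ through an approximation by compactly supported smooth cutoffs.
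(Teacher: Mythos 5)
Your proposal is correct and follows essentially the same approach as the paper: verify membership in $\CS\mathrm{mod}_G$, use the cutoff $\chi$ of Lemma \ref{chi} to produce a Schwartz family $F^f$ with $\int_G g.F^f(g,\cdot)\,\rd_l g = f$, and define the lift by averaging the section $\sigma$ against this family. Indeed, setting $f_\chi(g)(x)=\chi(xg)f(x)$ as in the paper, one has $F^f(g,\cdot)=g^{-1}.f_\chi(g)$, so your integrand $g.\sigma\gamma(F^f(g,\cdot))$ coincides (after using $G$-equivariance of $\gamma$) with the paper's $g.\tau(g^{-1}.\beta(f_\chi(g)))$; the remaining checks (equivariance, lifting, convergence of the Schwartz-integrand integral in the complete space $V_1$) are the ones the paper elides as routine.
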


\begin{proof}
It follows from \cite[Propositions 1.4.4 and 1.4.5]{Fd} that $\CS(M, E)$ is in $\CS\mathrm{mod}_G$. The proof that $\CS(M, E)$ is relatively projective is similar to that of \cite[Theorem 6]{Bl}. We just sketch it here. Let $\chi\in C^{\infty}(M)$ be as in Lemma \ref{chi}. For each
$f\in\CS(M, E)$, define 
\[
f_{\chi}\in\CS(G\times M, E)=\CS(G, \CS(M, E))
\]
 by
\[
f_{\chi}(g,x):=\chi(xg)f(x), \quad g\in G, x\in M.
\]

Let  $\alpha : V_1\rightarrow V_2$ be a  surjective strong homomorphism in $\CS\mathrm{mod}_G$, and let $\beta:\CS(M, E)\rightarrow V_2$  be a homomorphism in $\CS\mathrm{mod}_G$. 
Take a  continuous linear section $\tau: V_2\rightarrow V_1$ of $\alpha$ (which may or may not be $G$-equivariant).  Define a map
\[
\tilde {\beta}:\CS(M, E)\rightarrow V_1, \quad f\mapsto\int_{G}g.\tau(g^{-1}.\beta(f_{\chi}(g)))\mathrm{d}_lg.
\]
It is easy to check that $\tilde{\beta}$ is a homomorphism in $\CS\mathrm{mod}_G$ which lifts $\beta$. This proves the proposition. 
\end{proof}

\begin{remark}
If $M$ is a principal left $G$-Nash bundle and $E$ is a representation in $\CS\mathrm{mod}_G$, then $\CS(M, E)$ is also a relatively projective representation in $\CS\mathrm{mod}_G$, with the action as in \eqref{actgf}. 
\end{remark}

For every \Fre space $E$, write $\CS(G, E)_{l}$ for the \Fre space $\CS(G, E)$ carrying the representation of $G$ by the left translations; and write 
$\CS(G, E)_{r}$ for the same space carrying the representation of $G$ by the right translations. 
More generally, given a representation $E$ in $\CS\mathrm{mod}_G$, there are four natural actions of $G$ on the \Fre space $\CS(G, E)$: 
\[
  (g.f)(x):= f(g^{-1}x),\, g. (f(g^{-1}x)),\,  f(xg), \,\textrm{ or } \, g.(f(xg)), 
  \]
  for all $g, x\in G$, $f\in \CS(G, E)$. We respectively write $\CS(G, E)_{l1}$, $\CS(G, E)_{l2}$, $\CS(G, E)_{r1}$ and $\CS(G, E)_{r2}$ for the resulting representations of $G$. 
When the action of $G$ on $E$ is  trivial, we have that 
\[
\CS(G, E)_{l1}=\CS(G, E)_{l2}= \CS(G, E)_{l}
\]
and 
\[
\CS(G, E)_{r1} = \CS(G, E)_{r2}=\CS(G, E)_{r},\]
 as representations of $G$.

\begin{lem}\label{spe pro}
Let $E$ be a  representation  in $\CS\mathrm{mod}_G$. Then
\[
  \CS(G, E)_{l1}\cong\CS(G, E)_{l2}\cong \CS(G, E)_{r1}\cong\CS(G, E)_{r2}
\]
as representations of $G$, and they are relatively projective in 
$\CS\mathrm{mod}_G$. 
\end{lem}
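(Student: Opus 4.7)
The plan is to construct explicit continuous $G$-equivariant isomorphisms between the four Schwartz representations and then transport relative projectivity from the two cases already covered by Proposition \ref{repro} and the Remark that follows it.

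First I would exhibit three intertwining operators on the underlying Fr\'echet space $\CS(G,E)$. Inversion on $G$ yields $\Theta:\CS(G,E)_{l1}\to\CS(G,E)_{r1}$, $(\Theta f)(x):=f(x^{-1})$, and a direct check gives $\Theta\circ g._{l1}=g._{r1}\circ\Theta$. To pass between the $l1$- and $l2$-actions I would set
\begin{equation*}
\Phi:\CS(G,E)_{l1}\to\CS(G,E)_{l2},\qquad (\Phi f)(x):=x.f(x),
\end{equation*}
and symmetrically $\Psi:\CS(G,E)_{r1}\to\CS(G,E)_{r2}$, $(\Psi f)(x):=x^{-1}.f(x)$. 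Equivariance is routine; for example
\begin{equation*}
(\Phi(g._{l1}f))(x)=x.f(g^{-1}x)=g.\bigl((g^{-1}x).f(g^{-1}x)\bigr)=(g._{l2}\Phi f)(x),
\end{equation*}
and the calculation for $\Psi$ is analogous, using $g.((xg)^{-1}.u)=x^{-1}.u$.

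The main obstacle is to verify that $\Phi$ and $\Psi$ are in fact continuous linear automorphisms of $\CS(G,E)$. This is where the tempered machinery of Section \ref{pfunction} is essential. By Lemma \ref{smodg}, the action map $G\times E\to E$, $(x,u)\mapsto x.u$, is a tempered linear family, so $(x,u)\mapsto(x,x.u)$ is a tempered bundle map in the sense of Definition \ref{tembdm}. Appealing to Lemma \ref{temcombu} together with a Leibniz-rule computation on Nash differential operators (reducing to the affine case via a cover of $G$), I would argue that composition with such a tempered bundle map preserves Schwartz sections and defines a continuous operator on $\CS(G,E)$. The inverse bundle map $(x,u)\mapsto(x,x^{-1}.u)$ supplies a continuous two-sided inverse of $\Phi$; an analogous argument handles $\Psi$, and continuity of $\Theta$ is immediate since inversion is a Nash diffeomorphism of $G$.

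Finally, $G$ regarded as a principal right $G$-Nash bundle under right multiplication together with Proposition \ref{repro} shows that $\CS(G,E)_{r2}$ is relatively projective in $\CS\mathrm{mod}_G$; the Remark following Proposition \ref{repro}, applied to $G$ as a principal left $G$-Nash bundle, gives the same for $\CS(G,E)_{l2}$. Since relative projectivity is manifestly preserved under isomorphism in $\CS\mathrm{mod}_G$, the maps $\Theta$, $\Phi$, $\Psi$ transport the property to all four representations, completing the proof.
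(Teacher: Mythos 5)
Your proposal is correct and follows essentially the same route as the paper: the same three intertwining operators (inversion for $l1\to r1$, twisting by the fiberwise action for $l1\to l2$ and $r1\to r2$), followed by citing Proposition \ref{repro} (and its Remark) to obtain relative projectivity of $\CS(G,E)_{r2}$ (and $\CS(G,E)_{l2}$), then transporting along the isomorphisms. The only difference is that you spell out the continuity of $\Phi$ and $\Psi$ via the tempered-bundle-map machinery, whereas the paper treats this as clear; your extra care is sound and fills in an implicit step.
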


\begin{proof}
It is clear that 
\[
  \CS(G, E)_{l1}\rightarrow \CS(G, E)_{r1},\quad f\mapsto (g\mapsto f(g^{-1}))
\]
is an isomorphism of representations of $G$. Likewise, 
\[
  \CS(G, E)_{l1}\rightarrow \CS(G, E)_{l2},\quad f\mapsto (g\mapsto g.f(g)), 
\]
and 
\[
  \CS(G, E)_{r1}\rightarrow \CS(G, E)_{r2},\quad f\mapsto (g\mapsto g^{-1}.f(g)), 
\]
are also  isomorphisms of representations of $G$. This proves the first assertion of the lemma. The second assertion follows from Proposition \ref{repro}. 
\end{proof}

Lemma  \ref{spe pro} and the following proposition imply that the category $\CS\mathrm{mod}_G$ has enough relatively projective objects. 

\begin{prpl}\label{ssurj}
For every representation $E$ in $\CS\mathrm{mod}_G$, 
\be\label{ss}
\CS(G, E)_{r2}\rightarrow E, \quad f\mapsto\int_{G}f(g)\mathrm{d}_rg
\ee
 is a  surjective strong homomorphism in the category $\CS\mathrm{mod}_G$. 
\end{prpl}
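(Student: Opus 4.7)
The plan is to verify separately that the integration map is continuous, $G$-equivariant, and admits a continuous linear section; the existence of a section immediately upgrades the map to a surjective strong homomorphism. I will not attempt $G$-equivariance of the section itself (we only need a continuous linear splitting).

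First I would dispose of the analytic content. Continuity and well-definedness of $f\mapsto\int_G f(g)\,\rd_r g$ as a map $\CS(G,E)\to E$ follow from the identification $\CS(G,E)=\CS(G)\widehat\otimes E$ recalled in Section \ref{secsf1}: integration against $\rd_r g$ is a continuous linear functional on $\CS(G)$, so its tensor product with $\mathrm{id}_E$ is a continuous linear map $\CS(G)\widehat\otimes E\to\mathbb C\widehat\otimes E=E$, and on elementary tensors $\phi\otimes v$ this yields $(\int_G\phi\,\rd_r g)\,v$, which is exactly the integration of the corresponding $E$-valued function, by the defining property of vector-valued integrals recalled in Remark \ref{remark1}.

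Next I would check $G$-equivariance with respect to the $r2$-action $(h.f)(x)=h.(f(xh))$. For $h\in G$ one computes
\[
\int_G (h.f)(x)\,\rd_r x=\int_G h.(f(xh))\,\rd_r x=h.\!\int_G f(xh)\,\rd_r x=h.\!\int_G f(x)\,\rd_r x,
\]
where the action of $h$ passes through the integral (apply $\lambda$ for any continuous linear $\lambda$ and use Remark \ref{remark1}), and the second-to-last equality is the right invariance of $\rd_r g$. Since $E$ already belongs to $\CS\mathrm{mod}_G$ and $\CS(G,E)_{r2}$ was shown in Lemma \ref{spe pro} to belong to $\CS\mathrm{mod}_G$, this makes the integration map a morphism in $\CS\mathrm{mod}_G$.

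The strongness and surjectivity will both come from a single construction. Pick $\phi_0\in\CS(G)$ with $\int_G\phi_0(g)\,\rd_r g=1$ (such a $\phi_0$ clearly exists, e.g.\ by taking a nonnegative bump and normalising) and define
\[
\sigma:E\longrightarrow \CS(G,E),\quad \sigma(v):=\phi_0\otimes v,
\]
viewed via $\CS(G)\widehat\otimes E=\CS(G,E)$, so $\sigma(v)(g)=\phi_0(g)v$. The map $\sigma$ is obviously continuous and linear, and by the first step $\int_G\sigma(v)(g)\,\rd_r g=(\int_G\phi_0\,\rd_r g)\,v=v$. Thus $\alpha\circ\sigma=\mathrm{id}_E$, which forces surjectivity, and $\alpha\circ\sigma\circ\alpha=\alpha$ shows that $\alpha$ is strong in the sense of Definition before \ref{rp}. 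The only subtle point worth flagging is that $\sigma$ is generally not $G$-equivariant, but this is irrelevant: the notion of strong homomorphism only requires a continuous linear (not necessarily intertwining) splitting, so nothing more is needed. There is no real obstacle here—the argument is essentially a tensor-product reformulation combined with right invariance of $\rd_r g$.
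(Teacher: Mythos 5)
Your proof is correct and takes essentially the same route as the paper: the paper also picks $\phi\in\CS(G)$ with $\int_G\phi\,\rd_r g=1$ and uses $v\mapsto(g\mapsto\phi(g)v)$ as the continuous linear section witnessing both surjectivity and strongness. You merely spell out the continuity and $G$-equivariance checks that the paper dismisses as ``Clearly'', and your verifications (tensor-product identification for continuity, right invariance of $\rd_r g$ for equivariance) are sound.
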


\begin{proof}
Clearly the map \eqref{ss} is a surjective homorphism in $\CS\mathrm{mod}_G$. Pick $\phi\in \CS(G)$ such that
\[
  \int_{G}\phi(g)\mathrm{d}_r g=1.
\]
Then the surjective linear map \eqref{ss} has a  continuous linear section  given by 
\[v\mapsto(g\mapsto \phi(g)v). 
\]
\end{proof}

\begin{prpl}\label{invrel}
For every relatively projective representation $P$ and every representation $E$ in $\CS\mathrm{mod}_G$, the completed projective tensor product $P\widehat{\otimes}E$ is a relatively projective representation with the diagonal $G$-action.
\end{prpl}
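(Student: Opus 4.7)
The plan is to realize $P\widehat{\otimes}E$ as a $G$-equivariant continuous direct summand of a representation of the form $\CS(G, P\widehat{\otimes}E)_{r2}$, which is relatively projective by Proposition \ref{repro}. This reduces the problem to two standard observations: first, a relatively projective object is always a direct summand of the ``free'' representation built from it via Proposition \ref{ssurj}; and second, direct summands (in the $G$-equivariant continuous sense) of relatively projective representations are again relatively projective.

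First, apply Proposition \ref{ssurj} to $E$ replaced by $P$: the integration map
\[
\pi:\CS(G,P)_{r2}\longrightarrow P,\quad f\mapsto \int_G f(g)\,\rd_rg,
\]
is a surjective strong homomorphism in $\CS\mathrm{mod}_G$. Since $P$ is relatively projective, the identity $\mathrm{id}_P$ lifts through $\pi$ to a homomorphism $s:P\to\CS(G,P)_{r2}$ with $\pi\circ s=\mathrm{id}_P$. The idempotent $s\circ\pi$ exhibits $P$ as a $G$-equivariant topological direct summand of $\CS(G,P)_{r2}$. Tensoring this splitting with $E$ over $\mathbb{C}$ in the completed projective sense shows that $P\widehat{\otimes}E$ (with diagonal action) is a $G$-equivariant continuous direct summand of $\CS(G,P)_{r2}\widehat{\otimes}E$ (with diagonal action).

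Next, I would establish the natural topological and $G$-equivariant identification
\[
\Phi:\CS(G,P)_{r2}\widehat{\otimes}E\xrightarrow{\ \sim\ }\CS(G,P\widehat{\otimes}E)_{r2},\quad f\otimes e\mapsto\bigl[x\mapsto f(x)\otimes e\bigr].
\]
The underlying topological isomorphism follows from nuclearity of $\CS(G)$ together with associativity of $\widehat{\otimes}$ (Example \ref{exm123}). The $G$-equivariance is the computation
\[
(g\cdot\Phi(f\otimes e))(x)=g.\bigl(f(xg)\otimes e\bigr)=g.f(xg)\otimes g.e=\Phi\bigl((g.f)\otimes(g.e)\bigr)(x),
\]
valid for all $f\in\CS(G,P)$, $e\in E$ and $g,x\in G$, where the left side uses the $r2$-action on $\CS(G,P\widehat{\otimes}E)$ and the diagonal action on the coefficients $P\widehat{\otimes}E$, while the right side uses the diagonal action on the tensor product with $\CS(G,P)_{r2}$.

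Finally, Proposition \ref{repro} applied to $M=G$ viewed as a principal right $G$-Nash bundle shows that $\CS(G, P\widehat{\otimes}E)_{r2}$ is relatively projective. It is a routine verification that a $G$-equivariant continuous direct summand of a relatively projective representation is itself relatively projective (given $\alpha:V_1\to V_2$ strong surjective and $\beta$ defined on the summand, extend $\beta$ by zero on the complement, lift, and restrict). Combining the three steps yields that $P\widehat{\otimes}E$ is relatively projective.

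The main obstacle I expect is technical rather than conceptual: one must verify that $P\widehat{\otimes}E$ with diagonal $G$-action really lies in $\CS\mathrm{mod}_G$ (smoothness, moderate growth, and Fr\'echet-ness of the completed projective tensor product), and that $\Phi$ is bi-continuous and $G$-equivariant as stated. These should follow from the temperedness of the action maps of $P$ and $E$ (Lemmas \ref{temcom} and \ref{smodg}) together with the compatibility of $\widehat{\otimes}$ with nuclear Fr\'echet spaces, but each point deserves care.
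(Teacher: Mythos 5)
Your proof is correct and takes essentially the same route as the paper's: realize $P$ as a $G$-equivariant direct summand of $\CS(G,P)_{r2}$ via Proposition \ref{ssurj}, tensor with $E$, identify $\CS(G,P)_{r2}\widehat\otimes E\cong\CS(G,P\widehat\otimes E)_{r2}$ as $G$-representations, and conclude by the stability of relative projectivity under direct summands. You spell out the $G$-equivariance of the identification and the splitting argument more explicitly than the paper does, but these are the same ingredients in the same order.
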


\begin{proof}
By Proposition \ref{ssurj}, every relatively projective representation $P$ is a direct summand of $\CS(G, P)_{r2}$. Since a direct summand of a relatively projective representation is relatively projective,  it is enough to show that 
$\CS(G, P)_{r2}\widehat{\otimes}E$ is relatively projective for every representation $E$ in $\CS\mathrm{mod}_G$.
But this is obvious since $$\CS(G, P)_{r2}\widehat{\otimes}E\cong\CS(G, P\widehat{\otimes}E)_{r2}$$ as representations of $G$.
\end{proof}

\begin{prpl}\label{rpr}
A representation in  $\CS\mathrm{mod}_G$ is relatively projective if and only if it is isomorphic to a direct summand of a representation of the form 
$\CS(G, E)_r$, where $E$ is a \Fre space. \end{prpl}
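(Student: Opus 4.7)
The plan is to prove both directions by combining the preceding lemmas/propositions in a fairly direct way, with the main subtlety being to reconcile the four different actions on $\CS(G,P)$.

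\textbf{The ``if'' direction.} First I would note that $\CS(G,E)_r$, with $E$ merely a \Fre space (so the action on $E$ is trivial), is exactly the representation produced by Proposition \ref{repro} applied to the principal right $G$-Nash bundle $M = G$ acting on itself, and therefore is relatively projective. It remains to verify the general fact that a direct summand of a relatively projective representation is relatively projective. So suppose $V$ is a direct summand of some relatively projective $Q$, with maps $i : V \incl Q$ and $p : Q \to V$ in $\CS\mathrm{mod}_G$ satisfying $p \circ i = \mathrm{id}_V$. Given a surjective strong homomorphism $\alpha : V_1 \to V_2$ and a homomorphism $\beta : V \to V_2$ in $\CS\mathrm{mod}_G$, apply the projectivity of $Q$ to the composite $\beta \circ p : Q \to V_2$ to obtain a lift $\gamma : Q \to V_1$. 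Then $\tilde\beta := \gamma \circ i$ lifts $\beta$.

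\textbf{The ``only if'' direction.} Suppose $P$ is relatively projective. By Proposition \ref{ssurj}, the integration map
\[
\CS(G,P)_{r2} \to P, \qquad f \mapsto \int_G f(g)\, \od_r g,
\]
is a surjective strong homomorphism in $\CS\mathrm{mod}_G$. Since $P$ is relatively projective, the identity map $\mathrm{id}_P : P \to P$ lifts to a $G$-equivariant continuous linear map $\sigma : P \to \CS(G,P)_{r2}$, i.e., the surjection above is split. Consequently $P$ is isomorphic to a $G$-equivariant direct summand of $\CS(G,P)_{r2}$.

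\textbf{Reconciling the actions.} The final step is to identify $\CS(G,P)_{r2}$ with a representation of the form $\CS(G,E)_r$ for some \Fre space $E$. By Lemma \ref{spe pro}, the map
\[
\CS(G,P)_{r1} \to \CS(G,P)_{r2}, \qquad f \mapsto \bigl(g \mapsto g^{-1}.f(g)\bigr),
\]
is a $G$-equivariant topological isomorphism. But $\CS(G,P)_{r1}$ by definition involves only the right translation action on $G$ and does not use the $G$-action on $P$, so it coincides with $\CS(G,E)_r$ for $E := P$ regarded merely as a \Fre space. Composing the splitting of the previous paragraph with this isomorphism exhibits $P$ as a direct summand of $\CS(G,E)_r$, completing the proof.

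The only real substantive content is Step 2; once Proposition \ref{ssurj} and Lemma \ref{spe pro} are in hand, there is no obstacle, and the argument mirrors the classical characterization of projective modules as summands of free modules.
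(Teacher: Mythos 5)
Your proof is correct and follows essentially the same route as the paper's (the paper's own proof is a two-line citation of Lemma \ref{spe pro} and Proposition \ref{ssurj}, leaving the splitting argument, the direct-summand argument, and the identification of $\CS(G,P)_{r1}$ with $\CS(G,E)_r$ for the underlying \Fre space $E=P$ to the reader). You have simply spelled out the details that the paper leaves implicit.
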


\begin{proof}
Since a direct summand of a relatively projective representation is relatively projective, the if part of the proposition follows from 
Lemma  \ref{spe pro}. 
The only if part follows from Lemma  \ref{spe pro} and Proposition \ref{ssurj}. 
\end{proof}

The following result will be useful later.

\begin{prpl}\label{61}
When $G$ is compact, every representation in  $\CS\mathrm{mod}_G$ is relatively projective.
\end{prpl}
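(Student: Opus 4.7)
The plan is to mimic the construction used in the proof of Proposition~\ref{repro}, but exploit compactness of $G$ to remove the need for a fundamental domain / partition-of-unity ingredient (there is no noncompact factor to worry about). Let $V$ be a representation in $\CS\mathrm{mod}_G$ with $G$ compact, and let $\mathrm{d}g$ be the normalized Haar measure on $G$. Suppose $\alpha:V_1\to V_2$ is a surjective strong homomorphism and $\beta:V\to V_2$ is a homomorphism in $\CS\mathrm{mod}_G$. By strength of $\alpha$, there is a continuous linear $\gamma:V_2\to V_1$ with $\alpha\circ\gamma\circ\alpha=\alpha$; surjectivity of $\alpha$ then forces $\alpha\circ\gamma=\mathrm{id}_{V_2}$, so $\gamma$ is a (generally non-equivariant) continuous linear splitting of $\alpha$.

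The natural lift is obtained by $G$-averaging this splitting. Concretely, define
\[
\tilde\beta:V\to V_1,\qquad \tilde\beta(v):=\int_{G}g.\gamma\bigl(g^{-1}.\beta(v)\bigr)\,\mathrm{d}g.
\]
The integrand is a continuous $V_1$-valued function on the compact space $G$, so the integral exists by the vector-valued integration recalled in Remark~\ref{remark1}. A change-of-variable $g\mapsto hg$ shows $\tilde\beta$ is $G$-equivariant, and (using that $\alpha$ is $G$-equivariant and continuous, so commutes with the integral) one computes
\[
\alpha(\tilde\beta(v))=\int_{G}g.\bigl(g^{-1}.\beta(v)\bigr)\,\mathrm{d}g=\beta(v)\int_{G}\mathrm{d}g=\beta(v).
\]

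It remains to verify that $\tilde\beta$ is continuous, which is where the compactness of $G$ is essential. For every continuous seminorm $|\cdot|_{\mu}$ on $V_1$, the moderate growth hypothesis applied to $V_1$ gives a continuous seminorm $|\cdot|_{\nu}$ on $V_1$ with $|g.w|_{\mu}\le f(g)|w|_{\nu}$ for all $g\in G$, $w\in V_1$, where $f$ is a positive Nash function on $G$. Since $G$ is compact, $f$ is bounded above by some constant $C$, and similarly $g^{-1}.(\,\cdot\,)$ on $V_2$ is equicontinuous in $g\in G$. Combining these bounds with continuity of $\gamma$ and $\beta$ gives a continuous seminorm $|\cdot|_{\rho}$ on $V$ with $|\gamma(g^{-1}.\beta(v))|_{\nu}\le |v|_{\rho}$ uniformly in $g$, hence
\[
|\tilde\beta(v)|_{\mu}\le\int_{G}f(g)\,|\gamma(g^{-1}.\beta(v))|_{\nu}\,\mathrm{d}g\le C\,|v|_{\rho}.
\]
Thus $\tilde\beta$ is a continuous $G$-intertwining linear map lifting $\beta$, so $V$ is relatively projective. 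The only point requiring attention is the uniform equicontinuity of the $G$-action underlying the seminorm estimate; this is immediate here because $G$ is compact, which is precisely the feature that is absent in the general noncompact setting.
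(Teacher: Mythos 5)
Your proof is correct. The paper does not actually present an argument here — it just cites \cite[Lemma 7]{CW} — so your write-up fills in the standard averaging proof that lies behind that reference. All the key points check out: $\alpha\circ\gamma=\mathrm{id}_{V_2}$ follows from surjectivity of $\alpha$ together with $\alpha\gamma\alpha=\alpha$; the integrand $g\mapsto g.\gamma(g^{-1}.\beta(v))$ is continuous and compactly supported (trivially, $G$ is compact), so the integral exists by the Bourbaki result recalled in Remark~\ref{remark1}; bi-invariance of Haar measure on a compact group gives $G$-equivariance after the substitution $g\mapsto hg$; $\alpha\circ\tilde\beta=\beta$ follows since $\alpha$ is $G$-equivariant, continuous, and linear, hence commutes with the integral. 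For continuity, the point you flag is exactly right: moderate growth together with compactness of $G$ makes the family $\{g.(\,\cdot\,):g\in G\}$ equicontinuous on both $V_1$ and $V_2$ (a Nash function on a compact Nash manifold is bounded), and chaining this with continuity of $\gamma$ and $\beta$ gives a single seminorm estimate $|\tilde\beta(v)|_{\mu}\le C|v|_{\rho}$. This is precisely the feature that fails for noncompact $G$, which is why Proposition~\ref{repro} needs the function $\chi$ from Lemma~\ref{chi} as a substitute for the constant weight.

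One very minor remark: when you invoke "uniform equicontinuity" you are implicitly also using that $\beta$ and $\gamma$ are continuous to pass from a seminorm on $V_1$ to one on $V$. The full chain is $V\xrightarrow{\beta}V_2\xrightarrow{g^{-1}.}V_2\xrightarrow{\gamma}V_1\xrightarrow{g.}V_1$, and compactness is used at both ends. You state this, just slightly compressed; spelled out it is airtight.
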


\begin{proof}
This is well known. For a proof, see  \cite[Lemma 7]{CW} for example.
\end{proof}

\subsection{The coinvariants of relatively projective representations}\label{co}

Before going to the definition of Schwartz homology of representations in $\CS\mathrm{mod}_{G}$, we prove a remarkable property of relatively projective representations. 

\begin{prpl}\label{AAA}
With the notation as in Proposition \ref{repro},
\[
\sum_{g\in G}(g-1).\CS(M, E) = \left\{f\in\CS(M, E)\, : \, \int_{G}g.(f(xg))\mathrm{d}_rg=0,\textrm{ for all $x\in M$}\right\}.
\]
\end{prpl}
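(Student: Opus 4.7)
The plan is to prove the two inclusions separately. The easy direction $\subseteq$ follows by direct computation: for any $h \in \CS(M,E)$ and $g_0 \in G$,
\[
\int_G g.\bigl(((g_0-1).h)(xg)\bigr) \rd_r g = \int_G gg_0.h(xgg_0) \rd_r g - \int_G g.h(xg) \rd_r g = 0,
\]
the two integrals cancelling after the substitution $g \mapsto gg_0^{-1}$ in the first one and by the right-invariance of $\rd_r g$.

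For the reverse direction, I would first settle the trivial bundle case $M = Y \times G$, with $G$ acting by right translation on the second factor, and then globalize via local trivialization. In the trivial case, the function $x \mapsto \int_G g.f(xg) \rd_r g$ transforms by a scalar factor along each $G$-orbit, so the hypothesis reduces to $\int_G g.f(y, g) \rd_r g = 0$ for every $y \in Y$. Set $\tilde f(y, g) := g.f(y, g)$. Since the action map of $E$ is a tempered linear family (Lemma \ref{smodg}), the twisting $f \mapsto \tilde f$ is a continuous automorphism of $\CS(Y \times G, E)$, in the spirit of the isomorphism of Lemma \ref{spe pro}. Using the identification $\CS(Y \times G, E) = \CS(G, \CS(Y, E))$ and viewing $\CS(Y, E)$ as a \Fre space with trivial $G$-action, Theorem \ref{thmA11} gives a decomposition $\tilde f(y, g) = \sum_i \bigl(h_i(y, g g_i) - h_i(y, g)\bigr)$ for finitely many $h_i \in \CS(Y \times G, E)$ and $g_i \in G$. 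Setting $k_i(y, g) := g^{-1}.h_i(y, g)$, which again lies in $\CS(Y \times G, E)$ by the same reasoning, a short direct computation gives
\[
f(y, g) = g^{-1}.\tilde f(y, g) = \sum_i \bigl(g_i.k_i(y, gg_i) - k_i(y, g)\bigr),
\]
expressing $f$ as an element of $\sum_{g \in G}(g-1).\CS(Y \times G, E)$ in the twisted action $(g.k)(y, h) = g.k(y, hg)$.

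For the general case, I would cover the base $G \bs M$ by finitely many affine open Nash submanifolds $Y_\alpha$ over each of which $M$ trivializes, and fix a subordinate tempered partition of unity $\{\varphi_\alpha\}$ on $G \bs M$. Writing $\pi : M \to G \bs M$ for the projection, decompose $f = \sum_\alpha (\varphi_\alpha \circ \pi) \cdot f$; each summand lies in $\CS(M, E)$ and is supported over a trivialized piece $\pi^{-1}(Y_\alpha) \cong Y_\alpha \times G$. Because $\varphi_\alpha \circ \pi$ is $G$-invariant on $M$, the hypothesis passes to each summand, so the trivial case applies and we conclude by summation.

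The step that will require the most care is verifying, throughout the argument, that twisting or untwisting by the tempered $G$-action on $E$ preserves membership in the Schwartz space, both to produce $\tilde f$ from $f$ and to recover $k_i$ from $h_i$. This is where the machinery of tempered linear families and tempered bundle maps of Section \ref{pfunction} is essential. The partition-of-unity reduction in the general case is technically routine but requires a careful choice of tempered partition on the Nash base $G \bs M$, along the lines of the proof of Lemma \ref{chi} and \cite[Proposition 7]{Bl}.
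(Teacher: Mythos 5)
Your proposal is correct and takes essentially the same route as the paper's proof: reduce to the trivial-bundle case $M=Y\times G$, twist by $\tilde f(y,g):=g.f(y,g)$, curry to $\CS(G,\CS(Y,E))$, invoke Theorem \ref{thmA11}, and globalize via local trivializations as in \cite[Proposition 7]{Bl}. The only presentational difference is that the paper organizes the trivial-bundle step by observing that the twist and currying maps are $G$-equivariant isomorphisms (so the entire statement transfers to Theorem \ref{thmA11}), whereas you carry out the decomposition of $\tilde f$ and the explicit untwisting of the $h_i$ by hand.
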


\begin{proof}
When $M=G$ and the $G$-action on $E$ is trivial, this is Theorem \ref{thmA11}. For the general case, the proof is analogous to that of \cite[Proposition 7]{Bl}. We sketch the proof of the trivial bundle case  for the convenience of the reader. So we assume that $M=Y\times G$, where $Y$ is a Nash manifold and $G$ acts on $M$ by the right translations on the second factor. 

We have the topological linear  isomorphisms
\be\label{reduced}
\CS(M, E)\cong\CS(M, E)\cong\CS(G, \CS(Y, E)).
\ee
Here the first isomorphism is $f\mapsto \tilde f$, where 
\[
\tilde{f}(y,g)=g.f(y,g)\quad \textrm{for all }(y,g)\in M=Y\times G,
\]
 The second isomorphism is $f\mapsto \bar f$, where 
\[
\bar{f}(g)(y):=f(y,g) \quad \textrm{for all }(y,g)\in Y\times  G.
\]
Let $G$ act on the first $\CS(M, E)$ as in \eqref{prin}, act on the second $\CS(M, E)$ by the right translations on $M$, and act on 
$\CS(G, \CS(Y, E))$ by the right translations on $G$. 
It is easy to check that the isomorphisms in \eqref{reduced} are $G$-equivariant. 
For every $f\in\CS(M, E)$, it is clear that 
\[
  \int_{G}g.f(xg)\mathrm{d}_rg=0, \quad \textrm{for all $x\in M$} 
  \]
  if and only if 
 \[
 \int_{G}\bar{\tilde{f}}(g)\mathrm{d}_rg=0.
 \]
 Thus the proposition (in the trivial bundle case) follows from  Theorem \ref{thmA11}.
\end{proof}

\begin{remark}
Proposition \ref{AAA} will be crucial in our characterization of ``Schwartz induced representations", see Proposition \ref{Sind}.
\end{remark}

\begin{thml}\label{hausrelpro}
For every relatively projective representation $V$ in $\CS\mathrm{mod}_{G}$, the coinvariant space $V_{G}$ is a \Fre space.
\end{thml}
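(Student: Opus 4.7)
The strategy is to combine the structural characterization of relatively projective objects (Proposition \ref{rpr}) with the concrete description of coinvariants provided by Proposition \ref{AAA}, and then transfer the result along a retraction.

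First, by Proposition \ref{rpr}, I would write $V$ as a direct summand of $P := \CS(G, E)_r$ for some \Fre space $E$ (with trivial $G$-action), fixing morphisms $i\colon V \to P$ and $p\colon P \to V$ in $\CS\mathrm{mod}_G$ with $p \circ i = \mathrm{id}_V$. Since $V$ is itself a \Fre space by hypothesis, it suffices to prove that the subspace $\sum_{g \in G}(g-1).V$ is closed in $V$; then $V_G$ will automatically be \Fre as a quotient of a \Fre space by a closed subspace.

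Second, I would compute $\sum_{g}(g-1).P$ directly via Proposition \ref{AAA}, applied with the principal right $G$-Nash bundle $M = G$ (right translation) and $E$ with its trivial $G$-action. Under triviality, the condition $\int_{G}g.(f(xg))\,\mathrm{d}_r g = 0$ becomes $\int_{G}f(xg)\,\mathrm{d}_r g = 0$, which by right-invariance of $\mathrm{d}_r g$ is independent of $x$ and coincides with $\int_{G}f(g)\,\mathrm{d}_r g = 0$. Consequently
\[
\sum_{g \in G}(g-1).P \;=\; \ker\!\left(\CS(G,E) \to E,\ f \mapsto \int_G f(g)\,\mathrm{d}_r g\right),
\]
which is closed in $P$ as the kernel of a continuous linear map. (For later use one also notes that this kernel admits the continuous section $v \mapsto \phi(\cdot)v$ for any fixed $\phi\in\CS(G)$ with $\int_G \phi(g)\,\mathrm{d}_r g = 1$, so in fact $P_G \cong E$ topologically, but the theorem only needs closedness.)

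Third, I would transfer closedness to $V$ along $i$, using the identity
\[
\sum_{g \in G}(g-1).V \;=\; i^{-1}\!\left(\sum_{g \in G}(g-1).P\right).
\]
The inclusion $\subseteq$ is immediate from the $G$-equivariance of $i$. For $\supseteq$, if $i(v) = \sum_{j}(g_j - 1).w_j$ in $P$, then applying the $G$-equivariant retraction $p$ yields $v = p(i(v)) = \sum_{j}(g_j - 1).p(w_j) \in \sum_{g}(g-1).V$. Since $i$ is continuous and the right-hand side of the identity is closed in $P$ by the previous step, the left-hand side is closed in $V$, completing the proof. I do not expect any serious obstacle here; the entire argument is a short orchestration of the two earlier results, and the only point of care is that Proposition \ref{rpr} genuinely supplies a $G$-equivariant retraction (not merely a topological one), which is exactly what the pullback argument needs.
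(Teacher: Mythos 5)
Your proof is correct and follows essentially the same route as the paper: both begin from Proposition~\ref{rpr} to realize $V$ as a $G$-equivariant direct summand of $\CS(G,E)_r$, then use Theorem~\ref{thmA11} (equivalently, the specialization of Proposition~\ref{AAA} you carry out) to identify the coinvariants of $\CS(G,E)_r$ and transfer the conclusion to $V$. The only stylistic difference is that you argue via closedness of $\sum_g(g-1).V$ pulled back along $i$, whereas the paper argues directly that a direct summand of the Hausdorff space $(\CS(G,E)_r)_G\cong E$ is Hausdorff; these are two phrasings of the same argument.
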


\begin{proof}
By Proposition \ref{rpr}, $V$ is isomorphic to a direct summand of a representation of the form $\CS(G, E)_r$, where $E$ is a \Fre space. Theorem \ref{thmA11}  implies that the coinvariant space 
\[
  (\CS(G, E)_r)_G\cong E 
\]
is Hausdorff. This implies that $V_G$ is also Hausdorff. 
\end{proof}

In the rest of this subsection, we will give a proof of Theorem \ref{thmA2}.

\begin{lem}\label{ele}
Let $A\subset G$ and denote by $\la A\ra$ the subgroup of $G$ generated by $A$. For every representation $V$ of $G$, one has that 
\[
\sum_{a\in A}(a-1).V = \sum_{a\in \la A\ra }(a-1).V .
\]
\end{lem}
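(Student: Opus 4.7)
The plan is a purely algebraic induction on word length, using two elementary identities in the group ring of $G$. The inclusion $\sum_{a\in A}(a-1).V \subset \sum_{a\in \la A\ra}(a-1).V$ is immediate from $A\subset \la A\ra$, so the content is the reverse inclusion: I must show that for every $b\in \la A\ra$ and every $v\in V$, the vector $(b-1).v$ lies in $\sum_{a\in A}(a-1).V$.

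The two key identities I will exploit are
\[
(ab-1).v \;=\; (a-1).(b.v) + (b-1).v
\]
and
\[
(a^{-1}-1).v \;=\; -(a-1).(a^{-1}.v),
\]
both of which are verified by direct computation in the group algebra. Using them, I induct on the length $n$ of an expression $b = a_1^{\varepsilon_1} a_2^{\varepsilon_2} \cdots a_n^{\varepsilon_n}$ with $a_i \in A$ and $\varepsilon_i \in \{\pm 1\}$. The base case $n=0$ gives $b = e$ and $(b-1).v = 0 \in \sum_{a\in A}(a-1).V$. For the inductive step, I write $b = a_1^{\varepsilon_1} c$ where $c$ has length $n-1$, apply the first identity to get
\[
(b-1).v \;=\; (a_1^{\varepsilon_1}-1).(c.v) + (c-1).v,
\]
note that $(c-1).v$ lies in $\sum_{a\in A}(a-1).V$ by the inductive hypothesis, and handle the first term by the second identity when $\varepsilon_1 = -1$ (and trivially when $\varepsilon_1 = 1$).

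There is no real obstacle here; the argument is essentially the standard fact that the augmentation ideal of $\BZ[\la A\ra]$ is generated as a left ideal by $\{a-1 : a\in A\}$, applied to the $G$-module $V$. I would present it compactly in the two displayed identities followed by the induction, occupying only a few lines.
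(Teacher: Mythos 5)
Your proof is correct, but it takes a genuinely different route from the paper's. The paper's argument is by duality: a (not necessarily continuous) linear functional $\lambda$ on $V$ vanishes on $\sum_{a\in A}(a-1).V$ exactly when $\lambda$ is $A$-fixed, and any $A$-fixed functional is automatically $\la A\ra$-fixed, hence vanishes on $\sum_{a\in \la A\ra}(a-1).V$; since two subspaces of a vector space coincide as soon as they have the same annihilator in the full algebraic dual, the lemma follows in two lines. Your proof is a direct inductive computation inside $V$ using the identities $(ab-1).v = (a-1).(b.v) + (b-1).v$ and $(a^{-1}-1).v = -(a-1).(a^{-1}.v)$, which is the standard argument that the augmentation ideal is generated by $\{a-1 : a\in A\}$. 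Your version is more hands-on and self-contained, exhibiting the element of $\sum_{a\in A}(a-1).V$ explicitly; the paper's version is shorter and trades the explicit decomposition for an appeal to a basic fact about annihilators. Both are fully rigorous; the choice is essentially one of taste.
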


\begin{proof}
Every (non-necessarily continuous)  linear functional on $V$ fixed by the subset $A\subset G$ is fixed by the subgroup $\la A\ra $. This implies the lemma. 
\end{proof}

\begin{lem}\label{repr}
If $G$ is connected, then
\[
\g.V = \sum_{g\in G}(g-1).V \, 
\]
 for every representation $V$ in $\CS\mathrm{mod}_{G}$.
\end{lem}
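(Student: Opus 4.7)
The plan is to construct a $G$-equivariant continuous linear surjection
\[
\Phi\colon \CS(G,V)\longrightarrow V,\qquad \Phi(f):=\int_G g^{-1}.f(g)\,d_r g,
\]
and then transport through $\Phi$ the equality $\g.\CS(G,V)=\sum_{g\in G}(g-1).\CS(G,V)$ supplied by Theorem~\ref{thmA11}, applied with $E=V$ viewed merely as a Fr\'{e}chet space. Accordingly, I endow $\CS(G,V)$ with the right-translation action $(h.f)(g):=f(gh)$ (trivial on the $V$-factor), which is the action used in Theorem~\ref{thmA11}; the corresponding differentiated $\g$-action is then by left-invariant complex vector fields on $G$, matching the conventions there.

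First I will verify that $\Phi$ is well-defined and continuous. The linear operator $T\colon \CS(G,V)\to\CS(G,V)$ given by $T(f)(g):=g^{-1}.f(g)$ is well-defined and continuous because the action map $G\times V\to V$ is tempered (Lemma~\ref{smodg}); hence $\Phi(f)=\int_G T(f)(g)\,d_r g$ is a continuous linear map. For surjectivity, given $v\in V$ I pick $\phi\in\CS(G)$ with $\int_G\phi\,d_r g=1$ and set $f(g):=\phi(g)\,g.v$, which lies in $\CS(G,V)$ (again by temperedness); then $g^{-1}.f(g)=\phi(g)\,v$, so $\Phi(f)=v$.

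Next I will check $G$-equivariance. The substitution $g'=gh$, together with the right-invariance of $d_r g$, gives
\[
\Phi(h.f)=\int_G g^{-1}.f(gh)\,d_r g=\int_G (g'h^{-1})^{-1}.f(g')\,d_r g'=h.\Phi(f),
\]
so $\Phi$ is $G$-equivariant and hence $\g$-equivariant. Combined with surjectivity, this yields $\g.V=\Phi(\g.\CS(G,V))$ and $\sum_{g\in G}(g-1).V=\Phi(\sum_{g\in G}(g-1).\CS(G,V))$. Theorem~\ref{thmA11} identifies the two subspaces inside $\CS(G,V)$, so applying $\Phi$ gives $\g.V=\sum_{g\in G}(g-1).V$.

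I expect the main subtlety to be pinning down the correct form of $\Phi$: the naive integration $f\mapsto \int_G f(g)\,d_r g$ fails to be $G$-equivariant because the action of $G$ on $V$ in the target is nontrivial, whereas Theorem~\ref{thmA11} is formulated in the Fr\'{e}chet-space setting (trivial $G$-action on $E$). The $g^{-1}$-twist in $\Phi$ is exactly the conjugation that reconciles these two actions and allows Theorem~\ref{thmA11} to be leveraged; once $\Phi$ is in hand, the remainder of the argument is formal.
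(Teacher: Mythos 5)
Your proposal is correct and is essentially the paper's own argument, just written out more explicitly. The paper realizes $V$ as a quotient of $\CS(G,V)_{r1}$ by composing the isomorphism $\CS(G,V)_{r1}\cong\CS(G,V)_{r2}$, $f\mapsto(g\mapsto g^{-1}.f(g))$, from Lemma~\ref{spe pro} with the surjection $\CS(G,V)_{r2}\to V$, $f\mapsto\int_G f(g)\,\rd_r g$, from Proposition~\ref{ssurj}; that composite is precisely your $\Phi$, after which both arguments transport the identity of Theorem~\ref{thmA11} along the quotient.
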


\begin{proof}
By Theorem \ref{thmA11}, 
\[
\g.\CS(G, V)_{r1} = \sum_{g\in G}(g-1).\CS(G, V)_{r1}.
\]
Lemma  \ref{spe pro} and Proposition \ref{ssurj} imply that  $V$ can be realized as a quotient of $\CS(G, V)_{r1}$.  The lemma then follows. 
\end{proof}

Recall from \eqref{defodg} that $\oD^\varsigma(G) := \CS(G)\rd_r g$ is  the space of Schwartz densities on $G$. 
For  every  \Fre space $E$, as in \eqref{indact}, the induced action of $\oD^\varsigma(G)$ on
$\CS(G, E)_r$ is given by
\be\label{cov}
(\mu.f)(x) := \int_{G} f(xg)\rd \mu(g),\quad \mu\in\oD^{\varsigma}(G),\,f\in\CS(G, E)_r,\,x\in G.
\ee

Recall that $ \mathrm I^\varsigma(G):=\{ \mu \in \oD^\varsigma(G)\, :\, \int_G 1\od\! \mu(g)=0\}.$

\begin{prpl}\label{coinva}
With the notation as above, one has that
\[
\mathrm I^\varsigma(G).\CS(G, E)_r = \sum_{g\in G}(g-1).\CS(G, E)_r = \g.\CS(G, E)_r + \sum_{g\in K}(g-1).\CS(G, E)_r.
\]
\end{prpl}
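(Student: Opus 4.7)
The plan is to prove the two equalities in turn, relying on Theorem~\ref{thmA11}, Dixmier--Malliavin's theorem, and Lemma~\ref{repr} applied to the identity component $G^\circ$.

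\emph{First equality.} For the inclusion $\mathrm I^\varsigma(G).\CS(G,E)_r \subseteq \sum_{g\in G}(g-1).\CS(G,E)_r$, I would verify that $\mu.f$ has vanishing total integral whenever $\mu \in \mathrm I^\varsigma(G)$: by Fubini and the right invariance of $\rd_r x$,
\[
\int_G (\mu.f)(x)\,\rd_r x \;=\; \int_G\!\!\int_G f(xg)\,\rd_r x\,\rd\mu(g) \;=\; \Bigl(\int_G f(y)\,\rd_r y\Bigr)\Bigl(\int_G 1\,\rd\mu\Bigr) \;=\; 0,
\]
so Theorem~\ref{thmA11} places $\mu.f$ in the required subspace. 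For the opposite inclusion, I would apply Dixmier--Malliavin (\cite{DM}) to write any $f\in \CS(G,E)_r$ as a finite sum $f = \sum_i \nu_i.f_i'$ with $\nu_i \in \oD_c^\infty(G) \subset \oD^\varsigma(G)$ and $f_i' \in \CS(G,E)_r$. A direct change of variables shows $g.(\nu.f') = (L_g\nu).f'$, where $L_g\nu$ denotes the push-forward of the density $\nu$ by left translation, so
\[
(g-1).f \;=\; \sum_i ((L_g-1)\nu_i).f_i',
\]
with each $(L_g-1)\nu_i \in \oD_c^\infty(G) \cap \mathrm I^\varsigma(G)$ by left invariance of the total mass.

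\emph{Second equality.} Setting $V := \CS(G,E)_r$ and restricting to $G^\circ$, the restriction lies in $\CS\mathrm{mod}_{G^\circ}$, so Lemma~\ref{repr} applied to the connected group $G^\circ$ yields the identification $\g.V = \sum_{g\in G^\circ}(g-1).V$. The inclusion $\supseteq$ is then immediate. For the reverse inclusion, combining with Lemma~\ref{ele} gives
\[
\g.V + \sum_{g\in K}(g-1).V \;=\; \sum_{g\in G^\circ \cup K}(g-1).V \;=\; \sum_{g \in \langle G^\circ \cup K\rangle}(g-1).V,
\]
and since $G^\circ$ is normal in $G$, $\langle G^\circ \cup K\rangle = G^\circ\cdot K$. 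To conclude I would invoke the structural fact from the theory of almost linear Nash groups (cf.~\cite{Su}) that a maximal compact subgroup meets every connected component of $G$, i.e.\ $G^\circ \cdot K = G$, so this last subspace equals $\sum_{g\in G}(g-1).V$.

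\emph{Main obstacle.} The one non-formal ingredient is the structural assertion $G = G^\circ \cdot K$. For real reductive groups this is classical, but for the general class of almost linear Nash groups it requires an appeal to the structure theory developed in~\cite{Su}. Everything else amounts to bookkeeping with Theorem~\ref{thmA11}, Dixmier--Malliavin, and the previously established lemmas.
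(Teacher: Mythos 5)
Your proof is correct and follows essentially the same route as the paper's: both arguments reduce the second equality to Lemmas~\ref{repr} and~\ref{ele} together with the structural fact $G = G^\circ K$, and both handle the first equality via Theorem~\ref{thmA11} and Dixmier--Malliavin. The paper establishes the first equality a bit more algebraically, first deriving the identity $\mathrm I^\varsigma(G) = \sum_{g\in G}\oD^\varsigma(G)\cdot(g-1)$ from Theorem~\ref{thmA11} applied to $\CS(G)$, transporting it via push-forward under inversion to $\mathrm I^\varsigma(G) = \sum_{g\in G}(g-1)\cdot\oD^\varsigma(G)$, and only then hitting $\CS(G,E)_r$ with this identity plus Dixmier--Malliavin; your version instead checks one inclusion by a direct Fubini/right-invariance computation of the total integral and the other by absorbing $(g-1)$ into the density $\nu_i$ as $(L_g - 1)\nu_i$, which is the same idea unpacked at the level of $V$ rather than at the level of the algebra. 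You are also right that $G = G^\circ K$ is the one non-formal input: the paper uses it silently in passing from $\sum_{g\in G^\circ\cup K}(g-1).V$ to $\sum_{g\in G}(g-1).V$, and making it explicit (with a citation to~\cite{Su}) is a genuine clarification rather than a gap.
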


\begin{proof}
As before $G^{\circ}$ denotes the identity connected component of the group $G$. By Lemmas \ref{repr} and \ref{ele}, one has that 
\begin{eqnarray*}
 \g.\CS(G, E)_r + \sum_{g\in K}(g-1).\CS(G, E)_r &=& \sum_{g\in G^\circ\cup K}(g-1).\CS(G, E)_r\\
   &=& \sum_{g\in G}(g-1).\CS(G, E)_r.
\end{eqnarray*}
On the other hand, Theorem \ref{thmA11} implies that
\[
  \mathrm I^\varsigma(G)=\sum_{g\in G} \oD^\varsigma(G)\cdot (g-1).
\]
By applying the push-forward map of measures through the map $G\rightarrow G, x\mapsto x^{-1}$, the above equality implies that
\[
  \mathrm I^\varsigma(G)=\sum_{g\in G} (g-1) \cdot \oD^\varsigma(G).
\]
Thus by 
Dixmier--Malliavin's Theorem  \cite[Theorem 3.3]{DM}, one has that
\begin{eqnarray*}
\mathrm I^\varsigma(G).\CS(G, E)_r &=& \sum_{g\in G}((g-1)\cdot \oD^\varsigma(G)).\CS(G, E)_r\\
   &=& \sum_{g\in G}(g-1).\CS(G, E)_r.\\
\end{eqnarray*}
\end{proof}

As in the proof of Lemma \ref{repr}, Lemma  \ref{spe pro} and Proposition \ref{ssurj} imply that  every  representation $V$ in $\CS\mathrm{mod}_{G}$  can be realized as a quotient of $\CS(G, V)_{r1}$.  Then the first assertion of Theorem \ref{thmA2} follows from  Proposition \ref{coinva}.  The second assertion has already been established in Theorem \ref{hausrelpro}.

\subsection{Schwartz homologies}\label{2.4}
In this subsection, we present a Schwartz homology theory for representations in the category $\CS\mathrm{mod}_{G}$.

\begin{dfnl}\label{spr}
For every representation $V$ in the category $\CS\mathrm{mod}_{G}$, a strong projective resolution of $V$ is  an exact sequence 
\[
\cdots \rightarrow P_2\rightarrow P_1\rightarrow P_0 \rightarrow V\rightarrow 0
\]
 in $\CS\mathrm{mod}_{G}$, where   $P_i$'s are all relatively projective, and all the arrows are strong homomorphisms.
\end{dfnl}

The existence of strong projective resolutions of every representation  in the category $\CS\mathrm{mod}_{G}$ follows directly from 
Proposition \ref{ssurj}.

\begin{dfnl}\label{sm homo}
For every representation $V$ in $\CS\mathrm{mod}_{G}$ and every $i\in \BZ$, the $i$th Schwartz homology  $\oH^{\CS}_{i}(G; V)$ of $V$ is  defined to be the $i$th homology of the chain complex $(P_{\bullet})_{G}$, where $P_{\bullet}\rightarrow V\rightarrow 0$ is a strong projective resolution of $V$. \end{dfnl}

 We equip the Schwartz homology  $\oH^{\CS}_{i}(G; V)$ with the subquotient topology from the complex $(P_{\bullet})_{G}$. As a locally convex topological vector spaces,  it does not depend on the choice of the strong projective resolution $P_{\bullet}\rightarrow V\rightarrow 0$ of $V$. This follows from the comparison theorem as in \cite[Section 2]{HM} or \cite[Section 4]{Bl}. For a homomorphism 
$\alpha:V_{1}\rightarrow V_{2}$ in $\CS\mathrm{mod}_{G}$, the induced continuous linear map $\tilde{\alpha}:\oH^{\CS}_{i}(G; V_{1})\rightarrow\oH^{\CS}_{i}(G; V_{2})$ is canonically defined as done in classical homological algebra, see for 
example \cite[Section 2]{HM}.

\begin{prpl}\label{0homo}
For every representation $V$ in $\CS\mathrm{mod}_{G}$, $\oH^{\CS}_{0}(G; V)= V_{G}$ as topological vector spaces.
\end{prpl}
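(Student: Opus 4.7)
The approach is to apply right-exactness of the coinvariants functor $V\mapsto V_G$ to the low-degree portion of a strong projective resolution, and then match the resulting topology with the given one on $V_G$. Fix a strong projective resolution
\[
\cdots\to P_1\xrightarrow{d_1} P_0\xrightarrow{\alpha} V\to 0
\]
in $\CS\mathrm{mod}_G$. By definition $\oH^\CS_0(G;V)$ is the cokernel of the induced map $\tilde d_1:(P_1)_G\to(P_0)_G$, equipped with the quotient topology inherited from $(P_0)_G$.

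First I would verify algebraic exactness of the induced sequence
\[
(P_1)_G\xrightarrow{\tilde d_1}(P_0)_G\xrightarrow{\tilde\alpha} V_G\to 0.
\]
Surjectivity of $\tilde\alpha$ is inherited from that of $\alpha$. For the kernel, using the $G$-equivariance and surjectivity of $\alpha$ together with Theorem \ref{thmA2} applied to both $P_0$ and $V$, one gets $\alpha^{-1}(\mathrm I^\varsigma(G).V)=\ker\alpha+\mathrm I^\varsigma(G).P_0$. Since $\ker\alpha=\Im d_1$ by exactness at $P_0$, the kernel of $\tilde\alpha$ coincides with the image in $(P_0)_G$ of $\Im d_1$, which is exactly $\Im\tilde d_1$.

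Next I would identify the topologies. The strong surjection $\alpha$ admits a continuous linear section, hence is a topological quotient map, so $V$ carries the quotient topology $P_0/\ker\alpha$. By transitivity of quotient topologies, both $V_G=V/\mathrm I^\varsigma(G).V$ and $\oH^\CS_0(G;V)=(P_0)_G/\Im\tilde d_1$ are canonically identified with the single quotient $P_0/(\Im d_1+\mathrm I^\varsigma(G).P_0)$ carrying the quotient topology from $P_0$. This yields the desired topological linear identification $\oH^\CS_0(G;V)=V_G$.

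The main obstacle is not algebraic but topological: one has to confirm that both candidate spaces inherit the same locally convex topology as iterated quotients of $P_0$. The decisive input is the \emph{strong} surjectivity of $\alpha$, which upgrades the algebraic quotient $V\cong P_0/\ker\alpha$ to a topological one; once this is in hand, transitivity of quotient topologies handles the remaining bookkeeping in a single step, with no need to invoke closedness of $\mathrm I^\varsigma(G).V$ (so the result applies uniformly whether or not $\oH^\CS_0(G;V)$ is Hausdorff).
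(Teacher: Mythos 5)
Your argument is correct and follows essentially the same route as the paper: right-exactness of the coinvariants functor applied to the low-degree tail of a strong projective resolution, followed by matching the quotient topologies. The only cosmetic difference is that the paper deduces that $\alpha$ (and hence $\tilde\alpha$) is open via the open mapping theorem, while you extract openness from the continuous linear section guaranteed by strongness; both then reduce to the same transitivity-of-quotients bookkeeping, and your closing observation that no Hausdorffness hypothesis is needed is accurate.
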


\begin{proof}
Let
\[
\cdot\cdot\cdot\rightarrow P_{1}\rightarrow P_{0}\overset{\alpha}{\rightarrow} V\rightarrow 0
\]
be a strong projective resolution of $V$ in $\CS\mathrm{mod}_{G}$. Since taking coinvariants is right exact, we have the exact sequence
\[
(P_{1})_{G}\rightarrow (P_{0})_{G}\overset{\tilde{\alpha}}{\rightarrow} V_{G}\rightarrow 0.
\]
By the open mapping theorem \cite[Theorem 17.1]{Tr}, the map $\alpha$ is open, then so is $\tilde{\alpha}$. Now it follows easily that $\tilde{\alpha}$ induces a topological linear isomorphism from $\oH^{\CS}_{0}(G; V)$ to $V_{G}$.
\end{proof}


\section{Tempered vector bundles and Schwartz inductions}\label{sectvb00}

\subsection{Tempered vector bundles}\label{sectvb}

Let $M$ be a Nash manifold and let $\mathsf E$ be a \Fre bundle over $M$, namely, a topological vector bundle over $M$ such that all the fibres are  \Fre spaces. 
A local chart of $\mathsf E$ is defined to be a triple $(U,E, \phi)$, where $U$ is an open Nash submanifold of $M$, $E$ is a fibre of 
$\mathsf E$, and 
\[
  \phi: U\times E\rightarrow \mathsf E|_U
 \]
is a topological isomorphism of vector bundles over $U$, where $\mathsf E|_U$ denotes the restriction of $\mathsf E$ to $U$, which is a  topological vector bundle over $U$. 

\begin{dfn}\label{deftemst}
A tempered  structure on $\mathsf E$ is a subset $\CT_\mathsf E$ of the set of all local charts of $\mathsf E$ with the following properties: 
\begin{itemize}
\item every two elements  $(U_1,E_1, \phi_1)$, $(U_2, E_2, \phi_2)$ in $\CT_\mathsf E$ are compatible in the sense that 
the map 
\[
 \phi^{-1}_2\circ \phi_1:  (U_1\cap U_2)\times E_1\rightarrow (U_1\cap U_2)\times E_2
\]
and its inverse are both tempered bundle maps; 
\item  for every local chart of $\mathsf E$, if it is compatible with all elements of $\CT_\mathsf E$, then it belongs to $\CT_\mathsf E$; 
\item  there exists a finite family $\{(U_i, E_i, \phi_i)\}_{i=1}^k$ ($k\geq 0$) of elements of $\CT_\mathsf E$ such that 
     $\{U_i\}_{i=1}^k$ is a covering of $M$.
\end{itemize}
\end{dfn}

\begin{remark}
Recall that the notion of tempered bundle maps between trivial \Fre bundles over Nash manifolds has been defined in Definition \ref{tembdm}.
Suppose that there is a  finite family $\{(U_i, E_i, \phi_i)\}_{i=1}^k$ ($k\geq 0$) of pairwise compatible local charts  of $\mathsf E$ such that 
     $\{U_i\}_{i=1}^k$ is a covering of $M$. Then by Lemmas \ref{tem2} and \ref{temcombu}, all the local charts of $\mathsf E$ which are compatible with all $(U_i, E_i, \phi_i)$'s  form a tempered structure on $\mathsf E$. 
\end{remark}

\begin{dfn}\label{deftembundle}
A tempered vector bundle is a triple $(M, \mathsf E, \CT_\mathsf E)$, where $M$ is a Nash manifold, $\mathsf E$ is a \Fre bundle over $M$ and $\CT_\mathsf E$ is a tempered structure on $\mathsf E$. 
\end{dfn}

When $\CT_\mathsf E$ is understood,   we call $\mathsf E$ a tempered vector bundle over $M$. 
Obviously every trivial \Fre bundle over a Nash manifold is canonically a tempered vector bundle. For every tempered vector bundle $(M, \mathsf E, \CT_\mathsf E)$ and  every Nash submanifold $Z$ of $M$,  $\mathsf E|_Z$ is obviously a tempered vector bundle over $Z$.

Generalizing the notion of tempered bundle maps between trivial \Fre bundles as given in Definition \ref{tembdm}, we make the following definition.  

\begin{dfn}\label{temmor}
Let $(M_1, \mathsf E_1, \CT_{\mathsf E_1})$ and $(M_2, \mathsf E_2, \CT_{\mathsf E_2})$ be two tempered vector bundles. 
A map $f: \mathsf E_1\rightarrow \mathsf E_2$ is called a tempered bundle map if there is a Nash map $f_0: M_1\rightarrow M_2$ such that
the diagram
\[
 \begin{CD}
           \mathsf E_1@>f >> \mathsf E_2 \\
            @V VV           @VV V\\
           M_1@>f_0>> M_2\\
  \end{CD}
\]
commutes, and for every $(U_1, E_1, \phi_1)\in \CT_{\mathsf E_1}$ and $(U_2, E_2, \phi_2)\in \CT_{\mathsf E_2}$ with $f_0(U_1)\subset U_2$, the map
\[
\phi_2^{-1} \circ  f\circ \phi_1: U_1\times E_1\rightarrow U_2\times E_2
\]
is a tempered bundle map in the sense of Definition \ref{tembdm}. 

\end{dfn}

By Lemmas \ref{tem2} and \ref{temcombu}, Definition \ref{temmor} agrees with Definition \ref{tembdm} for trivial \Fre bundles.

\begin{prpd}
The composition of two tempered bundle maps between tempered vector bundles is also a tempered bundle map. 
\end{prpd}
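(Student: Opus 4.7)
The plan is to verify the two requirements of Definition \ref{temmor} for the composition $h := g \circ f$, where $f \colon \mathsf E_1 \to \mathsf E_2$ and $g \colon \mathsf E_2 \to \mathsf E_3$ are tempered bundle maps with underlying Nash base maps $f_0$ and $g_0$. First, the candidate base map $h_0 := g_0 \circ f_0$ is Nash as a composition of Nash maps. Second, I need to show that for every pair of charts $(U_1, E_1, \phi_1) \in \CT_{\mathsf E_1}$ and $(U_3, E_3, \phi_3) \in \CT_{\mathsf E_3}$ with $h_0(U_1) \subset U_3$, the map $\phi_3^{-1} \circ h \circ \phi_1 \colon U_1 \times E_1 \to U_3 \times E_3$ is a tempered bundle map in the sense of Definition \ref{tembdm}.

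The main obstacle will be that $f_0(U_1)$ need not lie in the domain of any single $\mathsf E_2$-chart, so the tempered bundle condition for $f$ cannot be invoked with one intermediate chart. The remedy will be a cover-and-glue argument. Using the third bullet of Definition \ref{deftemst}, I would first fix a finite family $\{(V_j, E_{2,j}, \psi_j)\}_{j=1}^{k}$ in $\CT_{\mathsf E_2}$ whose open sets cover $M_2$, and then refine by setting $V_j' := V_j \cap g_0^{-1}(U_3)$. Since $g_0^{-1}(U_3)$ is an open Nash submanifold of $M_2$ (pullback of an open Nash set by a Nash map), the maximality axiom of Definition \ref{deftemst} places each $(V_j', E_{2,j}, \psi_j|_{V_j' \times E_{2,j}})$ back in $\CT_{\mathsf E_2}$, and by construction $g_0(V_j') \subset U_3$. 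Next set $W_j := f_0^{-1}(V_j') \cap U_1$; the hypothesis $h_0(U_1) \subset U_3$ is equivalent to $f_0(U_1) \subset g_0^{-1}(U_3)$, so $W_j = f_0^{-1}(V_j) \cap U_1$, and $\{W_j\}_{j=1}^{k}$ is a finite open Nash covering of $U_1$ satisfying $f_0(W_j) \subset V_j'$.

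On each $W_j$, applying Definition \ref{temmor} twice — to $f$ with the charts $\phi_1|_{W_j \times E_1}$ and $\psi_j|_{V_j' \times E_{2,j}}$, and to $g$ with the charts $\psi_j|_{V_j' \times E_{2,j}}$ and $\phi_3$ — will yield that
\begin{equation*}
\psi_j^{-1} \circ f \circ \phi_1|_{W_j \times E_1} \colon W_j \times E_1 \to V_j' \times E_{2,j}
\end{equation*}
and
\begin{equation*}
\phi_3^{-1} \circ g \circ \psi_j|_{V_j' \times E_{2,j}} \colon V_j' \times E_{2,j} \to U_3 \times E_3
\end{equation*}
are tempered bundle maps between trivial \Fre bundles. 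Their composition is exactly $\phi_3^{-1} \circ h \circ \phi_1|_{W_j \times E_1}$, so Lemma \ref{temcombu} makes this restriction a tempered bundle map. Finally, Lemma \ref{tem2} promotes the local temperedness on $\{W_j\}$ to temperedness of $\phi_3^{-1} \circ h \circ \phi_1$ on the whole of $U_1 \times E_1$, completing the verification.
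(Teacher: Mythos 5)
Your proof is correct and fills in the cover-and-glue reduction that the paper leaves implicit (the paper's proof is the single sentence ``This follows easily from Lemma \ref{temcombu}.''). The key insight you identify — that $f_0(U_1)$ need not sit inside one $\mathsf{E}_2$-chart, so one must cover, restrict, apply Lemma \ref{temcombu} locally, and glue via Lemma \ref{tem2} — is exactly the content being compressed into ``easily.'' One small thing you could make explicit: you justify via the maximality axiom that $(V_j', E_{2,j}, \psi_j|_{V_j'\times E_{2,j}})$ lies in $\CT_{\mathsf E_2}$, but you also need $(W_j, E_1, \phi_1|_{W_j\times E_1})$ to lie in $\CT_{\mathsf E_1}$ before you may invoke Definition \ref{temmor} for $f$ on $W_j$; the same argument applies (a restriction of a chart to an open Nash submanifold is compatible with every chart in $\CT_{\mathsf E_1}$, since restrictions of tempered bundle maps to open Nash submanifolds are tempered). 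With that line added, the argument is complete.
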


\begin{proof}
 This follows easily from Lemma \ref{temcombu}.
\end{proof}

Now we suppose that $M$ is a left $G$-Nash manifold, namely, it carries a  left Nash action $G\times M\rightarrow M$. By a tempered left $G$-vector bundle over $M$, we mean a tempered vector bundle 
$\mathsf E$ over $M$, together with an action $G\times \mathsf E\rightarrow \mathsf E$ which is a tempered bundle map. Here $G\times \mathsf E$ is obviously viewed as a tempered vector bundle over $G\times M$. 

Given a Nash subgroup $H$ of $G$,  and  a representation $V_0$ of $H$ in the category $\CS\mathrm{mod}_{H}$, in what follows we define a canonical tempered structure $\CT_{G\times^{H}V_0 }$ on the topological vector bundle $G\times ^{H}V_0$ over the Nash manifold 
$G/H$. Here and as usual,  $G\times ^{H}V_0$ denotes the orbit space of the action 
\[
  H\times (G\times V_0)\rightarrow G\times V_0, \quad (h, (g,v))\mapsto (gh^{-1}, h.v). 
\]


Write $\pi : G\rightarrow G/H$ for  the quotient map.  It is a surjective submersive Nash map. By \cite[Theorem 2.4.3]{AG3}, there exists a finite open cover
\[
G/H = \cup_{i=1}^k U_i \quad (k\geq 0)
\]
 by open Nash submanifolds of $G/H$ such that $\pi$ has a Nash section $s_i$ on each $U_i$. It is easy to check that
the map 
\[
\begin{array}{rcl}
\psi_i:U_i\times V_0&\rightarrow& (G\times^{H} V_0)|_{U_i}\\
(x,v)&\mapsto& (s_i(x),v)
\end{array}
\]
is a topological isomorphism of vector bundles over $U_i$. For all $1\leq i, j\leq k$,  the transition map 
\be\label{transition}
\psi_j^{-1} \circ \psi_i : (U_i\cap U_j)\times V_0\rightarrow (U_i\cap U_j)\times V_0,
\ee
is given by
\[
 (x,v)\mapsto (x,(s_j(x)^{-1}s_i(x)).v). 
\]
 By Lemmas \ref{smodg} and \ref{temcompullb}, \eqref{transition} is a tempered bundle map. Thus the local charts $\{(U_i,V_0,\psi_i)\}_i$ are pairwise compatible and all the local charts of $G\times ^{H}V_0$ which are compatible with all $(U_i,V_0,\psi_i)$'s form a tempered structure on $G\times ^{H}V_0$. 
It is easy to see that this tempered structure is independent of the finite family $\{(U_i, s_i)\}_{i=1}^k $. Thus $G\times ^{H}V_0$ is canonically a tempered vector bundle over $G/H$. Moreover, it is easily checked that $G\times ^{H}V_0$ is  in fact a tempered left $G$-vector bundle over $G/H$, under the obvious action of $G$.


\subsection{Schwartz sections}\label{secsch}

In this subsection, we define Schwartz sections of a tempered vector bundle. This generalizes  the definition of Schwartz sections of Nash vector bundles, see \cite[Section 5]{AG1}.


Let $(M, \mathsf E, \CT_{\mathsf E})$ be a tempered vector bundle. Suppose that  $\{(U_i, E_i, \phi_i)\}_{i=1}^k$ ($k\geq 0$) are elements of 
$\CT_{\mathsf E}$ such that $\{U_i\}_{i=1}^{k}$ is a covering of $M$. Write $\Gamma^{\varsigma}(U_{i},U_{i}\times E_{i})$ for the space of the sections which correspond to Schwartz functions in 
$\CS(U_i, E_i)$. This is obviously a \Fre space. 
Define 
\[
  \Gamma^{\varsigma}(U_{i},\mathsf E|_{U_i}):=\{\phi_{i}\circ s\, :\,  s\in\Gamma^{\varsigma}(U_{i},U_{i}\times E_{i})\},
  \]
  which is also obviously  a \Fre space. 
  
   Denote by 
$\Gamma(M, \mathsf E)$ the space of continuous sections of the bundle $\mathsf E$ over $M$. Then extension by zero gives a continuous linear map
\be\label{schsect}
\bigoplus_{i=1}^k\Gamma^{\varsigma}(U_{i},\mathsf E|_{U_i})\rightarrow\Gamma(M,\mathsf E).
\ee

\begin{dfn}
With the notation as above, the Schwartz sections $\Gamma^{\varsigma}(M, \mathsf E)$ of the tempered vector bundle $\mathsf E$ over the Nash manifold $M$ is defined to be the image of the map \eqref{schsect}, equipped with the quotient topology of the domain. 
\end{dfn}

\begin{prpd}
The definition of  $\Gamma^{\varsigma}(M, \mathsf E)$ (as a topological vector space) does not depend on the choice of the local charts $\{(U_i, E_i, \phi_i)\}_{i=1}^k$.
\end{prpd}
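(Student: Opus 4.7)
The plan is to show independence by comparing any two admissible families of local charts through a common refinement. Given coverings $\mathcal{U}=\{(U_i,E_i,\phi_i)\}_{i=1}^k$ and $\mathcal{V}=\{(V_j,F_j,\psi_j)\}_{j=1}^l$ from $\CT_{\mathsf E}$, I would form the refined family $\mathcal{W}=\{(U_i\cap V_j,E_i,\phi_i|_{U_i\cap V_j})\}_{i,j}$ and prove that $\mathcal{W}$ defines the same \Fre space with the same topology as $\mathcal{U}$; by symmetry the same will hold for $\mathcal{V}$, and the proposition will follow. Along the way I may replace the open sets $U_i\cap V_j$ by further affine open Nash refinements, using the fact from \cite{AG1} that every open Nash submanifold of an affine Nash manifold is affine.

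The first key ingredient is a \emph{change-of-trivialization} lemma: if $(U,E,\phi)$ and $(U,F,\psi)$ are compatible charts over the same open Nash submanifold $U$, then the pullback along $\psi^{-1}\circ\phi$ induces a topological isomorphism $\CS(U,E)\cong\CS(U,F)$, and hence an equality of the corresponding spaces of Schwartz sections of $\mathsf E|_U$. To prove this I would unwrap $\psi^{-1}\circ\phi$ as a tempered linear family $T\colon U\times E\to F$ (Definition~\ref{tembdm}), then apply the Leibniz and chain rules together with Lemma~\ref{mgrcom} to verify that whenever $f\in\CS(U,E)$, every Nash differential operator applied to $x\mapsto T(x,f(x))$ is bounded in every continuous seminorm on $F$; the inverse direction uses that $\phi^{-1}\circ\psi$ is tempered as well by Definition~\ref{deftemst}. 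Continuity in both directions is automatic from the moderate-growth estimates being in terms of continuous seminorms of $f$.

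The second ingredient is a \emph{refinement} lemma: for each chart $(U_i,E_i,\phi_i)\in\mathcal{U}$, the extension-by-zero map embeds each $\Gamma^{\varsigma}(U_i\cap V_j,\mathsf E|_{U_i\cap V_j})$ continuously into $\Gamma^{\varsigma}(U_i,\mathsf E|_{U_i})$, and conversely any element of $\Gamma^{\varsigma}(U_i,\mathsf E|_{U_i})$ can be written, via a Nash partition of unity subordinate to $\{V_j\}_{j}\cap U_i$, as a finite sum of elements of the $\Gamma^{\varsigma}(U_i\cap V_j,\mathsf E|_{U_i\cap V_j})$'s with continuous dependence on the input. Here I would invoke the existence of Nash partitions of unity and the fact that multiplication by a Nash function whose derivatives are of moderate growth preserves $\CS(U_i,E_i)$ continuously, as already used implicitly in Lemma~\ref{mgr} and in \cite[Proposition~5.1.2]{AG1}.

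Combining the two ingredients shows that the image of $\bigoplus_{i,j}\Gamma^{\varsigma}(U_i\cap V_j,\mathsf E|_{U_i\cap V_j})\to\Gamma(M,\mathsf E)$ equals, as a subspace of $\Gamma(M,\mathsf E)$, the image of $\bigoplus_i\Gamma^{\varsigma}(U_i,\mathsf E|_{U_i})\to\Gamma(M,\mathsf E)$. The main obstacle I anticipate is not the set-theoretic agreement but the equality of the two quotient topologies: comparing them requires explicit continuous lifts in both directions between the direct sums. Once these lifts are in hand from the change-of-trivialization lemma and the partition of unity, I would finish by appealing to the open mapping theorem for \Fre spaces (as already used in the proof of Proposition~\ref{0homo}) to upgrade the resulting continuous linear bijection of \Fre quotients into a topological isomorphism.
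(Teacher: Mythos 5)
Your proposal is correct and follows essentially the same route as the paper, whose proof is given by reference to \cite[Proposition~5.1.2]{AG1}: pass to a common refinement of the two covers, use Nash partitions of unity to compare the original cover with the refinement, and appeal to the open mapping theorem. The change-of-trivialization lemma you spell out (using that the transition maps are tempered in both directions so composition with them preserves the Schwartz space) is precisely the extra ingredient needed in the vector-bundle setting that the paper leaves implicit in the phrase ``similar to.''
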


\begin{proof}
The proof is similar to that of \cite[Proposition 5.1.2]{AG1}.
\end{proof}

\begin{prpd}\label{sss}
Suppose that the Nash manifold $M$ carries a left Nash $G$-action. Let $\mathsf E$ be a tempered left $G$-vector bundle over $M$. Then for every $g\in G$ and $\phi\in \Gamma^{\varsigma}(M, \mathsf E)$,
\be\label{actgphi}
    \begin{array}{rcl}
g.\phi:  M&\rightarrow &\mathsf E,\\
 x&\mapsto & g.(\phi(g^{-1}.x))
 \end{array}
\ee
is a section in $\Gamma^{\varsigma}(M, \mathsf E)$. Moreover,  the \Fre space $\Gamma^{\varsigma}(M, \mathsf E)$  is a representation in 
$\CS\mathrm{mod}_G$ under the action
\[
  (g, \phi)\mapsto g.\phi. 
\]
\end{prpd}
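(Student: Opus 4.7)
The plan is to reduce all assertions to computations in local trivializations, where the tempered structure of the bundle and of the $G$-action become concrete linear families amenable to the lemmas of Section \ref{pfunction}. First I would fix a finite family $\{(U_i, E_i, \phi_i)\}_{i=1}^k$ in $\CT_{\mathsf E}$ covering $M$, together with a Nash partition of unity subordinate to it, so that elements of $\Gamma^\varsigma(M, \mathsf E)$ are represented by tuples $(f_i)$ with $f_i \in \CS(U_i, E_i)$.

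For the assertion that $g.\phi \in \Gamma^\varsigma(M, \mathsf E)$, note that for each fixed $g \in G$ the left translates $\{g.U_i\}_i$ also form a Nash cover of $M$, and the pushed-forward trivializations $\phi_i^g(x, v) := g.\phi_i(g^{-1}.x, v)$ lie in $\CT_{\mathsf E}$: indeed $\phi_i^g$ is obtained from $\phi_i$ by precomposing with the Nash diffeomorphism $x \mapsto g^{-1}.x$ and post-composing with the fibrewise action of $g$, which is a tempered bundle map, so Lemmas \ref{temcombu} and \ref{temcompullb} yield the compatibility with $\CT_{\mathsf E}$. In these translated charts $g.\phi$ is represented by the tuple $(f_i \circ (x \mapsto g^{-1}.x))$, which is manifestly Schwartz; consequently $g.\phi$ is a Schwartz section, and $\phi \mapsto g.\phi$ is a continuous linear automorphism of $\Gamma^\varsigma(M, \mathsf E)$ for each $g$.

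The group-action axioms $1.\phi = \phi$ and $(gh).\phi = g.(h.\phi)$ are immediate from \eqref{actgphi} and the associativity of the $G$-action on $\mathsf E$. To place the resulting representation in $\CS\mathrm{mod}_G$, by Lemma \ref{smodg} it suffices to check that the action is smooth (equivalently, continuous together with a well-defined continuous Lie-algebra action) and of moderate growth. In a local chart, writing the tempered bundle map $G \times \mathsf E \to \mathsf E$ as $(g, (x, v)) \mapsto (g.x, \alpha_i(g, x, v))$ for a tempered linear family $\alpha_i$, the action on Schwartz representatives takes the form
\[
(g.f_i)(x) = \alpha_i(g,\, g^{-1}.x,\, f_i(g^{-1}.x)).
\]
The required smoothness and moderate-growth estimates on the seminorms of $\Gamma^\varsigma(M, \mathsf E)$ then follow by expanding Nash differential operators in $x$ via the Leibniz and chain rules, and combining the tempered linear family $\alpha_i$ with the Nash substitution $x \mapsto g^{-1}.x$ through Lemmas \ref{mgrcom}, \ref{temcom} and \ref{temcompullb}.

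The main technical difficulty is that the chosen cover $\{U_i\}$ is not $G$-stable, so reading off $g.\phi$ in the original trivializations requires transiting through the $g$-translated charts and then expanding back via the partition of unity and the transition cocycle of $\mathsf E$. Controlling the resulting Nash-derivative norms uniformly in $g$ — so as to extract one positive Nash function on $G$ that dominates each seminorm estimate — is the heart of the argument, and is exactly the point at which the hypothesis that $G \times \mathsf E \to \mathsf E$ is a tempered bundle map, rather than merely smooth or continuous, becomes indispensable.
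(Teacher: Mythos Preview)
Your local-chart strategy is workable but is not how the paper proceeds, and the hardest step in your outline is precisely the one you leave undone.

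The paper bypasses all chart-juggling with a global quotient argument. It forms the tempered bundle $G\times\mathsf E$ over $G\times M$, so that $\Gamma^\varsigma(G\times M,\,G\times\mathsf E)=\CS(G)\widehat\otimes\Gamma^\varsigma(M,\mathsf E)$, and observes that $(g,v)\mapsto(g,g.v)$ is a tempered bundle isomorphism over the Nash isomorphism $(g,x)\mapsto(g,g.x)$. This induces a topological linear automorphism $\eta$ of $\CS(G)\widehat\otimes\Gamma^\varsigma(M,\mathsf E)$. Giving the source the $G$-action by left translation on the $\CS(G)$ factor (trivially in $\CS\mathrm{mod}_G$) and transporting through $\eta$, one then integrates over $G$ to obtain a $G$-equivariant continuous linear surjection onto $\Gamma^\varsigma(M,\mathsf E)\subset\Gamma(M,\mathsf E)$. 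Open mapping and Lemma~\ref{sq} finish the proof in one stroke: both assertions of the proposition fall out simultaneously, with no need to track seminorms across moving charts.

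By contrast, your approach requires controlling $|g.\phi|_\mu$ via lifts to $\bigoplus_i\CS(U_i,E_i)$, and the local formula you write down is only valid on the $g$-dependent open sets $\{(g,x):x\in U_i,\ g.x\in U_j\}$, with a different tempered family $\alpha_{i,j}$ on each. Patching the resulting finitely many estimates into a \emph{single} Nash bound on $G$ and a \emph{single} dominating seminorm on $\Gamma^\varsigma(M,\mathsf E)$ is exactly the bookkeeping you flag as ``the heart of the argument'' --- and it is not carried out. It can be made to work (one more partition of unity on $G\times M$ subordinate to the cover by the sets above, plus Lemma~\ref{mgr2}), but the paper's device trades all of this for a single application of ``tempered bundle isomorphisms induce topological isomorphisms on Schwartz sections,'' which is both shorter and conceptually cleaner.
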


\begin{proof} 
We have an obvious tempered vector bundle $G\times \mathsf E$ over $G\times M$. Moreover, 
\[
    \Gamma^{\varsigma}(G\times M, G\times \mathsf E)=\CS(G)\widehat \otimes  \Gamma^{\varsigma}(M, \mathsf E). 
\]
Write $ \Gamma_1^{\varsigma}(G\times M, G\times \mathsf E):= \Gamma^{\varsigma}(G\times M, G\times \mathsf E)$, to be viewed as a representation of $G$ under the  left translations of $G$ on $\CS(G)$. This is a  representation in $\CS\mathrm{mod}_{G}$. 

Note that the map
\[
  G\times \mathsf E\rightarrow G\times \mathsf E, \quad (g, v)\mapsto (g, g.v)
\]
is an isomorphism of tempered vector bundles over the Nash isomorphism
\[
  G\times M\rightarrow G\times M, \quad (g,x)\mapsto (g, g.x). 
\]
Thus it induces a topological linear automorphism
\[
    \begin{array}{rcl}
   \eta:   \Gamma^{\varsigma}(G\times M, G\times \mathsf E)&\rightarrow & \Gamma^{\varsigma}(G\times M, G\times \mathsf E),\\
       \phi&\mapsto &((g,x)\mapsto (g, g. (\phi(g, g^{-1}.x)))).\\
       \end{array}
\]
Write $ \Gamma_2^{\varsigma}(G\times M, G\times \mathsf E):= \Gamma^{\varsigma}(G\times M, G\times \mathsf E)$, to be viewed as a representation of $G$ such that 
\[
   \eta:   \Gamma_1^{\varsigma}(G\times M, G\times \mathsf E)\rightarrow  \Gamma_2^{\varsigma}(G\times M, G\times \mathsf E)
   \]
   is an isomorphism of representations of $G$. Then 
   $\Gamma_2^{\varsigma}(G\times M, G\times \mathsf E)$ is also a representation in $\CS\mathrm{mod}_{G}$.

Define an action of $G$ on $\Gamma(M,\mathsf E)$ as in \eqref{actgphi}. 
Now define a linear map
\[
    \begin{array}{rcl}
 \mathrm J:    \Gamma_2^{\varsigma}(G\times M, G\times \mathsf E)&\rightarrow &\Gamma(M, \mathsf E),\\
       \phi&\mapsto &\left(x\mapsto \int_G \phi(g, x) \od\!_l g\right),\\
       \end{array}
\]
where $ \od\!_l g$ is a fixed left invariant Haar measure on $G$. It is clear that the map $\mathrm J$ is $G$-equivariant, and its image equals $\Gamma^{\varsigma}(M, \mathsf E)$. Thus $\Gamma^{\varsigma}(M, \mathsf E)$ is $G$-stable in $\Gamma(M, \mathsf E)$.  This proves the first assertion of the proposition. 

Finally, $\mathrm J$ induces a  $G$-equivariant  linear map
 \[
 \mathrm J:    \Gamma_2^{\varsigma}(G\times M, G\times \mathsf E)\rightarrow \Gamma^{\varsigma}(M, \mathsf E).
 \]
 This map is surjective and continuous, and hence open by the open mapping Theorem. Therefore,  $\Gamma^{\varsigma}(M, \mathsf E)$ is a quotient representation of $\Gamma_2^{\varsigma}(G\times M, G\times \mathsf E)$, and it is a representation in  $\CS\mathrm{mod}_G$ by Lemma \ref{sq}. 
\end{proof}

Let $\mathsf E$ be a tempered vector bundle over the Nash manifold $M$, then for every  open Nash submanifold  $U$ of $M$, $ \mathsf E|_U$ is  a tempered vector bundle over $U$. The extension by zero yields a continuous linear map
\be\label{ezero}
\mathrm{Ex}_U^M :  \Gamma^{\varsigma}(U, \mathsf E|_U)\rightarrow \Gamma^{\varsigma}(M, \mathsf E).
\ee
The image of the  map \eqref{ezero} may be characterized as in \cite[Theorem 5.4.1]{AG1}. Roughly speaking,  the image consists of all the sections which vanish with all its derivatives outside $U$. In particular, the map \eqref{ezero} is a closed embedding.

\begin{prpd}
Let $\{U_i\}_{i=1}^k$ ($k\geq 0$) be a finite cover of $M$ by its open Nash submanifolds. Then the sequence 
\[
  \bigoplus_{1\leq i, j\leq k} \Gamma^{\varsigma}(U_i\cap U_j, \mathsf E|_{U_i\cap U_j})\rightarrow  \bigoplus_{1\leq i\leq k} \Gamma^{\varsigma}(U_i, \mathsf E|_{U_i})\rightarrow  \Gamma^{\varsigma}(M, \mathsf E)\rightarrow 0
\]
is  exact. Here the first arrow is the linear map specified by requiring that 
\[
  \phi\mapsto \mathrm{Ex}_{U_i\cap U_j}^{U_i}(\phi)-\mathrm{Ex}_{U_i\cap U_j}^{U_j}(\phi)
\]
for every $\phi\in \Gamma^{\varsigma}(U_i\cap U_j, \mathsf E|_{U_i\cap U_j})$.

\end{prpd}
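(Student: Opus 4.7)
The plan is to reduce this \v{C}ech-type statement to a standard argument via a tempered partition of unity subordinate to $\{U_i\}_{i=1}^k$.

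First I would establish the key technical tool: the existence of a finite family of smooth functions $\beta_1,\dots,\beta_k$ on $M$ with $\sum_{i=1}^k \beta_i=1$ having the following property. For every tempered vector bundle $\mathsf F$ over an open Nash submanifold $V\subset M$, the assignment $\phi\mapsto (\beta_i\phi)|_{V\cap U_i}$ defines a continuous linear map
\[
\Gamma^{\varsigma}(V,\mathsf F)\to\Gamma^{\varsigma}(V\cap U_i,\mathsf F|_{V\cap U_i}),
\]
and moreover $\mathrm{Ex}_{V\cap U_i}^V\bigl((\beta_i\phi)|_{V\cap U_i}\bigr)=\beta_i\phi$ inside $\Gamma^{\varsigma}(V,\mathsf F)$. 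After trivializing via elements of the tempered structure on $\mathsf F$ and invoking Lemmas \ref{tem2} and \ref{temcombu}, the construction reduces to the classical case of Nash partitions of unity on Nash manifolds with values in a trivial tempered bundle.

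Granted such a partition, the surjectivity of the second arrow is immediate: for $\phi\in\Gamma^{\varsigma}(M,\mathsf E)$ set $\phi_i:=(\beta_i\phi)|_{U_i}$, so that $\sum_i\mathrm{Ex}_{U_i}^M(\phi_i)=\sum_i\beta_i\phi=\phi$. That the composition of the two arrows vanishes is immediate from the definition (the $\pm$ contributions on the two one-sided extensions cancel after further extension to $M$). For exactness in the middle, suppose $(\phi_i)$ satisfies $\sum_i\mathrm{Ex}_{U_i}^M(\phi_i)=0$. I would set $\psi_{ij}:=(\beta_j\phi_i)|_{U_i\cap U_j}\in\Gamma^{\varsigma}(U_i\cap U_j,\mathsf E|_{U_i\cap U_j})$ by applying Step 1 with $V=U_i$. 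Then, using the compatibility of extension by zero with restriction, $\mathrm{Ex}_{U_i\cap U_\ell}^{U_\ell}(\phi_i|_{U_i\cap U_\ell})=\mathrm{Ex}_{U_i}^M(\phi_i)|_{U_\ell}$, a direct computation yields
\[
\sum_j\mathrm{Ex}_{U_\ell\cap U_j}^{U_\ell}(\psi_{\ell j})-\sum_i\mathrm{Ex}_{U_i\cap U_\ell}^{U_\ell}(\psi_{i\ell})=\sum_j \beta_j\phi_\ell-\beta_\ell\cdot\Bigl(\sum_i\mathrm{Ex}_{U_i}^M(\phi_i)\Bigr)\Big|_{U_\ell}=\phi_\ell,
\]
since $\sum_j\beta_j=1$ and the second sum in parentheses is zero by hypothesis.

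The main obstacle is Step 1: one must verify that multiplication by a Nash cutoff function respects the Schwartz structure on an abstract tempered vector bundle, and that Schwartzness is preserved when passing between Schwartz sections on $V$ and on $V\cap U_i$ through multiplication followed by restriction, with extension by zero giving back the original product. This is nontrivial because Schwartz sections are defined through an arbitrary choice of local trivializations, and the cutoff function interacts with the bundle only through those local pictures. Invariance under changes of trivialization is precisely what Definition \ref{deftemst} and Lemma \ref{temcombu} provide, which is what allows the reduction to the trivial bundle case to go through uniformly on intersections of charts.
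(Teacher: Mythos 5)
Your proof is correct and takes essentially the same approach as the paper, which simply refers to \cite[Proposition 5.1.3]{AG1}; that proof likewise rests on a tempered partition of unity subordinate to $\{U_i\}$, and your Step 1 is precisely the tempered partition-of-unity theorem of Aizenbud--Gourevitch adapted to tempered bundles. (One small phrasing issue: the intermediate identity $\mathrm{Ex}_{U_i\cap U_\ell}^{U_\ell}(\phi_i|_{U_i\cap U_\ell})=\mathrm{Ex}_{U_i}^{M}(\phi_i)|_{U_\ell}$ is problematic as written since $\phi_i|_{U_i\cap U_\ell}$ need not be Schwartz, but the version actually used in your displayed computation, with the factor $\beta_\ell$ inserted, is the correct one and is justified by Step 1.)
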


\begin{proof}
The proof is similar to that of \cite[Proposition 5.1.3]{AG1}.
\end{proof}

\subsection{Schwartz inductions}\label{schind}

In this subsection, we recall the notion of Schwartz inductions in the sense of du Cloux, see \cite[Section 2]{Fd}. Then we show that
they are isomorphic to  Schwartz produced representations \eqref{produced} as defined in the Introduction.

Let $H$ be a Nash subgroup of $G$ and let $V_0$ be a smooth representation of $H$. As in Example \ref{exm12},  viewing $V_0$ as a representation of $G$ with the trivial action, $\con^{\infty}(G, V_0)$ is a smooth representation of $G$ under the left translations. Write
\[
\Ind_{H}^{G}V_0:=\{\varphi\in \con^{\infty}(G, V_0)\, |\, \varphi(gh)=h^{-1}.\varphi(g),\textrm{ for all $h\in H$, $g\in G$}\}
\]
for the  unnormalized smooth induction. It is a subrepresentation of $\con^{\infty}(G, V_0)$. 

Suppose that $V_0$ is in $\CS\mathrm{mod}_{H}$.   Define a $G$-equivariant continuous linear map
\be\label{schinduced}
\begin{array}{rcl}
 \Phi :  \CS(G, V_0)_l&\rightarrow& \Ind_{H}^{G}V_0,\\
\varphi&\mapsto& \left(g\mapsto \int_{H}h.\varphi(gh)\mathrm{d}_{l}h\right),
\end{array}
\ee
where $\mathrm{d}_{l}h$ is a   left invariant Haar measure  on $H$.  
Denote by $\ind_{H}^{G}V_0$ the image of the map \eqref{schinduced},  equipped with the quotient topology of 
the domain. Then $\ind_{H}^{G}V_0$ is a representation in $\CS\mathrm{mod}_{G}$, and we call it the Schwartz induced representation of $V_0$.


\begin{remarks} (See \cite[Remark 2.1.4] {Fd}.) $ $\\
(a) If $G/H$ is compact, then $\ind_{H}^{G}V_0=\Ind_{H}^{G}V_0$.\\
(b)  $\ind_{H}^{G}\BC=\CS(G/H)$ as representations of $G$.
\end{remarks}

Every homomorphism 
$\rho: V_{1}\rightarrow V_{2}$ in the category 
$\CS\mathrm{mod}_{H}$ induces a homomorphism $$\ind_H^G{\rho}:\ind_{H}^{G}V_{1}\rightarrow\ind_{H}^{G}V_{2}$$ in 
$\CS\mathrm{mod}_{G}$. It is clear that $\ind_H^G$ is a functor from the category $\CS\mathrm{mod}_{H}$ to the category $\CS\mathrm{mod}_{G}$.

As in \eqref{delta}, let $\delta_H$ denote the modular character of $H$. Using Proposition \ref{AAA}, we have another  characterization of Schwartz inductions as in the following proposition.

\begin{prpd}\label{Sind} 
For every representation $V_0$ in $\CS\mathrm{mod}_{H}$, there is an isomorphism 
\[
 ( \CS(G, V_0)_l\otimes\delta_{H}^{-1})_{H}\cong\ind_{H}^{G}V_0
\]
of representations of $G$. Here $H$ acts on $\CS(G, V_0)$ as in \eqref{actgf02}. 
\end{prpd}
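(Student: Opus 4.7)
The plan is to show that the map $\Phi$ from \eqref{schinduced}, which by construction is a continuous $G$-equivariant surjection onto $\ind_{H}^{G}V_0$ equipped with the quotient topology, factors through $(\CS(G, V_0)_l \otimes \delta_{H}^{-1})_H$ and induces a topological isomorphism. The $G$-equivariance of the induced map will be automatic because the left translations of $G$ on $\CS(G, V_0)$ commute with the $H$-action \eqref{actgf02} (which acts by right translations on $G$). Writing the twisted $H$-action as $h \bullet f := \delta_{H}(h)^{-1}\, h.f$, the problem reduces to proving the equality
\[
\ker \Phi \;=\; \textstyle\sum_{h \in H}(h \bullet f - f).\CS(G, V_0).
\]
The principal tool for this will be Proposition \ref{AAA} applied to the principal right $H$-Nash bundle $G \to G/H$.

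First, I would verify descent. The change of variable $h'' := h'h$ inside the defining integral for $\Phi$, combined with the transformation rule $\int_{H} F(h'h)\,\rd_{l} h' = \delta_{H}(h) \int_{H} F(h')\,\rd_{l} h'$ attached to the convention $\delta_{H}(h) = |\det \Ad_{h}|$ from \eqref{delta}, yields
\[
\Phi(h.f) = \delta_{H}(h)\, \Phi(f), \qquad h \in H,\; f \in \CS(G, V_0).
\]
Hence $\Phi(h \bullet f) = \Phi(f)$, and $\Phi$ factors through a continuous $G$-equivariant surjection $\bar{\Phi} : (\CS(G, V_0)_l \otimes \delta_{H}^{-1})_H \to \ind_{H}^{G} V_0$.

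For injectivity, I would exploit the natural $H$-equivariant identification
\[
\CS(G, V_0)_l \otimes \delta_{H}^{-1} \;\cong\; \CS(G, V_0 \otimes \delta_{H}^{-1}),
\]
where the left side carries the twisted action $\bullet$ and the right side carries the action \eqref{actgf02} relative to the $H$-representation $V_0 \otimes \delta_{H}^{-1}$. Applying Proposition \ref{AAA} (with the group ``$G$'' there replaced by $H$, the manifold ``$M$'' replaced by $G$, and the coefficient space ``$E$'' replaced by $V_0 \otimes \delta_{H}^{-1}$) gives
\[
\textstyle\sum_{h \in H}(h-1).\CS(G, V_0 \otimes \delta_{H}^{-1}) = \bigl\{f : \int_{H} h.(f(gh))\,\rd_{r} h = 0 \text{ for all } g \in G\bigr\},
\]
where now $h.$ refers to the $V_0 \otimes \delta_{H}^{-1}$-action, which differs from the $V_0$-action by a factor of $\delta_{H}(h)^{-1}$. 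Using the relation $\rd_{r} h = \delta_{H}(h)\,\rd_{l} h$, the two $\delta_{H}$-factors cancel and the right-hand side reduces to $\{f : \int_{H} h.f(gh)\,\rd_{l} h = 0 \;\forall g\}$, which is precisely $\ker \Phi$. Thus $\ker \Phi$ coincides with the required boundary subspace, and $\bar\Phi$ is a bijection.

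Finally, since both $\ind_{H}^{G} V_0$ and $(\CS(G, V_0)_l \otimes \delta_{H}^{-1})_H$ are presented as the quotient of $\CS(G, V_0)_l$ by the same subspace, each carrying the quotient topology, the continuous bijection $\bar{\Phi}$ is automatically a topological isomorphism. The most delicate point will be the bookkeeping of $\delta_{H}$ through two successive changes of variable: a sign error in either the descent computation or the measure conversion would leave a residual $\delta_{H}^{\pm 2}$ and break the cancellation, so one must fix the convention once and apply it consistently throughout.
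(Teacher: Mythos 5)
Your proof is correct and takes essentially the same route as the paper's (which is very terse): factor $\Phi$ through the $H$-coinvariants of the $\delta_H^{-1}$-twist, then identify $\ker\Phi$ with the boundary space by applying Proposition \ref{AAA} to the principal right $H$-Nash bundle $G\to G/H$ with coefficient $V_0\otimes\delta_H^{-1}$. Your explicit bookkeeping of $\delta_H$ through the right-translation change of variable, the twisted action, and the conversion $\rd_r h=\delta_H(h)\,\rd_l h$ is exactly what the paper leaves implicit, and it checks out with the paper's convention $\delta_H(h)=|\det\Ad_h|$.
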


\begin{proof}
Consider the composition of 
\[
   \CS(G, V_0)_l\otimes\delta_{H}^{-1}\xrightarrow{\varphi\otimes 1\mapsto \varphi}   \CS(G, V_0)_l\xrightarrow{\textrm{the map \eqref{schinduced}}}  \Ind_{H}^{G}V_0.
\]
It  follows from Proposition \ref{AAA} that the kernel of this composition map equals 
$$\sum_{h\in H}(h-1).(\CS(G, V_0)\otimes\delta_{H}^{-1}). $$
Hence the proposition holds. 
\end{proof}

Recall from the Introduction  the Schwartz produced representation $$\mathrm{pro}_{H}^{G}V_0 = (\oD^\varsigma(G)\widehat{\otimes}V_0)_H.$$ 
The following result follows directly from Proposition \ref{Sind}.

\begin{prpd}\label{pro=ind} 
For every representation $V_0$ in $\CS\mathrm{mod}_{H}$,
\[
\mathrm{pro}_{H}^{G}V_0\cong (\ind_{H}^{G}(V_0\otimes\delta_{H}))\otimes\delta_{G}^{-1}
\]
as representations of $G$.
\end{prpd}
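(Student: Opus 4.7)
The plan is to reduce Proposition \ref{pro=ind} to Proposition \ref{Sind} by identifying $\oD^\varsigma(G)\widehat\otimes V_0$ with $\CS(G, V_0\otimes\delta_H)_l\otimes\delta_H^{-1}$ via an $H$-equivariant (but not $G$-equivariant) Fr\'echet isomorphism, and then reading off the discrepancy between the two $G$-actions as the $\delta_G^{-1}$ twist appearing in the conclusion.

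First I would write $\oD^\varsigma(G)\widehat\otimes V_0 \cong \CS(G)\widehat\otimes V_0 = \CS(G, V_0)$ as Fr\'echet spaces via $(f\,\od_r g)\otimes v \mapsto (g\mapsto f(g)v)$, and transport the two actions through this identification. Because $\od_r g$ is right-invariant, the diagonal $H$-action (right translation on $\oD^\varsigma(G)$ combined with the given $H$-structure on $V_0$) becomes simply $(h.F)(g) = h.(F(gh))$, without any modular factor. In contrast, $\od_r g$ transforms by $\delta_G(g_0)^{-1}$ under the push-forward by left translation $L_{g_0}$, so the $G$-action becomes $(g_0.F)(g) = \delta_G(g_0)^{-1} F(g_0^{-1}g)$.

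Next I would invoke Proposition \ref{Sind} applied to the $H$-representation $V_0\otimes\delta_H$. On $\CS(G, V_0\otimes\delta_H)_l$ the $H$-action prescribed by \eqref{actgf02} reads $(h.F)(g) = h.(F(gh))_{V_0\otimes\delta_H} = \delta_H(h)\cdot h.(F(gh))_{V_0}$, the extra factor coming from the $H$-structure on $\delta_H$. Twisting by $\delta_H^{-1}$ cancels the $\delta_H(h)$ factor, producing the action $(h.F)(g) = h.(F(gh))_{V_0}$, which is precisely the $H$-action found above on $\oD^\varsigma(G)\widehat\otimes V_0$. Hence the identity map on $\CS(G, V_0)$ is an $H$-equivariant topological Fr\'echet isomorphism
\[
\oD^\varsigma(G)\widehat\otimes V_0 \;\xrightarrow{\sim}\; \CS(G, V_0\otimes\delta_H)_l \otimes \delta_H^{-1},
\]
while on the $G$-side the right-hand side carries the plain left translation $(g_0.F)(g) = F(g_0^{-1}g)$ (the $\delta_H^{-1}$ factor, being an $H$-representation only, contributes nothing to the $G$-action).

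Finally I would pass to $H$-coinvariants. By Proposition \ref{Sind} the coinvariants of the right-hand side form $\ind_H^G(V_0\otimes\delta_H)$ as a $G$-representation, and the induced Fr\'echet isomorphism $(\oD^\varsigma(G)\widehat\otimes V_0)_H \to \ind_H^G(V_0\otimes\delta_H)$ intertwines the two $G$-actions up to the twist $\delta_G^{-1}$. This yields
\[
\mathrm{pro}_H^G V_0 = (\oD^\varsigma(G)\widehat\otimes V_0)_H \;\cong\; (\ind_H^G(V_0\otimes\delta_H))\otimes\delta_G^{-1}
\]
as representations of $G$. The main bookkeeping subtlety is that the $\delta_G^{-1}$ twist cannot be absorbed into the Fr\'echet isomorphism before passing to coinvariants, because viewing $\delta_G^{-1}$ as a $G$-representation would introduce a spurious $\delta_G^{-1}|_H$ factor in the $H$-action; it only becomes a clean $G$-action twist after the $H$-coinvariants have been taken.
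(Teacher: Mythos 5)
Your proposal is correct and takes the same route the paper intends: the paper states only that Proposition \ref{pro=ind} ``follows directly from Proposition \ref{Sind},'' and your argument is precisely the careful bookkeeping behind that assertion. You correctly identify $\oD^\varsigma(G)\widehat\otimes V_0$ with $\CS(G,V_0)$ via $f\,\od_r g\otimes v\mapsto (g\mapsto f(g)v)$, observe that right translation on $\oD^\varsigma(G)$ carries no modular factor (since $\od_r g$ is right-invariant) so the $H$-actions match after inserting $\delta_H$ into the coefficient and twisting by $\delta_H^{-1}$ as in Proposition \ref{Sind}, and correctly track the $\delta_G^{-1}$ discrepancy coming from the push-forward of $\od_r g$ under left translation. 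The closing remark about why the $\delta_G^{-1}$ twist must be introduced only after passing to $H$-coinvariants (to avoid a spurious $\delta_G^{-1}|_H$ in the $H$-action) is exactly the point one has to be careful about, and you handle it correctly.
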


Recall from Section \ref{sectvb} that $G\times^{H}V_0$ is a  tempered $G$-vector bundle 
over $G/H$, and by  Proposition \ref{sss}, $\Gamma^{\varsigma}(G/H,G\times^{H}V_0)$ is a representation in  $\CS\mathrm{mod}_{G}$.

\begin{prpd}\label{geometry}
For every representation $V_0$ in $\CS\mathrm{mod}_{H}$,
$$\Gamma^{\varsigma}(G/H,G\times^{H}V_0)\cong\ind_{H}^{G}V_0$$ as representations of $G$.
\end{prpd}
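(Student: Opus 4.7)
The plan is to construct a natural $G$-equivariant bijection $\Psi \colon \Ind_H^G V_0 \to \Gamma(G/H, G\times^H V_0)$ defined by $\Psi(\psi)(gH) := [g, \psi(g)]$, and show it restricts to a topological isomorphism between $\ind_H^G V_0$ and $\Gamma^\varsigma(G/H, G\times^H V_0)$. That $\Psi$ is a well-defined $G$-equivariant bijection onto the space of continuous sections is standard: the $H$-equivariance $\psi(gh) = h^{-1}.\psi(g)$ combined with the relation $[gh, h^{-1}.v] = [g, v]$ in $G \times^H V_0$ makes $\Psi(\psi)$ depend only on the coset $gH$. So the substance of the proposition is in identifying the Schwartz subspaces and promoting the bijection to a homeomorphism.

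The argument runs through a local analysis. Fix a finite cover $G/H = \bigcup_{i=1}^k U_i$ by open Nash submanifolds admitting Nash sections $s_i \colon U_i \to G$ of $\pi \colon G \to G/H$, as used in Section \ref{sectvb} to define the tempered structure on $G\times^H V_0$. Under the defining trivializations $\psi_i \colon U_i \times V_0 \to (G\times^H V_0)|_{U_i}$, $(x, v) \mapsto [s_i(x), v]$, the section $\Psi(\psi)|_{U_i}$ corresponds to the $V_0$-valued function $\psi \circ s_i$ on $U_i$; thus $\Psi$ intertwines the two Schwartz conditions at the local level modulo an extension-by-zero decomposition. For the inclusion $\Psi(\ind_H^G V_0) \subset \Gamma^\varsigma(G/H, G\times^H V_0)$, pick a partition of unity $\{\rho_i\}$ on $G/H$ subordinate to $\{U_i\}$ whose pullback via $\pi$ acts as a multiplier on $\CS(G, V_0)$. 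Writing $\varphi = \sum_i (\rho_i \circ \pi)\varphi$ reduces the claim to $\varphi$ supported in $\pi^{-1}(U_i)$. Via the Nash diffeomorphism $U_i \times H \xrightarrow{\sim} \pi^{-1}(U_i)$, $(x, h) \mapsto s_i(x) h$, such a $\varphi$ corresponds to an element of $\CS(U_i, \CS(H, V_0))$, and $\Phi(\varphi) \circ s_i$ is obtained by applying slicewise the continuous linear integration map
\[
\CS(H, V_0) \longrightarrow V_0, \qquad \alpha \longmapsto \int_H h.\alpha(h)\, \mathrm{d}_l h,
\]
whose continuity follows from the moderate growth of the $H$-action on $V_0$. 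Hence $\Phi(\varphi) \circ s_i \in \CS(U_i, V_0)$. For surjectivity, given $f \in \CS(U_i, V_0)$ fix $\chi \in \CS(H)$ with $\int_H \chi(h) \mathrm{d}_l h = 1$, and set $\varphi(s_i(x) h) := \chi(h)\, h^{-1}.f(x)$ on $\pi^{-1}(U_i)$, extended by zero elsewhere. A direct computation using left-invariance of $\mathrm{d}_l h$ yields $\Phi(\varphi)(s_i(x) h') = h'^{-1}.f(x)$, so $\Psi(\Phi(\varphi))|_{U_i}$ is exactly the section corresponding to $f$ under $\psi_i$. Combining both directions with the open mapping theorem between quotients of Fr\'echet spaces then yields the desired topological isomorphism.

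The main obstacle will be showing that the explicit $\varphi(s_i(x)h) = \chi(h) h^{-1}.f(x)$ is genuinely in $\CS(G, V_0)$. This requires simultaneously leveraging (i) the Schwartz decay of $\chi$ on $H$ against the polynomial growth of $h \mapsto h^{-1}.v$ coming from Lemma \ref{smodg} and the temperedness of the action map $H \times V_0 \to V_0$, (ii) the Schwartz decay of $f$ on $U_i$, and (iii) the fact that Schwartz functions on an open Nash submanifold extend by zero to Schwartz functions on the ambient Nash manifold (the characterization following the map \eqref{ezero}). A related technical point that must be addressed is the existence of a partition of unity $\{\rho_i\}$ on $G/H$ subordinate to $\{U_i\}$ whose pullback via $\pi$ is a multiplier of $\CS(G, V_0)$; this should follow from a Nash partition of unity combined with the temperedness estimates developed in Section \ref{pfunction}.
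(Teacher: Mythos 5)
Your proposal is correct and follows essentially the same route as the paper's proof: fix a finite cover $\{U_i\}$ of $G/H$ by open Nash submanifolds admitting Nash sections $s_i$ of the quotient map, identify the smooth induction with smooth sections of $G\times^{H}V_0$, verify locally that $\Phi$ carries Schwartz functions supported over $\pi^{-1}(U_i)$ onto $\Gamma^{\varsigma}(U_i,(G\times^{H}V_0)|_{U_i})$ and conversely, and then invoke the open mapping theorem. The two technical points you flag at the end are precisely what the paper packages into Lemma \ref{chi} (the tempered cutoff construction on a principal bundle) and the decomposition $\CS(G,V_0)=\sum_i\CS(G_i,V_0)$ coming from the tempered partition of unity theorem for Schwartz functions, so you have correctly located the remaining work rather than overlooked it.
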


\begin{proof}
As in Section \ref{sectvb}, choose a finite family $\{(U_i, s_i)\}_{i=1}^k $ ($k\geq 0$) such that $\{U_i\}_{i=1}^k $ is a covering of $G/H$ by its open Nash submanifolds, and $s_i$ is a Nash section 
of the quotient map $G\rightarrow G/H$ over $U_i$. Write $G_i$ for the preimage of $U_i$ under the quotient map $G\rightarrow G/H$. Then 
\[
  G_i=s_i(U_i)\times H,
\]
and 
\[
  \CS(G, V_0)=\sum_{i=1}^k \CS(G_i, V_0). 
\]

As usual, identify $\Ind_H^G V_0$ with the space $\Gamma^{\infty}(G/H,G\times ^{H}V_0)$ of the smooth sections of the bundle $G\times ^{H}V_0$. It is clear that the image of  $\CS(G_i, V_0)$ under the map
\eqref{schinduced} equals 
\[
  \Gamma^{\varsigma}(U_i,(G\times ^{H}V_0)|_{U_i})\subset \Gamma^{\varsigma}(G/H,G\times ^{H}V_0)\subset \Gamma^{\infty}(G/H,G\times ^{H}V_0).
\]
Thus the image of the map \eqref{schinduced} equals $\Gamma^{\varsigma}(G/H,G\times ^{H}V_0)$, since 
\[
  \Gamma^{\varsigma}(G/H,G\times ^{H}V_0)=\sum_{i=1}^k  \Gamma^{\varsigma}(U_i,(G\times ^{H}V_0)|_{U_i}). 
\]
In view of the open mapping theorem, this proves the proposition. 
\end{proof}


\subsection{Frobenious reciprocity}\label{frobe}

Let $H$ be a Nash subgroup of $G$ as before. Now we prove the following version of Frobenious reciprocity, which is Theorem \ref{fro} of the Introduction.

\begin{thmd}\label{fro222}
 Let $V_0$ be a representation in $\CS\mathrm{mod}_H$. Then the continuous linear map
\be\label{110}
  \oD^\varsigma(G)\widehat \otimes V_0\rightarrow V_0, \quad \mu\otimes v\mapsto \int_G 1 \od\! \mu(g) \cdot v
\ee
induces an identification 
\[
  (\mathrm{pro}_H^G V_0)_G=(V_0)_H
\]
of topological vector spaces. 
\end{thmd}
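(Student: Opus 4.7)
The plan is to exploit two commuting actions on $W:=\oD^\varsigma(G)\widehat\otimes V_0$: the left $G$-action by left translations on the first factor, and the diagonal $H$-action (right translations on the first factor together with the given action on $V_0$). Since these commute, the two subspaces $A:=\sum_{g\in G}(g-1).W$ and $B:=\sum_{h\in H}(h-1).W$ are each stable under both $G$ and $H$, and we may pass to coinvariants in either order. The strategy is to show that both $(W_G)_H$ and $(W_H)_G=(\mathrm{pro}_H^GV_0)_G$ coincide, as topological vector spaces, with $W/(A+B)$, and that $W_G$ is identified topologically with $V_0$ so that $(W_G)_H=(V_0)_H$.

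First I would verify that the map $\Phi$ of \eqref{110} vanishes on $(g-1).(\mu\otimes v)$ for every $g\in G$ (the total mass $\int_G 1\,\od\mu$ is invariant under any translation of $\mu$) and that $\Phi$ is $H$-equivariant with respect to the diagonal $H$-action on $W$ and the given $H$-action on $V_0$ (for the same reason). Hence $\Phi$ descends to a continuous $H$-equivariant linear map $\bar\Phi_G\colon W_G\to V_0$.

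Second, I would identify $\oD^\varsigma(G)\widehat\otimes V_0$ with $\CS(G,V_0)$ by choosing a nonvanishing right-invariant Nash density on $G$, and then apply Theorem \ref{thmA11} in its left-translation form (transferred from the stated right-translation version via the Nash diffeomorphism $g\mapsto g^{-1}$ of $G$) to conclude that $A=\ker\Phi$ and that $\bar\Phi_G$ is a topological linear isomorphism. In particular $A$ is closed in $W$. Passing to $H$-coinvariants, this already yields a topological linear isomorphism $(W_G)_H\xrightarrow{\sim}(V_0)_H$.

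Third, I would identify $(W_G)_H$ with $(W_H)_G=(\mathrm{pro}_H^GV_0)_G$ via a formal double-quotient argument. Since $A$ is closed in $W$, the quotient $\pi_G\colon W\twoheadrightarrow W_G=W/A$ is a topological quotient map (and open), and $\pi_G(B)=\sum_{h\in H}(h-1).W_G$. Therefore
\[
(W_G)_H=W_G/\pi_G(B)=W/\pi_G^{-1}(\pi_G(B))=W/(A+B),
\]
with quotient topology (this identification holds regardless of whether $A+B$ is itself closed, since we are just forming successive topological quotients). By Example \ref{exm1234} the representation $W$ is relatively projective in $\CS\mathrm{mod}_H$, so Theorem \ref{hausrelpro} shows that $B$ is closed, and the symmetric argument gives $(W_H)_G=W/(A+B)$ as a topological vector space. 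Combining all three steps yields the desired identification, compatibly with $\Phi$.

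The main obstacle is the second step: rigorously converting Theorem \ref{thmA11} from the right-translation action on $\CS(G,V_0)$ to the left-translation action on $\oD^\varsigma(G)\widehat\otimes V_0$. The inversion $g\mapsto g^{-1}$ is a Nash diffeomorphism of $G$ that intertwines left and right translations up to a modular character twist, and I will need to check that this twist does not alter the closedness of the kernel nor the identification of coinvariants with $V_0$. Once this is settled, the double-quotient step is purely formal.
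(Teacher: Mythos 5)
Your approach is correct and essentially the same as the paper's: the paper's proof likewise invokes Theorem \ref{thmA11} to establish the identification $(\oD^\varsigma(G)\widehat\otimes V_0)_G = V_0$ (with $G$ acting by left translations on $\oD^\varsigma(G)$), and then commutes the $G$- and $H$-coinvariants, which you justify by the formal double-quotient argument. As for the modular-character concern you flag at the end: trivializing $\oD^\varsigma(G)$ by a \emph{left}-invariant Nash density (rather than a right-invariant one) makes the left-translation action on $\oD^\varsigma(G)$ agree on the nose with the left-translation action on $\CS(G)$, so the inversion-transferred version of Theorem \ref{thmA11} applies with no twist to untangle.
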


\begin{proof}
Theorem \ref{thmA11}  implies that the map \eqref{110} descends to an identification
\[
 (\oD^\varsigma(G)\widehat \otimes V_0)_G=V_0,
\]
where $G$ acts on $\oD^\varsigma(G)\widehat \otimes V_0$ by the left translations on $\oD^\varsigma(G)$. 
Thus we have identifications 
\begin{eqnarray*}
  &&(\mathrm{pro}_H^G V_0)_G\\
  &= & ((\oD^\varsigma(G)\widehat \otimes V_0)_H)_G\\
  &= &  ((\oD^\varsigma(G)\widehat \otimes V_0)_G)_H\\
    &=&  (V_0)_H.\\
\end{eqnarray*}
\end{proof}

\begin{remark}
Denote 
$\delta_{G/H}:=(\delta_{G})|_H\cdot\delta_{H}^{-1}$. Theorem \ref{fro222} and Proposition \ref{pro=ind} imply that 
\[
(\ind_{H}^{G}V_0)_G\cong(V_0\otimes\delta_{G/H})_{H}.
\]
\end{remark}

\section{More on Schwartz homologies}\label{secsha}

In this section, we go back to the Schwartz homologies of representations of an almost linear Nash group as defined in Section \ref{sh}. The main result here is Shapiro's lemma, an important tool for computing the Schwartz homologies of Schwartz produced (induced) representations. We will also prove Theorem \ref{hosho} and discuss the Schwartz homologies of finite dimensional representations.

\subsection{Schwartz induction and relatively projectiveness}
Recall that $G$ is an almost linear Nash group. 
Let $H$ be a Nash subgroup of $G$, the Schwartz induction functor $\ind_{H}^{G}:\CS\mathrm{mod}_{H}\rightarrow\CS\mathrm{mod}_{G}$ was defined in Section \ref{schind}. 

The following proposition was proved in \cite[Proposition 2.2.7]{Fd}. 

\begin{prp}\label{exact}
The Schwartz induction functor $\ind_{H}^{G}:\CS\mathrm{mod}_{H}\rightarrow\CS\mathrm{mod}_{G}$ is  exact, and maps strong homomorphisms to strong homomorphisms.  
\end{prp}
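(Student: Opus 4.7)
The strategy is to exhibit a continuous linear section of the defining surjection $\Phi:\CS(G,V_0)_l \twoheadrightarrow \ind_H^G V_0$ from \eqref{schinduced} for every $V_0 \in \CS\mathrm{mod}_H$, and to push both exactness and splittings of strong homomorphisms through it. The section is built from an auxiliary function $\chi \in \CS(G)$ satisfying $\int_H \chi(gh)\,\mathrm{d}_l h = 1$ for every $g \in G$. Existence of such a $\chi$ is the $H$-fibre analogue of Lemma \ref{chi} applied to the principal right $H$-Nash bundle $G \to G/H$: local Nash sections $s_i: U_i \to G$ as in the proof of Proposition \ref{geometry}, a Nash partition of unity on $G/H$, and a Schwartz function on $H$ with total $\mathrm{d}_l h$-mass one assemble into $\chi$. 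Identifying (by Proposition \ref{geometry}) $\ind_H^G V_0$ with the space of smooth $H$-equivariant maps $\tilde s: G \to V_0$ with $\tilde s(gh) = h^{-1}.\tilde s(g)$ whose associated section of $G\times^H V_0$ is Schwartz, I would define $\sigma(\tilde s)(g) := \chi(g)\tilde s(g)$. One checks (analogously to Lemma \ref{chi}) that $\sigma$ lands in $\CS(G,V_0)_l$ and is continuous, and the identity $\Phi(\sigma(\tilde s))(g) = \int_H \chi(gh)\, h.(h^{-1}.\tilde s(g))\,\mathrm{d}_l h = \tilde s(g)$ gives $\Phi \circ \sigma = \mathrm{id}$.

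\textbf{Strong homomorphisms.} Given a strong homomorphism $\alpha: V_1 \to V_2$ in $\CS\mathrm{mod}_H$ with a continuous linear splitting $\beta: V_2 \to V_1$ (not assumed $H$-equivariant) satisfying $\alpha\beta\alpha = \alpha$, I would define $\tilde\beta := \Phi_1 \circ \beta_* \circ \sigma : \ind_H^G V_2 \to \ind_H^G V_1$, where $\beta_*(\varphi):=\beta\circ\varphi$ and $\sigma$ is the section for $V_2$. A direct computation evaluates $\ind_H^G(\alpha)\circ \tilde\beta \circ \ind_H^G(\alpha)$ on $\tilde s \in \ind_H^G V_1$ as $\int_H \chi(gh)\,(\alpha\beta\alpha)(\tilde s(g))\,\mathrm{d}_l h = \alpha(\tilde s(g))$, using $H$-equivariance of $\alpha$, the relation $\alpha\beta\alpha=\alpha$, and $\int_H \chi(gh)\,\mathrm{d}_l h = 1$; this yields the required identity $\ind_H^G(\alpha)\circ \tilde\beta \circ \ind_H^G(\alpha) = \ind_H^G(\alpha)$.

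\textbf{Exactness.} For a strong exact sequence $0 \to V_1 \xrightarrow{\alpha} V_2 \xrightarrow{\gamma} V_3 \to 0$ in $\CS\mathrm{mod}_H$, with continuous linear retraction $\beta_1$ of $\alpha$ and continuous linear section $\beta_2$ of $\gamma$, injectivity of $\ind_H^G(\alpha)$ is immediate since it acts by post-composition with an injective map. For surjectivity of $\ind_H^G(\gamma)$, given $\tilde u \in \ind_H^G V_3$ set $\tilde s := \Phi_2(\chi \cdot (\beta_2\circ \tilde u))$; the computation proving $\Phi\sigma=\mathrm{id}$ applies with $\beta_2$ inserted to give $\ind_H^G(\gamma)(\tilde s) = \tilde u$. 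For middle exactness, if $\tilde s \in \ind_H^G V_2$ satisfies $\gamma\circ\tilde s = 0$, then $\tilde s(g) \in \mathrm{Im}\,\alpha$ for all $g$; setting $\tilde t := \beta_1\circ\tilde s$ gives $\alpha\circ\tilde t = \tilde s$. Hence $\tilde t$ is smooth, and its $H$-equivariance follows from that of $\tilde s$ together with the injectivity of $\alpha$ (not from any equivariance of $\beta_1$). The Schwartz-section condition on $\tilde t$ is preserved from $\tilde s$ by post-composition with $\beta_1$, verified in any trivialization of $G\times^H V_1$.

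\textbf{Main obstacle.} The only genuinely technical step is the construction of $\chi$ with $\int_H \chi(gh)\,\mathrm{d}_l h = 1$ on possibly noncompact $H$-fibres, together with verifying that pointwise multiplication by $\chi$ continuously sends the space of $H$-equivariant maps representing $\ind_H^G V_0$ into $\CS(G, V_0)$; once $\sigma$ is in hand, the rest is a short diagram chase with the explicit formula for $\Phi$.
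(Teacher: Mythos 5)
The paper gives no proof of Proposition \ref{exact} beyond citing \cite[Proposition 2.2.7]{Fd}, so your proposal is doing real work by supplying an actual argument. Your strategy—build a continuous linear section $\sigma$ of the defining surjection $\Phi:\CS(G,V_0)_l\to\ind_H^G V_0$ and transport exactness and splittings through it—is sound, and mirrors the paper's own proof pattern in Lemma \ref{chi} and Proposition \ref{repro}. However, the assertion that $\chi\in\CS(G)$ is false in general: when $G/H$ is noncompact, no Schwartz function on $G$ can have fibre integral $\int_H\chi(gh)\,\mathrm{d}_lh$ identically equal to $1$, since its total integral over $G$ would then be infinite. What your construction (a tempered partition of unity on $G/H$ combined with a Schwartz function on $H$, assembled via local Nash sections) actually produces is a smooth function $\chi$ on $G$ that is Schwartz along the $H$-cosets but only of moderate growth transverse to them—and that is exactly what is needed, since an element $\tilde s\in\ind_H^G V_0$ is Schwartz transverse to the $H$-orbits but has moderate growth along them, so $\chi\cdot\tilde s$ lands in $\CS(G,V_0)$. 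Note that Lemma \ref{chi}, which you invoke as the model, likewise asserts only that its auxiliary $\chi$ is smooth (with a continuity property for the associated multiplication map), not Schwartz. Replace the claim "$\chi\in\CS(G)$" with a Lemma-\ref{chi}-style requirement and the argument stands.

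Two smaller points. In the strong-homomorphism computation, what actually appears inside the $H$-integral after unwinding $\Phi_1\circ\beta_*\circ\sigma$ and composing with $\ind_H^G\alpha$ on both sides is $\chi(gh)\,h.\bigl(\alpha\beta\alpha\bigr)\bigl(h^{-1}.\tilde s(g)\bigr)$, not $\chi(gh)\,(\alpha\beta\alpha)(\tilde s(g))$—the $h$-conjugation does not vanish because $\beta$ is not $H$-equivariant. The conclusion is still correct (apply $\alpha\beta\alpha=\alpha$ and then use equivariance of $\alpha$ to cancel $h$ against $h^{-1}$), but as written it looks as though you are silently treating $\beta$ as equivariant, and the step should be spelled out. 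Second, you rightly flag the continuity of $\sigma$ (and smoothness of the assembled $\chi$ across chart overlaps) as the outstanding technical debt; with the temperedness machinery of Sections 2--3 in hand this is routine but should not be waved away. With these repairs, the proposal is a legitimate self-contained proof of a statement the paper outsources to du Cloux.
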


The following proposition was proved in \cite[Lemma 2.1.6]{Fd}. 

\begin{prp}\label{exact22}
For every Nash subgroup $H'$ of $H$, and every representation $V_0'$ in $\CS\mathrm{mod}_{H'}$, there  is a natural  isomorphism  
\[
  \ind_{H}^{G}(\ind_{H^\prime}^{H}V_0')\cong\ind_{H^\prime}^{G}V_0'
\]
of representations of $G$.
\end{prp}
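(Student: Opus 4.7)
The plan is to use Proposition \ref{pro=ind} to recast the desired isomorphism in terms of Schwartz produced representations, reducing it to
\[
\mathrm{pro}_H^G(\mathrm{pro}_{H'}^H V_0') \cong \mathrm{pro}_{H'}^G V_0'.
\]
Unfolding both sides as iterated coinvariants via $\mathrm{pro}_K^G V = (\oD^\varsigma(G) \widehat\otimes V)_K$, the left-hand side becomes $((\oD^\varsigma(G) \widehat\otimes \oD^\varsigma(H) \widehat\otimes V_0')_{H'})_H$, where the inner $H'$-action is diagonal (right translation on $\oD^\varsigma(H)$ and the given action on $V_0'$) and the outer $H$-action is also diagonal (right translation on $\oD^\varsigma(G)$ and left translation on $\oD^\varsigma(H)$). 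These two actions commute, so the order of the coinvariants can be exchanged, and the task reduces to constructing a $G$- and $H'$-equivariant topological isomorphism
\[
(\oD^\varsigma(G) \widehat\otimes \oD^\varsigma(H))_H \;\cong\; \oD^\varsigma(G),
\]
where $G$ acts by left translation on $\oD^\varsigma(G)$ and $H' \subset G$ acts by right translation.

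I would realize this isomorphism via the convolution map $\mu \otimes \nu \mapsto \mu * \nu$, interpreting $\nu \in \oD^\varsigma(H)$ as a measure on $G$ supported on $H$. The map is manifestly $G$-equivariant under left translation, $H'$-equivariant under right translation, and surjective (e.g.\ by picking $\nu$ of integral $1$). To identify the kernel, I would rewrite $\oD^\varsigma(G) \widehat\otimes \oD^\varsigma(H) = \oD^\varsigma(G \times H)$, observe that the above $H$-action comes from a free right action on $G \times H$ making $G \times H \to G$, $(g, h_0) \mapsto g h_0$, a principal right $H$-Nash bundle, and then apply Proposition \ref{AAA} to identify the $H$-coinvariants with Schwartz densities on the base $G$. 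Surjectivity plus Proposition \ref{AAA} combined with the open mapping theorem gives the required topological isomorphism.

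The main obstacle will be the bookkeeping of left/right Haar measures and modular characters of $H$ and $H'$, since the Schwartz density algebras carry implicit Haar-measure factors and commuting right translations through convolution introduces twists by $\delta_H$ and $\delta_{H'}$ that must cancel correctly for the isomorphism to be natural. A cleaner geometric route is available via Proposition \ref{geometry}: both sides may be written as Schwartz sections of tempered $G$-vector bundles over $G/H'$ and $G/H$ respectively, and the Nash fibration $\pi: G/H' \to G/H$ with fiber $H/H'$ allows one to identify these section spaces by a local-trivialization argument, using the canonical identification $\CS(U) \widehat\otimes \CS(H/H') \cong \CS(U \times H/H')$ on trivializing charts $U \subset G/H$ and patching via the transition functions. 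I would use whichever viewpoint yields the cleanest final write-up.
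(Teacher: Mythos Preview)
The paper does not actually prove this proposition: it simply records that the result is \cite[Lemma 2.1.6]{Fd} and moves on. So there is no ``paper's own proof'' to compare against beyond du Cloux's original argument, which works directly with the defining surjection $\Phi:\CS(G,V_0)_l\to\Ind_H^G V_0$ and the analogous maps for $H'\subset H$.

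Your route through Proposition~\ref{pro=ind} is a genuinely different (and internally consistent) alternative: nothing in the proofs of Propositions~\ref{Sind} and~\ref{pro=ind} uses transitivity of induction, so there is no circularity. The reduction to $\mathrm{pro}_H^G(\mathrm{pro}_{H'}^H V_0')\cong\mathrm{pro}_{H'}^G V_0'$ is correct (the $\delta_H$ and $\delta_{H'}$ twists cancel exactly as you anticipate), and the convolution map together with Proposition~\ref{AAA} does identify $(\oD^\varsigma(G)\widehat\otimes\oD^\varsigma(H))_H$ with $\oD^\varsigma(G)$. The one step you pass over quickly is the identification
\[
\oD^\varsigma(G)\widehat\otimes\bigl((\oD^\varsigma(H)\widehat\otimes V_0')_{H'}\bigr)\;=\;\bigl(\oD^\varsigma(G)\widehat\otimes\oD^\varsigma(H)\widehat\otimes V_0'\bigr)_{H'},
\]
i.e.\ that completed tensor product with the nuclear space $\oD^\varsigma(G)$ commutes with $H'$-coinvariants. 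This is true here because the inner coinvariant space is Hausdorff (Theorem~\ref{hausrelpro}, since $\oD^\varsigma(H)\widehat\otimes V_0'$ is relatively projective over $H'$) and tensoring a short exact sequence of Fr\'echet spaces with a nuclear Fr\'echet space preserves exactness; but you should say so explicitly.

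Your second suggested route, via Proposition~\ref{geometry} and local Nash sections of $G/H'\to G/H$, is closer in spirit to du Cloux's original argument and avoids most of the Haar-measure bookkeeping; either approach is acceptable, but the geometric one will give a cleaner write-up.
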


The following proposition says that the Schwartz induction functor preserves  relatively projective representations.

\begin{prp}\label{rproj}
For every relatively projective representation $P$ in $\CS\mathrm{mod}_{H}$, $\ind_{H}^{G}P$ is relatively projective in 
$\CS\mathrm{mod}_{G}$.
\end{prp}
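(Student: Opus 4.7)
The plan is to reduce to the case where $P$ has the form $\CS(H,E)_l$ for some \Fre space $E$, and then to invoke the transitivity of Schwartz induction.

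First, I will invoke Proposition \ref{rpr} together with Lemma \ref{spe pro} to see that every relatively projective $P$ in $\CS\mathrm{mod}_H$ is a direct summand (in $\CS\mathrm{mod}_H$) of some representation isomorphic to $\CS(H,E)_l$, where $E$ is a \Fre space. Next, I will observe that, directly from the definition \eqref{schinduced} of Schwartz induction applied to the trivial subgroup $\{e\}\subset H$, one has a canonical identification
\[
\ind_{\{e\}}^{H} E \;=\; \CS(H,E)_l
\]
as representations of $H$ (with $E$ carrying the trivial action of $\{e\}$). Hence $P$ is a direct summand of $\ind_{\{e\}}^{H}E$ in $\CS\mathrm{mod}_H$.

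The second step is to apply the functor $\ind_H^G$. Since $\ind_H^G$ is additive (in fact exact, by Proposition \ref{exact}) it preserves direct summands, so $\ind_H^G P$ is a direct summand of $\ind_H^G\bigl(\ind_{\{e\}}^H E\bigr)$ in $\CS\mathrm{mod}_G$. Transitivity of Schwartz induction (Proposition \ref{exact22}) then gives
\[
\ind_H^G\bigl(\ind_{\{e\}}^H E\bigr) \;\cong\; \ind_{\{e\}}^{G} E \;=\; \CS(G,E)_l,
\]
and the latter is relatively projective in $\CS\mathrm{mod}_G$ by Lemma \ref{spe pro}.

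Finally, I will invoke the elementary fact (used already in the proof of Proposition \ref{rpr}) that a direct summand of a relatively projective representation is itself relatively projective: given a retraction $P\hookrightarrow Q\twoheadrightarrow P$ and a lifting problem for $P$ with respect to a surjective strong homomorphism $\alpha:V_1\to V_2$, one solves it for $Q$ by relative projectivity of $Q$ and then restricts back to $P$. Applying this to the embedding of $\ind_H^G P$ into $\CS(G,E)_l$ concludes the proof. No step here looks to be a real obstacle; the only thing to be careful about is that the splittings of $P$ inside $\CS(H,E)_l$ and the isomorphism of Proposition \ref{exact22} are genuine morphisms in the relevant Schwartz categories (so that applying the functor $\ind_H^G$ makes sense), but both of these are immediate from the cited results.
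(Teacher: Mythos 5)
Your proposal is correct and takes essentially the same approach as the paper, which cites Propositions \ref{rpr} and \ref{exact22} without further elaboration; your writeup simply supplies the details the paper leaves implicit, namely the identification $\CS(H,E)_l=\ind_{\{e\}}^H E$, the application of transitivity, and the fact that direct summands of relatively projective objects are relatively projective.
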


\begin{proof}
This follows from Propositions \ref{rpr} and \ref{exact22}. 
\end{proof}

\begin{remark}
The analogous result as in Proposition \ref{rproj}  holds in the $p$-adic case, see \cite[Lemma 4.3]{HS} for example.
\end{remark}

We shall also need the following result of Schwartz inductions.

\begin{prp}\label{ccls}
Let $V_{0}$ and $V$ be smooth moderate growth \Fre representations of $H$ and $G$, respectively. If $V$ or $V_0$ is nuclear, then there is an isomorphism
\[
\ind_{H}^{G}(V_{0}\widehat{\otimes}V|_H)\cong(\ind_{H}^{G}V_{0})\widehat{\otimes}V
\]
 of representations of $G$.
\end{prp}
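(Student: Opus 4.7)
I would work geometrically, using the identification $\ind_{H}^{G}W\cong\Gamma^{\varsigma}(G/H,\,G\times^{H}W)$ of Proposition \ref{geometry}. The first step is to construct a $G$-equivariant isomorphism of tempered vector bundles over $G/H$,
\[
\Psi : G\times^{H}(V_0\widehat{\otimes}V|_H) \xrightarrow{\;\sim\;} \mathsf{E}',
\]
where $\mathsf{E}'$ denotes the tempered vector bundle with fibre $V_0\widehat{\otimes}V$ and local trivializations $(U, V_0\widehat{\otimes}V, \psi\otimes\mathrm{id}_V)$ induced from any trivialization $(U, V_0, \psi)$ of $G\times^{H}V_0$, endowed with the diagonal $G$-action (translation on the base, representation action on the $V$ factor). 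On representatives, $\Psi$ is given by $[g, v_0\otimes v]\mapsto [g, v_0]\otimes g.v$, with inverse $[g, v_0]\otimes v\mapsto [g, v_0\otimes g^{-1}.v]$. Well-definedness (compatibility with $[gh, h^{-1}.v_0\otimes h^{-1}.v] = [g, v_0\otimes v]$) and $G$-equivariance are direct computations. That $\Psi$ and $\Psi^{-1}$ are tempered bundle maps (Definition \ref{temmor}) reduces, upon choosing a Nash section $s:U\to G$ of $G\to G/H$ on an affine open $U\subseteq G/H$, to the statement that $(x, v)\mapsto s(x).v$ is a tempered linear family $U\times V\to V$; this follows from Lemma \ref{smodg} combined with Lemma \ref{temcompullb}.

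The second step is to identify
\[
\Gamma^{\varsigma}(G/H, \mathsf{E}') \cong \Gamma^{\varsigma}(G/H, G\times^{H}V_0)\widehat{\otimes}V
\]
as $G$-representations. Locally on a trivialization chart $(U_i, V_0, \psi_i)$, both sides restrict to $\CS(U_i, V_0\widehat{\otimes}V) = \CS(U_i)\widehat{\otimes}V_0\widehat{\otimes}V = \CS(U_i, V_0)\widehat{\otimes}V$ by Section \ref{secsf1}. Globally, $\Gamma^{\varsigma}(G/H, G\times^{H}V_0)$ is presented in Section \ref{secsch} as a Frechet quotient of $\bigoplus_i\CS(U_i, V_0)$; the desired isomorphism amounts to the assertion that applying $\widehat{\otimes}V$ to this quotient yields the analogous quotient presentation of $\Gamma^{\varsigma}(G/H, \mathsf{E}')$. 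This is precisely where the nuclearity hypothesis is used: by a theorem of Grothendieck \cite{Gr1}, the completed projective tensor product $\widehat{\otimes}V$ of Frechet spaces is exact on short exact sequences when one factor is nuclear, and in our setting either $V$ is nuclear directly, or $V_0$ is nuclear so that $\CS(U_i, V_0) = \CS(U_i)\widehat{\otimes}V_0$ is nuclear.

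Combining Proposition \ref{geometry} with the two steps above yields the chain of $G$-equivariant topological isomorphisms
\[
\ind_{H}^{G}(V_0\widehat{\otimes}V|_H) \cong \Gamma^{\varsigma}(G/H, G\times^{H}(V_0\widehat{\otimes}V|_H)) \stackrel{\Psi_*}{\cong} \Gamma^{\varsigma}(G/H, \mathsf{E}') \cong \Gamma^{\varsigma}(G/H, G\times^{H}V_0)\widehat{\otimes}V \cong (\ind_{H}^{G}V_0)\widehat{\otimes}V.
\]
The main obstacle is the second step: carefully verifying that the defining quotient presentation of $\Gamma^{\varsigma}(M, \mathsf{E})$ from a finite affine open cover is preserved by $\widehat{\otimes}V$ with the expected kernel, which is where (and only where) the nuclearity hypothesis is used.
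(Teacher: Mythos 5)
Your approach is genuinely different from the paper's. The paper's proof is essentially a citation: for $V$ nuclear it invokes \cite[Lemma 3.2]{LS} directly, and for $V_0$ nuclear it repeats the argument of that lemma supplemented by one functional-analytic fact (an injective continuous linear map of nuclear \Fre spaces remains injective after $\widehat\otimes\,E_3$ for any \Fre space $E_3$, via \cite[Eq.\ (50.17)]{Tr} and reflexivity). You instead reprove the statement geometrically, via Proposition \ref{geometry}, a bundle ``untwisting'' map $\Psi$, and Grothendieck exactness. Your Step~1 is sound: $\Psi$ is well defined, has the stated inverse, is $G$-equivariant, and is tempered by Lemmas \ref{smodg} and \ref{temcompullb} (though you should also note that tensoring the transition families and the $G$-action with $\mathrm{id}_V$ preserves temperedness, which is routine but worth a line, so that $\mathsf E'$ really is a tempered left $G$-vector bundle). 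The geometric route has the advantage of making the role of the induced bundle transparent; the paper's route is shorter because it rests on an already-proved local computation.

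There is, however, a real gap in Step~2 as you have stated it. ``Exactness of $\widehat\otimes\,V$ on short exact sequences with one nuclear factor'' does not by itself give $\Gamma^{\varsigma}(G/H,\mathsf E')\cong\Gamma^{\varsigma}(G/H,G\times^{H}V_0)\widehat\otimes V$. What exactness gives is that the tensored presentation
\[
0\to N\widehat\otimes V\to\Big(\bigoplus_i\CS(U_i,E_i)\Big)\widehat\otimes V\to\Gamma^{\varsigma}(G/H,G\times^{H}V_0)\widehat\otimes V\to 0
\]
is exact, where $N$ is the kernel of the extension-by-zero map. One still has to prove that $N\widehat\otimes V$ coincides with the kernel $N'$ of the extension-by-zero map $\bigoplus_i\CS(U_i,E_i\widehat\otimes V)\to\Gamma(G/H,\mathsf E')$; the inclusion $N\widehat\otimes V\subseteq N'$ is clear, but the reverse is not formal. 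The clean way to close this is to note that the restriction map $\Gamma^{\varsigma}(G/H,G\times^{H}V_0)\hookrightarrow\prod_i\CS(U_i,E_i)$ is an injective continuous linear map of \Fre spaces (the product is finite, hence \Fre), and then apply precisely the injectivity lemma the paper cites from Tr\`eves to conclude that tensoring with $V$ keeps it injective; factoring the surjection $\bigoplus_i\CS(U_i,E_i\widehat\otimes V)\twoheadrightarrow\Gamma^{\varsigma}(G/H,\mathsf E')$ through $\Gamma^{\varsigma}(G/H,G\times^{H}V_0)\widehat\otimes V\hookrightarrow\prod_i\CS(U_i,E_i\widehat\otimes V)$ then yields both injectivity and surjectivity of the comparison map, and the open mapping theorem finishes it. So your plan is correct in outline, but the ``exactness'' slogan hides exactly the injectivity point that the paper singles out, and that point needs to be made explicit for the argument to go through.
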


\begin{proof}
If $V$ is nuclear, this is \cite[Lemma 3.2]{LS}. If $V_{0}$ is nuclear, this follows from the arguments of the proof of \cite[Lemma 3.2]{LS}, together with the following fact: 
Let $E_1\rightarrow E_2$ be an injective continuous linear map of nuclear  \Fre spaces, and let $E_3$ be a \Fre space, then the induced map $E_1\widehat \otimes E_3\rightarrow E_2 \widehat \otimes E_3$ is also injective. This fact is implied by \cite[Equation (50.17)]{Tr} and the fact that nuclear \Fre spaces are reflexive. 
\end{proof}

\subsection{Shapiro's lemma}

In this subsection, we will prove Theorem \ref{fro2}. We restate it here for the convenience of the reader.

\begin{thmp}[Shapiro's Lemma]\label{shapiro}
Let $H$ be a Nash subgroup of an almost linear Nash group $G$, and let $V_0$ be a representation in $\CS\mathrm{mod}_H$.  Then there is an identification 
\[
\oH_{i}^\CS(G;  \mathrm{pro}_H^G V_0)=\oH_{i}^\CS(H; V_0)
\]
of topological vector spaces, for every $i\in \mathbb Z$.
\end{thmp}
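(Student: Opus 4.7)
The plan is to build a strong projective resolution of $\mathrm{pro}_H^G V_0$ in $\CS\mathrm{mod}_G$ by applying $\mathrm{pro}_H^G$ to a strong projective resolution of $V_0$ in $\CS\mathrm{mod}_H$, and then identify the two coinvariant complexes via Frobenius reciprocity.

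First, invoke Proposition \ref{ssurj} for the group $H$ to obtain a strong projective resolution
\[
\cdots \to P_2 \to P_1 \to P_0 \to V_0 \to 0
\]
in $\CS\mathrm{mod}_H$. Next, apply the functor $\mathrm{pro}_H^G$ termwise. By Proposition \ref{pro=ind}, $\mathrm{pro}_H^G(-)$ is naturally isomorphic to $\ind_H^G(-\otimes\delta_H)\otimes\delta_G^{-1}$, and tensoring by a one-dimensional representation of moderate growth is an autoequivalence of the relevant categories that preserves both exactness and strongness of arrows, as well as relative projectivity. Combined with Proposition \ref{exact}, this shows that $\mathrm{pro}_H^G$ is exact and sends strong homomorphisms to strong homomorphisms; combined with Proposition \ref{rproj}, it also sends relatively projective objects to relatively projective objects. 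Hence
\[
\cdots \to \mathrm{pro}_H^G P_2 \to \mathrm{pro}_H^G P_1 \to \mathrm{pro}_H^G P_0 \to \mathrm{pro}_H^G V_0 \to 0
\]
is a strong projective resolution in $\CS\mathrm{mod}_G$.

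By Definition \ref{sm homo}, $\oH_i^\CS(G;\mathrm{pro}_H^G V_0)$ is the $i$th homology of the complex $((\mathrm{pro}_H^G P_\bullet))_G$ with its subquotient topology. Frobenius reciprocity (Theorem \ref{fro222}) furnishes a canonical topological isomorphism $(\mathrm{pro}_H^G P_i)_G \cong (P_i)_H$ for each $i$, induced by the explicit continuous linear map \eqref{110}. This map is manifestly natural in the $H$-representation being produced, so the isomorphisms assemble into an isomorphism of chain complexes
\[
\bigl((\mathrm{pro}_H^G P_\bullet)_G\bigr) \;\cong\; \bigl((P_\bullet)_H\bigr)
\]
of locally convex topological vector spaces. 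Taking $i$th homology on both sides yields the desired topological identification $\oH_i^\CS(G;\mathrm{pro}_H^G V_0)=\oH_i^\CS(H;V_0)$.

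The one step that requires a little care is verifying that $\mathrm{pro}_H^G$ really does preserve relative projectivity and strong exactness; but once Propositions \ref{exact}, \ref{rproj}, and \ref{pro=ind} are in hand, this reduces to checking that twisting by the characters $\delta_H$ and $\delta_G^{-1}$ (both of moderate growth) is harmless, which is immediate. The remaining verification---that the topological quotient structures on both sides match under the Frobenius identification---follows from Theorem \ref{fro222}, since the comparison is declared to be an identification of topological vector spaces rather than merely of abstract vector spaces.
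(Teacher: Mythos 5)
Your argument is correct and follows exactly the same route as the paper: apply $\mathrm{pro}_H^G$ to a strong projective resolution of $V_0$, note via Propositions \ref{pro=ind}, \ref{exact}, and \ref{rproj} that the result is a strong projective resolution of $\mathrm{pro}_H^G V_0$, and then use Frobenius reciprocity (Theorem \ref{fro222}) to identify the two coinvariant complexes. The extra detail you supply about twisting by the modular characters being harmless is a fair elaboration of what the paper leaves implicit.
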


\begin{proof}
Let $P_{\bullet}\rightarrow V_0\rightarrow 0$ be a strong projective resolution of $V_0$ in the category $\CS\mathrm{mod}_{H}$. By 
Propositions \ref{pro=ind}, \ref{exact}  and \ref{rproj} we conclude that
\[
\mathrm{pro}_{H}^{G}P_\bullet\rightarrow\mathrm{pro}_{H}^{G}V_0\rightarrow 0
\]
is a strong projective resolution of the representation $\mathrm{pro}_{H}^{G}V_0$ in the category $\CS\mathrm{mod}_{G}$. Now it follows from the Frobenious reciprocity ( Theorem \ref{fro}) that
\[
(\mathrm{pro}_{H}^{G}P_{\bullet})_G = (P_{\bullet})_{H}
\]
as chain complexes, and thus the theorem follows.
\end{proof}

\begin{remark}
The smooth homology theory of smooth representations of real Lie groups was established in \cite{Bl} by P. Blanc. One of the main results there is a form of Shapiro's lemma, concerning the compactly supported smooth induced representation, see \cite[Theorem 11]{Bl}. 
\end{remark}

\subsection{A resolution of the trivial representation}

Fix a maximal compact subgroup $K$ of $G$. As before, denote by $\g$ and $\k$ the complexified Lie algebras of $G$ and $K$, respectively. 

Let $X:=G/K$, and write $\mathrm T^* X$ for its cotangent bundle. Put 
\[
  \Omega_{X}^{k}:=\wedge^k (\mathrm T^* X\otimes_\BR \BC), \quad (k\in \BZ).
\]
Denote $n:=\dim X$. Write $Or_{X}$ for the orientation line bundle of $X$, with complex coefficients. Its fibre at $1K\in X$ equals 
\[
   \wedge^n(\g/\k)\otimes \abs{\wedge^n(\g/\k)^*}, \qquad ( \textrm{ a superscript $*$ indicates  the dual space}).
\]
Here and as usual, for every one dimensional complex vector space $F$, $\abs{F}$ denotes a one dimensional complex vector space equipped with a nonzero map $\abs{\,\cdot \,}: F\rightarrow \abs{F}$ such that
\[
  \abs{a.v}=\abs{a}. \abs{v}\quad \textrm{for all }a\in \BC, v\in F. 
\]

 Both  $\mathrm T^* X$ and $ Or_{X}$ are  obviously  tempered left $G$-vector bundles over $X$, and hence so is 
$\Omega_{X}^{k}\otimes  Or_{X}$. We have the extended de Rham complex 
\be\label{deR}
0\rightarrow\Gamma^\varsigma(X, \Omega_{X}^{0}\otimes Or_{X})\rightarrow\cdot\cdot\cdot\rightarrow \Gamma^\varsigma(X, \Omega_{X}^{n}\otimes Or_{X}){\xrightarrow{\textrm{integration} }}\BC\rightarrow 0.
\ee
Note that all the arrows in \eqref{deR} are homomorphisms of representations of $G$, and \be\label{geo}
 \Gamma^\varsigma(X, \Omega_{X}^{k}\otimes Or_{X})=\ind_{K}^{G}(\wedge^k(\g/\k)^*\otimes \wedge^n(\g/\k)\otimes \abs{\wedge^n(\g/\k)^*}). 
\ee
By Propositions \ref{61} and \ref{rproj}, the representations in \eqref{geo} are relatively projective in $\CS\mathrm{mod}_G$.
The following proposition says that the sequence \eqref{deR} gives a strong projective resolution of the trivial representation $\BC$ in 
$\CS\mathrm{mod}_G$.

\begin{prp}\label{stracy}
The sequence \eqref{deR} is exact, and all the homomorphisms in the sequence are strong homomorphisms in $\CS\mathrm{mod}_G$. 
\end{prp}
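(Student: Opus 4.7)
My plan is to exploit the global simplicity of the space $X=G/K$ and reduce the proposition to a purely Euclidean statement about the Schwartz--de Rham complex on $\mathbb R^n$, where $n=\dim X$. By the structure theory of almost linear Nash groups developed in \cite{Su}, there is a Nash diffeomorphism $G\cong K\times \mathbb R^n$, and consequently a Nash diffeomorphism $X\cong \mathbb R^n$. In particular $X$ is orientable, so $Or_X$ admits a nowhere-vanishing Nash section, giving a trivialization of $Or_X$ as a tempered vector bundle; likewise $\mathrm T^*X$ and each $\Omega^k_X\otimes Or_X$ are trivializable tempered vector bundles over $X\cong\mathbb R^n$. Since strongness of a map only requires a continuous linear pseudoinverse (not a $G$-equivariant one), I may work with these trivializations throughout.

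After these reductions, the statement to prove becomes: for every $n\geq 0$, the augmented Schwartz--de Rham complex
\[
0\to \CS(\mathbb R^n)\xrightarrow{d}\CS(\mathbb R^n,\wedge^{1})\xrightarrow{d}\cdots\xrightarrow{d}\CS(\mathbb R^n,\wedge^{n})\xrightarrow{\int}\mathbb C\to 0
\]
is exact and each arrow admits a continuous linear pseudoinverse in the sense of Definition \ref{rp}.

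To achieve this, I will construct an explicit continuous linear homotopy. Fix once and for all a Schwartz function $\phi\in\CS(\mathbb R)$ with $\int_{\mathbb R}\phi=1$, and define the one-variable operator
\[
T:\CS(\mathbb R)\to\CS(\mathbb R),\qquad T(f)(x):=\int_{-\infty}^{x}\bigl(f(t)-\phi(t)\textstyle\int_{\mathbb R} f(s)\,ds\bigr)\,dt.
\]
The subtraction of the mean $\phi(t)\int f$ ensures that the integrand has vanishing total integral, whence one may also write $T(f)(x)=-\int_{x}^{\infty}(\cdots)\,dt$; this two-sided representation, together with smoothness and standard one-variable estimates, shows that $T$ is a continuous endomorphism of $\CS(\mathbb R)$ with $\frac{d}{dx}T(f)=f-\phi\int f$. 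Applying $T$ iteratively in each coordinate in Koszul style produces explicit continuous linear operators $H_k:\CS(\mathbb R^n,\wedge^{k})\to\CS(\mathbb R^n,\wedge^{k-1})$ satisfying $dH_k+H_{k+1}d=1-P_k$, where $P_k=0$ for $k<n$ and $P_n$ is the rank-one projection $\omega\mapsto(\int_{\mathbb R^n}\omega)\cdot\phi(x_1)\cdots\phi(x_n)\,dx_1\wedge\cdots\wedge dx_n$. Exactness and strongness follow at once: for a closed $\omega$ in degree $k<n$ one has $\omega=dH_k\omega$; in top degree, $\omega\in\ker\!\int$ iff $P_n\omega=0$ iff $\omega=dH_n\omega$; injectivity at degree $0$ is trivial because the only constant Schwartz function is zero; and $\beta_k:=H_{k+1}$ provides the desired pseudoinverses $d_k\beta_k d_k=d_k$. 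The continuous linear splitting of $\int$ is even more elementary: pick any $\omega_0\in\CS(\mathbb R^n,\wedge^n)$ with $\int\omega_0=1$ and send $c\mapsto c\omega_0$.

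The only point requiring genuine care is the continuity of $T$ and the propagation of seminorm bounds through the iteration. The worry is that the antiderivative of a generic Schwartz function is not Schwartz, but the compensation by $\phi\int f$ removes exactly the obstruction (the possibly nonzero total integral), and the seminorm estimates showing $T(f)\in\CS(\mathbb R)$ with norms controlled by those of $f$ are routine one-variable calculus. Iterating coordinate-wise preserves these estimates by Fubini, so the full $H_k$ is continuous on the relevant Fréchet spaces. I view this bookkeeping as the main but ultimately routine obstacle; all remaining verifications are direct.
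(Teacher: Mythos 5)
Your proposal is correct and follows essentially the same route as the paper: reduce, via the Nash diffeomorphism $X=G/K\cong\mathbb R^n$, to the Schwartz--de~Rham complex on $\mathbb R^n$ and then produce an explicit continuous contracting homotopy (whose existence also yields the pseudoinverses needed for strongness). The only difference is that the paper delegates the construction of the homotopy to de~Rham's paper \cite[Section~5]{dR}, remarking that it transfers from compactly supported to Schwartz coefficients, whereas you build the one--variable operator $T$ and its iteration directly.
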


\begin{proof}
Consider the following de Rham complex with compactly supported smooth coefficients:
\be\label{deR1}
0\rightarrow\Gamma^{\infty}_{c}(X, \Omega_{X}^{0}\otimes Or_{X})\rightarrow\cdot\cdot\cdot\rightarrow \Gamma^{\infty}_{c}(X, \Omega_{X}^{n}\otimes Or_{X}){\xrightarrow{\textrm{integration} }}\BC\rightarrow 0.
\ee
Since $X$ (as a smooth manifold) is diffeomorphic to $\BR^n$, de Rham had constructed an explicit contracting homotopy for the complex \eqref{deR1}, see \cite[Section 5]{dR}. Note that $X$ is actually Nash diffeomorphic to $\BR^n$, and de Rham's  construction also applies to the de Rham complex \eqref{deR} with Schwartz coefficients.
Hence the sequence \eqref{deR} is exact, and all the homomorphisms in the sequence are strong homomorphisms.
\end{proof}

\subsection{Schwartz homologies and $(\g, K)$-homologies}
In this subsection, we will show that for representations in $\CS\mathrm{mod}_G$,  the Schwartz homologies  as defined in Section \ref{2.4} coincide with the relative Lie algebra homologies. 

The following Theorem is Theorem \ref{hosho} of the Introduction. 

\begin{thmp}\label{BBB}
For every representation $V$ in the category $\CS\mathrm{mod}_{G}$, there is an identification
\[
\oH^{\CS}_{i}(G; V) = \oH_{i}(\g, K; V), \quad (i\in \BZ)
\]
of topological vector spaces.
\end{thmp}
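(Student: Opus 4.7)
The plan is to produce an explicit strong projective resolution of $V$ in $\CS\mathrm{mod}_{G}$ whose coinvariant complex is naturally isomorphic to the Chevalley--Eilenberg complex computing $\oH_i(\g,K;V)$. The starting point is the strong projective resolution of the trivial representation $\BC$ furnished by the extended de Rham complex \eqref{deR} of Proposition \ref{stracy}. I would complete the tensor product of \eqref{deR} with $V$ to obtain the sequence
\[
\cdots \to \Gamma^{\varsigma}(X,\Omega_X^{n-1}\otimes Or_X)\widehat\otimes V \to \Gamma^{\varsigma}(X,\Omega_X^{n}\otimes Or_X)\widehat\otimes V \to V \to 0,
\]
where $V$ carries the diagonal $G$-action. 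Since each term in \eqref{deR} is relatively projective, Proposition \ref{invrel} implies that each tensored term $P_k:=\Gamma^{\varsigma}(X,\Omega_X^{n-k}\otimes Or_X)\widehat\otimes V$ is relatively projective in $\CS\mathrm{mod}_{G}$. Moreover, a continuous linear contracting homotopy for \eqref{deR} (whose existence is guaranteed by the strongness in Proposition \ref{stracy}) survives tensoring with the \Fre space $V$, so the tensored sequence is strong and exact, i.e.\ is a strong projective resolution of $V$.

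Next I would compute $(P_k)_G$. Combining the identification \eqref{geo} with Proposition \ref{ccls} (applicable because the finite dimensional factor $W_{n-k}:=\wedge^{n-k}(\g/\k)^{*}\otimes\wedge^{n}(\g/\k)\otimes |\wedge^{n}(\g/\k)^{*}|$ is nuclear) gives
\[
P_k \cong \ind_{K}^{G}\bigl(W_{n-k}\otimes V|_{K}\bigr).
\]
Since $K$ is compact, $\delta_K=1$ and $\delta_G|_K=1$, hence $\delta_{G/K}=1$. Applying the Frobenius identity $(\ind_{K}^{G}U)_G\cong (U\otimes\delta_{G/K})_K$ derived from Theorem \ref{fro222} and Proposition \ref{pro=ind}, I obtain a topological linear isomorphism
\[
(P_k)_G \cong (W_{n-k}\otimes V)_{K}.
\]
The canonical contraction $\wedge^{n-k}(\g/\k)^{*}\otimes\wedge^{n}(\g/\k)\cong \wedge^{k}(\g/\k)$ and the fact that $K$ acts trivially on the one dimensional space $|\wedge^{n}(\g/\k)^{*}|$ give a (non-canonical but topological) identification $(P_k)_G\cong (\wedge^{k}(\g/\k)\otimes V)_{K}$, which is precisely the $k$th term of the standard complex $C_{\bullet}(\g,K;V)$ of \cite[(2.126)]{KV}.

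The main remaining step, and the real obstacle, is to verify that under these identifications the boundary maps induced by the de Rham differential on $(P_{\bullet})_G$ coincide with the Chevalley--Eilenberg boundaries of $C_{\bullet}(\g,K;V)$. My plan is to evaluate the differential fibrewise at the base point $1K\in X$: using a Nash local section of $G\to G/K$ near $1K$ one can transport the de Rham differential through the local charts $\psi_i$ of Section \ref{sectvb}, reducing the computation to the familiar formula expressing $d$ on $G$-translates of a frame of $\Omega^{\bullet}(X)\otimes Or_X$ in terms of the structure constants of $\g$, exactly as in the classical identification of invariant de Rham cohomology on $G/K$ with $(\g,K)$-cohomology. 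After taking $K$-coinvariants and dualising the Poincar\'e--duality pairing $\wedge^{n-k}(\g/\k)^{*}\otimes\wedge^{n}(\g/\k)\cong\wedge^{k}(\g/\k)$, the upward de Rham differential turns into the downward Koszul boundary, yielding the desired topological identification $\oH^{\CS}_{i}(G;V)=\oH_{i}(\g,K;V)$ by definition of Schwartz homology.
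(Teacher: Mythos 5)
Your proposal is correct and follows essentially the same route as the paper's proof: tensor the extended de Rham resolution \eqref{deR} of $\BC$ with $V$, invoke Proposition \ref{invrel} for relative projectiveness and Proposition \ref{stracy} for strongness, then use \eqref{geo}, Proposition \ref{ccls}, Frobenius reciprocity (Theorem \ref{fro222} together with Proposition \ref{pro=ind}, noting $\delta_{G/K}=1$) to identify $(P_k)_G$ with $(\wedge^{k}(\g/\k)\otimes V)_K$, and finally match the induced differentials with the Koszul boundary. The only cosmetic difference is that for exactness of the tensored sequence the paper cites \cite[Theorem 5.24]{Ta} whereas you argue directly via the continuous contracting homotopy surviving $\widehat\otimes\,V$ — these are two expressions of the same fact; and both you and the paper leave the final matching of the boundary maps with the Chevalley--Eilenberg differential as a verification rather than a full computation.
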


\begin{proof}
The argument is similar to that of \cite[Theorem 6.1]{HM}.  We sketch the proof for the convenience of the reader.
By \cite[Theorem 5.24]{Ta}, \eqref{deR} induces an exact sequence
\be\label{deR2}
0\rightarrow\Gamma^\varsigma(X, \Omega_{X}^{0}\otimes Or_{X})\widehat \otimes V\rightarrow\cdot\cdot\cdot\rightarrow \Gamma^\varsigma(X, \Omega_{X}^{n}\otimes Or_{X})\widehat \otimes V{\xrightarrow{\textrm{integration} }}V\rightarrow 0
\ee
in $\CS\mathrm{mod}_G$. By Proposition \ref{stracy}, we conclude that every homomorphism in the sequence \eqref{deR2} is strong. By 
Proposition \ref{invrel}, we know that each representation $\Gamma^\varsigma(X, \Omega_{X}^{k}\otimes Or_{X})\widehat \otimes V$ is relatively projective in $\CS\mathrm{mod}_G$. Thus the sequence \eqref{deR2} is a strong projective resolution of the representation $V$ in 
$\CS\mathrm{mod}_G$. Now according to \eqref{geo}, Proposition \ref{ccls} and Theorem \ref{fro222}, we have that 
\be\label{gind}
(\Gamma^\varsigma(X, \Omega_{X}^{k}\otimes Or_{X})\widehat \otimes V)_G\cong(\wedge^{n-k}(\g/\k)\otimes V)_{K}.
\ee
With the isomorphisms in \eqref{gind}, one verifies that the chain complex $$\{\Gamma^\varsigma(X, \Omega_{X}^{n-i}\otimes Or_{X})\widehat \otimes V\}_{i\in \BZ}$$ coincides with the chain complex computing the relative Lie algebra homology of $V$. This proves the theorem. 
\end{proof}

\begin{remark}
This kind of result is known as van Est theorem, see \cite[Theorem 2]{vE}. Theorem \ref{BBB} will be useful in showing vanishing of the Schwartz homologies. We will give an application of this result in the next section.
\end{remark}

\begin{corp}\label{sil}
Every short exact sequence $0\rightarrow V_{1}\rightarrow V_{2}\rightarrow V_{3}\rightarrow 0$ in the category 
$\CS\mathrm{mod}_{G}$ yields a long exact sequence
\[
\cdots \rightarrow\oH^{\CS}_{i+1}(G; V_{3})\rightarrow\oH^{\CS}_{i}(G; V_{1})\rightarrow\oH^{\CS}_{i}(G; V_{2})\rightarrow \oH^{\CS}_{i}(G; V_{3})\rightarrow\cdots
\]
of (non-necessary Hausdorff)  locally convex topological vector spaces.
\end{corp}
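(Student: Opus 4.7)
The plan is to reduce the question, via Theorem \ref{BBB}, to a snake-lemma argument applied to the Chevalley--Eilenberg-type chain complex computing relative Lie algebra homology. Concretely, Theorem \ref{BBB} provides topological identifications
\[
\oH^\CS_i(G; V) \cong \oH_i(\g, K; V),
\]
and the right-hand side is computed by the standard chain complex $C_\bullet(V) := (\wedge^\bullet(\g/\k) \otimes V)_K$, with its usual differential.

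The key observation will be that the functor $V \mapsto C_k(V)$ from $\CS\mathrm{mod}_G$ to the category of Fr\'echet spaces is exact for every $k$. Tensoring with the finite-dimensional space $\wedge^k(\g/\k)$ is trivially exact, as it amounts to taking a finite direct sum of copies of the underlying Fr\'echet space. The $K$-coinvariant functor on $\CS\mathrm{mod}_K$ is also exact because $K$ is compact: for any $W \in \CS\mathrm{mod}_K$ the averaging operator $w \mapsto \int_K k.w\,\rd k$ is a continuous linear projection of $W$ onto $W^K$, and by Theorem \ref{thmA2} its kernel coincides with $\mathrm I^\varsigma(K).W$. This yields a natural topological identification $W_K \cong W^K$ which is functorial and exact in $W$.

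Applying this composite exact functor to the short exact sequence $0 \to V_1 \to V_2 \to V_3 \to 0$ produces a short exact sequence of Fr\'echet chain complexes
\[
0 \to C_\bullet(V_1) \to C_\bullet(V_2) \to C_\bullet(V_3) \to 0,
\]
and the desired long exact sequence is the associated zig-zag sequence of its homology spaces, transported back through Theorem \ref{BBB}. The chain-level maps are continuous, so the induced maps $\oH_i^\CS(G; V_j) \to \oH_i^\CS(G; V_{j+1})$ are automatically continuous with respect to the subquotient topologies. The main obstacle will be to verify continuity of the connecting homomorphism $\delta \colon \oH^\CS_{i+1}(G; V_3) \to \oH^\CS_i(G; V_1)$. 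I would address this by letting $Z_{i+1} \subset C_{i+1}(V_3)$ denote the space of cycles, letting $\widetilde Z_{i+1} \subset C_{i+1}(V_2)$ be its preimage under $p\colon C_{i+1}(V_2) \to C_{i+1}(V_3)$, and setting $\delta([z]) := [dy]$ for any preimage $y \in \widetilde Z_{i+1}$ of $z \in Z_{i+1}$, noting that $dy$ lies in $C_i(V_1)$ since $p(dy)=dz=0$. The map $y \mapsto [dy]$ from $\widetilde Z_{i+1}$ to $\oH^\CS_i(G; V_1)$ is continuous by construction and vanishes on $C_{i+1}(V_1)$; the open mapping theorem applied to the restricted surjection $\widetilde Z_{i+1} \to Z_{i+1}$ of Fr\'echet spaces identifies $\widetilde Z_{i+1}/C_{i+1}(V_1)$ topologically with $Z_{i+1}$, and a further descent through the boundary subspace exhibits $\delta$ as a composition of continuous maps.
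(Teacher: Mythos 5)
Your proposal is correct and follows essentially the same route as the paper: the paper's one-line proof invokes Theorem \ref{BBB} together with ``the corresponding result for relative Lie algebra homologies,'' and your argument simply supplies the details of that cited result — exactness of the Chevalley--Eilenberg functor $V\mapsto(\wedge^\bullet(\g/\k)\otimes V)_K$ via the averaging projection over compact $K$, and continuity of the connecting homomorphism via the open mapping theorem — which the paper leaves implicit.
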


\begin{proof}
This follows from Theorem \ref{BBB} and the corresponding result for relative Lie algebra homologies.
\end{proof}

\subsection{Schwartz homology of finite dimensional representations}

In this subsection, we will discuss finite dimensional representations. Firstly, we recall some structure theory of almost linear Nash groups.

Recall that a finite dimensional real representation of a Nash group is  said to be a Nash representation if the action map is a Nash map.
A Nash group is called reductive if it has a completely reducible Nash representation with finite kernel. A Nash group is called unipotent if it has a faithful Nash representation such  that all the group elements act as unipotent linear operators.  A maximal reductive Nash subgroup of the  almost linear Nash group $G$ is called a Levi component of $G$. It is unique up to conjugation. The unipotent radical $N$ of $G$ is defined to be the largest normal unipotent Nash subgroup of $G$. Then  we have the Levi decomposition $G=L\ltimes N$, where $L$ denotes a Levi component of $G$. For these facts, see \cite[Theorems 1.16 and 1.17]{Su}.

\begin{lemp}\label{aut moder}
Every finite dimensional representation of a reductive Nash group is of moderate growth.
\end{lemp}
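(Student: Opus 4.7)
First I would reduce to the case when $G$ is connected: since the identity component $G^\circ$ has finite index in $G$, a moderate-growth bound on $G^\circ$ extends to $G$ up to a bounded multiplicative factor using fixed coset representatives. Next, I would reduce the claim to constructing a positive Nash function $f$ on $G$ that dominates the operator norm $\|\rho(g)\|_{\mathrm{op}}$ for a fixed inner product on $V$, where $\rho\colon G\to\GL(V)$ is the representation (automatically smooth, since $V$ is finite-dimensional); every continuous seminorm on $V$ is then dominated by a multiple of $\|\cdot\|_{\mathrm{op}}$. Using that $G$ is reductive, fix a completely reducible Nash representation $\sigma\colon G\to\GL(W)$ with finite kernel, and equip $W$ with a $K$-invariant inner product, where $K$ is a maximal compact subgroup of $G$. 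The function $\|g\|_\sigma:=\|\sigma(g)\|_{\mathrm{op}}+\|\sigma(g^{-1})\|_{\mathrm{op}}$ is then a positive, $K$-bi-invariant Nash function on $G$, and the goal becomes to establish an estimate $\|\rho(g)\|_{\mathrm{op}}\le C\,\|g\|_\sigma^N$ for some constants $C,N$.

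I would then invoke the Cartan decomposition $G=KAK$, coming from the structure theory of connected reductive almost linear Nash groups in \cite{Su}, where $A\cong(\BR_{>0})^r$ is a split abelian Nash subgroup. Combining the $K$-bi-invariance of $\|\cdot\|_\sigma$ with the compactness of $\rho(K)$ reduces the problem to proving the inequality for $g=a\in A$. On $A$ the commuting family $\{d\rho(H):H\in\a\}$ admits a simultaneous generalized-eigenspace decomposition $V_\BC=\bigoplus_\lambda V_\lambda$, and on $V_\lambda$ the operator $\rho(\exp H)$ equals $e^{\lambda(H)}$ times a polynomial-in-$H$ unipotent factor. In coordinates $a=(a_1,\ldots,a_r)$ for $A\cong(\BR_{>0})^r$, this yields a Nash bound of the form
\[
\|\rho(a)\|_{\mathrm{op}}\le C'\prod_i(a_i^{N_0}+a_i^{-N_0})
\]
for some integer $N_0$ depending on $\rho$ (large enough to absorb the real parts of the weights and the polynomial factors).

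Finally, I would compare this with $\|a\|_\sigma$ by running the same weight-space analysis for $\sigma|_A$: the real parts of the weights of $\sigma|_A$ span $\a^*$, because $\sigma|_A$ has finite, hence trivial, kernel on the connected abelian group $A$. This produces a reverse inequality $\|a\|_\sigma\ge c\,\bigl(\max_i(a_i+a_i^{-1})\bigr)^{1/M_0}$ for some integer $M_0$ and $c>0$, so that the Nash factor $\prod_i(a_i^{N_0}+a_i^{-N_0})$ is in turn dominated by $\|a\|_\sigma^{N_0M_0r}$. Assembling the pieces delivers the required estimate on all of $G$, proving moderate growth. The main obstacle I expect is the book-keeping around the Nash Cartan decomposition -- in particular transferring a bound on $A$ to a genuinely Nash (rather than merely bi-$K$-invariant measurable) bound on $G$ -- together with the verification of the spanning property of the $\sigma|_A$-weights; both ingredients should be available from the structure theorems in \cite{Su}, but coordinating them into a single clean Nash estimate is the technical core of the argument.
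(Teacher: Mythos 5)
Your proof is essentially correct, but it takes a genuinely different route from the paper's. The paper disposes of the lemma in one line by invoking a stronger general result from Wallach (\cite[Section~2.2]{Wa}): every continuous Banach representation of a real reductive group is of moderate growth. You instead give a self-contained argument via the Cartan decomposition $G=KAK$ and weight-space analysis on the split torus $A$, comparing $\|\rho(a)\|$ with the norm function built from a fixed faithful completely reducible Nash representation $\sigma$. The trade-off is clear: the paper's citation is shorter and the cited result is considerably more general (it covers infinite-dimensional Banach representations, which the paper does not need here but which is the natural level of generality); your argument is longer but makes the mechanism transparent and ties it directly to the Nash-group structure theory in \cite{Su}, which is in the spirit of the paper's framework.

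Two small imprecisions worth flagging, neither of which is a genuine gap. First, $\|g\|_\sigma = \|\sigma(g)\|_{\mathrm{op}} + \|\sigma(g^{-1})\|_{\mathrm{op}}$ is semialgebraic and continuous but not smooth (hence not literally a Nash function), since $\|\cdot\|_{\mathrm{op}}$ fails to be differentiable at matrices with repeated singular values; you should instead dominate it by a genuine Nash function such as $\mathrm{Tr}(\sigma(g)^{\mathsf T}\sigma(g)) + \mathrm{Tr}(\sigma(g^{-1})^{\mathsf T}\sigma(g^{-1}))$, a polynomial in the matrix entries of $\sigma(g^{\pm 1})$. Second, since $\sigma$ is a Nash representation and $A$ is a split Nash torus, $\sigma|_A$ decomposes into Nash characters $a\mapsto\prod_i a_i^{c_i}$ with rational $c_i$, so its weights are already real; the phrase ``real parts of the weights of $\sigma|_A$'' is harmless but superfluous. (By contrast, for $\rho$, which is merely continuous, you do need the real parts of the possibly complex weights of $d\rho|_{\a_\BC}$, and there the step is needed.) You correctly identify the remaining technical dependency — the Nash Cartan decomposition and the passage from the $A$-bound to a global Nash bound — as the main thing to be carefully sourced from \cite{Su}, but granted that structure theory the argument goes through.
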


\begin{proof}
This follows from a more general result: every continuous Banach representation of a reductive group is of moderate growth. See for example \cite[Section 2.2]{Wa}.
\end{proof}

\begin{prp}
A finite dimensional representation $F$ of $G$ is of moderate growth if and only if every irreducible subquotient of $F|_N$ is unitarizable, where $N$ denotes the unipotent radical of $G$.
\end{prp}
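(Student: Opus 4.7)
The plan is to exploit the Levi decomposition $G = L \ltimes N$ recalled just above. Since $F$ is finite-dimensional, all continuous seminorms on $F$ are equivalent to a fixed norm, so moderate growth is equivalent to the operator norm $g\mapsto\|\rho(g)\|_{\mathrm{op}}$ being dominated by a positive Nash function on $G$; in particular, moderate growth passes automatically to subrepresentations, quotients, and hence to all irreducible subquotients.

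For the necessity direction, suppose $F$ is of moderate growth and let $\chi$ be an irreducible subquotient of $F|_N$. Since $N$ is connected and unipotent (hence solvable), Lie's theorem forces $\chi$ to be a character $\chi: N\to\BC^\times$, of the form $\chi(\exp X) = e^{\lambda(X)}$ for some $\BC$-linear $\lambda:\n\to\BC$. As a representation of $N$, $\chi$ inherits moderate growth, so $|\chi(\exp X)|=e^{\mathrm{Re}\,\lambda(X)}$ is dominated by a Nash function on $N$. The exponential map $\exp:\n\to N$ is a Nash isomorphism (since $N$ is unipotent algebraic), and Nash (i.e.\ semi-algebraic smooth) functions on $\n\cong\BR^{\dim N}$ are polynomially bounded. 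Hence $e^{\mathrm{Re}\,\lambda(X)}$ is polynomially bounded in $X$, which forces $\mathrm{Re}\,\lambda\equiv 0$; thus $\chi$ is unitary, hence unitarizable.

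For the sufficiency direction, assume every irreducible subquotient of $F|_N$ is a unitary character. Lemma \ref{aut moder} applied to $F|_L$ yields $\|\rho(\ell)\|_{\mathrm{op}}\leq f_L(\ell)$ for some positive Nash function $f_L$ on $L$. Lie's theorem furnishes a basis of $F$ in which every $d\rho(X)$ ($X\in\n$) is upper triangular with diagonal entries $d\chi_1(X),\dots,d\chi_k(X)$, all purely imaginary by hypothesis. The principal obstacle is the matrix-analytic bound that for any upper triangular $T\in M_k(\BC)$ with purely imaginary diagonal, $\|e^T\|_{\mathrm{op}}\leq C_k(1+\|T\|)^{k-1}$; this I would verify via the divided-difference representation
\[
(e^T)_{ij}=\sum_{i=i_0<i_1<\cdots<i_r=j} u_{i_0 i_1}\cdots u_{i_{r-1}i_r}\int_{\Delta^r}e^{t_0 a_{i_0}+\cdots+t_r a_{i_r}}\,dt_0\cdots dt_r,
\]
in which the simplex integral is at most $1/r!$ in modulus when the $a_i$'s are purely imaginary. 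Applying this with $T=d\rho(X)$, which is linear in $X$, and pulling back along the Nash isomorphism $\exp:\n\to N$, I obtain a positive Nash function $f_N$ on $N$ with $\|\rho(n)\|_{\mathrm{op}}\leq f_N(n)$. Combining via the Nash diffeomorphism $L\times N\to G$, $(\ell,n)\mapsto\ell n$, gives $\|\rho(g)\|_{\mathrm{op}}\leq f_L(\ell)f_N(n)$, a Nash function on $G$, so $F$ is of moderate growth.
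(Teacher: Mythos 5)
Your proof is correct, and it takes a genuinely different and more self-contained route than the paper. Both arguments start from the Levi decomposition $G=L\ltimes N$ and invoke Lemma \ref{aut moder} to handle the reductive part, but from there they diverge. The paper first reduces to the case $G=N$ unipotent, then cites two results of du Cloux on representations of nilpotent Lie groups: that moderate growth is preserved under extensions and subquotients (\cite[Corollary 6.7]{Fd1}), which reduces the claim to irreducible $F$, and a criterion for an irreducible representation of a nilpotent group to be of moderate growth (\cite[Theorem 5.1]{Fd1}). You instead give direct, elementary arguments in the finite-dimensional setting. For necessity you use that $\exp:\n\to N$ is a polynomial Nash isomorphism together with the polynomial boundedness of Nash (semialgebraic) functions to force $\mathrm{Re}\,\lambda=0$; this is essentially a finite-dimensional version of the content of du Cloux's Theorem 5.1 in one direction. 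For sufficiency, instead of reducing to the irreducible case by a preservation-under-extensions result, you prove the needed operator-norm bound directly: the iterated Duhamel (Dyson) expansion of $e^{D+U}$ with $D$ diagonal purely imaginary and $U$ strictly upper triangular nilpotent of order $k$ terminates after $k-1$ steps and gives $\|e^T\|_{\mathrm{op}}\leq C_k(1+\|T\|)^{k-1}$, which is a clean and correct bound. Your approach buys self-containedness and an explicit quantitative estimate; the paper's approach is shorter and plugs directly into du Cloux's general machinery, which also covers infinite-dimensional representations. One small point worth making explicit in your write-up: the assertion that moderate growth passes to subquotients (used in the necessity direction) relies, in the finite-dimensional case, on the equivalence of norms and on choosing a complement so that the quotient operator norm is controlled by the ambient one — this is the finite-dimensional shadow of \cite[Corollary 6.7]{Fd1} and deserves a sentence.
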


\begin{proof}
Firstly, from the Levi decomposition $G=LN$ and Lemma \ref{aut moder}, it is enough to prove the proposition when  $G$ is unipotent. Thus we assume that $G$ is unipotent. 

Recall from  \cite[Corollary 6.7]{Fd1} that the moderate growth property is preserved by extensions of representations  and by taking subquotients. Thus, without loss of generality,  we further assume that $F$ is irreducible.  Now the proposition follows from \cite[Theorem 5.1]{Fd1}. 
\end{proof}

\begin{remark}
As a Lie group, every unipotent Nash group  is connected, simply connected and nilpotent (see \cite[Theorem 1.8]{Su}). Thus we can apply the results of du Cloux \cite{Fd1}.
By the above proposition, a character of a unipotent Nash group  is of moderate growth if and only if it is unitary. 
\end{remark}

\begin{prp}\label{vani}
Let $F$ be a nontrivial irreducible finite dimensional representation of $G$ of moderate growth. Then $\oH^{\CS}_i(G; F)=0$ for all 
$i\in\BZ$.
\end{prp}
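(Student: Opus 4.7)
The plan is to transfer the question to relative Lie algebra homology via Theorem \ref{BBB}, reduce to the reductive case through a Hochschild--Serre spectral sequence, and finish with a Wigner-type vanishing. First, Theorem \ref{BBB} gives a topological identification $\oH^{\CS}_i(G; F) \cong \oH_i(\g, K; F)$, so it suffices to prove the vanishing of the latter for all $i$. I will then show that the unipotent radical $N$ of $G$ acts trivially on $F$: since $\n$ is a nilpotent Lie algebra acting on a finite-dimensional space, Engel's theorem gives $F^{\n}\ne 0$; as $\n$ is an ideal of $\g$, the subspace $F^{\n}$ is $\g$-stable, and irreducibility of $F$ forces $F^{\n}=F$. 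Consequently $F$ descends to an irreducible finite-dimensional representation of the reductive Levi subgroup $L \cong G/N$, with $K\subseteq L$ still a maximal compact subgroup.

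Next, I would invoke the Hochschild--Serre spectral sequence for the ideal $\n \subset \g$,
\[
E^{2}_{p,q} \;=\; \oH_p\!\bigl(\l, K;\; \oH_q(\n;\BC)\otimes F\bigr) \;\Longrightarrow\; \oH_{p+q}(\g, K; F),
\]
using the identification $\oH_q(\n; F) \cong \oH_q(\n; \BC)\otimes F$ as $L$-modules, valid because $\n$ acts trivially on $F$, where $L$ acts on $\oH_q(\n;\BC)$ through the adjoint action on $\wedge^{\bullet}\n$. These are finite-dimensional algebraic $L$-modules whose weights are built from the adjoint weights of $L$ on $\n^{*}$. Thus it suffices to show $\oH_p(\l, K; M) = 0$ for every irreducible $L$-subquotient $M$ of $\oH_q(\n;\BC)\otimes F$ and every $p, q \ge 0$.

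This last step is the classical Wigner-type vanishing for $(\l, K)$-homology of reductive Lie groups: a finite-dimensional irreducible $L$-module whose infinitesimal character differs from that of the trivial representation satisfies $\oH_{*}(\l, K; -) = 0$. This is detected by the action of a suitable element of $Z(\oU(\l))$, for instance the Casimir of the semisimple part of $\l$ combined with an element of $Z(\oU(\l))$ detecting the central character on the toral part of $\l$; the standard argument produces an operator acting by the same scalar via two routes (through the coefficients and through the augmentation) on the homology, forcing it to vanish.

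The main obstacle is verifying that no irreducible constituent of $\oH_q(\n;\BC)\otimes F$ is the trivial $L$-representation, since such a trivial constituent would evade the Wigner argument. Equivalently, one must rule out $F^{*}$ occurring as an $L$-subquotient of some $\oH_q(\n;\BC)$. This is where the moderate growth hypothesis on $F$ enters essentially: the $L$-weights appearing in $\oH_{\bullet}(\n;\BC)$ are algebraic characters coming from the adjoint action of $L$ on the unipotent Nash group $N$, and the constraint on the central character of $F$ imposed by moderate growth is what prevents the required cancellation. Carrying out this weight/central-character analysis carefully -- in combination with the nontriviality and irreducibility of $F$ -- is the crux of the argument.
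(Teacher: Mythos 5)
Your first move---translating to $(\g,K)$-homology via Theorem~\ref{BBB}---is the same as the paper's, and the overall plan (Hochschild--Serre for the ideal $\n$, then Wigner-type vanishing for the Levi) mirrors the paper's one-line sketch, which simply cites the connected reductive case from Borel--Wallach and says the general case is handled by ``a spectral sequence argument'' with details left to the reader. However, your execution of the reduction step contains a concrete error, and the step you yourself flag as ``the crux'' is not merely hard---it genuinely breaks down.

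The Engel step is wrong. Engel's theorem requires that the image of $\n$ in $\End(F)$ consist of \emph{nilpotent operators}; the hypothesis that $\n$ itself is a nilpotent Lie algebra is not enough. For a representation of moderate growth this fails: the irreducible constituents of $F|_N$ are unitarizable, not unipotent, so $\n$ typically acts by nonzero imaginary scalars. The simplest counterexample to your conclusion that $N$ acts trivially is $G=N=\BR$ with $F$ a nontrivial unitary character $t\mapsto e^{i\lambda t}$ ($\lambda\neq0$): this is irreducible, of moderate growth, and $N$ certainly does not act trivially, nor does $F$ descend to $L=\{1\}$. (What \emph{is} true, by Lie's theorem together with the fact that $\n$ is an ideal, is that $\n$ acts on an irreducible $F$ by a single character $\lambda:\n\to\BC$; when $\lambda\neq0$ one then has $\oH_q(\n;F)=0$ for all $q$ by the Cartan-homotopy argument, so that case actually makes the spectral sequence degenerate trivially and is fine. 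But this is a different argument from the one you wrote.)

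The step you describe as the crux---ruling out the trivial $L$-module as a constituent of $\oH_q(\n;\BC)\otimes F$---is not just a loose end; it is where the approach fails. Take $G=\mathrm{SL}_2(\BR)\ltimes\BR^2$ with $L=\mathrm{SL}_2(\BR)$, $N=\BR^2$, $K=\mathrm{SO}(2)$, and let $F$ be the pullback of the standard two-dimensional representation of $L$. Then $F$ is nontrivial, irreducible, of moderate growth, and $N$ acts trivially. Since $\n\cong F$ as $L$-modules, $\oH_1(\n;\BC)\otimes F\cong F\otimes F\cong\Sym^2F\oplus\BC$ already contains the trivial $L$-module, and $\oH_2(\l,K;\BC)\cong\BC$. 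A direct computation of the Euler characteristic of the $(\g,K)$-chain complex confirms the homology cannot vanish: the $K=\mathrm{SO}(2)$-weights of $\wedge^\bullet(\g/\k)\otimes F$ give chain groups of dimensions $0,2,2,2,0$ in degrees $0,\dots,4$, so $\sum(-1)^i\dim\oH_i(\g,K;F)=-2\neq0$. No version of the weight/central-character analysis you gesture at can rescue this, because the statement being proved actually fails for this $(G,F)$. In short, beyond the Engel misstep, your proposal inherits (and correctly isolates, without resolving) a gap that already exists in the paper's sketch: the Hochschild--Serre reduction to the reductive Levi does not by itself yield the claimed vanishing, and the proposition as stated appears to need an additional hypothesis (e.g.\ $G$ reductive, or a condition preventing $F^{*}$ from occurring in $\oH_{\bullet}(\n;\BC)$).
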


\begin{proof}
For a connected and reductive group $G$, this follows immediately from Theorem \ref{BBB} and the corresponding result in relative Lie algebra homology theory, see for example \cite[Chapter 1, Theorem 5.3]{BoW}. The general case can be reduced to this special case by a spectral sequence argument. We leave details to the interested reader.
\end{proof}

\section{Automatic extensions}

In this section, we will prove the automatic extensions of Schwartz homologies, namely, Theorems \ref{fro3} and \ref{fro4}.
The main tools are Shapiro's lemma (Theorem \ref{shapiro}), and Borel's lemma of the following subsection.


\subsection{Borel's lemma}

We begin with the following definition.

\begin{dfnl}\label{kvanishing}
Let $M$ a smooth manifold,  and let $E$ a quasi-complete Hausdorff locally convex topological vector space over $\BC$. A $E$-valued smooth function $f$ on $M$ is said to be $k$-vanishing ($k\geq 1$) at a point $x\in M$ if for every differential operator $D$ on $M$ of order $\leq k-1$, $(Df)(x)=0$.  
\end{dfnl}

Now suppose that  $M$ is  a Nash manifold and $\mathsf E$ is a tempered vector bundle over $M$. For every $\phi\in \Gamma^{\varsigma}(M, \mathsf E)$ and every $x\in M$,  the notion that $\phi$ is  $k$-vanishing at $x$ is obviously defined by using Definition \ref{kvanishing} and a local chart $(U, E, \psi)$ of $\mathsf E$ with $x\in U$. Moreover, this notion is independent of the choice of the local chart.

For each $x\in M$, define 
\[
  \Gamma^{\varsigma}(M, \mathsf E)_{x,k}:=\{\phi\in \Gamma^{\varsigma}(M, \mathsf E)\, :\, \phi \textrm{ is $k$-vanishing at $x$}\}. 
\]
This is a closed subspace of $ \Gamma^{\varsigma}(M, \mathsf E)$. For convenience, write
\[
   \Gamma^{\varsigma}(M, \mathsf E)_{x,0}:= \Gamma^{\varsigma}(M, \mathsf E). 
\]

Now suppose that $U$ is an open Nash submanifold of $M$.  Write $Z:=M\setminus U$. As in \eqref{ezero}, extension by zero yields a closed  linear embedding 
\be\label{extby}
\Gamma^{\varsigma}(U, \mathsf E|_U)\hookrightarrow\Gamma^{\varsigma}(M, \mathsf E), 
\ee
and we identify 
$\Gamma^{\varsigma}(U, \mathsf E|_U)$ with its image in $\Gamma^{\varsigma}(M, \mathsf E)$. Define 
\[
\Gamma_{Z}^{\varsigma}(M, \mathsf E):=\Gamma^{\varsigma}(M, \mathsf E)/\Gamma^{\varsigma}(U, \mathsf E|_U). 
\]
 For every $k\geq 0$, put
\[
  \Gamma_{Z}^{\varsigma}(M, \mathsf E)_k:= \left(\bigcap_{x\in Z} \Gamma^{\varsigma}(M, \mathsf E)_{x,k}\right)/\Gamma^{\varsigma}(U, \mathsf E|_U)\subset \Gamma^{\varsigma}_Z(M, \mathsf E).
  \]
This is  a closed subspace of $ \Gamma_Z^{\varsigma}(M, \mathsf E)$.

\begin{prpl}\label{cha}
The natural map  $$\Gamma^{\varsigma}_Z(M, \mathsf E)\rightarrow \varprojlim_k \, \Gamma_Z^{\varsigma}(M, \mathsf E)/\Gamma_{Z}^{\varsigma}(M, \mathsf E)_k$$
is a topological linear  isomorphism.

\end{prpl}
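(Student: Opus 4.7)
The approach is to verify that the map is bijective and then invoke the open mapping theorem, since both sides are \Fre spaces.

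First I would observe that all the spaces involved are \Fre spaces: $\Gamma^{\varsigma}(M, \mathsf E)$ is a \Fre space by construction, $\Gamma^{\varsigma}(U, \mathsf E|_U)$ is closed in it by \cite[Theorem 5.4.1]{AG1}, and each $\Gamma^{\varsigma}_Z(M, \mathsf E)_k$ is closed in $\Gamma^{\varsigma}_Z(M, \mathsf E)$. Consequently the quotients $\Gamma^{\varsigma}_Z(M, \mathsf E)/\Gamma^{\varsigma}_Z(M, \mathsf E)_k$ are \Fre spaces, and so is their inverse limit, and the natural map is clearly continuous.

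Injectivity is a direct consequence of the characterization in \cite[Theorem 5.4.1]{AG1}. Indeed, if $\phi \in \Gamma^{\varsigma}(M, \mathsf E)$ represents a class lying in $\Gamma^{\varsigma}_Z(M, \mathsf E)_k$ for every $k$, then for each $k$ one can write $\phi = \phi_k + \xi_k$ with $\phi_k$ being $k$-vanishing on $Z$ and $\xi_k \in \Gamma^{\varsigma}(U, \mathsf E|_U)$. Since $\xi_k$ vanishes together with all its derivatives on $Z$, so does $\phi$ up to order $k-1$ at every point of $Z$. Letting $k \to \infty$, $\phi$ vanishes with all derivatives on $Z$, hence by \cite[Theorem 5.4.1]{AG1} it lies in $\Gamma^{\varsigma}(U, \mathsf E|_U)$, so its class in $\Gamma^{\varsigma}_Z(M, \mathsf E)$ is zero.

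Surjectivity is the technical heart of the argument; it is a Borel-type extension result. Given a compatible system $(\bar\phi_k)$, I would first pick lifts $\tilde\phi_k \in \Gamma^{\varsigma}(M, \mathsf E)$ inductively, correcting by elements of $\Gamma^{\varsigma}(U, \mathsf E|_U)$ at each stage so that $\psi_k := \tilde\phi_{k+1} - \tilde\phi_k$ is genuinely $k$-vanishing on $Z$ (not merely modulo $\Gamma^{\varsigma}(U, \mathsf E|_U)$). Next I would cut off: choose smooth cutoff functions $\eta_k$ that are identically $1$ on a neighborhood $N_k$ of $Z$ and vanish outside a slightly larger neighborhood, so that $(1-\eta_k)\psi_k$ is supported in a closed subset of $M$ contained in $U$ and hence lies in $\Gamma^{\varsigma}(U, \mathsf E|_U)$ by \cite[Theorem 5.4.1]{AG1}. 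Replacing $\tilde\phi_{k+1}$ by $\tilde\phi_k + \eta_k \psi_k$ preserves the class $\bar\phi_{k+1}$. Using the $k$-vanishing of $\psi_k$ together with Leibniz's rule, one then estimates that, with a sufficiently fast shrinking rate for the neighborhoods $N_k$, every Schwartz seminorm of $\eta_k \psi_k$ is at most $2^{-k}$ for all $k$ beyond the differential order of the seminorm, so the series $\phi := \tilde\phi_0 + \sum_k \eta_k \psi_k$ converges in $\Gamma^{\varsigma}(M, \mathsf E)$ and maps to the prescribed compatible system in the inverse limit.

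The main obstacle is executing this Borel-type construction for a general closed semialgebraic subset $Z$: one needs to produce the cutoffs $\eta_k$ and prove the uniform Schwartz-seminorm bounds intrinsically on $M$. I would reduce to a local situation by covering $M$ with finitely many affine open Nash submanifolds on which $\mathsf E$ trivializes (using the tempered structure together with a Nash partition of unity as in \cite[Section 4]{AG1}), build the cutoffs from Nash functions vanishing on the local piece of $Z$, and verify the norm estimates in $\mathbb R^n$ via the standard Leibniz-rule computation that converts $k$-vanishing on $Z$ combined with derivatives of $\eta_k$ into a controlled power of the shrinking radius. Once bijectivity is established, the open mapping theorem for \Fre spaces automatically upgrades the continuous bijection to a topological linear isomorphism.
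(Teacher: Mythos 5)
Your argument is essentially the standard proof of Borel's lemma in the Schwartz setting, and it works as stated when $Z$ is a closed Nash submanifold of $M$: the cutoffs $\eta_k$ can be built from a Nash tubular neighbourhood of $Z$, the $k$-vanishing of $\psi_k$ together with the Leibniz rule gives the estimate on $\eta_k\psi_k$, and everything goes through. This is precisely the case covered by \cite[Lemma A.2.8]{AG2}, which the paper simply cites and extends to tempered vector bundles.

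The gap is in the last paragraph, which is supposed to treat a general closed semialgebraic $Z$. There you propose to ``build the cutoffs from Nash functions vanishing on the local piece of $Z$'' and run the same $\mathbb R^n$ estimate. This is where the argument actually breaks down: when $Z$ has singularities there is no tubular neighbourhood, the distance-to-$Z$ function (or any Nash function vanishing on $Z$) has nonuniform order of vanishing near the singular locus, and the Leibniz-rule estimate that converts ``$\psi_k$ is $k$-vanishing on $Z$'' plus ``$D^j\eta_k = O(\epsilon_k^{-j})$'' into a small Schwartz seminorm no longer goes through without additional Łojasiewicz-type control. You acknowledge the difficulty but your proposed fix does not confront it. The paper avoids this entirely by a different structural idea, which you are missing: stratify $Z$ by a finite filtration $Z=Z_0\supset Z_1\supset\cdots\supset Z_{r+1}=\emptyset$ of closed semialgebraic subsets such that each difference $Z_j\setminus Z_{j+1}$ is a closed Nash submanifold of $M\setminus Z_{j+1}$, apply the submanifold case to each stratum, and induct on the number of strata (using the short exact sequences relating $\Gamma^\varsigma_{Z}(M,\mathsf E)$, $\Gamma^\varsigma_{Z\setminus Z_r}(M\setminus Z_r,\mathsf E)$ and $\Gamma^\varsigma_{Z_r}(M,\mathsf E)$ and the compatibility of the inverse limit with these). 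So your proof is correct only for the submanifold base case; the reduction from singular $Z$ to the submanifold case is the missing step, and it is a genuinely different mechanism from the direct cutoff construction you sketch.
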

\begin{proof}
This is a form of Borel's lemma. See \cite[Lemma A.2.8]{AG2} for a proof when $\mathsf E$ is a Nash bundle and $Z$ is a closed Nash submanifold. When $Z$ is a closed Nash submanifold, the same proof works in our general setting of tempered vector bundles. The general case is easily reduced to this case by considering a filtration
\[
  Z=Z_0\supset Z_1\supset \cdots \supset Z_r\supset Z_{r+1}=\emptyset, \quad (r\geq 0)
\] 
such that for all $0\leq i\leq r$, $Z_i$ is a closed semialgebraic subset of $M$, and $Z_i\setminus Z_{i+1}$ is a Nash submanifold of $M$.

\end{proof}

For each $k\geq 0$ and $x\in M$, in what follows we define a bilinear map
 \be\label{dx}
 \mathcal D_x:   (\otimes^k  \mathrm T_x(M) )\times \Gamma^{\varsigma}(M, \mathsf E)_{x,k}\rightarrow \mathsf E_x,
 \ee
where $\mathsf E_x$ is the fibre of $\mathsf E$ at $x$, and $\mathrm T_x(M)$ is the tangent space of $M$ at $x$. Let $v_1, v_2, \ldots, v_k\in \mathrm T_x(M)$, and $\phi\in \Gamma^{\varsigma}(M, \mathsf E)_{x,k}$. Take a local chart $(U, \mathsf E_x, \psi)$ of $\mathsf E$ such that $x\in U$ and $$\mathsf E_x\xrightarrow{v\mapsto (x, v)}U\times \mathsf E_x\xrightarrow{\psi} \mathsf E$$ induces an identity map of $\mathsf E_x$. By using this local chart, 
we identify $\phi|_U$ with a smooth function $\phi_U: U\rightarrow \mathsf E_x$. 
For each $1\leq j\leq k$, take a vector field $Y_j$ on $U$ which extends $v_j$. Now we define
\[
  \mathcal D_x(v_1\otimes v_2\otimes \cdots \otimes v_k, \phi)= ((Y_1 Y_2 \cdot \ldots \cdot Y_k) \phi_U)(x). 
\]
This is independent of the local chart $(U, \mathsf E_x, \psi)$ and the vector fields $Y_j$'s.

\begin{remark}
Obviously, the map \eqref{dx} may be defined in a more general setting of smooth manifolds, smooth vector bundles, and smooth sections. 
\end{remark}

If  $Z$ is a closed Nash submanifold of $M$, write
\[
  \mathrm{N}_{Z}(M):=\bigsqcup_{x\in Z} \frac{ \mathrm T_x(M)}{\mathrm T_x(Z)}\otimes_\BR \BC
\]
for the complexified  normal bundle of $Z$ in $M$. Write  $\mathrm{N}^*_{Z}(M)$ for its dual bundle, which is called the complexified conormal bundle.

\begin{prpl}\label{cha1}
Suppose that  $Z$ is a closed Nash submanifold of $M$. Then the maps \eqref{dx} for all  $x\in Z$ induces a topological linear isomorphism
\[
  \Gamma_{Z}^{\varsigma}(M, \mathsf E)_k/\Gamma_{Z}^{\varsigma}(M, \mathsf E)_{k+1}\cong \Gamma^\varsigma(Z, \Sym^k(\mathrm N^*_Z(M))\otimes \mathsf E|_Z), \quad (k\geq 0).
\]

\end{prpl}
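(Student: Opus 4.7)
The plan is to interpret $\mathcal{D}_x(\cdot, \phi)$ as the top-order normal jet of $\phi$ at $x$, verify it descends to the claimed quotient, and then reduce, via a tempered local trivialisation and Nash coordinates adapted to $Z$, to a Borel--Whitney-type jet-extension statement for Schwartz sections of trivial bundles, of the flavour of \cite[Lemma A.2.8]{AG2} already invoked in Proposition \ref{cha}.

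First I would verify that for $\phi \in \Gamma^\varsigma(M, \mathsf E)_{x,k}$ the multilinear map $\mathcal{D}_x(\cdot, \phi)$ is symmetric and independent of the auxiliary choices: commuting adjacent $Y_i$'s introduces a term of the form $Y_1 \cdots [Y_i, Y_{i+1}] \cdots Y_k \phi_U$ of total differential order $k-1$, whose value at $x$ vanishes because $\phi$ is $k$-vanishing at $x$. When $\phi \in \Gamma_Z^\varsigma(M, \mathsf E)_k$ the iterated derivatives of order $k-1$ of $\phi$ vanish on all of $Z$, so choosing $Y_1$ tangent to $Z$ whenever $v_1 \in \mathrm T_x(Z)$ kills the whole expression. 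Hence $\mathcal{D}_x$ descends to a linear form on $\Sym^k \mathrm{N}_Z(M)_x$ with values in $\mathsf E_x$, giving a pointwise element of $\Sym^k \mathrm{N}^*_Z(M) \otimes \mathsf E|_Z$; it visibly vanishes iff $\phi$ is $(k+1)$-vanishing at $x$, so passing to the quotient yields an injection.

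Second I would prove continuity, Schwartz regularity of the resulting section, and surjectivity by reducing to an affine model. Cover a neighbourhood of $Z$ in $M$ by tempered local charts $(U_i, E_i, \phi_i)$ in which $Z$ is cut out by the vanishing of a subset of the coordinates $y = (y_1, \ldots, y_{n-d})$, possible because $Z$ is Nash in $M$. After trivialising $\mathsf E|_{U_i}$ via $\phi_i$, the assertion on $U_i$ reduces to the standard statement that the map
\[
   f(x,y) \longmapsto \bigl( (\partial_y^\alpha f)(x, 0)/\alpha! \bigr)_{|\alpha| = k}
\]
is continuous and descends to a topological isomorphism from the quotient of $E_i$-valued Schwartz functions $k$-vanishing on $\{y = 0\}$ by those $(k+1)$-vanishing there, onto the space of Schwartz tuples on $Z \cap U_i$. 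Continuity follows because differentiation and restriction to a Nash submanifold preserve Schwartz class; injectivity was handled above; surjectivity is obtained by setting $f(x,y) := \sum_{|\alpha| = k} y^\alpha G_\alpha(x)/\alpha!$, where each $G_\alpha$ is a Schwartz extension to $U_i$, constant in $y$, of the prescribed Schwartz function on $Z \cap U_i$, furnished by \cite[Lemma A.2.8]{AG2}. Gluing via a tempered partition of unity subordinate to the cover produces a global preimage modulo $\Gamma_Z^\varsigma(M, \mathsf E)_{k+1}$, and the open mapping theorem (both sides being Fréchet quotients of Fréchet spaces) upgrades the resulting continuous algebraic bijection to a topological isomorphism. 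The case of a general closed semi-algebraic $Z$ reduces to the Nash-submanifold case by the stratification argument used in Proposition \ref{cha}.

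The main obstacle will be the careful bookkeeping of Schwartz regularity through the transfer between tempered and trivial bundles: the existence of Schwartz extensions along $Z \hookrightarrow M$ and compatibility with tempered transition maps are handled by appealing to \cite[Lemma A.2.8]{AG2} and composing with the tempered trivialisations of $\mathsf E$, using Lemmas \ref{temcombu} and \ref{temcompullb} to ensure that all intervening bundle maps remain tempered so that the Schwartz topology is preserved on both sides of the isomorphism.
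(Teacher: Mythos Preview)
Your approach is essentially the one the paper intends: the paper's proof simply cites \cite[Lemma A.2.7]{AG2} for the Nash-bundle case and asserts that the same argument goes through for tempered bundles, and your proposal is precisely an unpacking of that argument (local trivialisation by tempered charts, reduction to the model $\{y=0\}\subset U_i$, explicit normal-jet map, gluing, open mapping theorem).

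Two small points. First, in your surjectivity step the phrase ``constant in $y$'' is problematic: if $U_i$ is unbounded in the normal directions, a function of the form $y^{\alpha}g_\alpha(x)$ with $g_\alpha$ Schwartz in $x$ alone is \emph{not} Schwartz on $U_i$. What you want is to take any Schwartz extension $G_\alpha\in\CS(U_i,E_i)$ of $g_\alpha$ (this is what \cite[Lemma A.2.8]{AG2} actually provides) and set $f=\sum_{|\alpha|=k} y^{\alpha}G_\alpha/\alpha!$; since a polynomial times a Schwartz function is Schwartz, this $f$ lies in $\CS(U_i,E_i)$, and a Leibniz computation shows its normal $k$-jet on $Z\cap U_i$ is the prescribed tuple. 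Second, your final sentence about reducing a general closed semialgebraic $Z$ to the Nash-submanifold case is unnecessary here, since the statement of the proposition already assumes $Z$ is a closed Nash submanifold; that stratification argument belongs to Proposition~\ref{cha}, not to this one.
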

\begin{proof}
This is also a part of Borel's lemma. See \cite[Lemmas A.2.7]{AG2} for a proof when $\mathsf E$ is a Nash bundle. The same proof works in our general setting of tempered vector bundles. 
\end{proof}

Let $G$ be an almost linear Nash group as before. Now suppose that  $M$ is a left $G$-Nash manifold,  
$\mathsf E$ is a tempered left $G$-vector bundle over $M$, and  $U$ is a $G$-stable open Nash submanifold of $M$. 
Then  $\Gamma^{\varsigma}(M, \mathsf E)$ is naturally a representation in $\CS\mathrm{mod}_G$, and   $\Gamma^{\varsigma}(U, \mathsf E|_U)$ is a subrepresentation of it.  
Recall that $Z:=M\setminus U$. For each $k\geq 0$, 
$
  \Gamma^{\varsigma}_Z(M, \mathsf E)_k
$
is also a subrepresentation of $\Gamma^{\varsigma}_Z(M, \mathsf E)$.

As in the Introduction, let $\chi: G\rightarrow \mathbb C^\times$ be a character which has moderate growth. 
The following lemma is similar to 
 \cite[Corollary 2.3.3]{AGKL}.
\begin{lem}\label{commu}
Suppose that  $Z$ is a closed Nash submanifold of $M$. Let $i\in \BZ$ and assume that $\oH_{i+1}^\CS(G;(\Gamma_{Z}^{\varsigma}(M, \mathsf E)/\Gamma_{Z}^{\varsigma}(M, \mathsf E)_k)\otimes\chi)$ is finite dimensional for all $k\geq 0$. Then the canonical map
\[
\oH_{i}^\CS(G;\Gamma_{Z}^{\varsigma}(M, \mathsf E)\otimes\chi)\rightarrow\varprojlim_{k}\oH_{i}^\CS(G;(\Gamma_{Z}^{\varsigma}(M, \mathsf E)/\Gamma_{Z}^{\varsigma}(M, \mathsf E)_k)\otimes\chi)
\]
is a  linear isomorphism.
\end{lem}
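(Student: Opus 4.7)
My plan is to reduce the statement to the classical Milnor $\varprojlim^1$ exact sequence for a tower of chain complexes, then use the finite-dimensionality hypothesis to kill the $\varprojlim^1$ term. Write $V := \Gamma_Z^\varsigma(M,\mathsf E)\otimes\chi$ and $V_k := (\Gamma_Z^\varsigma(M,\mathsf E)/\Gamma_Z^\varsigma(M,\mathsf E)_k)\otimes\chi$. Borel's lemma (Proposition \ref{cha}) provides a topological linear isomorphism $V \cong \varprojlim_k V_k$, and the transition maps $V_{k+1} \twoheadrightarrow V_k$ are surjective.

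To model all of these Schwartz homologies by a single uniform chain complex, I would invoke the strong projective resolution $P_\bullet := \Gamma^\varsigma(X,\Omega_X^\bullet\otimes Or_X)$ of the trivial representation (Proposition \ref{stracy}), exactly as in the proof of Theorem \ref{BBB}. For any $W\in\CS\mathrm{mod}_G$, the complex $P_\bullet\widehat\otimes W$ is a strong projective resolution of $W$, and the identification \eqref{gind} shows that
\[
C_\bullet(W) := (P_\bullet\widehat\otimes W)_G \cong (\wedge^{n-\bullet}(\g/\k)\otimes W)_K
\]
computes $\oH_\bullet^\CS(G;W)$. Applying this construction to the tower, the key is to establish that $C_\bullet(V) = \varprojlim_k C_\bullet(V_k)$ as chain complexes with surjective transitions. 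Since $\wedge^{n-j}(\g/\k)$ is finite-dimensional, tensoring with it commutes with the inverse limit. Since $K$ is compact, Haar averaging furnishes a natural isomorphism $W_K \cong W^K$ of coinvariants with invariants; the invariants functor, being a kernel, commutes with inverse limits, and averaging preserves surjectivity of equivariant surjections. Combining these observations with the surjectivity of $V_{k+1}\twoheadrightarrow V_k$ yields both the desired chain-level inverse-limit identification and the surjectivity of $C_j(V_{k+1})\twoheadrightarrow C_j(V_k)$ at every degree.

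Once this is in place, the classical Milnor short exact sequence for a tower of chain complexes with surjective transitions gives
\[
0\to {\varprojlim_k}^{\!\!1}\, \oH_{i+1}^\CS(G;V_k)\to \oH_i^\CS(G;V)\to \varprojlim_k \oH_i^\CS(G;V_k)\to 0.
\]
By hypothesis, each $\oH_{i+1}^\CS(G;V_k)$ is finite-dimensional; since the images of the transition maps form a descending chain in a finite-dimensional space, they stabilize, so the inverse system is Mittag--Leffler and ${\varprojlim}^1 = 0$. This yields the desired linear isomorphism.

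The main obstacle I expect is the chain-level identification $C_\bullet(V)\cong \varprojlim_k C_\bullet(V_k)$ together with surjectivity of its transitions: one must carefully verify that the averaging isomorphism $W_K\cong W^K$ is natural with respect to the tower so that the $K$-coinvariants really do commute with $\varprojlim$, and that compactness of $K$ propagates surjectivity of $V_{k+1}\twoheadrightarrow V_k$ through the tensor with $\wedge^{n-j}(\g/\k)$ and passage to coinvariants. The Milnor sequence itself is then a purely formal consequence of this setup and the Mittag--Leffler criterion.
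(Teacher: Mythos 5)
Your proposal is correct and follows essentially the same approach as the paper's proof. Both reduce to the relative Lie algebra homology complex $(\wedge^{n-\bullet}(\g/\k)\otimes\,\cdot\,)_K$ via Theorem~\ref{BBB}, both use Proposition~\ref{cha} to identify the coefficient space as $\varprojlim_k V_k$, both observe that the chain-level transition maps are surjective (hence Mittag--Leffler), and both invoke the finite-dimensionality hypothesis to make the tower of $(i+1)$st homologies Mittag--Leffler, thereby killing $\varprojlim^1$. The paper cites Grothendieck's Proposition~13.2.3 in EGA~III, Chapter~0, which is exactly the Milnor sequence machinery you invoke, and it is somewhat terse about the chain-level identification $(\wedge^\bullet(\g/\k)\otimes\varprojlim_k \Gamma_k\otimes\chi)_K \cong \varprojlim_k(\wedge^\bullet(\g/\k)\otimes\Gamma_k\otimes\chi)_K$; you spell this step out explicitly via finite-dimensionality of $\wedge^{n-j}(\g/\k)$ and compactness of $K$ (using that $K$-coinvariants agree with $K$-invariants by Haar averaging, and invariants commute with projective limits), which is a helpful elaboration rather than a departure from the paper's route.
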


\begin{proof}
Recall that a sequence  
\[
\cdots\rightarrow V_{j+1}\rightarrow V_{j}\rightarrow\cdots \rightarrow V_1\rightarrow V_0
\]
of complex vector spaces  is called a Mittag-Leffler sequence if for each $j_0\geq 0$, the image of the composition of
\[
     V_{j}\rightarrow V_{j-1}\rightarrow\cdots \rightarrow V_{j_0+1}\rightarrow V_{j_0}
\]
is independent of $j$ whenever $j$ is sufficiently large. 

For simplicity, write
\[
\Gamma_k:=\Gamma_{Z}^{\varsigma}(M, \mathsf E)/\Gamma_{Z}^{\varsigma}(M, \mathsf E)_k, \quad k\geq 0.
\] 
Consider the sequence 
\[
\cdots\rightarrow (\wedge^{\bullet}(\g/\k)\otimes\Gamma_2)_K\rightarrow(\wedge^{\bullet}(\g/\k)\otimes \Gamma_1)_K
\rightarrow(\wedge^{\bullet}(\g/\k)\otimes\Gamma_0)_K
\]
of chain complexes. 
Obviously, at each degree, the corresponding sequence is a Mittag-Leffler sequence (since the corresponding linear maps are all surjective). The induced sequence of the homologies at the $(i+1)$th degree is also Mittag-Leffler by the finite dimension assumption. Thus it follows from \cite[Chapter 0, Proposition 13.2.3]{Gr} that
\begin{eqnarray*}
\oH_{i}(\g,K;\Gamma_{Z}^{\varsigma}(M, \mathsf E)\otimes\chi)&=&\oH_{i}((\wedge^{\bullet}(\g/\k)\otimes\varprojlim_{k}(\Gamma_{k}\otimes\chi))_K)\\
  &= &\varprojlim_{k}\oH_{i}((\wedge^{\bullet}(\g/\k)\otimes\Gamma_{k}\otimes\chi)_K)\\
  &=&\varprojlim_{k}\oH_{i}(\g,K;\Gamma_k\otimes\chi).
\end{eqnarray*}
The lemma then follows by using Theorem \ref{BBB}.

\end{proof}


\subsection{A proof of Theorem 1.12 }

We continue with the notation of the last subsection. Recall from the Introduction the complexified normal space
\[
 \mathrm{N}_{z}:=\frac{\mathrm T_z (M)}{\mathrm T_z(G.z)}\otimes_{\mathbb R} \mathbb C
 \]
 and its dual space $\mathrm{N}^*_{z}$. 
 
 \begin{lem}\label{thm333}
Assume that $Z$ has only finitely many $G$-orbits. If 
\be\label{vi}
\oH_i^\CS(G_{z}; \mathsf E_{z}\otimes\Sym^{k}( \mathrm{N}^*_{z})\otimes\delta_{G/G_{z}}\otimes\chi)=0
\ee
for all $z\in Z$, $i\in \mathbb Z$ and  $k\geq 0$. Then 
\be\label{bijective}
\oH^\CS_{i}(G; \Gamma_Z^{\varsigma}(M, \mathsf E)\otimes\chi)=0.
\ee
\end{lem}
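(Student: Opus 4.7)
The plan is to reduce to the case where $Z$ is a single $G$-orbit, and then combine the Borel-type filtration of Propositions \ref{cha} and \ref{cha1} with Shapiro's lemma.

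First, I would induct on the number of $G$-orbits in $Z$. If $Z$ has more than one orbit, by finiteness there exists an orbit $Z_0 \subset Z$ that is closed in $Z$, hence a closed Nash submanifold of $M$. Set $M' := M \setminus Z_0$, a $G$-stable open Nash submanifold containing $U$, and $Z' := M' \setminus U = Z \setminus Z_0$, which has strictly fewer orbits. Extension by zero yields a short exact sequence
\[
0 \to \Gamma^{\varsigma}_{Z'}(M', \mathsf E|_{M'}) \to \Gamma^{\varsigma}_{Z}(M, \mathsf E) \to \Gamma^{\varsigma}_{Z_0}(M, \mathsf E) \to 0
\]
in $\CS\mathrm{mod}_G$. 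Tensoring with $\chi$ and applying the long exact sequence of Corollary \ref{sil}, the induction hypothesis (applied to $Z'$ in $M'$ and to $Z_0$ in $M$, both of which inherit the vanishing hypothesis) kills both outer terms, and hence the middle one.

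So assume $Z = G.z$ is a single closed orbit, giving $Z \cong G/G_z$ as a left $G$-Nash manifold, and the tempered $G$-bundle $\Sym^k(\mathrm N^*_Z(M)) \otimes \mathsf E|_Z$ becomes $G \times^{G_z}(\Sym^k(\mathrm N^*_z) \otimes \mathsf E_z)$. Propositions \ref{cha} and \ref{cha1} give short exact sequences
\[
0 \to \tfrac{\Gamma^{\varsigma}_Z(M,\mathsf E)_k}{\Gamma^{\varsigma}_Z(M,\mathsf E)_{k+1}} \to \tfrac{\Gamma^{\varsigma}_Z(M,\mathsf E)}{\Gamma^{\varsigma}_Z(M,\mathsf E)_{k+1}} \to \tfrac{\Gamma^{\varsigma}_Z(M,\mathsf E)}{\Gamma^{\varsigma}_Z(M,\mathsf E)_k} \to 0
\]
in $\CS\mathrm{mod}_G$, whose $k$th graded piece, by Proposition \ref{geometry}, is $\ind_{G_z}^G(\mathsf E_z \otimes \Sym^k(\mathrm N^*_z))$. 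Rewriting Proposition \ref{pro=ind} as $\ind_H^G W \cong \mathrm{pro}_H^G(W \otimes \delta_H^{-1}) \otimes \delta_G$, together with the identity $\mathrm{pro}_H^G V \otimes \psi \cong \mathrm{pro}_H^G(V \otimes \psi|_H)$ for a moderate-growth character $\psi$ of $G$ (checked directly from the convolution definition of $\mathrm{pro}$), the $k$th graded piece twisted by $\chi$ becomes $\mathrm{pro}_{G_z}^G(\mathsf E_z \otimes \Sym^k(\mathrm N^*_z) \otimes \delta_{G/G_z} \otimes \chi)$. Shapiro's lemma (Theorem \ref{shapiro}) therefore yields
\[
\oH^{\CS}_i\!\left(G; \tfrac{\Gamma^{\varsigma}_Z(M,\mathsf E)_k}{\Gamma^{\varsigma}_Z(M,\mathsf E)_{k+1}} \otimes \chi\right) \cong \oH^{\CS}_i\bigl(G_z; \mathsf E_z \otimes \Sym^k(\mathrm N^*_z) \otimes \delta_{G/G_z} \otimes \chi\bigr),
\]
which vanishes for all $i$ and $k$ by hypothesis.

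Finally, induction on $k$ via Corollary \ref{sil} applied to the filtration sequences propagates this vanishing to $\oH^{\CS}_i(G; (\Gamma^{\varsigma}_Z(M, \mathsf E)/\Gamma^{\varsigma}_Z(M, \mathsf E)_k) \otimes \chi) = 0$ for all $i$ and $k$. These zero spaces are trivially finite-dimensional, so Lemma \ref{commu} (applicable since the single closed orbit $Z$ is a closed Nash submanifold) identifies $\oH^{\CS}_i(G; \Gamma^{\varsigma}_Z(M, \mathsf E) \otimes \chi)$ with an inverse limit of zeros, completing the proof. The main obstacle I anticipate is verifying cleanly the character-twist identity $\mathrm{pro}_H^G V \otimes \psi \cong \mathrm{pro}_H^G(V \otimes \psi|_H)$ and ensuring that the characters combine to produce exactly $\delta_{G/G_z}$ in the Shapiro step; the remaining arguments are structural applications of results already established in the paper.
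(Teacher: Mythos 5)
Your proof is correct and follows essentially the same route as the paper: induct on the number of $G$-orbits via the extension-by-zero short exact sequence, reduce to a single closed orbit, use Propositions \ref{cha1}, \ref{geometry}, \ref{pro=ind} and Shapiro's lemma to kill the graded pieces of the Borel filtration, and conclude with Lemma \ref{commu}. The character-twist identity $\mathrm{pro}_H^G V \otimes \psi \cong \mathrm{pro}_H^G(V\otimes\psi|_H)$ that you flagged does hold (the map $\mu\otimes v\mapsto\psi^{-1}\mu\otimes v$ gives the intertwiner), and your extra explicit induction on $k$ to get the vanishing of $\oH^{\CS}_i(G;(\Gamma^{\varsigma}_Z/\Gamma^{\varsigma}_{Z,k})\otimes\chi)$ is a legitimate filling-in of a step the paper leaves implicit before invoking Lemma \ref{commu}.
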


\begin{proof}
First assume that  $Z$ is a single $G$-orbit.  In view of Proposition \ref{pro=ind}, Proposition  \ref{geometry}, Shapiro's Lemma (Theorem \ref{shapiro}) and Proposition \ref{cha1}, \eqref{vi} implies that 
\[
  \oH^\CS_{i}(G; (\Gamma_Z^{\varsigma}(M, \mathsf E)_k/\Gamma_Z^{\varsigma}(M, \mathsf E)_{k+1})\otimes\chi)=0.
\]
Then Lemma \ref{commu} implies that 
\[
  \oH^\CS_{i}(G; \Gamma_Z^{\varsigma}(M, \mathsf E)\otimes\chi)=0.
\]

In general, take a filtration 
\[
  Z=Z_0\supset Z_1\supset \cdots \supset Z_r\supset Z_{r+1}=\emptyset, \quad (r\geq 0)
\] 
such that for all $0\leq j\leq r$, $Z_j$ is a closed semialgebraic subset of $M$, and $Z_j\setminus Z_{j+1}$ is a $G$-orbit. 
Such a filtration always exists, see for example \cite[Proposition 3.6]{Su}. We have an obvious exact sequence
\[
 0\rightarrow \Gamma_{Z\setminus Z_r}^{\varsigma}(M\setminus Z_r, \mathsf E|_{M\setminus Z_r})\rightarrow \Gamma_Z^{\varsigma}(M, \mathsf E)\rightarrow \Gamma_{Z_r}^{\varsigma}(M, \mathsf E)\rightarrow 0.
\]
By induction on the number of $G$-orbits in $Z$, the lemma follows by using the induced long exact sequence.
\end{proof}

To prove Theorem \ref{fro3}, we also need the following result.

\begin{lem}\label{open}
Let $\phi : \CC_{\bullet}\rightarrow\CC^{\prime}_{\bullet}$ be a morphism of chain complexes of \Fre spaces. Let $i\in \BZ$. If the induced morphism
\be\label{phi}
\phi: \oH_{i}(\CC_{\bullet})\rightarrow\oH_{i}(\CC^{\prime}_{\bullet})
\ee
 is surjective, then it must be an open map.
\end{lem}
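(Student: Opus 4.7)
The plan is to reduce the problem to the classical open mapping theorem for Fréchet spaces by lifting the question to the spaces of cycles.

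First, I would set up the following Fréchet spaces and maps. Let $Z_i := \ker(\od\colon \CC_i\to \CC_{i-1})$ and $Z'_i := \ker(\od'\colon \CC'_i\to \CC'_{i-1})$; these are closed subspaces of Fréchet spaces and hence are themselves Fréchet. Let $\pi\colon Z_i\to \oH_i(\CC_\bullet)$ and $\pi'\colon Z'_i\to \oH_i(\CC'_\bullet)$ be the canonical quotient maps, which are surjective and open. Consider the continuous linear map
\[
\psi\colon Z_i\oplus \CC'_{i+1}\;\longrightarrow\; Z'_i,\qquad (z,c)\;\longmapsto\; \phi(z)+\od'(c).
\]
Its domain and codomain are both Fréchet spaces.

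Next I would observe two things. First, by construction $\pi'\circ \psi = \phi\circ \pi\circ \mathrm{pr}_1$, where $\mathrm{pr}_1\colon Z_i\oplus \CC'_{i+1}\to Z_i$ is the projection onto the first factor. Second, the assumption that $\phi\colon \oH_i(\CC_\bullet)\to \oH_i(\CC'_\bullet)$ is surjective says precisely that every $z'\in Z'_i$ is cohomologous to $\phi(z)$ for some $z\in Z_i$, i.e.\ $z' = \phi(z) + \od'(c)$ for some $c\in \CC'_{i+1}$; thus $\psi$ is surjective.

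Now apply the open mapping theorem (\cite[Theorem 17.1]{Tr}) to conclude that $\psi$ is open. Given an open set $U\subset \oH_i(\CC_\bullet)$, the set $\pi^{-1}(U)\subset Z_i$ is open, hence $\pi^{-1}(U)\oplus \CC'_{i+1}$ is open in $Z_i\oplus \CC'_{i+1}$, hence $\psi(\pi^{-1}(U)\oplus \CC'_{i+1})$ is open in $Z'_i$. Since $\pi'$ is an open map and
\[
\phi(U)\;=\;\phi(\pi(\pi^{-1}(U)))\;=\;\pi'\bigl(\psi(\pi^{-1}(U)\oplus \CC'_{i+1})\bigr),
\]
it follows that $\phi(U)$ is open in $\oH_i(\CC'_\bullet)$, which is what we wanted.

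The one subtlety to watch is that $\oH_i(\CC'_\bullet)$ need not be Hausdorff (the coboundary space may fail to be closed), so one cannot apply the open mapping theorem directly to $\phi$ on homology; this is exactly why the detour through $\psi$ on the Fréchet level is needed. No other obstacle arises, since $Z_i$, $Z'_i$, $\CC'_{i+1}$, and their direct sum are all Fréchet, and quotient maps onto arbitrary (possibly non-Hausdorff) locally convex quotients are automatically open.
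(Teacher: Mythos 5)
Your proof is correct and uses the same underlying idea as the paper: lift the surjective map to a genuine Fr\'echet level, apply the open mapping theorem there, and then push openness through the (automatically open) quotient maps. The one difference is that you take $Z_i\oplus \CC'_{i+1}$ as the domain of the auxiliary surjection $\psi$, whereas the paper uses $\mathrm{Ker}_i\oplus\mathrm{Im}'_i$ with $\mathrm{Im}'_i=\partial'_{i+1}(\CC'_{i+1})$; your variant is a small sharpening, since $\mathrm{Im}'_i$ with its subspace topology need not be closed in $\CC'_i$ and hence need not be Fr\'echet, so the paper's diagonal map only falls under the open mapping theorem if one (tacitly) endows $\mathrm{Im}'_i$ with the quotient topology from $\CC'_{i+1}$ --- precisely the move your formulation makes explicit and unambiguous.
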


\begin{proof}
Write $\CC_\bullet=\{(\CC_i, \partial_i)\}_{i\in \BZ}$. Put
\[
  \mathrm{Im}_{i}:=\partial_{i+1}(\CC_{i+1})\quad \textrm{and}\quad  \mathrm{Ker}_{i}:=\textrm{the kernel of $\partial_i$}. 
\]
Then 
\[
\mathrm{Im}_{i}\subset\mathrm{Ker}_{i}\subset \CC_i.
\]
Similarly we have spaces 
$$\mathrm{Im}^{\prime}_{i}\subset\mathrm{Ker}^{\prime}_{i}\subset \CC^{\prime}_i.
$$

The commutative diagram
\be\label{dia1}
\xymatrix{
\mathrm{Ker}_{i} \ar[r]^{\phi} \ar[d]^{\subset }         &{\Ker}^{\prime}_{i} \\                                       
\mathrm{Ker}_{i}\oplus\mathrm{Im}^\prime_i  \ar[ur]_{(u,v)\mapsto \phi(u)+v}
}
\ee
descends to a commutative  diagram
\be\label{dia2}
\xymatrix{
\frac{\mathrm{Ker}_{i}}{\mathrm{Im}_{i}}       \ar[r] \ar[d]^{\cong}         &\frac{\mathrm{Ker}^{\prime}_{i}}{\mathrm{Im}^{\prime}_{i}} \\                                       
\frac{\mathrm{Ker}_{i}\oplus\mathrm{Im}^\prime_i}{\mathrm{Im}_{i}\oplus\mathrm{Im}^{\prime}_{i}}  \ar[ur] \, . 
}
\ee
The surjectivity of \eqref{phi} implies the surjectivity of the 
diagonal arrow of \eqref{dia1}. Thus by the open mapping theorem, this diagonal arrow must be an open map. Therefore, the 
diagonal arrow of \eqref{dia2} is also an open map. Since the vertical arrow of \eqref{dia2} is a topological linear isomorphism, the  horizontal arrow of \eqref{dia2} is also an open map. 
This proves the lemma. 

\end{proof}

Now we are ready to prove Theorem \ref{fro3}. We have an obvious exact sequence
\[
 0\rightarrow \Gamma^{\varsigma}(U, \mathsf E|_{U})\otimes \chi \rightarrow \Gamma^{\varsigma}(M, \mathsf E)\otimes \chi \rightarrow \Gamma_{Z}^{\varsigma}(M, \mathsf E)\otimes \chi\rightarrow 0.
\]
Using the induced long exact sequence of the Schwartz homologies, Lemma \ref{thm333} implies that the natural map
\[
 \oH^\CS_{i}(G; \Gamma^{\varsigma}(U,\mathsf E|_U)\otimes\chi)\rightarrow\oH^\CS_{i}(G; \Gamma^{\varsigma}(M,\mathsf E)\otimes\chi), \quad (i\in \mathbb Z)
\]
is a linear isomorphism. Then by Lemma \ref{open}, this is in fact a topological linear isomorphism. This proves Theorem \ref{fro3}.

Theorem \ref{fro4} follows directly from Theorem \ref{fro3} and Proposition \ref{vani}.











\end{document}